\newcommand{\infop}[0]{\bar\partial+\bar\partial^*}
\newcommand{\hook}[0]{\lrcorner\,}
\newtheorem{thm}{Theorem}
\newtheorem*{thm*}{Theorem}
\newtheorem{lem}[thm]{Lemma}
\newtheorem{prop}[thm]{Proposition}
\newtheorem*{prop*}{Proposition}
\newtheorem{cor}[thm]{Corollary}
\newtheorem*{cor*}{Corollary}
\theoremstyle{definition}
\newtheorem*{rem}{Remark}
\newtheorem{defn}{Definition}
\newtheorem*{ex}{Example}
\numberwithin{thm}{section}
\numberwithin{equation}{section}
\numberwithin{defn}{section}
\title{Deformations of conically singular Cayley submanifolds}
\author{Kim Moore}
\address{Department of Mathematics, University College London, Gower Street, London, WC1E  6BT}
\email{kim.moore@ucl.ac.uk}
\begin{document}
\begin{abstract}
In this article we study the deformation theory of conically singular Cayley submanifolds. In particular, we prove a result on the expected dimension of a moduli space of Cayley deformations of a conically singular Cayley submanifold. Moreover, when the Cayley submanifold is a two-dimensional complex submanifold of a Calabi--Yau four-fold we show by comparing Cayley and complex deformations that in this special case the moduli space is a smooth manifold. We also perform calculations of some of the quantities discussed for some examples.
\end{abstract}

\maketitle

\section{Introduction}

Cayley submanifolds are calibrated submanifolds that arise naturally in manifolds with exceptional holonomy $Spin(7)$. Introduced by Harvey and Lawson \cite{MR666108}, calibrated submanifolds are by construction volume minimising, and hence minimal submanifolds. Cayley submanifolds exist in abundance, with the simplest examples being any two-dimensional complex submanifold of a Calabi--Yau four-fold.

The deformation theory of compact calibrated submanifolds in manifolds with special holonomy was studied by McLean \cite{MR1664890}. A major obstruction to generalising these results to noncompact manifolds is the failure of an elliptic operator on a noncompact manifold to be Fredholm. However, by introducing a weighted norm on spaces of sections of a given vector bundle on a particular type of noncompact manifold, it is possible to overcome this difficulty, as long as one is careful about the choice of weight. It is therefore possible to study certain moduli spaces of noncompact calibrated submanifolds.

In this article, the noncompact submanifolds that we study are \emph{conically singular}. Motivated by the SYZ conjecture, an interesting problem is whether a $Spin(7)$-manifold can be fibred by Cayley submanifolds with some singular fibres. Conically singular Cayley submanifolds are natural candidates for these singular fibres. Deformations of conically singular special Lagrangian submanifolds in Calabi--Yau manifolds are coassociative submanifolds of $G_2$-manifolds have been studied by Joyce \cite{MR2015549} and Lotay \cite{MR2403190} respectively. Deformations of compact Cayley submanifolds with boundary and asymptotically cylindrical Cayley submanifolds have been studied by Ohst \cite{ohst2014deformations,ohst2015deformations}.

We say that a manifold with a singular point is conically singular if a neighbourhood of the singular point is diffeomorphic to a cone $C\cong L\times (0,\epsilon)$, and moreover the metric approaches the cone metric like $r^{\mu-1}$ as $r\to 0$. We will prove a series of results on the moduli space of Cayley deformations of a Cayley submanifold, conically singular with cone $C$ and rate $\mu$, that also have a conical singularity at the same point with cone $C$ and rate $\mu$.

In Theorem \ref{thm:cscaydefs}, we prove that the expected dimension of this moduli space is given by the index of a first order linear elliptic operator acting on smooth normal vector fields that decay like $r^\mu$ close to the singular point. Motivated by other work of the author \cite{compactcay}, we pay special attention to Cayley deformations of a conically singular complex surface $N$ inside a Calabi--Yau four-fold $M$. In Theorem \ref{thm:cscxcaydefs}, we will show that the elliptic operator in Theorem \ref{thm:cscaydefs} is 
\[
\infop:C^\infty_\mu( \nu^{1,0}_M(N)\oplus \Lambda^{0,2}N\otimes \nu^{1,0}_M(N))\to C^\infty_{\textnormal{loc}}(\Lambda^{0,1}N\otimes \nu^{1,0}_M(N)).
\]
We will then study the moduli space of complex deformations of $N$ in $M$ that are conically singular at the same point with the same rate and cone as $N$. We will show in Theorem \ref{thm:cxcsdefs} that this moduli space is a smooth manifold, and moreover that there are no infinitesimal Cayley deformations of $N$ that are not infinitesimal complex deformations of $N$ in Corollary \ref{cor:cscxcaysame}.

In the later sections of this article we will focus on the elliptic operators whose indices we are interested in. In particular, we will characterise the exceptional weights for which these operators are not Fredholm. We will also apply the Atiyah--Patodi--Singer Index Theorem \cite{MR0397797} to write down an expression for the index of these operators in terms of topological and spectral invariants of the manifold. 

We will conclude this article by performing a series of calculations, including the dimension of the space of infinitesimal Cayley and complex deformations of three complex cones in $\mathbb{C}^4$, that are themselves cones, motivated by the work of Kawai \cite{MR3672213} on deformations of associative submanifolds of the seven-sphere.

\textbf{Layout.} In Section \ref{sec:prelim} we will discuss some background results which the reader may find useful on Cayley submanifolds, conically singular manifolds and Fredholm theory on noncompact manifolds. Section \ref{sec:csdefs} contains the results on the deformation theory of conically singular Cayley and complex submanifolds. In Section \ref{sec:index} we characterise the set $\mathcal{D}$ of exceptional weights for which the operators we discuss in this article, before deducing a version of the Atiyah--Patodi--Singer theorem for these operators. In Section \ref{sec:calc} we perform calculations of some of the quantities discussed in this article for some examples.

\textbf{Notation.} When $M$ is a complex manifold, we denote by $\Lambda^{p,q}M$ the bundle of $(p,q)$-forms $\Lambda^pT^{*1,0}M\otimes \Lambda^qT^{*0,1}M$. If $N$ is a submanifold of $M$, we denote the normal bundle of $N$ in $M$ by $\nu_M(N)$. Moreover, if $N$ is a complex submanifold of $M$ then we denote by $\nu^{1,0}_M(N)$ and $\nu^{0,1}_M(N)$ the holomorphic and anti-holomorphic normal bundles of $N$ in $M$ respectively. Submanifolds will be taken to be embedded unless otherwise stated.
\section{Preliminaries}\label{sec:prelim}
\subsection{Cayley submanifolds}\label{ss:cay}

We follow Joyce \cite[Defn 11.4.2]{MR2292510} to define $Spin(7)$-manifolds.
\begin{defn}\label{defn:spin7}
 Let $(x_1,\dots,x_8)$ be coordinates on $\mathbb{R}^8$ with the Euclidean metric $g_0=dx_1^2+\dots+dx_8^2$. Define a four-form on $\mathbb{R}^8$ by
\begin{align}\nonumber
 \Phi_0:=&dx_{1234}-dx_{1256}-dx_{1278}-dx_{1357}+dx_{1368}-dx_{1458}-dx_{1467} \\\label{eqn:phi0}
-&dx_{2358}-dx_{2367}+dx_{2457}-dx_{2468}-dx_{3456}-dx_{3478}+dx_{5678},
\end{align}
where $dx_{ijkl}:=dx_i\wedge dx_j\wedge dx_k\wedge dx_l$.

Let $X$ be an eight-dimensional oriented manifold. For each $p\in X$ define the subset $\mathcal{A}_pX\subseteq \Lambda^4T^*_pX$ to be the set of four-forms $\Phi$ for which there exists an oriented isomorphism $T_pX\to \mathbb{R}^8$ identifying $\Phi$ and $\Phi_0$ given in \eqref{eqn:phi0}, and define the vector bundle $\mathcal{A}X$ to be the vector bundle with fibre $\mathcal{A}_pX$.

A four-form $\Phi$ on $X$ that satisfies $\Phi|_p\in \mathcal{A}_pX$ for all $p\in X$ defines a metric $g$ on $X$. We call $(\Phi,g)$ a $Spin(7)$-\emph{structure} on $X$. If $\nabla$ denotes the Levi-Civita connection of $g$ then say $(\Phi,g)$ is a \emph{torsion-free} $Spin(7)$-structure on $X$ if $\nabla \Phi=0$.

Then $(X,\Phi,g)$ is a $Spin(7)$-\emph{manifold} if $X$ is an eight-dimensional oriented manifold and $(\Phi,g)$ is a torsion-free $Spin(7)$-structure on $X$.
\end{defn}

Given a $Spin(7)$-manifold $(X,\Phi,g)$ then $\Phi$ is a calibration on $X$, known as the \emph{Cayley calibration}. An oriented, four-dimensional submanifold $Y$ of $X$ is said to be Cayley if 
\[
\Phi|_Y=\text{vol}_Y,
\]
i.e., $Y$ is calibrated by $\Phi$.

\begin{defn}
Let $(M^m,J,\omega')$ be a compact K\"{a}hler manifold with trivial canonical bundle $K_M:= \Lambda^{m,0}M$, i.e., with nowhere vanishing section $\alpha$ with $\bar\partial\alpha=0$. Then by Yau's proof of the Calabi conjecture, there exists a Ricci-flat K\"{a}hler form $\omega\in [\omega']$. Choose $\Omega\in \Omega^{m,0}(M)$ so that
\begin{equation}\label{eqn:omOm}
\frac{\omega^m}{m!}=\left(\frac{i}{2}\right)^m(-1)^{m(m-1)/2}\Omega\wedge \overline{\Omega}.
\end{equation}
Say that $(M,J,\omega,\Omega)$ is a Calabi--Yau manifold.
\end{defn}
Given a Calabi--Yau four-fold $(M,J,\omega,\Omega)$, we can define a Cayley form on $M$ by
\begin{equation}\label{eqn:caycy}
\Phi=\frac{1}{2}\omega\wedge \omega+\text{Re }\Omega,
\end{equation}
and so with the choice of constant in \eqref{eqn:omOm} we can view $M$ as a $Spin(7)$-manifold. Examining expression \eqref{eqn:caycy} we see that complex surfaces and special Lagrangians in a Calabi--Yau four-fold are Cayley submanifolds.

We can decompose bundles of forms on $Spin(7)$-manifolds into irreducible representations of $Spin(7)$. The following proposition is taken from \cite[Prop 11.4.4]{MR2292510}.
\begin{prop}\label{prop:spin7decomp}
Let $X$ be a $Spin(7)$-manifold. Then the bundle of two-forms $M$ admits the following decomposition into irreducible representations of $Spin(7)$:
\begin{align*}
 \Lambda^2X&\cong \Lambda^2_7\oplus \Lambda^2_{21},
\end{align*}
where $\Lambda^k_l$ denotes the irreducible representation of $Spin(7)$ on $k$-forms of dimension $l$.
\end{prop}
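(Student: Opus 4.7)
The plan is to reduce the statement to a pointwise assertion: fix $p \in X$ together with an oriented isomorphism $T_pX \to \mathbb{R}^8$ identifying $\Phi|_p$ with the model four-form $\Phi_0$ of \eqref{eqn:phi0}. The proposed splitting of $\Lambda^2X$ is $Spin(7)$-invariant by construction, and $\Phi_0$ is fixed by $Spin(7) \subset SO(8)$, so it suffices to exhibit, at the level of the model $\mathbb{R}^8$, a $Spin(7)$-invariant splitting $\Lambda^2(\mathbb{R}^8)^* = W_7 \oplus W_{21}$ with $\dim W_7 = 7$ and $\dim W_{21} = 21$, and to verify each summand is $Spin(7)$-irreducible.

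The key device is the linear endomorphism $T \colon \Lambda^2(\mathbb{R}^8)^* \to \Lambda^2(\mathbb{R}^8)^*$ defined by
\[
T(\alpha) = *(\alpha \wedge \Phi_0),
\]
equivalently $T(\alpha)_{kl} = \tfrac12 \Phi_{0,ijkl}\,\alpha^{ij}$ in coordinates. Since $\Phi_0$ and the Hodge star of the Euclidean metric are both $Spin(7)$-invariant, $T$ is a $Spin(7)$-equivariant endomorphism, and it is symmetric with respect to the induced inner product on $\Lambda^2$. Hence $\Lambda^2$ decomposes orthogonally into the eigenspaces of $T$, and each eigenspace is $Spin(7)$-invariant.

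The next step, and the main computational obstacle, is to determine the eigenvalues and their multiplicities. Working on the basis $\{dx_i \wedge dx_j : i < j\}$, one reads off the action of $T$ directly from the 14 monomials in \eqref{eqn:phi0}: for each basis two-form $dx_i \wedge dx_j$, the image $T(dx_i \wedge dx_j)$ is a signed sum of those $dx_k \wedge dx_l$ for which $dx_{ijkl}$ or $dx_{klij}$ appears as a summand of $\Phi_0$. Diagonalising the resulting $28 \times 28$ matrix shows that $T$ has exactly two distinct eigenvalues, with eigenspaces $W_7$ and $W_{21}$ of dimensions $7$ and $21$ respectively; the calculation is routine but bookkeeping-heavy, and nothing beyond the explicit expression \eqref{eqn:phi0} is needed.

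Finally, to confirm irreducibility, identify $\Lambda^2(\mathbb{R}^8)^* \cong \mathfrak{so}(8)$ via the metric. The inclusion $Spin(7) \subset SO(8)$ yields the reductive decomposition $\mathfrak{so}(8) = \mathfrak{spin}(7) \oplus \mathfrak{m}$ as $Spin(7)$-modules, with $\dim \mathfrak{spin}(7) = 21$ and $\dim \mathfrak{m} = 7$. Comparing with the eigenspace decomposition just obtained, $W_{21}$ must coincide with the adjoint representation $\mathfrak{spin}(7)$, while $W_7$ is the isotropy representation of the symmetric space $SO(8)/Spin(7) \cong \mathbb{RP}^7$, which is isomorphic to the standard vector representation of $Spin(7)$ on $\mathbb{R}^7$. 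Both representations are well-known to be irreducible, giving the claimed decomposition.
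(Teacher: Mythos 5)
The paper itself does not prove this proposition; it is quoted verbatim from Joyce \cite[Prop 11.4.4]{MR2292510}, so there is no internal proof to compare against. On its own terms your argument is correct, and is essentially the standard one for establishing the $Spin(7)$ decomposition of two-forms.

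Two small remarks. First, you could sidestep the ``bookkeeping-heavy'' diagonalisation of the $28\times 28$ matrix entirely: since every monomial $dx_{ijkl}$ in $\Phi_0$ has four distinct indices, $T(dx_i\wedge dx_j)$ has no $dx_i\wedge dx_j$ component, so $T$ is traceless; and $T\neq 0$ is immediate (e.g.\ $T(dx_1\wedge dx_2)$ contains $dx_3\wedge dx_4$). A traceless, nonzero, self-adjoint equivariant operator cannot be scalar, so it has at least two eigenspaces, each $Spin(7)$-invariant. Your representation-theoretic input $\mathfrak{so}(8)=\mathfrak{spin}(7)\oplus\mathfrak{m}$ then forces exactly two eigenspaces of dimensions $7$ and $21$, without any explicit matrix computation; the eigenvalue calculation is only needed if one wants the explicit projectors. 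Second, the phrase ``symmetric space'' is a slight overreach: $SO(8)/Spin(7)$ is diffeomorphic to $\mathbb{RP}^7$, but $(SO(8),Spin(7))$ is not a symmetric pair, since $Spin(7)$ is not the fixed-point set of an involution of $SO(8)$. What you actually use is only the reductive decomposition and the identification of the isotropy module $\mathfrak{m}$ with the standard $7$-dimensional representation; the latter can be justified by noting that the only nontrivial irreducible $Spin(7)$-representation of dimension at most $7$ is the vector representation, while a trivial summand of $\mathfrak{m}\subset\Lambda^2\mathbb{R}^8$ would be a $Spin(7)$-invariant two-form on $\mathbb{R}^8$, and none exists because the $8$-dimensional spin representation is real irreducible with an invariant symmetric (not skew) form.
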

\begin{rem}
If $Y$ is a Cayley submanifold of $X$, then we have that
\begin{equation}\label{eqn:l27decomp}
\Lambda^2_7|_Y=\Lambda^2_+Y\oplus E,
\end{equation}
where $E$ is a rank four vector bundle on $Y$.
\end{rem}

The following result allows us to characterise Cayley submanifolds of a $Spin(7)$-manifold $(X,\Phi,g)$ by finding a four-form that vanishes exactly when restricted to a Cayley submanifold of $X$.
\begin{prop}[{\cite[Lem 10.15]{salamon2010notes}}]\label{prop:tau}
Let $X$ be a eight-dimensional manifold with $Spin(7)$-structure $(\Phi,g)$ and let $Y$ be an oriented four-dimensional submanifold of $X$. Then there exists $\tau\in C^\infty(\Lambda^4 X\otimes \Lambda^2_7)$ so that $Y$ is a Cayley submanifold of $X$ if, and only if, $\tau|_Y\equiv 0$.

If $x,u,v,w$ are orthogonal, then
\[
\tau(x,u,v,w)=\pi_7(\Phi(\,\cdot \, , u,v,w)\wedge x^\flat),
\]
where  $\pi_7(x^\flat\wedge y^\flat)=\frac{1}{2}(x^\flat \wedge y^\flat+\Phi(x,y,\cdot,\cdot))$ and $\flat$ denotes the musical isomorphism $TX\to T^*X$. Moreover, if $e_1,\dots , e_8$ is an orthonormal frame for $TX$ then
\begin{equation}\label{eqn:taunice}
\tau=\sum_{i=2}^8 \left[e^i\wedge (e_1\hook \Phi)-e^1\wedge (e_i\hook \Phi)\right]\otimes \pi_7(e^1\wedge e^i)
\end{equation}
\end{prop}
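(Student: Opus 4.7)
The plan is pointwise: both the Cayley condition $\Phi|_Y = \mathrm{vol}_Y$ and $\tau|_Y \equiv 0$ depend only on the tangent plane $V = T_pY$ at each $p \in Y$, so it suffices to show that a 4-plane $V \subset T_pX$ is Cayley (for some orientation) if and only if $\tau|_V = 0$ in $\Lambda^4 V \otimes \Lambda^2_7 T_pX$. Fix an orthonormal basis $e_1,\dots,e_8$ of $T_pX$ identifying $\Phi|_p$ with $\Phi_0$, with $e_1,\dots,e_4$ an oriented orthonormal basis of $V$.

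The first step is a direct verification of \eqref{eqn:taunice} by evaluating each bracket $e^i\wedge(e_1\hook\Phi)-e^1\wedge(e_i\hook\Phi)$ on $(e_1,e_2,e_3,e_4)$. Using $(e_1\hook\Phi)(e_1,\cdot,\cdot)=0$ and $e^a(e_b)=\delta_{ab}$, each wedge collapses to a single nonzero term; after invoking antisymmetry of $\Phi$ to drop the $i=2,3,4$ contributions one is left with
\[
\tau|_V = -\bigg(\sum_{j=5}^{8}\Phi(e_j,e_2,e_3,e_4)\,\pi_7(e^1\wedge e^j)\bigg)\otimes e^1\wedge e^2\wedge e^3\wedge e^4.
\]
A parallel frame expansion of $\pi_7(\Phi(\cdot,e_2,e_3,e_4)\wedge e_1^\flat)$ yields the same expression, confirming that the two descriptions of $\tau$ coincide. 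Applying $\pi_7(e^1\wedge e^j)=\tfrac{1}{2}(e^1\wedge e^j+\Phi(e_1,e_j,\cdot,\cdot))$ and noting that antisymmetry of $\Phi$ prevents any $e^1\wedge e^k$ term from appearing in $\Phi(e_1,e_j,\cdot,\cdot)$, the four images $\pi_7(e^1\wedge e^j)$ for $j=5,6,7,8$ have independent leading $e^1\wedge e^j$ parts, so they are linearly independent in $\Lambda^2_7$. Hence $\tau|_V=0$ is equivalent to the 1-form $\Phi(\cdot,e_2,e_3,e_4)$ lying in $\mathrm{span}(e_1^\flat)$.

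To upgrade this to the Cayley condition I invoke $Spin(7)$-invariance. The group $Spin(7)$ acts transitively on oriented orthonormal 3-frames in $\mathbb{R}^8$ (the Stiefel manifold $V_3(\mathbb{R}^8)$ has dimension $18 < 21 = \dim Spin(7)$) and preserves $\Phi$, so $|\Phi(\cdot,u,v,w)|$ is constant on orthonormal 3-frames; evaluating on the standard frame gives $\Phi_0(\cdot,e_2,e_3,e_4) = e^1$, so this constant equals $1$. Combined with the preceding paragraph, $\tau|_V=0$ then forces $\Phi(e_1,e_2,e_3,e_4)=\pm 1$, i.e.\ $V$ is Cayley after possibly reversing its orientation. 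Conversely, if $V$ is Cayley then $V$ lies in the $Spin(7)$-orbit of $\mathrm{span}(e_1,\dots,e_4)$, and inspection of \eqref{eqn:phi0} shows that no term $dx_{234j}$ with $j\ge 5$ appears, so $\Phi_0(e_j,e_2,e_3,e_4)=0$ and $\tau|_V=0$ on the standard Cayley plane; $Spin(7)$-invariance of $\tau$ then transports this to every Cayley 4-plane. I expect the linear independence of $\{\pi_7(e^1\wedge e^j)\}_{j=5}^{8}$ to be the most delicate step, since it requires explicit knowledge of $\pi_7$ on basis 2-forms; the orbit-theoretic argument that bridges componentwise vanishing to the full Cayley equality is then essentially formal.
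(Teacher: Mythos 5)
The paper does not contain its own proof of this proposition: it is quoted verbatim from Salamon--Walpuski \cite[Lem 10.15]{salamon2010notes}, so there is nothing in this paper for me to compare your argument against. That said, your proof is a reasonable and essentially correct pointwise argument. The evaluation of \eqref{eqn:taunice} on $(e_1,e_2,e_3,e_4)$ and its reconciliation with $\pi_7(\Phi(\,\cdot\,,e_2,e_3,e_4)\wedge e^1)$ are both correct, as is the observation that $\pi_7(e^1\wedge e^j)$ for $j=5,\dots,8$ are independent because $\Phi(e_1,e_j,e_1,\cdot)=0$ means the $\Lambda^2_7$-projection cannot cancel the leading $e^1\wedge e^j$ terms. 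Two small cautions. First, the dimension count $\dim V_3(\mathbb{R}^8)=18<21=\dim Spin(7)$ is only a consistency check, not a proof that the action on orthonormal $3$-frames is transitive; that fact follows from the chain of transitive actions $Spin(7)\curvearrowright S^7$ with stabiliser $G_2$, $G_2\curvearrowright S^6$ with stabiliser $SU(3)$, $SU(3)\curvearrowright S^5$, and you should say so rather than rely on the inequality. Second, for the converse direction you invoke transitivity of $Spin(7)$ on Cayley $4$-planes, which is a further standard but non-obvious fact; it is cleaner and self-contained to observe that if $V$ is Cayley then $\Phi(e_1,e_2,e_3,e_4)=1=|\Phi(\,\cdot\,,e_2,e_3,e_4)|$ forces $\Phi(e_j,e_2,e_3,e_4)=0$ for all $j\ge 5$, so $\tau|_V=0$ with no appeal to the orbit structure of the Cayley Grassmannian.
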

\subsection{Deformation theory of compact Cayley submanifolds}\label{ss:cptdefs}
We begin by studying a compact Cayley submanifold $Y$ of a $Spin(7)$-manifold $X$. The results here are due to McLean \cite[\S 6]{MR1664890}, although are taken in this form from a paper of the author \cite{compactcay}. We first use the tubular neighbourhood theorem to identify the moduli space of Cayley deformations of $Y$ in $X$ with the kernel of a partial differential operator.
\begin{prop}[{\cite[Thm 6.3]{MR1664890}}] \label{prop:caylin}
 Let $(X,g,\Phi)$ be a $Spin(7)$-manifold with compact Cayley submanifold $Y$.  Let $\exp$ denote the exponential map and for a normal vector field $v$ define $Y_v:=\exp_v(Y)$. The moduli space of Cayley deformations of $Y$ in $X$ is isomorphic near $Y$ to the kernel of the following partial differential operator
\begin{align}\nonumber
 F:C^\infty(V)&\to C^\infty(E), \\ \label{eqn:caypde}
v&\mapsto \pi(*_Y\exp^*_v(\tau|_{Y_v})),
\end{align}
where $\tau$ is defined in Proposition \ref{prop:tau}, $V$ is an open neighbourhood of the zero section in $\nu_X(Y)$ and 
\begin{equation}\label{eqn:27split}
\Lambda^2_7|_Y=\Lambda^2_+Y\oplus E,
\end{equation}
with $\pi:\Lambda^2_7|_Y\to E$ the projection map.

Moreover, we have that the the linearisation of $F$ at zero is the operator
\begin{align}\nonumber
D:C^\infty(\nu_X(Y))&\to C^\infty(E), \\ \label{eqn:caylin}
 v&\mapsto\sum_{i=1}^4 \pi_7(e^i\wedge (\nabla^\perp_{e_i}v)^\flat),
\end{align}
where $\{e_1,e_2,e_3,e_4\}$ is a frame for $TY$ with dual coframe $\{e^1,e^2,e^3,e^4\}$, $\nabla^\perp:TY\otimes \nu_X(Y)\to  \nu_X(Y)$ denotes the connection on $\nu_X(Y)$ induced by the Levi-Civita connection of $X$ and $\pi_7$ denotes the projection of two-forms onto $\Lambda^2_7$ as in Proposition \ref{prop:tau}.
\end{prop}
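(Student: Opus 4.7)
The plan is to prove both parts of the proposition in two stages: first, identify nearby Cayley deformations of $Y$ with the zero set of a PDE via the tubular neighborhood theorem and Proposition \ref{prop:tau}; and second, compute the linearization at the zero section using the parallelism of $\Phi$.

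For the first stage, since $Y$ is compact the tubular neighborhood theorem produces an open neighborhood $V$ of the zero section in $\nu_X(Y)$ on which $\exp$ is a diffeomorphism onto its image in $X$. Every embedded submanifold of $X$ sufficiently $C^1$-close to $Y$ is of the form $Y_v=\exp_v(Y)$ for a unique $v\in C^\infty(V)$, so the moduli space of Cayley deformations near $Y$ is in bijection with the set of $v\in C^\infty(V)$ for which $Y_v$ is Cayley; by Proposition \ref{prop:tau} this is equivalent to $\tau|_{Y_v}\equiv 0$. Pulling $\tau|_{Y_v}$ back by the diffeomorphism $\exp_v:Y\to Y_v$, and using parallel transport along the geodesics emanating from $Y$ to identify $\Lambda^2_7T^*X|_{Y_v}$ with $\Lambda^2_7T^*X|_Y$, one obtains a section of $\Lambda^4 T^*Y\otimes \Lambda^2_7T^*X|_Y$; applying $*_Y$ reduces this to a section of $\Lambda^2_7|_Y$, on which the projection $\pi$ from \eqref{eqn:27split} gives the section $F(v)$ of $E$. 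One direction of the equivalence is immediate, and the converse amounts to checking that the $\Lambda^2_+Y$-component of $*_Y\exp_v^*(\tau|_{Y_v})$ vanishes identically, so that $F(v)=0$ already forces $\tau|_{Y_v}=0$.

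For the second stage, since $d\exp|_0=\mathrm{id}$ and $F(0)=0$, computing $dF|_0(v)$ reduces to the first-order variation of $\tau$ along the normal direction $v$. Because $\nabla\Phi=0$ for the Levi-Civita connection of $X$, this variation is controlled entirely by the infinitesimal change in the tangent frame to $Y_{tv}$ at $t=0$. In an adapted orthonormal frame $\{e_1,\dots,e_8\}$ with $\{e_1,\dots,e_4\}$ tangent to $Y$, that variation in direction $v$ is given by $\nabla^\perp_{e_i}v$: the tangential part of $\nabla_{e_i}v$ corresponds to a reparametrization of $Y$ and drops out upon projecting via $\pi$ onto $E$, since the associated deformation preserves $Y$ itself. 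Substituting into the explicit expression \eqref{eqn:taunice} for $\tau$, simplifying term by term using $\Phi|_Y=\mathrm{vol}_Y$, and projecting then yields the formula $\sum_{i=1}^{4}\pi_7(e^i\wedge (\nabla^\perp_{e_i}v)^\flat)$ claimed in \eqref{eqn:caylin}.

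The hardest point is the algebraic assertion underlying the first stage, namely that the $\Lambda^2_+Y$-component of $*_Y\exp_v^*(\tau|_{Y_v})$ vanishes for all small $v$ so that projection onto $E$ loses no information about the Cayley condition. This is a pointwise fact that follows from the definition of $\tau$ via $\pi_7$ and the structure of $\Lambda^2_7$ restricted to a Cayley submanifold, and is the key technical input from McLean's original argument. Once this is in hand, the moduli space identification is immediate from the tubular neighborhood correspondence, and the linearization step is a careful but mechanical unwinding of \eqref{eqn:taunice} using parallelism of $\Phi$.
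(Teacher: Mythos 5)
Your overall route — tubular neighbourhood identification, then linearisation via the first variation of $\tau$ using parallelism of $\Phi$ — is the same as the paper's (the paper only recaps the linearisation, citing McLean for the moduli space identification). However, there is a genuine error in the mechanism you give for the linearisation.

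You write that ``the tangential part of $\nabla_{e_i}v$ corresponds to a reparametrization of $Y$ and drops out upon projecting via $\pi$ onto $E$.'' This is not why that term disappears, and the projection $\pi:\Lambda^2_7|_Y\to E$ plays no role at this step. In the Lie derivative formula
\[
(\mathcal{L}_v\tau)(e_1,e_2,e_3,e_4)=(\nabla_v\tau)(e_1,e_2,e_3,e_4)+\sum_i\pm\,\tau(\nabla_{e_i}v,e_1,\dots,\widehat{e_i},\dots,e_4),
\]
the vector $\nabla_{e_i}v$ appears as an \emph{argument} of $\tau$, not as a deformation direction of $Y$, so the reparametrisation heuristic does not apply. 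Decomposing $\nabla_{e_i}v$ into normal and tangential parts, the tangential contribution is $\tau(\nabla^T_{e_i}v,e_j,e_k,e_l)$ with all four arguments tangent to $Y$; this is identically zero because $Y$ is Cayley and hence $\tau|_Y\equiv 0$ by Proposition \ref{prop:tau}. That vanishing — not the projection $\pi$, and not a reparametrisation argument — is what leaves only $\nabla^\perp_{e_i}v$ in the formula. You should also be explicit that the residual term $(\nabla_v\tau)(e_1,\dots,e_4)$ must be shown to vanish; the paper does this by expanding $\nabla_v$ of the explicit formula \eqref{eqn:taunice} and using $\nabla\Phi=0$ plus metric compatibility, which is more content than ``controlled entirely by the infinitesimal change in the tangent frame.''

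Two smaller points. First, your claim that the $\Lambda^2_+Y$-component of $*_Y\exp_v^*(\tau|_{Y_v})$ ``vanishes for all small $v$'' is not obviously the correct formulation of McLean's algebraic input; what one actually needs is that the zero set of $F$ coincides with the zero set of the unprojected map, and this need not come from the $\Lambda^2_+Y$-component being identically zero off the Cayley locus. Since you cite McLean for this and the paper does too, it is acceptable to leave it as a black box, but be careful not to assert a stronger pointwise vanishing than is actually required. Second, the tubular neighbourhood map here is $\exp$ rather than the parallel-transport identification you describe; the paper's $F$ is defined simply via the pullback $\exp_v^*$ composed with $*_Y$ and $\pi$, so introducing a separate parallel-transport bundle identification is unnecessary and risks conflating two different constructions.
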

\begin{proof}
We will recap the proof that the linearisation takes the claimed form since McLean's version of this result is presented differently.

We have that
\[
dF|_0(v)=\frac{d}{dt}F(tv)|_{t=0}=*_N\mathcal{L}_v\tau|_N.
\]
Take an orthonormal frame $\{e_1,\dots , e_8\}$ for $TM$ so that $TN=\{e_1,e_2,e_3,e_4\}$, and in this frame
\begin{align*}
\Phi&=e^{1234}-e^{1256}-e^{1278}-e^{1357}+e^{1368}-e^{1458}-e^{1467} \\
&-e^{2358}-e^{2367}+e^{2457}-e^{2468}-e^{3456}-e^{3478}+e^{5678},
\end{align*}
where $e^i=g(e_i,\, \cdot \,)$ and $e^{ijkl}:=e^i\wedge e^j\wedge e^k\wedge e^l$.
We have that
\[
*_N\mathcal{L}_v\tau|_N=(\mathcal{L}_v\tau)(e_1,e_2,e_3,e_4).
\]
Using a formula such as \cite[Eqn (4.3.26)]{MR2829653}, we find that
\begin{align*}
(\mathcal{L}_v\tau)(e_1,e_2,e_3,e_4)&=(\nabla_v\tau)(e_1, e_2,e_3,e_4)+\tau(\nabla^\perp_{e_1} v,e_2,e_3,e_4) \\
&-\tau(\nabla^\perp_{e_2}v,e_1,e_3,e_4)+\tau(\nabla^\perp_{e_3}v,e_1,e_2,e_4) \\
&-\tau(\nabla^\perp_{e_4}v,e_1,e_2,e_3),
\end{align*}
where we have used that $\tau$ vanishes on four tangent vectors to a Cayley submanifold. By definition of $\tau$ given in Proposition \ref{prop:tau}, we have that
\[
(\mathcal{L}_v\tau)(e_1,e_2,e_3,e_4)=(\nabla_v\tau)(e_1, e_2,e_3,e_4)+\sum_{i=1}^4 \pi_7(e^i\wedge (\nabla^\perp_{e_i}v)^\flat),
\]
and so it remains to show that if $\Phi$ is parallel then so is $\tau$. From Equation \eqref{eqn:taunice}, we see that
\begin{align*}
\nabla_v\tau&=\sum_{i=2}^8\nabla_v\left[e^i\wedge (e_1\hook \Phi)-e^1\wedge (e_i\hook\Phi)\right]\otimes \pi_7(e^1\wedge e^i) \\
&+\sum_{i=2}^8\left[e^i\wedge (e_1\hook \Phi)-e^1\wedge (e_i\hook\Phi)\right]\otimes \nabla_v\pi_7(e^1\wedge e^i).
\end{align*}
We can see that the second sum in the above expression will vanish when evaluated on $e_1,e_2,e_3,e_4$, so it remains to compute
\[
\nabla_v\left[e^i\wedge (e_1\hook \Phi)-e^1\wedge (e_i\hook\Phi)\right],
\]
for $i=2,\dots ,8$. Since 
\[
\nabla_v(e^1\wedge (e_i\hook \Phi))=(\nabla_v e^1)\wedge (e_i\hook \Phi)+e^1\wedge (\nabla_ve_i\hook \Phi)+e^1\wedge (e_i\hook\nabla_v\Phi),
\]
we find that
\begin{align*}
\nabla_v(e^1\wedge (e_i\hook \Phi))(e_1,e_2,e_3,e_4)&=e^2(e_i)(\nabla_ve^1)(e_2)+e^3(e_i)(\nabla_v e^1)(e_3) \\
&+e^4(e_i)(\nabla_ve^1)(e_4)+\Phi(\nabla_v e_i,e_2,e_3,e_4) \\
&+(\nabla_v\Phi)(e_i,e_2,e_3,e_4).
\end{align*}
Similarly,
\begin{align*}
\nabla_v(e^i\wedge (e_1\hook \Phi))(e_1,e_2,e_3,e_4)&=(\nabla_v e^i)(e_1)-e^i(e_2)\Phi(\nabla_ve_1,e_1,e_3,e_4) \\
&+e^i(e_3)\Phi(\nabla_ve_1,e_1,e_2,e_4) \\
&-e^i(e_4)\Phi(\nabla_ve_1,e_1,e_2,e_3).
\end{align*}
Using the explicit expression for $\Phi$, we have that
\begin{align*}
(\nabla_v\tau)(e_1,e_2,e_3,e_4)&=\sum_{i=2}^8\left[e^i(e_2)e^2(\nabla_ve_1)-e^2(e_i)(\nabla_ve^1)(e_2) \right. \\
+e^i(e_3)e^3(\nabla_ve_1)&-e^3(e_i)(\nabla_ve^1)(e_3)+e^i(e_4)e^4(\nabla_ve_1)-e^4(e_i)(\nabla_ve^1)(e_4) \\
-e^1(\nabla_v e_i)&\left.+(\nabla_v e^i)(e_1)-(\nabla_v\Phi)(e_i,e_2,e_3,e_4)\right]\otimes \pi_7(e^1\wedge e^i).
\end{align*}
Finally, note that since the metric $g$ on $X$ is parallel with respect to the Levi-Civita connection,
\begin{align*}
(\nabla_ve^j)(e_k)&=-e^j(\nabla_v e_k)=-g(\nabla_v e_k,e_j)=g(e_k,\nabla_v e_j)=e^k(\nabla_v e_j) \\
&=-(\nabla_v e^k)(e_j),
\end{align*}
and so we find that
\[
(\nabla_v\tau)(e_1,e_2,e_3,e_4)=\sum_{i=5}^8-(\nabla_v\Phi)(e_i,e_1,e_2,e_3,e_4)\otimes\pi_7(e^1\wedge e^i),
\]
which vanishes since $\Phi$ is parallel.
\end{proof}
Now suppose that $M$ is a four-dimensional Calabi--Yau manifold and $N$ is a two-dimensional complex submanifold of $M$. We can apply the above results to study the Cayley deformations of $N$ in $M$, but we will exploit the complex structure of $N$ and $M$ to give these results nicer forms. The following results are due to the author, and proofs can be found in \cite{compactcay}.

To begin with, we identify the normal bundle and $E$ with natural vector bundles on $N$.
\begin{prop}[{\cite[Prop 3.2 and 3.3]{compactcay}}]\label{prop:bundleisom}
Let $N$ be a two-dimensional submanifold of a Calabi-Yau four-fold $M$. Then
\begin{equation}\label{eqn:normisom}
\nu_M(N)\otimes \mathbb{C}\cong \nu^{1,0}_M(N)\oplus \Lambda^{0,2}N\otimes \nu^{1,0}_M(N),
\end{equation}
via the map
\[
v\mapsto \frac{1}{4}(v\hook \overline{\Omega})^\sharp, 
\]
where $\Omega$ is the holomorphic volume form on $M$ and $\sharp:\nu^{*0,1}_M(N)\to \nu^{1,0}_M(N)$, and
\begin{equation}\label{eqn:eisom}
E\otimes \mathbb{C} \cong \Lambda^{0,1}N\otimes \nu^{1,0}_M(N),
\end{equation}
where $\nu_M^{1,0}(N)$ denotes the holomorphic normal bundle of $N$ in $M$.
\end{prop}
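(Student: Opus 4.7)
\textbf{First isomorphism.} The plan is to use the complex structure on $M$, which gives the splitting $\nu_M(N) \otimes \mathbb{C} = \nu^{1,0}_M(N) \oplus \nu^{0,1}_M(N)$ into summands of complex rank two. It then remains to identify $\nu^{0,1}_M(N)$ with $\Lambda^{0,2}N \otimes \nu^{1,0}_M(N)$ using the map $v \mapsto \tfrac14 (v \hook \overline{\Omega})^\sharp$. The decisive point is that, since $\dim_\mathbb{C} N = \dim_\mathbb{C} \nu_M(N) = 2$, the tangent/normal type decomposition of $\Lambda^{0,4}M|_N$ collapses to a single non-vanishing summand $\Lambda^{0,2}N \otimes \Lambda^2 \nu^{*0,1}_M(N)$, so $\overline{\Omega}|_N$ lies entirely there. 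Contracting with $v \in \nu^{0,1}_M(N)$ yields an element of $\Lambda^{0,2}N \otimes \nu^{*0,1}_M(N)$, and the Hermitian musical isomorphism $\sharp$ converts $\nu^{*0,1}_M(N)$ to $\nu^{1,0}_M(N)$. Nondegeneracy follows from $\overline{\Omega}$ being nowhere vanishing, and equality of ranks then gives the isomorphism.

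\textbf{Second isomorphism.} Both sides have complex rank four, so I would proceed by constructing an injective bundle map. The natural candidate is
\[
\alpha \otimes v \longmapsto \pi_7(\alpha \wedge v^\flat),
\]
where $v^\flat \in \nu^{*0,1}_M(N) \subset \Lambda^{0,1}M|_N$ is obtained from $v$ via the Hermitian metric. The wedge $\alpha \wedge v^\flat$ is a $(0,2)$-form of mixed tangent--normal type, which is automatically orthogonal to $\Lambda^2_+ N \otimes \mathbb{C} = \mathbb{C}\omega_N \oplus \Lambda^{2,0}N \oplus \Lambda^{0,2}N$ (this is entirely of tangent--tangent type); hence $\pi_7(\alpha \wedge v^\flat)$ lies in $E \otimes \mathbb{C}$, the orthogonal complement of $\Lambda^2_+ N$ inside $\Lambda^2_7|_N$.

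To verify injectivity I would work in a local orthonormal frame $\{e_1, \ldots, e_8\}$ on $M$ along $N$ adapted to the splitting $TM|_N = TN \oplus \nu_M(N)$ and to the complex structures ($Je_{2k-1} = e_{2k}$), and compute $\pi_7(\alpha \wedge v^\flat)$ on a basis of $\Lambda^{0,1}N \otimes \nu^{1,0}_M(N)$ using the formula $\pi_7(e^a \wedge e^b) = \tfrac12(e^a \wedge e^b + \Phi(e_a, e_b, \cdot, \cdot))$ from Proposition \ref{prop:tau}. The main technical obstacle here is that $\pi_7$ could in principle have nontrivial kernel on the four-dimensional mixed-type subspace $\Lambda^{0,1}N \wedge \nu^{*0,1}_M(N)$; verifying that no nonzero $\alpha \wedge v^\flat$ lies entirely in $\Lambda^2_{21} \otimes \mathbb{C}$ is a direct but slightly tedious computation using $\Phi = \tfrac12 \omega^2 + \mathrm{Re}\,\Omega$. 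Once this is done, equality of ranks finishes the proof.
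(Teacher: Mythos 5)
The paper cites this proposition from \cite{compactcay} without reproducing the proof, so there is no in-paper argument for me to compare against; I can only assess your sketch on its own terms, and it is correct. For the first isomorphism, the key observation — that $\overline{\Omega}|_N$ is forced by rank considerations to lie entirely in the single bidegree-component $\Lambda^{0,2}N \otimes \Lambda^2 \nu^{*0,1}_M(N)$, so that contraction of a $(0,1)$ normal vector against it amounts to contraction against a volume form of $\nu^{*0,1}_M(N)$ and is therefore nondegenerate — is exactly the right argument, and the split into $\mathrm{id}$ on $\nu^{1,0}_M(N)$ plus the stated map on $\nu^{0,1}_M(N)$ is the correct reading of the proposition (as $v \hook \overline{\Omega}$ kills the $(1,0)$ part identically). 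For the second isomorphism, your candidate $\alpha \otimes v \mapsto \pi_7(\alpha \wedge v^\flat)$ is natural, and your chain of reasoning that this lands in $E \otimes \mathbb{C}$ does hold: since $\Lambda^2_+ N \subseteq \Lambda^2_7|_N$, the orthogonal projection $\pi_7$ cannot introduce a $\Lambda^2_+ N$-component into a form that starts out orthogonal to it, and $\alpha \wedge v^\flat$ is orthogonal to $\Lambda^2_+ N$ because it is of mixed tangent--normal type. (One can also see $\pi_7$ preserves the mixed type directly from the formula $\pi_7(e^a \wedge e^b) = \tfrac12(e^a\wedge e^b + \Phi(e_a,e_b,\cdot,\cdot))$: every monomial in $\Phi_0$ has an even number of indices in $\{5,\dots,8\}$, so $\Phi(e_a,e_b,\cdot,\cdot)$ with $e_a$ tangent, $e_b$ normal is itself pure tangent--normal.) You correctly isolate the remaining content of the proof as showing that $\pi_7$ is injective on the complex rank-$4$ subbundle $\Lambda^{0,1}N \wedge \nu^{*0,1}_M(N)$, and equality of ranks then closes the argument; that is indeed a finite verification in an adapted orthonormal frame.
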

With these isomorphisms in place, we can modify Proposition \ref{prop:caylin}.
\begin{prop}[{\cite[Prop 3.5]{compactcay}}]\label{prop:caylin2}
Let $N$ be a complex surface in a Calabi--Yau manifold $M$. Then the infinitesimal Cayley deformations of $N$ in $M$ can be identified with the kernel of the operator
\begin{equation}\label{eqn:infopcs}
\infop:C^\infty(\nu^{1,0}_M(N)\oplus \Lambda^{0,2}N\otimes \nu^{1,0}_M(N))\to C^\infty(\Lambda^{0,1}N\otimes \nu^{1,0}_M(N)).
\end{equation}
\end{prop}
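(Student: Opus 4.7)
The plan is to start from the characterisation of infinitesimal Cayley deformations as $\ker D$ in Proposition \ref{prop:caylin}, complexify, and identify the result with $\infop$ via the bundle isomorphisms in Proposition \ref{prop:bundleisom}.

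First I would observe that since $D : C^\infty(\nu_M(N)) \to C^\infty(E)$ is a real linear differential operator, its kernel is in bijection with the real part of the kernel of its complexification $D_\mathbb{C} : C^\infty(\nu_M(N) \otimes \mathbb{C}) \to C^\infty(E \otimes \mathbb{C})$. Applying the isomorphisms \eqref{eqn:normisom} and \eqref{eqn:eisom} rewrites $D_\mathbb{C}$ as a map between the bundles appearing in \eqref{eqn:infopcs}, so it suffices to prove that under these identifications $D_\mathbb{C}$ agrees with $\infop$ (up to a nonzero constant, which is harmless for computing the kernel).

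To make the comparison pointwise, I would fix $p \in N$, choose a unitary frame $\{e_1, Je_1, e_2, Je_2\}$ for $TN$ together with $\{e_3, Je_3, e_4, Je_4\}$ for $\nu_M(N)$ near $p$, and extend using K\"{a}hler normal coordinates so that the Christoffel symbols of $M$ vanish at $p$. Because $M$ is K\"{a}hler and $N$ is complex, the induced connection $\nabla^\perp$ on the holomorphic normal bundle $\nu^{1,0}_M(N)$ coincides with the Chern connection, so its $(0,1)$-part restricted to smooth sections is precisely $\bar\partial$. Using \eqref{eqn:taunice} one can then compute $\sum_i \pi_7(e^i \wedge (\nabla^\perp_{e_i} v)^\flat)$ at $p$ for $v$ of each pure type in the complexified decomposition of the normal bundle, and match the output with $\bar\partial v$, respectively $\bar\partial^* v$, viewed inside $\Lambda^{0,1}N \otimes \nu^{1,0}_M(N)$. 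Since the resulting identity is pointwise linear algebra and $p$ was arbitrary, the global equality $D_\mathbb{C} = \infop$ follows.

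The main obstacle will be tracing the isomorphism $v \mapsto \tfrac{1}{4}(v \hook \overline{\Omega})^\sharp$ of Proposition \ref{prop:bundleisom} through the explicit formula for $\tau$ in \eqref{eqn:taunice} and unpacking the composition of projections onto $\Lambda^2_7$ and then onto the $E$ summand of \eqref{eqn:l27decomp}; in particular, verifying that the $\nu^{1,0}_M(N)$ summand of a section of the complexified normal bundle produces $\bar\partial$ of that section, while the $\Lambda^{0,2}N \otimes \nu^{1,0}_M(N)$ summand produces its formal adjoint $\bar\partial^*$, will require careful bookkeeping of the interplay between the $Spin(7)$ projections and the K\"{a}hler structure. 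A useful sanity check along the way is that holomorphic sections of $\nu^{1,0}_M(N)$, which correspond to infinitesimal complex deformations of $N$ in $M$ and are a fortiori Cayley, must lie in the kernel of the resulting operator, forcing the $\nu^{1,0}_M(N)$ piece to factor through $\bar\partial$.
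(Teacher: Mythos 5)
Your outline — linearize (Proposition \ref{prop:caylin}), complexify, apply the bundle isomorphisms of Proposition \ref{prop:bundleisom}, and verify pointwise in a unitary frame with K\"ahler normal coordinates that $D_\mathbb{C}$ becomes $\bar\partial+\bar\partial^*$ — is exactly the argument underlying \cite[Prop 3.5]{compactcay}, which this paper cites rather than reproves. One refinement: passing to ``up to a nonzero constant'' is only harmless if both summands of $\nu_M(N)\otimes\mathbb{C}$ pick up the \emph{same} constant; had the computation produced $a\bar\partial + b\bar\partial^*$ with $a\neq b$ you would need a further rescaling of one summand before the kernel literally coincides with $\ker(\bar\partial+\bar\partial^*)$, so verifying that the $\tfrac{1}{4}$ normalisation built into Proposition \ref{prop:bundleisom} makes $a=b$ is part of the bookkeeping you defer rather than something to assume away.
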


We will now apply McLean's method to study the complex deformations of $N$ in $M$. We begin by finding a form which vanishes exactly when restricted to a two-dimensional complex submanifold.

\begin{prop}[{\cite[Prop 4.2]{compactcay}}]\label{prop:compdefs}
Let $Y$ an oriented four-dimensional submanifold of a four-dimensional Calabi--Yau manifold $M$. Then $Y$ is a complex submanifold of $M$ iff for all vector fields $u,v,w$ on $Y$,
\[
\sigma(u,v,w)= 0,
\]
where $\sigma(u,v,w):=\textnormal{Re }\Omega(u,v,w,\cdot)$, where $\Omega$ is the holomorphic volume form of $M$.
\end{prop}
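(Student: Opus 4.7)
The plan is to reduce the real condition on $\sigma$ to a complex condition on $\Omega$ itself, and then interpret the latter in terms of $J$-invariance of $TY$.

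First I would argue that the condition $\sigma(u,v,w)=0$ for all $u,v,w$ tangent to $Y$ is actually equivalent to the stronger-looking complex condition $\Omega(u,v,w,\cdot)=0$ on $TM|_Y$. Since $\Omega$ is of type $(4,0)$, for any $x\in T_pM$ we have
\[
\Omega(u,v,w,Jx)=\Omega(u,v,w,(Jx)^{1,0})=\Omega(u,v,w,i\,x^{1,0})=i\,\Omega(u,v,w,x),
\]
so $\textnormal{Re }\Omega(u,v,w,Jx)=-\textnormal{Im }\Omega(u,v,w,x)$. Thus vanishing of the real part on all of $T_pM$ forces vanishing of the imaginary part as well, and hence $\Omega(u,v,w,\cdot)=0$. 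The same $(4,0)$-type considerations also reduce $\Omega(u,v,w,x)$ to $\Omega(u^{1,0},v^{1,0},w^{1,0},x^{1,0})$.

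Next I would translate the condition $\Omega(u^{1,0},v^{1,0},w^{1,0},x^{1,0})=0$ for all $u,v,w\in T_pY$, $x\in T_pM$ into the statement
\[
u^{1,0}\wedge v^{1,0}\wedge w^{1,0}=0\quad\text{in }\Lambda^3T^{1,0}_pM\quad\forall\,u,v,w\in T_pY,
\]
using that $\Omega$ is a nowhere-vanishing section of $\Lambda^{4,0}M$. The projection $\pi^{1,0}:T_pY\to T^{1,0}_pM$, $u\mapsto u^{1,0}=\tfrac12(u-iJu)$, has trivial kernel (a purely imaginary real vector must be zero), so its image is a real $4$-plane $V\subset T^{1,0}_pM$. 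The wedge condition says that the complex span of $V$ has complex dimension at most $2$. Since a real $4$-plane inside a complex vector space has complex span of dimension at least $2$, with equality if and only if the plane is $J$-invariant, $V$ must be a complex $2$-plane.

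Finally I would close the argument by noting the identity $iu^{1,0}=(Ju)^{1,0}$. Because $\pi^{1,0}$ is injective, $V$ being complex-linear means that for every $u\in T_pY$ the vector $Ju$ also lies in $T_pY$. Thus $T_pY$ is $J$-invariant at every point, and by the Newlander--Nirenberg theorem (or more elementary, by integrability of the induced almost complex structure on a real submanifold of a complex manifold whose tangent space is $J$-invariant) $Y$ is a complex submanifold of $M$. The converse direction is immediate: if $Y$ is complex then $T^{1,0}Y$ has complex dimension $2$, so any three $(1,0)$-vectors in it are linearly dependent, making $\Omega(u,v,w,\cdot)$ and hence $\sigma(u,v,w)$ identically zero. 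I expect the main subtlety to be the first step, namely recognising that the $(4,0)$-type of $\Omega$ together with the $J$-action on $TM$ allows one to recover the imaginary part of $\Omega(u,v,w,\cdot)$ from its real part, since without this observation one would only control half of the data.
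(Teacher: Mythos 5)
Your argument is correct. Since the paper defers the proof to \cite[Prop 4.2]{compactcay} rather than including it, I cannot compare line by line, but the structure you use is the natural pointwise one, and every step checks out: the $(4,0)$-type of $\Omega$ gives $\Omega(u,v,w,Jx)=i\,\Omega(u,v,w,x)$, so vanishing of the real part on all of $T_pM$ forces $\Omega(u,v,w,\cdot)\equiv 0$; since $\Omega_p$ is a nonzero element of the one-dimensional space $\Lambda^4(T^{1,0}_pM)^*$, this contraction vanishes exactly when $u^{1,0}\wedge v^{1,0}\wedge w^{1,0}=0$; and because $u\mapsto u^{1,0}$ is injective on $T_pM$, the image $V=\pi^{1,0}(T_pY)$ is a real $4$-plane whose complex span is then forced to have complex dimension exactly two, making $V$ (and hence $T_pY$, via $iu^{1,0}=(Ju)^{1,0}$) a $J$-invariant subspace. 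Integrability is automatic for a $J$-invariant submanifold of a complex manifold, and your converse is the obvious dimension count. The subtlety you flagged at the start, namely recovering $\textnormal{Im }\Omega(u,v,w,\cdot)$ from $\textnormal{Re }\Omega(u,v,w,\cdot)$ via the $J$-action on the free slot, is indeed the crux of why a purely real-valued condition $\sigma=0$ suffices to characterise complexity of $Y$.
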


We can now define a partial differential operator whose kernel can be identified with the moduli space of complex deformations of $N$ in $M$.
\begin{prop}[{\cite[Prop 4.3]{compactcay}}]\label{prop:comppde}
 Let $N$ be a compact complex surface inside a four-dimensional Calabi--Yau manifold $M$. Let $\exp$ denote the exponential map and for a normal vector field $v$ define $N_v:=\exp_v(N)$. Then the moduli space of complex deformations of $N$ is isomorphic near $N$ to the kernel of
\begin{align}\nonumber
 G:C^\infty(V\otimes \mathbb{C})&\to C^\infty(\Lambda^1N\otimes T^*M|_N\otimes \mathbb{C}), \\ \label{eqn:cxop}
v&\mapsto *_N\exp_v^*(\sigma|_{N_v}),
\end{align}
where $\sigma$ was defined in Proposition \ref{prop:compdefs} and $V$ is an open neighbourhood of the zero section in $\nu_M(N)$.
\end{prop}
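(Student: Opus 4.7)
The plan is to mirror the strategy of Proposition~\ref{prop:caylin} exactly, with the Cayley-cutting form $\tau$ replaced by the complex-cutting form $\sigma$ of Proposition~\ref{prop:compdefs}. There are three conceptual ingredients: parametrise nearby submanifolds by normal vector fields via a tubular neighbourhood, encode the complex condition as a pointwise vanishing equation using $\sigma$, and repackage it via pullback and Hodge star to land in the bundle $\Lambda^1 N \otimes T^*M|_N \otimes \mathbb{C}$.

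First, since $N$ is compact, the tubular neighbourhood theorem provides an open neighbourhood $V$ of the zero section in $\nu_M(N)$ on which $\exp$ is a diffeomorphism onto an open neighbourhood of $N$ in $M$. Consequently, every submanifold of $M$ sufficiently $C^1$-close to $N$ is uniquely of the form $N_v = \exp_v(N)$ for some $v \in C^\infty(V)$, giving a local chart on the space of nearby submanifolds. The complexification of the domain (and codomain) is a formal extension convenient for the later Dolbeault interpretation; actual submanifold deformations correspond to real $v$.

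Next, by Proposition~\ref{prop:compdefs}, $N_v$ is complex in $M$ if and only if $\sigma|_{N_v}(u,v',w) = 0$ for all vector fields $u,v',w$ on $N_v$, equivalently $\sigma|_{N_v}$ vanishes as a section of $\Lambda^3 T^*N_v \otimes T^*M|_{N_v}$ (recall $\sigma$ takes three tangent inputs and outputs a cotangent vector of $M$). Pulling back via $\exp_v$, and using the tubular diffeomorphism to identify $T^*M|_{N_v}$ with $T^*M|_N$, produces a section of $\Lambda^3 T^*N \otimes T^*M|_N$. Applying $*_N$ on the $\Lambda^3$ factor (which makes sense because $\dim_{\mathbb{R}} N = 4$) converts $\Lambda^3 T^*N$ to $\Lambda^1 T^*N$, landing in $\Lambda^1 N \otimes T^*M|_N$. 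This is precisely $G(v)$, and by construction $v \in \ker G$ if and only if $N_v$ is complex, giving the claimed identification of the moduli space near $N$ with $\ker G$.

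The only real obstacle is bookkeeping: checking that the bundle identifications after pullback are the natural ones, that $*_N$ acts on the correct tensor factor, and that $G$ extends to a smooth map of sections. None of these is substantial — they follow from smoothness of $\exp$, $\sigma$, and $*_N$ together with the standard fact that composition of smooth bundle maps with pullback along a family of embeddings defines a smooth nonlinear operator between spaces of sections. The argument contains no new analytic content beyond that already deployed in Proposition~\ref{prop:caylin}.
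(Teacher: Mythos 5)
Your proof correctly mirrors the strategy used for the Cayley case (Proposition~\ref{prop:caylin}): parametrise nearby submanifolds via the tubular neighbourhood theorem, encode the complex condition pointwise via $\sigma$ from Proposition~\ref{prop:compdefs}, and repackage with a pullback and the Hodge star on the four-manifold $N$. The paper does not reproduce this proof (it is cited from \cite{compactcay}), but your argument is the expected one, and your observation that the complexification of the domain is a formal device with actual deformations corresponding to real sections is the correct resolution of the one subtlety in the bundle bookkeeping.
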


We can find the linear part of $G$.
\begin{prop}[{\cite[Prop 4.4]{compactcay}}]\label{prop:glin}
 Let $N$ be a compact complex surface in a Calabi--Yau four-fold $M$. Let $G$ be the partial differential operator defined in Proposition \ref{prop:compdefs}. Then the linearisation of $G$ at zero is equal to the operator
\[
 v\mapsto -\partial^*(v\hook\Omega)-\bar\partial^*(v\hook\overline{\Omega}),
\]
where $v\in C^\infty(\nu_M(N)\otimes \mathbb{C})$. Therefore $v$ is an infinitesimal complex deformation of $N$ if, and only if,
\[
 \partial^*(v\hook \Omega)=0=\bar\partial^*(v\hook \overline{\Omega}).
\]

Moreover, we have that, if $v=v_1\oplus v_2$ where $v_1\in \nu^{1,0}_M(N)$ and $v_2\in \nu^{0,1}_M(N)$
\[
 \partial^*(v_1\hook\Omega)=0 \iff \bar\partial v_1=0.
\]
\end{prop}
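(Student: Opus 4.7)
The plan is to adapt the linearisation argument from Proposition \ref{prop:caylin} essentially verbatim. Since $\sigma|_N = 0$ by Proposition \ref{prop:compdefs} (as $N$ is complex), $G(0) = 0$ and the same ``commute $d/dt|_{t=0}$ with $\exp_{tv}^*$'' computation gives
\[
dG|_0(v) = *_N (\mathcal{L}_v \sigma)|_N.
\]
The advantage over the Cayley case is that $\sigma$ is built from the closed, parallel form $\textnormal{Re }\Omega$, so Cartan's magic formula applies directly: since $d(\textnormal{Re }\Omega) = 0$, we have $\mathcal{L}_v(\textnormal{Re }\Omega) = d(v \hook \textnormal{Re }\Omega)$, and splitting $v = v_1 + v_2$ with $v_1 \in \nu^{1,0}_M(N)$ and $v_2 \in \nu^{0,1}_M(N)$, a type check on $\Omega$ yields $v \hook \Omega = v_1 \hook \Omega$ and $v \hook \overline{\Omega} = v_2 \hook \overline{\Omega}$, so
\[
\mathcal{L}_v(\textnormal{Re }\Omega) = \tfrac{1}{2}\bigl(d(v_1 \hook \Omega) + d(v_2 \hook \overline{\Omega})\bigr).
\]

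The decisive step is to translate $*_N \circ d$ into the codifferentials $\partial^*$ and $\bar\partial^*$ on $N$. Passing through the splitting $T^{*1,0}M|_N = T^{*1,0}N \oplus \nu^{*1,0}_M(N)$, one recognises $v_1 \hook \Omega$ as a $(2,0)$-form on $N$ with values in $\nu^{*1,0}_M(N)$, and $v_2 \hook \overline{\Omega}$ as a $(0,2)$-form on $N$ with values in $\nu^{*0,1}_M(N)$. Because $\Lambda^{3,0}N = 0 = \Lambda^{0,3}N$, the exterior derivative $d$ reduces to $\bar\partial$ on the first piece and to $\partial$ on the second, and then the K\"{a}hler identities $\partial^* = -*\bar\partial *$ and $\bar\partial^* = -*\partial *$ convert $*_N \circ d$ into $-\partial^*$ and $-\bar\partial^*$ respectively, producing the claimed formula for $dG|_0(v)$.

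For the moreover part, I would work in adapted holomorphic coordinates about a point of $N$ so that $N = \{z_3 = z_4 = 0\}$ and $\Omega = dz_1 \wedge dz_2 \wedge dz_3 \wedge dz_4$. Writing $v_1 = f\,\partial_{z_3} + g\,\partial_{z_4}$, a direct computation gives $v_1 \hook \Omega = (dz_1 \wedge dz_2) \otimes (f\, dz_4 - g\, dz_3)$, and $\partial^*(v_1 \hook \Omega) = 0$ unpacks via $\partial^* = -*\bar\partial *$ (applied to this constant-tangential-coefficient form) to $\bar\partial f = \bar\partial g = 0$, which is exactly $\bar\partial v_1 = 0$. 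The main obstacle throughout is bookkeeping: one must carefully reinterpret $v \hook \Omega$, naively a $(3,0)$-form on $M$ only defined along $N$, as a bundle-valued form on $N$ before the operator $\partial^*$ even makes sense. Once that identification is fixed, the rest is a mechanical combination of Cartan's formula, the K\"{a}hler identities, and parallelism of the Calabi--Yau structure.
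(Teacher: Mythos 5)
Since the paper cites this result from \cite{compactcay} without reproducing the proof, I can only assess your argument on its own merits. Your approach — linearise to $dG|_0(v) = *_N(\mathcal{L}_v\sigma)|_N$, apply Cartan's magic formula to the closed (indeed parallel) form $\textnormal{Re }\Omega$, type-check to reduce $v\hook\textnormal{Re }\Omega$ to $v_1\hook\Omega$ and $v_2\hook\overline{\Omega}$, and then convert $*_N\circ d$ to codifferentials on $N$ via the K\"ahler identities — is the natural route and is the reason the complex case is simpler than the Cayley case in Proposition~\ref{prop:caylin}, where $\tau$ is not itself a closed form and a direct frame computation is needed. Your coordinate computation for the ``moreover'' part is also correct; in fact it can be done invariantly, since $\partial^*\alpha = -*\bar\partial*\alpha = -*\bar\partial\alpha$ for a $(2,0)$-form $\alpha$ on a K\"ahler surface, so $\partial^*(v_1\hook\Omega)=0 \iff \bar\partial(v_1\hook\Omega)=0$, and the contraction $v_1\mapsto v_1\hook\Omega$ is a holomorphic bundle isomorphism $\nu^{1,0}_M(N)\to \Lambda^{2,0}N\otimes\nu^{*1,0}_M(N)$ since $\Omega$ is holomorphic.

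However, your computation is internally inconsistent on one point. You correctly obtain $\mathcal{L}_v(\textnormal{Re }\Omega) = \tfrac{1}{2}\bigl(d(v_1\hook\Omega)+d(v_2\hook\overline{\Omega})\bigr)$ from $\textnormal{Re }\Omega = \tfrac{1}{2}(\Omega+\overline{\Omega})$, so after applying $*_N$ your argument yields $dG|_0(v) = -\tfrac{1}{2}\bigl(\partial^*(v_1\hook\Omega)+\bar\partial^*(v_2\hook\overline{\Omega})\bigr)$, which differs by a factor of $\tfrac{1}{2}$ from the stated operator. You then assert this ``produces the claimed formula,'' which it does not literally do. The discrepancy is harmless for the kernel characterisation and for everything downstream, but since the statement says ``is equal to'' you should either track down the missing factor of two (there is no such factor in the K\"ahler identities or the Hodge star on $\Lambda^{2,0}$ of a K\"ahler surface, so it would have to come from a convention in the definition of $\sigma$ or $G$) or explicitly note that the two operators agree up to an overall constant, which does not affect the kernel. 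You also correctly flag, but do not resolve, the bookkeeping that makes $\partial^*$ meaningful: one must check that the ambient $\bar\partial_M$ applied to the $(3,0)$-form $v_1\hook\Omega$ and then restricted to $N$ coincides with the Dolbeault operator on the bundle-valued $(2,0)$-form on $N$. This does hold — $\bar\partial$ respects restriction to a complex submanifold and the induced holomorphic structure on $\nu^{*1,0}_M(N)$ — but it is the one place a reader could object, and it deserves a sentence rather than an acknowledgement.
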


So we can see from this result (in combination with the explicit isomorphism given in Proposition \ref{prop:bundleisom}) that an infinitesimal Cayley deformation of $N$, $v\oplus w \in C^\infty(\nu^{1,0}_M(N)\oplus \Lambda^{0,2}N\otimes \nu^{1,0}_M(N))$ such that $\bar\partial v +\bar\partial^* w=0$ is a complex deformation of $N$ if and only if $\bar\partial v=0=\bar\partial^*w$. Therefore an infinitesimal Cayley deformation of $N$ in $M$ that is not complex would satisfy $\bar\partial v=-\bar\partial^* w$.
With a little bit more work, we can prove the following theorem.

\begin{thm}[{\cite[Thm 4.9]{compactcay}}]\label{thm:maincomp}
 Let $N$ be a compact complex surface inside a four-dimensional Calabi--Yau manifold $M$. Then the moduli space of Cayley deformations of $N$ in $M$ near $N$ is isomorphic to the moduli space of complex deformations of $N$ in $M$, which near $N$ is a smooth manifold of dimension
\[
 \textnormal{dim}_\mathbb{C} \textnormal{Ker }\bar\partial+\textnormal{dim}_\mathbb{C} \textnormal{Ker }\bar\partial^*=2 \,\textnormal{dim}_{\mathbb{C}} \textnormal{Ker }\bar\partial,
\]
where 
\begin{align*}
 \bar\partial:C^\infty(\nu^{1,0}_M(N))&\to C^\infty(\Lambda^{0,1}N\otimes \nu^{1,0}_M(N)), \\
\bar\partial^*:C^\infty(\Lambda^{0,2}N\otimes\nu^{1,0}_M(N))&\to C^\infty(\Lambda^{0,1}N\otimes \nu^{1,0}_M(N)).
\end{align*}
\end{thm}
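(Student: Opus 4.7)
The plan is to prove the theorem by combining an infinitesimal identification of Cayley and complex deformations with a nonlinear one, and then extracting smoothness and dimension from the coincidence of the two moduli spaces.

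At the infinitesimal level, the remark following Proposition \ref{prop:glin} already reduces the claim to showing that $\bar\partial v + \bar\partial^* w = 0$ forces $\bar\partial v = 0$ and $\bar\partial^* w = 0$ separately. The standard Hodge-theoretic trick handles this: pair the equation with itself in $L^2$ and use integration by parts together with $\bar\partial^2 = 0$ to kill the cross term, giving
\[
0 = \|\bar\partial v + \bar\partial^* w\|_{L^2}^2 = \|\bar\partial v\|^2 + 2\,\textnormal{Re}\,\langle\bar\partial^2 v, w\rangle + \|\bar\partial^* w\|^2 = \|\bar\partial v\|^2 + \|\bar\partial^* w\|^2.
\]
Both summands then vanish, so $\ker(\bar\partial + \bar\partial^*) = \ker\bar\partial \oplus \ker\bar\partial^*$. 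The equality $\dim_\mathbb{C}\ker\bar\partial = \dim_\mathbb{C}\ker\bar\partial^*$ follows from Serre duality on the surface $N$, combined with the adjunction $K_N = \det\nu^{1,0}_M(N)$ (valid because $K_M$ is trivial) which yields $\nu^{*1,0}_M(N) \otimes K_N \cong \nu^{1,0}_M(N)$ for the rank-two holomorphic normal bundle.

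For the nonlinear claim I would show that every Cayley submanifold $Y$ sufficiently close to $N$ is already complex. Since $Y$ is homologous to $N$ and $N$ is complex,
\[
\int_Y \tfrac12\omega\wedge\omega = \int_N \tfrac12\omega\wedge\omega = \textnormal{vol}(N),
\]
by homological invariance of $\omega\wedge\omega$ and Wirtinger saturation on $N$. Homological invariance of $\int\Phi$ together with the Cayley condition on both $Y$ and $N$ gives $\textnormal{vol}(Y) = \int_Y \Phi = \int_N \Phi = \textnormal{vol}(N)$. Combining yields $\int_Y \tfrac12\omega\wedge\omega = \textnormal{vol}(Y)$, and the pointwise Wirtinger inequality $\tfrac12\omega\wedge\omega|_Y \le \textnormal{vol}_Y$, with equality exactly where $Y$ is complex, then forces $Y$ to be complex pointwise.

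Combining these two steps produces a set-theoretic identification of the Cayley and complex moduli spaces near $N$, with common tangent space of dimension $\dim_\mathbb{C}\ker\bar\partial + \dim_\mathbb{C}\ker\bar\partial^*$. The hardest step will be promoting this to smoothness: the cokernel of $\bar\partial + \bar\partial^*$ is identified via Hodge theory with $H^{0,1}(N,\nu^{1,0}_M(N))$, which need not vanish, so \emph{a priori} the Kuranishi obstruction to the elliptic Cayley equation $F(v)=0$ of Proposition \ref{prop:caylin} is non-trivial. The nonlinear identification, however, means that every zero of $F$ arises from a complex deformation, so the effective obstruction map from $\ker(\bar\partial+\bar\partial^*)$ to $H^{0,1}$ must vanish identically, forcing the zero set of $F$ to be a smooth manifold of the stated dimension. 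Making this last vanishing rigorous in appropriate Banach spaces, rather than merely matching dimensions, is the technical crux of the proof.
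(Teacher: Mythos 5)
Your infinitesimal argument is correct and essentially matches the paper's: pairing $\bar\partial v + \bar\partial^*w = 0$ against itself, integrating by parts, and using $\bar\partial^2 = 0$ to kill the cross term is exactly the Hodge-theoretic computation behind Proposition \ref{prop:nocaydefs} (the CS version in this paper; \cite[Cor 4.6]{compactcay} in the compact case). The Serre-duality argument for $\dim_\mathbb{C}\ker\bar\partial = \dim_\mathbb{C}\ker\bar\partial^*$, via $K_N\cong\det\nu^{1,0}_M(N)$ when $K_M$ is trivial, is also sound. Your Wirtinger argument for the nonlinear identification — a homological bookkeeping with $\int_Y\Phi = \textnormal{vol}(Y)$, $\int_Y\tfrac12\omega\wedge\omega = \int_N\tfrac12\omega\wedge\omega$, and pointwise saturation — is a genuinely different and arguably more geometric route than the paper's; the paper's strategy (visible in Proposition \ref{prop:cscxop}) is instead to set up a separate nonlinear complex-deformation operator $G$ and show its kernel equals that of its linearisation.

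The gap is in the smoothness step, and your diagnosis of it is itself incorrect. You argue that because every zero of $F$ arises from a complex deformation, "the effective obstruction map from $\ker(\bar\partial+\bar\partial^*)$ to $H^{0,1}$ must vanish identically." This does not follow. The Wirtinger argument says Cayley$\subseteq$complex (near $N$), and trivially complex$\subseteq$Cayley, so the two moduli spaces coincide as sets; but this gives no information about whether that common set is a manifold of the expected dimension. If the complex deformations of $N$ were themselves obstructed — as they can be for a general compact complex surface, since $H^1(N,\nu^{1,0}_M(N))$ need not vanish — then the common moduli space would be singular and your claimed conclusion false. The identification Cayley$=$complex moves the burden, it does not discharge it. What is actually needed, and what the cited result \cite[Lem 4.7]{compactcay} supplies (echoed here in Proposition \ref{prop:cscxop}), is the nontrivial fact that the kernel of the nonlinear complex-deformation operator $G$ from Proposition \ref{prop:comppde} equals the kernel of its linearisation \eqref{eqn:csglin}. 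In other words, in the Calabi--Yau setting the equation $G(v)=0$ is effectively linear, so its solution set is automatically a smooth manifold of the stated dimension. This is the key unobstructedness input your proposal is missing; without it, neither the Wirtinger argument nor the dimension count of the tangent space establishes that the moduli space is smooth.
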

\begin{rem}
Comparing this to Kodaira's theorem \cite[Thm 1]{MR0133841} on the deformation theory of compact complex submanifolds, we see that we agree with the infinitesimal deformation space, but in this special case where the ambient manifold is Calabi--Yau we can integrate all infinitesimal complex deformations to true complex deformations.
\end{rem}

\subsection{Conically singular and asymptotically cylindrical manifolds}\label{ss:csacyl}
We will now give some facts about closely related conically singular and asymptotically cylindrical manifolds that we will require later.
\subsubsection{Conically singular manifolds}
Heuristically speaking, a conically singular manifold can be thought of as a compact topological space that is a smooth Riemannian manifold away from a point. If the manifold near this point is diffeomorphic to a product $L\times (0,\epsilon)$, and the metric on the manifold is close to the cone metric on $L\times (0,\epsilon)$, then we call the manifold conically singular. This idea is made formal in the following definition, taken from \cite[Defn 3.1]{MR2403190}.
\begin{defn}\label{defn:csman}
 Let $M$ be a connected Hausdorff topological space and let $\hat{x}\in M$. Suppose that $\hat{M}:=M\backslash\{\hat{x}\}$ is a smooth Riemannian manifold with metric $g$. Then we say that $M$ is conically singular at $\hat{x}$ with cone $C$ and rate $\lambda$ if there exist $\epsilon>0$, $\lambda>1$, a closed Riemannian manifold $(L,g_L)$ of dimension one less than $M$, an open set $\hat{x}\in U\subseteq M$ and a diffeomorphism 
\[
 \Psi:(0,\epsilon)\times L\to U\backslash\{\hat{x}\},
\]
such that
\begin{equation}\label{eqn:csmetric}
 |\nabla_C^j(\Psi^*g-g_C)|_{g_C}=O(r^{\lambda-1-j}) \quad \text{for } j\in \mathbb{N} \text{ as } r\to 0,
\end{equation}
where $r$ is the coordinate on $(0,\infty)$ on the cone $C=(0,\infty)\times L$, $g_C=dr^2+r^2g_L$ is the cone metric on $C$ and $\nabla_C$ is the Levi-Civita connection of $g_C$.
\end{defn}
\begin{defn}\label{defn:radfunc}
 Let $M$ be a conically singular manifold at $\hat{x}$ with cone $(0,\infty)\times L$. Use the notation of Definition \ref{defn:csman}. We say that a smooth function $\rho:\hat{M}\to (0,1]$ is a radius function for $M$ if $\rho$ is bounded below by a positive constant on $M\backslash U$, while on $U\backslash\{\hat{x}\}$ there exist constants $0<c<1$ and $C>1$ such that
\[
 cr<\Psi^*\rho<Cr,
\]
on $(0,\epsilon)\times L$.
\end{defn}

We will now define weighted Sobolev spaces for conically singular manifolds. The definition given here may be deduced from \cite[Defn 4.1]{MR879560}.

\begin{defn}\label{defn:weightcs}
 Let $M$ be an $m$-dimensional conically singular manifold at $\hat{x}$ with metric $g$ on $\hat{M}:=M\backslash \{\hat{x}\}$. Let $\rho$ be a radius function for $M$. For a vector bundle $E$ define the weighted Sobolev space $L^p_{k,\mu}(E)$ to be the set of sections $\sigma\in L^p_{k,\text{loc}}(E)$ such that
\begin{equation}\label{eqn:weightsobcs}
 \|\sigma\|_{L^p_{k,\mu}}:=\left(\sum_{j=0}^k \int_{\hat{M}}|\rho^{j-\mu}\nabla^j\sigma|^p \rho^{-m}\,\text{vol}_g\right)^{1/p},
\end{equation}
is finite.
\end{defn}

\subsubsection{Asymptotically cylindrical manifolds}
An asymptotically cylindrical manifold is topologically the same as a conically singular manifold, but metrically they are conformally equivalent. Compare the following definition to Definition \ref{defn:csman}.
\begin{defn}\label{defn:acylman}
Suppose that $(\hat{M},g)$ is a Riemannian manifold. Then we say that $\hat{M}$ is asymptotically cylindrical if there exist $\delta>0$, a closed Riemannian manifold $(L,g_L)$ of dimension one less than $M$, an open set $U\subseteq M$ and a diffeomorphism 
\[
 \Psi:(0,\infty)\times L\to U,
\]
such that for all $j\in \mathbb{N}\cup \{0\}$
\begin{equation}\label{eqn:acmetric}
 |\nabla_\infty^j(\Psi^*g-g_\infty)|_{g_\infty}=O(e^{-\delta t}) \quad \text{ as } t\to \infty,
\end{equation}
where $t$ is the coordinate on $(0,\infty)$ on the cylinder $C=(0,\infty)\times L$, $g_\infty=dt^2+g_L$ is the cylindrical metric on $C$ and $\nabla_\infty$ is the Levi-Civita connection of $g_\infty$.
\end{defn}
Notice that if $(\hat{M},g)$ is a conically singular manifold with radius function $\rho$ then $(\hat{M},\rho^{-2}g)$ is asymptotically cylindrical.

We have the following weighted spaces on an asymptotically cylindrical manifold.
\begin{defn}\label{defn:weightac}
Let $(M,g)$ be an asymptotically cylindrical manifold. For a vector bundle $E$ over $M$, define the weighted Sobolev spaces $W^p_{k,\delta}(E)$ to be the space of sections $\sigma\in L^p_{k,\text{loc}}(E)$ so that
\[
\|\sigma\|_{W^p_{k,\delta}}:=\left(\sum_{j=0}^k\int_M |\rho^{-\delta}\nabla^j\sigma|^p\text {vol}_g\right)^{1/p}<\infty,
\]
where $\rho:M\to (0,1]$ is a smooth function satisfying $c e^{-t}\le \rho(t)\le Ce^{-t}$ on the cylindrical end on $M$ and is equal to one elsewhere.
\end{defn}

We have the following relationship between the weighted spaces $W^p_{k,\delta}$ and $L^p_{k,\mu}$.
\begin{lem}[{\cite[Prop and Defn 4.4]{MR879560}}]\label{lem:weightisom}
 Let $M$ be a conically singular manifold at $\hat{x}$ of dimension $m$ with metric $g$ on $\hat{M}:=M\backslash \{\hat{x}\}$. Let $\rho$ be a radius function for $M$. Let $T^q_s\hat{M}$ be the vector bundle of $(s,q)$-tensors on $\hat{M}$. Denote by $W^p_{k,\delta}(T^q_s\hat{M})$ the weighted space of Definition \ref{defn:weightac} with metric $\rho^{-2}g$ and denote by $L^p_{k,\mu}(T^q_s\hat{M})$ the weighted space of Definition \ref{defn:weightcs}. Then these spaces are isomorphic, with isomorphism given by
\begin{align*}
 L^p_{k,\mu}(T^q_s\hat{M})&\to W^p_{k,\delta}(T^q_s\hat{M}), \\
\sigma&\mapsto \rho^{\delta-\mu+s-q}\sigma.
\end{align*}
\end{lem}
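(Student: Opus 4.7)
The plan is to check that the stated map $\sigma \mapsto \rho^{\delta - \mu + s - q}\sigma$, which is clearly a bijection on locally $L^p_k$ sections since $\rho$ is smooth and nowhere vanishing on $\hat M$, induces equivalent weighted norms. Away from a neighbourhood of $\hat x$ the radius function $\rho$ is bounded above and below by positive constants, so the metrics $g$ and $\rho^{-2}g$ are uniformly equivalent there and both weighted norms reduce (up to equivalent constants) to the standard unweighted Sobolev norm. Hence the problem is entirely about the behaviour on the conical end $U \setminus \{\hat x\}$, which under $\Psi$ we identify with the model $(0,\epsilon) \times L$ equipped with a metric satisfying \eqref{eqn:csmetric}.

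On this model, I would record three conformal-rescaling identities for $\tilde g := \rho^{-2}g$. First, since $\dim \hat M = m$, one has $\mathrm{vol}_{\tilde g} = \rho^{-m}\,\mathrm{vol}_g$. Second, for a section $\sigma$ of $T^q_s \hat M$, a direct computation of the pointwise norm using the dual metric $\tilde g^{-1} = \rho^{2}g^{-1}$ gives $|\sigma|_{\tilde g} = \rho^{q - s}|\sigma|_g$, because each covariant index contributes a factor of $\rho$ and each contravariant index a factor of $\rho^{-1}$. Combining these two identities, for $\sigma' := \rho^{\delta - \mu + s - q}\sigma$ one obtains
\[
|\rho^{-\delta}\sigma'|_{\tilde g}^{\,p}\,\mathrm{vol}_{\tilde g} \;=\; |\rho^{-\mu}\sigma|_g^{\,p}\,\rho^{-m}\,\mathrm{vol}_g,
\]
establishing the $k = 0$ norm equivalence directly.

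For $k \geq 1$, the remaining task is to relate $\tilde\nabla^j \sigma'$ to $\nabla^l \sigma$ for $l \leq j$. The key input is the standard conformal transformation law for the Levi-Civita connection: writing $\tilde g = e^{2f}g$ with $f = -\log \rho$, the difference tensor $A := \tilde\nabla - \nabla$ is given explicitly in terms of $df$ and $g$, from which $|A|_g = O(\rho^{-1})$ and, inductively, $|\nabla^l A|_g = O(\rho^{-1-l})$ follow. Expanding $\tilde\nabla^j(\rho^{a}\sigma)$ via the Leibniz rule produces a sum of terms each of which is a contraction of some $\nabla^l \sigma$ ($0\le l\le j$) with factors of $\nabla^* A$ and powers of $\rho$ and $d\rho$; the pointwise $g$-norm of each such term is bounded by a constant multiple of $\rho^{a - (j - l)}|\nabla^l \sigma|_g$. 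With $a = \delta - \mu + s - q$, each contribution to $\|\sigma'\|_{W^p_{k,\delta}}^{p}$ is then dominated by the corresponding contribution to $\|\sigma\|_{L^p_{k,\mu}}^{p}$, giving one direction of the norm equivalence; the reverse direction follows by swapping the roles of $g$ and $\tilde g$, and of $a$ with $-a$.

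The main obstacle is the derivative step: producing a clean inductive argument that the Leibniz expansion of $\tilde\nabla^j$ generates exactly the powers of $\rho$ absorbed by the shift between the two weighting conventions. Once the pointwise bounds $|\nabla^l A|_g = O(\rho^{-1-l})$ are secured, however, the remainder is bookkeeping, and the agreement of the exponent $\delta - \mu + s - q$ with the three scalings—volume form, tensor norm, and derivative shift—becomes manifest.
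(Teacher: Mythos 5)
The paper does not supply its own proof of this lemma: it is quoted as a citation (\cite[Prop and Defn 4.4]{MR879560}), so there is no in-paper argument to compare against, and your proposal is filling in the content from scratch. Evaluated on its own merits, the skeleton is right and the central computation is correct. Your $k=0$ calculation is clean: with $\tilde g = \rho^{-2}g$ one indeed has $\mathrm{vol}_{\tilde g} = \rho^{-m}\mathrm{vol}_g$ and $|\sigma|_{\tilde g} = \rho^{q-s}|\sigma|_g$ for a $T^q_s$-section, and plugging in $\sigma' = \rho^{\delta-\mu+s-q}\sigma$ reproduces the $L^p_{0,\mu}$ integrand exactly. For $k\ge 1$ the bookkeeping you outline does close: $\tilde\nabla^j\sigma'$ is a $T^{q+j}_{s}$-section, so its $\tilde g$-norm picks up an extra $\rho^{j}$, and each Leibniz term involving $\nabla^l\sigma$ comes with the compensating power $\rho^{a-(j-l)}$, yielding precisely the $\rho^{l-\mu}|\nabla^l\sigma|_g$ weight.

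There is, however, one genuine gap in the higher-order step. You write that from the explicit formula for the difference tensor $A = \tilde\nabla - \nabla$ in terms of $d(\log\rho)$, the estimates $|\nabla^l A|_g = O(\rho^{-1-l})$ ``follow inductively.'' That induction needs $|\nabla^{l+1}\rho|_g = O(\rho^{-l})$ for all $l\ge 0$, but Definition~\ref{defn:radfunc} of a radius function is purely a $C^0$ statement ($cr < \Psi^*\rho < Cr$ plus smoothness), so those derivative bounds are not automatic from the definition as stated. You should either (a) note that one is free to choose a radius function satisfying these bounds --- e.g.\ take $\Psi^*\rho = r$ on the conical end and use~\eqref{eqn:csmetric} to compare $\nabla^j$ of the CS metric $g$ with that of $g_C$, for which $|\nabla_C^{j+1}r|_{g_C} = 0$ for $j\ge 1$ --- or (b) observe that the weighted spaces themselves depend on $\rho$ only up to $C^0$-equivalence, reduce to this good choice, and then run your argument. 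With that point addressed, the proof is complete.
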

\subsection{Fredholm theory on noncompact manifolds}\label{ss:lmco}

A key part in the argument for proving a result on the moduli space of Cayley deformations of compact manifolds is the observation that an elliptic operator on a compact manifold is Fredholm. Unfortunately, this result fails in general when the underlying manifold is not compact, even in the simplest of settings. However, when the noncompact manifold is topologically a compact manifold with a cylindrical end, a theory was developed for certain types of elliptic operators.
\begin{defn}
Let $X$ be a manifold with a cylindrical end $L\times (0,\infty)$. Let
\[
A:C^\infty_0(E)\to C^\infty_0(F),
\]
be a differential operator on compactly supported smooth sections of vector bundles. We say that $A$ is translation invariant if it is invariant under the natural $\mathbb{R}_+$-action on the cylindrical end $L\times (0,\infty)$ of $X$. If $X$ has an asymptotically cylindrical metric $g$, then we say that an operator
\[
A=\sum_{j=0}^m a_j\cdot \nabla^j,
\]
is asymptotically translation invariant if there exists a translation invariant operator
\[
A_\infty=\sum_{j=0}^m a_j ^\infty \cdot \nabla^j,
\]
and $\delta>0$ such that for all $j=0,\dots, m$ and $k\in \mathbb{N}\cup \{0\}$
\[
|\nabla^k(a_j-a_j^\infty)|_g=O(e^{-\delta t}) \quad \text{ as } t \to \infty,
\]
where $\nabla$ is the Levi-Civita connection of $g$. Here $a_j,a_j^\infty\in C^\infty(E^*\otimes F\otimes (TX)^{\otimes j})$ and `$\cdot$' denotes tensor product followed by contraction.
\end{defn}

The following result may be deduced from the work of Lockhart and McOwen \cite[Thm 6.2]{MR837256} in combination with Lemma \ref{lem:weightisom}.
\begin{prop}\label{prop:lmco}
Let $M$ be a conically singular manifold at $\hat{x}$, $\rho$ a radius function for $M$ and $T^q_s\hat{M}$ be the vector bundle of $(s,q)$-tensors on $\hat{M}:=M\backslash \{\hat{x}\}$. Let
\[
 A:C^\infty_0(T^q_s\hat{M})\to C^\infty_0(T^{q'}_{s'}\hat{M}),
\]
be a linear $m^\text{th}$-order elliptic differential operator with smooth coefficients such that there exists $\lambda\in \mathbb{R}$ so that
\[
\tilde{A}:=\rho^{\lambda+s'-q'} A \rho^{q-s},
\]
is asymptotically translation invariant.
Then
\begin{equation}\label{eqn:acs}
 A:L^p_{k+m,\mu}(T^q_s\hat{M})\to L^p_{k,\mu-\lambda}(T^{q'}_{s'}\hat{M}),
\end{equation}
is a bounded map and there exists a discrete set $\mathcal{D}_{A}\subseteq\mathbb{R}$ such that \eqref{eqn:acs} is Fredholm if, and only if, $\mu\in \mathbb{R}\backslash \mathcal{D}_{A}$.
\end{prop}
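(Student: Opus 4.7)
The plan is to pass from the conically singular manifold $(\hat M, g)$ to the conformally equivalent asymptotically cylindrical manifold $(\hat M, \rho^{-2}g)$ (see the remark after Definition \ref{defn:acylman}), where Lockhart--McOwen's theory applies directly. The key transfer tool is Lemma \ref{lem:weightisom}: taking the weight $\delta$ there equal to $\mu$, multiplication by $\rho^{s-q}$ gives a topological isomorphism
\[
L^p_{k,\mu}(T^q_s\hat M)\xrightarrow{\ \cong\ } W^p_{k,\mu}(T^q_s\hat M)
\]
for every $k$, $\mu$, $p$ and every tensor type $(q,s)$.

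Under these isomorphisms applied to source and target, the operator $A$ in \eqref{eqn:acs} becomes exactly the operator $\tilde A = \rho^{\lambda+s'-q'}A\rho^{q-s}$ acting $W^p_{k+m,\mu}(T^q_s\hat M)\to W^p_{k,\mu}(T^{q'}_{s'}\hat M)$: indeed, for $\phi\in W^p_{k+m,\mu}$ one has $\rho^{q-s}\phi\in L^p_{k+m,\mu}$, then $A(\rho^{q-s}\phi)\in L^p_{k,\mu-\lambda}$ by the mapping property of $A$, and finally $\rho^{\lambda+s'-q'}A(\rho^{q-s}\phi)=\tilde A\phi\in W^p_{k,\mu}$. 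Since multiplication by $\rho^{s-q}$ is a topological isomorphism, boundedness and Fredholmness of \eqref{eqn:acs} are each equivalent to the corresponding property of $\tilde A$ between these $W^p$-spaces at the same weight $\mu$.

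Boundedness of $\tilde A: W^p_{k+m,\mu}\to W^p_{k,\mu}$ then follows directly from the assumption that $\tilde A$ is asymptotically translation invariant, since this guarantees that the coefficients of $\tilde A$ and all their $\nabla_\infty$-derivatives are uniformly bounded on the cylindrical end. For the Fredholm statement I would invoke the Lockhart--McOwen theorem \cite[Thm 6.2]{MR837256}: an asymptotically translation invariant elliptic operator of order $m$ on an asymptotically cylindrical manifold is Fredholm as a map $W^p_{k+m,\mu}\to W^p_{k,\mu}$ precisely when $\mu$ avoids a discrete subset of $\mathbb R$, determined by the indicial roots of the translation-invariant model $A_\infty$ acting on sections over the cross-section $L$. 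This discrete set is the desired $\mathcal D_A\subseteq\mathbb R$.

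The main technical content of the argument is bundled inside Lockhart and McOwen's theorem, which we invoke as a black box; the remaining work is the careful bookkeeping of powers of $\rho$ in the weighted space isomorphism of Lemma \ref{lem:weightisom}, verifying that the prefactors $\rho^{\lambda+s'-q'}$ and $\rho^{q-s}$ appearing in the definition of $\tilde A$ are precisely the ones that implement the isomorphism with the same weight $\mu$ on source and target.
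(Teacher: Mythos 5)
Your proof is correct and follows exactly the route the paper indicates: the paper offers no written proof but states that the proposition ``may be deduced from the work of Lockhart and McOwen in combination with Lemma \ref{lem:weightisom},'' which is precisely the conjugation argument you have spelled out. The $\rho$-power bookkeeping is right — setting $\delta = \mu$ in Lemma \ref{lem:weightisom} on both source and target produces the conjugated operator $\tilde A$ between $W^p$-spaces at the \emph{same} weight $\mu$, as required to invoke Lockhart--McOwen.
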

We will characterise the set $\mathcal{D}$ for the operators that feature in this article in Section \ref{sec:index}.

\section{Deformations of conically singular Cayley submanifolds}\label{sec:csdefs}

\subsection{Conically singular Cayley submanifolds}

The following definition gives a preferred choice of coordinates around any given point of $X$. This definition is analogous to \cite[Defn 3.6]{MR2053761} and \cite[Defn 3.3]{MR2403190}, which are coordinate systems for almost Calabi--Yau manifolds and $G_2$-manifolds respectively.

\begin{defn}\label{defn:spin7coord}
Let $(X,g,\Phi)$ be a $Spin(7)$-manifold. Then given $x\in X$, there exist $\eta>0$, an open set $x\in V\subseteq X$, $\eta>0$ and a diffeomorphism 
\begin{equation}\label{eqn:spin7coord}
 \chi:B_\eta(0)\to V,
\end{equation}
where $B_\eta(0)$ denotes the ball of radius $\eta$ around zero in $\mathbb{R}^8$, with $\chi(0)=x$ and so that $d\chi|_0:\mathbb{R}^8\to T_xX$ is an isomorphism identifying $(\Phi|_x,g|_x)$ with $(\Phi_0,g_0)$. Call $\chi$ a $Spin(7)$ \emph{coordinate system} for $X$ around $x$.

Call two $Spin(7)$-coordinate systems $\chi,\tilde{\chi}$ for $X$ around $x$ \emph{equivalent} if
\[
 d\chi|_0=d\tilde{\chi}|_0,
\]
as maps $\mathbb{R}^8\to T_xM$.
\end{defn}

In particular, when the $Spin(7)$-manifold $X$ is a four-dimensional Calabi--Yau manifold, we can choose a holomorphic volume form $\Omega$ for $X$ so that $\chi$ is a biholomorphism and $d\chi|_0$ identifies the Ricci-flat K\"ahler form $\omega$ with $\omega_0$ and $\Omega$ with $\Omega_0$, the Euclidean K\"ahler form and holomorphic volume form respectively.

We may now define conically singular submanifolds of $Spin(7)$-manifolds. This definition is again analogous to \cite[Defn 3.6]{MR2053761} and \cite[Defn 3.4]{MR2403190}.
\begin{defn}\label{defn:cs}
 Let $(X,g,\Phi)$ be a $Spin(7)$-manifold and $Y\subseteq X$ compact and connected such that there exists $\hat{x}\in Y$ such that $\hat{Y}:=Y\backslash\{\hat{x}\}$ is a smooth submanifold of $X$. Choose a $Spin(7)$-coordinate system $\chi$ for $X$ around $\hat{x}$. We say that $Y$ is \emph{conically singular} (CS) at $\hat{x}$ with rate $\mu$ and cone $C$ if there exist $1<\mu<2$, $0<\epsilon<\eta$, a compact Riemannian submanifold $(L,g_L)$ of $S^7$ of dimension one less than $Y$, an open set $\hat{x}\in U\subset X$ and a smooth map $\phi:(0,\epsilon)\times L\to B_\eta(0)\subseteq \mathbb{R}^8$ such that $\Psi=\chi\circ\phi:(0,\epsilon)\times L\to U\backslash\{\hat{x}\}$ is a diffeomorphism and $\phi$ satisfies
\begin{equation}
 |\nabla^j(\phi-\iota)|=O(r^{\mu-j})\text{ for }j\in\mathbb{N} \text{ as } r\to 0,
\end{equation}
where $\iota:(0,\infty)\times L\to \mathbb{R}^8$ is the inclusion map given by $\iota(r,l)=rl$, $\nabla$ is the Levi-Civita connection of the cone metric $g_C=dr^2+r^2g_L$ on $C$, and $|\cdot |$ is computed using $g_C$.
\end{defn}
\begin{rem}
 If the smooth, noncompact submanifold $\hat{Y}$ is a Cayley (complex) submanifold of the $Spin(7)$-manifold (Calabi--Yau four-fold) $X$ then we say that $Y$ is a CS Cayley (complex) submanifold of $X$.
\end{rem}
Conically singular submanifolds come with a rate $1<\mu<2$. We must have that $\mu>1$ to guarantee that a conically singular submanifold is a conically singular manifold (in the sense of Definition \ref{defn:csman}). The reason for asking that $\mu<2$ is so that $\mu$ does not depend on the choice of equivalent $Spin(7)$-coordinate system around the singular point of the conically singular submanifold. 
\begin{lem}\label{lem:indepcoor}
 Let $Y$ be a conically singular submanifold at $\hat{x}$ with rate $\mu$ and cone $C$ of a $Spin(7)$-manifold $(X,g,\Phi)$ with $Spin(7)$-coordinate system $\chi$ around $\hat{x}$. Then Definition \ref{defn:cs} is independent of choice of equivalent $Spin(7)$-coordinate system.
\end{lem}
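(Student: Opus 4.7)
The plan is to compare the representations $\phi$ and $\tilde{\phi}$ of the conical end in the two coordinate systems by passing through the transition map $F := \tilde{\chi}^{-1}\circ \chi$, defined on a neighbourhood of $0\in \mathbb{R}^8$. Because $\chi$ and $\tilde{\chi}$ are equivalent, $F(0)=0$ and $dF|_0 = \mathrm{id}_{\mathbb{R}^8}$, so the remainder $h:= F - \mathrm{id}$ is smooth with $h(0)=0$ and $Dh|_0=0$, and Taylor's theorem yields the pointwise bounds
\[
|D^k h(x)| = O\bigl(|x|^{\max(2-k,\,0)}\bigr) \text{ as } x\to 0, \quad k\ge 0.
\]
Since $\tilde{\chi}\circ \tilde{\phi} = \Psi = \chi\circ \phi$, one has $\tilde{\phi} = F\circ \phi = \phi + h\circ \phi$, and the lemma reduces to showing that $|\nabla_C^j(h\circ\phi)|_{g_C} = O(r^{\mu-j})$ for every $j\ge 0$.

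First I would record base estimates on $\phi$ itself. The inclusion $\iota(r,\ell)=r\ell$ satisfies $|\iota|_{g_C}=r$ and $|\nabla_C^j\iota|_{g_C} = O(r^{1-j})$ for $j\ge 1$, so the defining bound $|\nabla_C^j(\phi-\iota)| = O(r^{\mu-j})$ combined with $\mu>1$ yields
\[
|\phi|_{g_C} = O(r), \qquad |\nabla_C^j \phi|_{g_C} = O(r^{1-j}) \text{ for } j\ge 1.
\]

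The next step is to expand $\nabla_C^j(h\circ\phi)$ via the Fa\`a di Bruno formula as a linear combination of terms of the form $D^k h|_\phi \cdot \nabla_C^{m_1}\phi\cdots \nabla_C^{m_k}\phi$ with $1\le k\le j$, each $m_i\ge 1$, and $m_1+\cdots+m_k=j$. By the preceding estimates each such term is bounded in $g_C$-norm by
\[
O\bigl(r^{\max(2-k,\,0)}\bigr)\cdot O\bigl(r^{k-j}\bigr) = O\bigl(r^{\max(2,\,k)-j}\bigr),
\]
and the minimum of this exponent over $1\le k\le j$ equals $2-j$, attained at $k\in\{1,2\}$. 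Hence $|\nabla_C^j(h\circ\phi)|_{g_C} = O(r^{2-j})$.

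Finally the upper bound $\mu<2$ enters: for $r\in(0,\epsilon)$ one has $r^{2-j} = r^{2-\mu}\,r^{\mu-j} = O(r^{\mu-j})$, which together with the hypothesised bound on $\phi-\iota$ gives $|\nabla_C^j(\tilde{\phi}-\iota)|_{g_C} = O(r^{\mu-j})$, as required. The only real obstacle is the bookkeeping in the chain-rule expansion; conceptually, the argument makes clear that the range $1<\mu<2$ in Definition \ref{defn:cs} is exactly what is needed, with $\mu>1$ used to control $\phi$ by $\iota$ at leading order and $\mu<2$ used to absorb the quadratic distortion of the coordinate change.
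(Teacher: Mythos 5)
The proposal is correct and follows essentially the same route as the paper: both pass through the transition map $\tilde\chi^{-1}\circ\chi$, exploit that its quadratic Taylor remainder contributes an $O(r^{2-j})$ error to $\nabla^j(\tilde\phi-\iota)$, and use $\mu<2$ to absorb that error into $O(r^{\mu-j})$. The only difference is one of rigour: the paper states the estimate $|\nabla^j(\tilde\phi-\iota)|=|\nabla^j(\phi-\iota)|+O(r^{2-j})$ without derivation, whereas you carry out the Fa\`a di Bruno bookkeeping to justify it, which is a welcome clarification rather than a different method.
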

\begin{proof}
Let $\tilde{\chi}$ be another $Spin(7)$-coordinate system for $X$ around $\hat{x}$ equivalent to $\chi$. Then $\chi$ and $\tilde{\chi}$ and their differentials agree at zero. Let $\phi:(0,\epsilon)\times L\to B_\eta(0)$ be the map from Definition \ref{defn:cs}. We will show that $Y$ is conically singular in $X$ with $Spin(7)$-coordinate system $\tilde{\chi}$ around $\hat{x}$. Taking $\tilde\phi:=\tilde\chi^{-1}\circ \chi\circ \phi$, we have that
\begin{align}\label{eqn:equivcoord}
 |\nabla^j(\tilde\phi-\iota)|=|\nabla^j(\tilde\chi^{-1}\circ \chi\circ \phi-\iota)|=|\nabla^j(\phi-\iota)|+O(r^{2-j}),
\end{align}
since $\tilde\chi^{-1}\circ\chi(x)=x+x^TAx+\dots$, and $\phi(r,l)=rl+O(r^{\mu})$. So we see that $Y$ is conically singular at $\hat{x}$ with cone $C$ in $(X,g,\Phi)$ with $Spin(7)$-coordinate system $\tilde{\chi}$, but in order for $Y$ to be CS with rate $\mu$ in this case, Equation \eqref{eqn:equivcoord} tells us that we must have that $\mu<2$.  
\end{proof}
The following definition is independent of choice of equivalent $Spin(7)$-coordinate system. It is analogous to \cite[Defn 3.5]{MR2403190}.
\begin{defn}\label{defn:tcone}
Let $Y$ be a conically singular submanifold at $\hat{x}$ with rate $\mu$ and cone $C$ of a $Spin(7)$-manifold $(X,g,\Phi)$ with $Spin(7)$-coordinate system $\chi$. Denote by $\zeta:=d\chi|_0:T_0\mathbb{R}^8\to T_{\hat{x}}X$. Define the \emph{tangent cone} of $Y$ at $\hat{x}$ to be 
\[
\hat{C}:=\zeta\circ\iota(C)\subseteq T_{\hat{x}}X,
\]
where $\iota:C\to\mathbb{R}^8$ is the inclusion map given in Definition \ref{defn:cs}.
\end{defn}

On a Calabi--Yau manifold $M$ we are given a Ricci-flat metric $\omega$ that we often have no explicit expression for. The following lemma tells us that Definition \ref{defn:cs} is independent of choice of K\"ahler metric on $M$.
\begin{lem}\label{lem:indepom}
 Let $M$ be a Calabi--Yau four-fold with Ricci-flat K\"ahler form $\omega$ and let $N$ be a CS submanifold of $M$ as in Definition \ref{defn:cs}. Then if $\omega'$ is any other K\"ahler form on $M$ then $N$ is still a conically singular submanifold of $M$ with the same rate and tangent cone.
\end{lem}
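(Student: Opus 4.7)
The approach is to reduce the change of K\"ahler form to a change of coordinate chart around $\hat x$, and then to a linear map of $\mathbb{R}^8$; the strict inequality $\mu<2$ will absorb all nonlinear errors, exactly as in the proof of Lemma~\ref{lem:indepcoor}.

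First, associate to $\omega'$ a $Spin(7)$-coordinate chart $\chi'$ around $\hat x$: the pointwise four-form $\Phi'|_{\hat x}$ defined from $\omega'|_{\hat x}$ and a suitably rescaled $\Omega'|_{\hat x}$ via \eqref{eqn:omOm}--\eqref{eqn:caycy} lies in $\mathcal{A}_{\hat x}X$, so a chart $\chi'$ with $d\chi'|_0$ sending $(\Phi_0,g_0)$ to $(\Phi',g')|_{\hat x}$ exists by linear algebra, even though $(\Phi',g')$ need not be torsion-free away from $\hat x$. Set $B:=(\chi')^{-1}\circ\chi$, a smooth diffeomorphism of a neighbourhood of $0\in\mathbb{R}^8$ fixing the origin, and $A:=dB|_0\in GL(8,\mathbb{R})$, so that $B(x)=Ax+O(|x|^2)$ by Taylor. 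Starting from the parameterisation $\phi:(0,\epsilon)\times L\to B_\eta(0)$ of Definition~\ref{defn:cs}, introduce the new link $L':=\{Al/|Al|:l\in L\}\subseteq S^7$ with its induced metric, the diffeomorphism $F:(0,\epsilon)\times L\to(0,\epsilon')\times L'$ given by $F(r,l):=(r|Al|,Al/|Al|)$, and the candidate parameterisation $\phi':=B\circ\phi\circ F^{-1}$. Using $A(rl)=\iota'(F(r,l))$ and $|\phi-\iota|_{g_C}=O(r^\mu)$,
\[
\phi'(r',l'')-\iota'(r',l'') = A\bigl(\phi(r,l)-rl\bigr) + O(|\phi(r,l)|^2) = O(r^\mu)+O(r^2) = O(r'^{\mu}),
\]
the last equality since $\mu<2$ and $r'\sim r$ uniformly on the compact link $L$. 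The higher-order bounds $|\nabla^j(\phi'-\iota')|_{g_{C'}}=O(r'^{\mu-j})$ follow by the chain rule: the Taylor remainder $R(x):=B(x)-Ax=O(|x|^2)$ contributes derivatives of order $O(r^{2-j})$ when composed with $\phi$, the hypothesis on $\phi$ provides $O(r^{\mu-j})$, and uniform bounds on $|A|,|A^{-1}|,|dF|,|dF^{-1}|$ on $L$ transport these estimates into the new cone metric $g_{C'}$; all are absorbed into $O(r'^{\mu-j})$ since $\mu<2$.

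Finally, the tangent cone is preserved: by Definition~\ref{defn:tcone},
\[
\hat C' = d\chi'|_0(C') = d\chi'|_0(AC) = d\chi|_0(C) = \hat C,
\]
since $d\chi|_0=d\chi'|_0\circ A$ by the definition of $A$, so the tangent cone at $\hat x$ does not depend on the choice of K\"ahler form. Independence of the particular coordinate system equivalent to $\chi'$ is then supplied by Lemma~\ref{lem:indepcoor}. The main obstacle I expect is the bookkeeping of the higher-derivative estimates under the nonlinear reparameterisation $F$, because $A$ is generally not an isometry and the cone metrics $g_C$ and $g_{C'}$ on $C$ and $C'$ genuinely disagree; what makes the argument go through is the uniform comparability of $r$ and $r'$ on the compact link $L$, together with the strict inequality $\mu<2$, which ensures the quadratic errors from $B$ never compete with the given decay rate.
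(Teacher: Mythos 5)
Your proof is correct and takes essentially the same route as the paper: introduce a $Spin(7)$-coordinate chart $\chi'$ adapted to $\omega'$, set $A:=d\bigl((\chi')^{-1}\circ\chi\bigr)\big|_0$, transport $\phi$ by the transition map to get $\phi'$ over the new cone $C'=AC$, and use $\mu<2$ to absorb the $O(r^2)$ Taylor error, exactly as in Lemma~\ref{lem:indepcoor}, with the tangent cone preserved because $d\chi|_0=d\chi'|_0\circ A$. The only cosmetic differences are how $\chi'$ is produced (you via pointwise $Spin(7)$ linear algebra at $\hat{x}$, the paper via K\"ahler normal coordinates for $\omega'$) and that you spell out the reparameterisation $F$ and the comparison between the cone metrics on $C$ and $C'$ more explicitly than the paper, which defers these to ``a similar argument to Lemma~\ref{lem:indepcoor}.''
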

\begin{proof}
Suppose that $N$ is a CS submanifold of $M$ with respect to $\omega$ at $\hat{x}$. Choose a $Spin(7)$-coordinate system for $M$ around $\hat{x}$,
\[
 \chi:B_\eta(0)\to V\backslash\{\hat{x}\},
\]
for some $\eta>0$ and open $V\subseteq M$ containing $\hat{x}$, so that $\chi(0)=\hat{x}$ and
\[
 \chi^*\omega=\omega_0+O(|z|^2),
\]
where $\omega_0$ is the standard Euclidean K\"ahler form on $\mathbb{C}^4$. Let $\phi$, $\epsilon$, $C=(0,\infty)\times L$, $\iota$ and $\mu$ be as in Definition \ref{defn:cs}.

Now given any other K\"ahler form $\omega'$ on $M$, we can find by \cite[pg 107]{MR1288523} $\eta'>0$, an open set $x\in V'\subseteq M$ and a diffeomorphism
\[
 \chi':B_{\eta'}(0)\to V'\backslash\{x\},
\]
with $\chi'(0)=x$ and
\[
 \chi'^*\omega'=\omega_0+O(|z|^2).
\]
Since $\chi$ and $\chi'$ are diffeomorphisms, $d\chi|_0$ and $d\chi'|_0$ are isomorphisms $\mathbb{C}^4\to T_{\hat{x}}M$. Then $A:=(d\chi'|_0)^{-1}\circ d\chi|_0$ is an invertible linear map $\mathbb{C}^4\to \mathbb{C}^4$.  We will show that $N$ is conically singular in $(M,\omega')$ (taking $\chi'$ to be the coordinate system) with cone $C'=A\iota(C)$ and rate $\mu$.

Firstly note that since $A$ is a linear map, $C'=A\iota(C)=\{Av \,|\, v\in \iota(C)\}$ is also a cone. Denote by $L'$ the link of $C'$ (considered as a Riemannian submanifold of $S^7$), and for any $\epsilon'>0$ write $\iota':L'\times (0,\epsilon')\to \mathbb{C}^4$ for the inclusion map $(r',l')\mapsto r'l'$.

Define $\phi':(0,\epsilon')\times L'\to \mathbb{C}^4$ by $\phi'=\chi'^{-1}\circ \chi\circ\phi\circ A^{-1}$, where $\epsilon'=\epsilon\|A\|$. Then this map is well defined (taking $\epsilon'$ smaller if necessary) and moreover $\chi'\circ\phi'$ is a diffeomorphism onto its image. Moreover, by a similar argument to Lemma \ref{lem:indepcoor} we have that
\[
|\nabla^j(\phi'(r',l')-\iota(r',l'))|=O(r'^\mu),
\]
since $\mu<2$.

Finally, we have that
\[
\hat{C'}=d\chi'|_0(\iota'(C'))=d\chi|_0\circ (d\chi|_0)^{-1}\circ d\chi'|_0(A\iota(C))=d\chi|_0(A^{-1}A\iota(C))=\hat{C},
\]
and so the tangent cone to $N$ at $\hat{x}$ is the same with respect to each metric.
\end{proof}
\begin{rem}
Note that the proof Lemma \ref{lem:indepom} also shows that if $N$ is conically singular with respect to one $Spin(7)$-coordinate system, it is conically singular with respect to any other $Spin(7)$-coordinate system, although with a different cone in general, but the same tangent cone.
\end{rem}
We can now construct an example of a conically singular complex surface inside a Calabi--Yau four-fold.
\begin{ex}
We will model our conically singular complex surface on the following complex cone in $\mathbb{C}^4$. Define $C$ to be the set of $(z_1,z_2,z_3,z_4)\in \mathbb{C}^4$ satisfying
\begin{align*}
 z_1^4+z_2^4+z_3^4+z_4^4&=0, \\
z_1^3+z_2^3+z_3^3+z_4^3&=0.
\end{align*}
Clearly, if $z\in C$, then also $\lambda z\in C$ for any $\lambda\in\mathbb{R}\backslash\{0\}$, and so $C$ is a cone.

Checking the rank of the matrix
\[
 \begin{pmatrix}
  4z_1^3 & 4z_2^3 & 4z_3^3 & 4z_4^3 \\
3z_1^2 & 3z_2^2 & 3z_3^2 & 3z_4^2
 \end{pmatrix},
\]
at each point of $C$, we see that the only singular point of $C$ is zero.

As we will discuss in more detail in Section \ref{sec:index}, a complex cone $C$ in $\mathbb{C}^4$ has both a real link $L:=S^7\cap C$, and a complex link $\Sigma:=\pi(L)$, where $\pi:S^7\to \mathbb{C}P^3$ is the Hopf fibration. We can view the real link of a complex cone as a circle bundle over the complex link of the cone.

In this case, the complex link $\Sigma$ of $C$ is the Riemannian surface in $\mathbb{C}P^3$ is given by $[z_0:z_1:z_2:z_3]\in\mathbb{C}P^3$ satisfying
\begin{align*}
 z_0^4+z_1^4+z_2^4+z_3^4&=0, \\
z_0^3+z_1^3+z_2^3+z_3^3&=0.
\end{align*}
We can apply the adjunction formula to find that the canonical bundle of $\Sigma$ is given by
\[
 K_\Sigma=K_{\mathbb{C}P^3}|_\Sigma\otimes \mathcal{O}_{\mathbb{C}P^3}(4)|_{\Sigma}\otimes \mathcal{O}_{\mathbb{C}P^3}(3)|_\Sigma=\mathcal{O}_{\mathbb{C}P^3}(4+3-3-1)|_\Sigma=\mathcal{O}_{\mathbb{C}P^3}(3)|_\Sigma,
\]
where $\mathcal{O}_{\mathbb{C}P^3}(k)$ denotes the $-k^\text{th}$ (tensor) power of the tautological line bundle over $\mathbb{C}P^3$ if $k$ is a negative integer, the $k^\text{th}$ power of the dual of the tautological line bundle if $k$ is a positive integer, and the trivial line bundle if $k=0$.
Then it follows from the Hirzebruch--Riemann--Roch theorem \cite[Thm 5.1.1]{MR2093043} that the genus of $\Sigma$ is
\[
 g=\frac{2+\text{deg }\mathcal{O}_{\mathbb{C}P^3}(3)|_\Sigma}{2}=\frac{2+3\times \text{deg}(\Sigma)}{2}=(2+3\times 4\times 3)/2=19.
\]

Now consider the Calabi--Yau four-fold $M$ defined by 
\[
\{[z_0:z_1:z_2:z_3:z_4:z_5]\in \mathbb{C}P^5\, |\,z_0^6+z_1^6+z_2^6+z_3^6+z_4^6+z_5^6=0\}.
\]
Consider the singular submanifold $N$ of $M$ defined to be
the set of all $[z_0:z_1:z_2:z_3:z_4:z_5]\in \mathbb{C}P^5$ satisfying
\begin{align*}
 z_0^6+z_1^6+z_2^6+z_3^6+z_4^6+z_5^6&=0, \\
z_1^4+z_2^4+z_3^4+z_4^4 &=0, \\
z_1^3+z_2^3+z_3^3+z_4^3 &=0.
\end{align*}
The complex Jacobian matrix of the defining equations of $N$ is given by
\[
 \begin{pmatrix}
6z_0^5 & 6z_1^5 & 6z_2^5 & 6z_3^5 & 6z_4^5 & 6z_5^5 \\
  0 &  4z_1^3 & 4z_2^3 & 4z_3^3 & 4z_4^3 & 0 \\
0 & 3z_1^2 & 3z_2^2 & 3z_3^2 & 3z_4^2 & 0
 \end{pmatrix}. 
\]
It can be calculated that there are six singular points on $N$ of the form $[\omega:0:0:0:0:1]$, where $\omega$ is a $6^\text{th}$ root of $-1$.

We will now prove that $N$ satisfies Definition \ref{defn:cs}. We will exploit Lemma \ref{lem:indepom} and check the definition using the metric on $M$ induced from the Fubini--Study metric on $\mathbb{C}P^5$, denoted by $\omega$.

Denote the singular points of $N$ by $\{p_1,\dots,p_6\}$, where $p_k=[\omega_k:0:0:0:0:1]$ for $\omega_k:= e^{i(2k-1)\pi/6}$. We must construct maps $\chi_k$ so that there exist $\eta_k>0$ and open sets $p_k\in V_k\subseteq M$ and diffeomorphisms
\[
 \chi_k:B_{\eta_k}(0)\to V_k,
\]
with $\chi_k(0)=p_k$ and so that
\[
 \chi_k^*\omega=\omega_0+O(|z|^2),
\]
for $k=1,\dots, 6$.

For $k=1,\dots 6$, define $\chi_k:B_{\eta_k}(0)\to M$ by 
\begin{align}\nonumber
\chi_k(&w_1,w_2,w_3,w_4)= \\ \label{eqn:maps}
[&\omega_k:\sqrt{2}w_1:\sqrt{2}w_2:\sqrt{2}w_3:\sqrt{2}w_4:(1-8(w_1^6+w_2^6+w_3^6+w_4^6))^{1/6}],
\end{align}
where if $a=re^{i\theta}$ for $r>0$ and $-\pi<\theta\le \pi$, we define $a^{1/6}:=r^{1/6}e^{i\theta/6}$. It is clear that \eqref{eqn:maps} is a diffeomorphism onto its image. The induced Fubini--Study metric on $M$ pulls back under $\chi_k$ to the Euclidean metric on $\mathbb{C}^4$ at each $p_k=[\omega_k:0:0:0:0:1]$. Taking $\phi=\iota$, where $\iota:C\to \mathbb{C}^4$ is the inclusion map, we see that $\phi\circ\chi$ is a diffeomorphism $C$ to $N$, and so the definition of conically singular is trivially satisfied.
\end{ex}

\subsection{Tubular neighbourhood theorems}

In this section we will prove a tubular neighbourhood theorem for conically singular submanifolds so that we can identify deformations of conically singular submanifolds with normal vector fields. We will do this in two steps. Firstly, in Proposition \ref{prop:tubnbhdcone} we will construct a tubular neighbourhood of a cone in $\mathbb{R}^n$ using the well-known tubular neighbourhood theorem for compact submanifolds. We will use this to construct a tubular neighbourhood of a conically singular submanifold in Proposition \ref{prop:cstubnbhd}. Propositions \ref{prop:tubnbhdcone} and \ref{prop:cstubnbhd} use ideas of similar results proved by Joyce \cite[Thm 4.6]{MR2053761} for special Lagrangian cones and Lotay \cite[Prop 6.4]{MR2403190} for CS coassociative submanifolds.

\begin{prop}[Tubular neighbourhood theorem for cones]\label{prop:tubnbhdcone}
 Let $C$ be a cone in $\mathbb{R}^n$ with link $L$ and let $g$ be a Riemannian metric on $\mathbb{R}^n$ (not necessarily the Euclidean metric). There exists an action of $\mathbb{R}_+$ on $\nu_{\mathbb{R}^n}(C)$ 
\[
 t:\nu_{\mathbb{R}^n}(C)\to \nu_{\mathbb{R}^n}(C),
\]
so that
\begin{equation}\label{eqn:tact}
 |t\cdot v |=t| v |.
\end{equation}
We can construct open sets $V_C\subseteq \nu_{\mathbb{R}^n}(C)$, invariant under \eqref{eqn:tact}, containing the zero section and $T_C\subseteq \mathbb{R}^n$, invariant under multiplication by positive scalars, containing $C$ that grow like $r$ and a dilation equivariant diffeomorphism
\[
 \Xi_C:V_C\to T_C,
\]
in the sense that $\Xi_C(t\cdot v)=t\,\Xi_C(v)$ for all $v\in \nu_{\mathbb{R}^n}(C)$. Moreover, $\Xi_C$ maps the zero section of $\nu_{\mathbb{R}^n}(C)$ to $C$.
\end{prop}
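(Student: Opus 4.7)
The strategy is to apply the classical compact tubular neighbourhood theorem to the link $L := C \cap S^{n-1}$, which is compact, and then propagate the resulting neighbourhood radially by Euclidean scaling to cover all of $C \setminus \{0\}$.

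First I would identify $C \setminus \{0\}$ with $L \times (0, \infty)$ via $(l, r) \mapsto rl$. To build the $\mathbb{R}_+$-action on $\nu_{\mathbb{R}^n}(C)$, observe that Euclidean scaling $\sigma_t(x) := tx$ is a diffeomorphism of $\mathbb{R}^n$ preserving $C$, so its differential $d\sigma_t$ acts on $T\mathbb{R}^n|_C$ preserving $TC$ and therefore descends to a smooth fibrewise-linear action on $\nu_{\mathbb{R}^n}(C) \cong T\mathbb{R}^n|_C / TC$. If this induced action does not already scale $g$-norms exactly by $t$, a pointwise rescaling by the scalar $t|v|/|d\sigma_t \cdot v|$ corrects it and produces an action satisfying \eqref{eqn:tact}.

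Next, apply the standard compact tubular neighbourhood theorem to $L \subset S^{n-1}$ (equipped with the metric induced from $g$) to obtain open neighbourhoods $V_L$ of the zero section of $\nu_{S^{n-1}}(L)$ and $T_L$ of $L$ in $S^{n-1}$, together with a diffeomorphism $\Xi_L : V_L \to T_L$. The natural isomorphism $\nu_{\mathbb{R}^n}(C)|_{rl} \cong \nu_{S^{n-1}}(L)|_l$, coming from the decomposition $T_{rl}\mathbb{R}^n = \mathbb{R}\partial_r \oplus T_l S^{n-1}$ together with the fact that $\partial_r \in T_{rl}C$, lets me embed $V_L$ into $\nu_{\mathbb{R}^n}(C)$ over $L \times \{1\}$. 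I then take $V_C$ to be the $\mathbb{R}_+$-saturation of $V_L$, set $T_C := \{tz : t > 0,\, z \in T_L\} \subseteq \mathbb{R}^n$, and define $\Xi_C : V_C \to T_C$ by equivariant extension, $\Xi_C(t \cdot v) := t\,\Xi_L(v)$. Dilation equivariance and the mapping of the zero section to $C \setminus \{0\}$ are immediate from this definition; smoothness and bijectivity of $\Xi_C$ reduce to the corresponding properties of $\Xi_L$, using that every $y \in T_C$ has a unique decomposition $y = tz$ with $t > 0$ and $z \in T_L \subset S^{n-1}$.

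The main obstacle lies in the interplay between the Euclidean scaling structure and the arbitrary metric $g$: the scaling $d\sigma_t$ does not preserve $g$-orthogonality in general, so the renormalisation of the action to secure $|t \cdot v| = t|v|$ and the identification of normal bundles between $C$ and $L$ must be arranged compatibly. Once this is handled, the remainder is the familiar equivariant tubular neighbourhood construction for a free smooth $\mathbb{R}_+$-action fibred over the compact link $L$.
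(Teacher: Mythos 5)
Your proposal is correct and follows essentially the same route as the paper: apply the compact tubular neighbourhood theorem to the link $L\subset S^{n-1}$, then propagate the resulting neighbourhood and diffeomorphism radially by equivariance under a dilation action on $\nu_{\mathbb{R}^n}(C)$. The only notable difference is cosmetic: the paper defines the $\mathbb{R}_+$-action by abstractly identifying the inner product spaces $\nu_{r,l}(C)$ at different radii (and works with the round metric on $S^{n-1}$), whereas you obtain it more concretely from $d\sigma_t$ followed by a norm renormalisation; you also correctly flag the one genuine subtlety (when $g$ is not Euclidean, identifying $\nu_{S^{n-1}}(L)$ with $\nu_{\mathbb{R}^n}(C)$ and making the dilation compatible with $g$-orthogonality requires a projection argument) which the paper's proof glosses over to the same extent.
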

\begin{proof}
We will first address the claim that there exists an $\mathbb{R}_+$-action on $\nu_{\mathbb{R}^n}(C)$ so that \eqref{eqn:tact} holds. First note that points in $\nu_{\mathbb{R}^n}(C)$ take the form
\[
(r,l,v(r,l)),
\]
where $r\in \mathbb{R}_+$, $l\in L$ and $v(r,l)\in \nu_{r,l}(C)$. Since any finite-dimensional inner product spaces of the same dimension are isometric, given any $r'\in \mathbb{R}_+$ we can think of $(r',l,v(r,l))$ as another point in $\nu_{\mathbb{R}^n}(C)$ with $|v(r,l)|_{r,l}=|v(r,l)|_{r',l}$, where $|\cdot|_{r,l}$ denotes the norm on $\nu_{r,l}(C)$ induced from $g$. Define an action of $\mathbb{R}_+$ on $\nu_{\mathbb{R}}(C)$ by
\begin{align}\nonumber
t:\nu_{\mathbb{R}^n}(C) & \to \nu_{\mathbb{R}^n}(C), \\ \label{eqn:tactpr}
(r,l,v(r,l))&\mapsto (tr,l,tv(r,l)).
\end{align}
Then $|t\cdot v(r,l)|_{tr,l}=|t v(r,l)|_{r,l}=t|v(r,l)|_{r,l}$ as claimed. Notice that $t\cdot (t'\cdot v)=(tt')\cdot v$ and so \eqref{eqn:tactpr} is a group action in the usual sense.

To prove the tubular neighbourhood part of this proposition, we first apply the usual tubular neighbourhood theorem to the compact submanifold $L$ of $S^{n-1}$. (Recall that we need a metric on $S^{n-1}$ to define the exponential map. We take this to be the standard round metric on $S^{n-1}$.) This gives us an open set $V_L\subseteq \nu_{S^{n-1}}(L)$ containing the zero section and an open set $T_L\subseteq S^7$ containing $L$ and a diffeomorphism
\[
 \Xi_L:V_L\to T_L,
\]
so that $\Xi_L$ maps the zero section of $\nu_{S^{n-1}}(L)$ to $L$. Again write points in $\nu_{\mathbb{R}^n}(C)$ as $(r,l,v(r,l))$, where $v\in \nu_{r,l}(C)$, and similarly points in $\nu_{S^{n-1}}(L)$ as $(l,v(l))$ where $v\in \nu_{l}(L)\cong \nu_{r,l}(C)$. Then define
\[ 
V_C:=\left\{(r,l,v(r,l))\in \nu_{\mathbb{R}^n}(C) \,\left|\right. \, \left(l,r^{-1}v(r,l)\right)\in V_L\right\}.
\]
It is clear that $V_C$ is invariant under the $\mathbb{R}_+$-action \eqref{eqn:tactpr} by construction of $V_C$ and the $\mathbb{R}_+$-action. We see that $V_C$ grows like $r$ in the sense that if $v=(r,l,v(r,l))\in V_C$ then
\[
|v(r,l)|_{rl}\le r |V|,
\]
where $V$ is the diameter of the set $V$.
Now define 
\[
T_C:=\{\lambda t \, |\, t\in T_L,\, \lambda\in \mathbb{R}_+\}. 
\]
Then it is clear that $T_C$ is dilation invariant, in the sense that it is clearly invariant under multiplication by positive scalars, and that $C\subseteq T_C$. We see that $T_C$ grows like $r$ in the sense that if $t\in T$, $l\in L$ and $r\in \mathbb{R}_+$ then
\[
|rt-rl|\le r |T|,
\]
where $|T|$ is the diameter of the set $T$.
Define 
\begin{align*}
 \Xi_C:V_C&\to T_C, \\
(r,l,v(r,l))&\mapsto r\,\Xi_L(l,r^{-1} v(r,l)).
\end{align*}
It is clear that $\Xi_C$ is well-defined, bijective and smooth. It is also clear that
\[
 \Xi_C(t\cdot(r,l,v(r,l))=t\, \Xi_C(r,l,v(r,l)).
\]
Finally we have that
\[
 \Xi_C(r,l,0)=r\, \Xi_L(l,0)=rl,
\]
by definition of $\Xi_L$ and so $\Xi_C$ maps the zero section of $\nu_{\mathbb{R}^n}(C)$ to $C$.
\end{proof}

We can use this result to prove a tubular neighbourhood theorem for a conically singular submanifold.
\begin{prop}\label{prop:cstubnbhd}
 Let $Y$ be a conically singular submanifold of $X$ at $\hat{x}$ with cone $C$ and rate $\mu$. Write $\hat{Y}:=Y\backslash \{\hat{x}\}$. Then there exist open sets $\hat{V}\subseteq \nu_X(\hat{Y})$ containing the zero section and $\hat{T}\subseteq X$ containing $\hat{Y}$ and a diffeomorphism
\[
 \hat{\Xi}:\hat{V}\to \hat{T},
\]
that takes the zero section of $\nu_X(\hat{Y})$ to $\hat{Y}$. Moreover, we can choose $\hat{V}$ and $\hat{T}$ to grow like $\rho$ as $\rho\to 0$.
\end{prop}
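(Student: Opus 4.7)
The plan is to obtain $\hat{\Xi}$ by gluing two constructions: the ordinary tubular neighbourhood on a relatively compact portion of $\hat{Y}$, and a transplantation of the cone tubular neighbourhood of Proposition \ref{prop:tubnbhdcone} via the $Spin(7)$-coordinate chart $\chi$ near the singular point.

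First I would fix $0 < \epsilon_1 < \epsilon_2 < \epsilon$ and decompose $\hat{Y}$ into the singular piece $\hat{Y}_{\mathrm{sing}} := \Psi((0,\epsilon_2)\times L)$ and the relatively compact piece $\hat{Y}_{\mathrm{cpt}} := \hat{Y}\setminus\Psi((0,\epsilon_1]\times L)$, overlapping on the annular region $\Psi((\epsilon_1,\epsilon_2)\times L)$. On $\hat{Y}_{\mathrm{cpt}}$, whose closure in $X$ is a compact manifold with boundary, the standard tubular neighbourhood theorem gives a diffeomorphism $\hat{\Xi}_{\mathrm{cpt}} : \hat{V}_{\mathrm{cpt}} \to \hat{T}_{\mathrm{cpt}}$; since $\rho$ is bounded below on $\hat{Y}_{\mathrm{cpt}}$, any such $\hat{V}_{\mathrm{cpt}}$ trivially grows like $\rho$.

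For the singular piece I would pull the picture back via $\chi^{-1}$ to $\mathbb{R}^8$, where $\hat{Y}_{\mathrm{sing}}$ becomes $\phi((0,\epsilon_2)\times L)$, and apply Proposition \ref{prop:tubnbhdcone} to $C \subseteq \mathbb{R}^8$ to obtain the dilation-equivariant diffeomorphism $\Xi_C : V_C \to T_C$ with $V_C$ of linear width in $r$. The hypothesis $\mu>1$ is then used twice: the estimate $|\phi-\iota|=O(r^\mu)$ is of strictly lower order than the linear radius of $V_C$, so that after shrinking $\epsilon_2$ we have $\phi((0,\epsilon_2)\times L) \subseteq T_C$; and $|\nabla(\phi-\iota)|=O(r^{\mu-1})$ means that the tangent spaces to $\phi$ differ from those of $\iota$ by an $O(r^{\mu-1})$ perturbation in the Grassmannian. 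Using these, orthogonal projection identifies normal vectors to $\phi((0,\epsilon_2)\times L)$ with nearby normal vectors to $C$; composing this identification with $\Xi_C$ and then $\chi$ defines $\hat{\Xi}_{\mathrm{sing}} : \hat{V}_{\mathrm{sing}} \to \hat{T}_{\mathrm{sing}}$. The $\mathbb{R}_+$-action from Proposition \ref{prop:tubnbhdcone} lets one rescale to a fixed scale and apply the inverse function theorem fibrewise, yielding uniform bounds; dilation equivariance of $\Xi_C$ and the linear growth of $V_C$ then give that $\hat{V}_{\mathrm{sing}}$ grows like $\rho$.

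It remains to patch $\hat{\Xi}_{\mathrm{sing}}$ and $\hat{\Xi}_{\mathrm{cpt}}$ on the annular overlap. Since the overlap has compact closure in $\hat{Y}$, any two tubular neighbourhood parametrisations of the same smooth submanifold differ by a smooth isotopy of the normal bundle, so a standard interpolation via a cut-off in $\rho$ produces a single diffeomorphism $\hat{\Xi} : \hat{V} \to \hat{T}$, agreeing with $\hat{\Xi}_{\mathrm{sing}}$ for $\rho < \epsilon_1$ and with $\hat{\Xi}_{\mathrm{cpt}}$ for $\rho > \epsilon_2$. Further shrinking of $\hat{V}$ preserves the growth condition, which is automatic on the compact part and holds by construction near the singular point.

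The main obstacle lies in the singular piece: one must verify that the construction produces a diffeomorphism on an open set whose fibres over $y\in \hat{Y}$ have diameter comparable to $\rho(y)$, rather than one that degenerates as $\rho\to 0$. The crucial input is the strict inequality $\mu>1$: it is exactly what makes $\phi-\iota$ and its first derivative strictly smaller than the linear width of the cone tubular neighbourhood, which after rescaling by the $\mathbb{R}_+$-action lets the inverse function theorem be applied with constants independent of the base point, and hence uniformly as one approaches the singularity.
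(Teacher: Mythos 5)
Your proposal follows essentially the same strategy as the paper: split $\hat{Y}$ into a compact piece handled by the ordinary tubular neighbourhood theorem, a conical piece handled by transplanting the cone's tubular neighbourhood (Proposition \ref{prop:tubnbhdcone}) via the $Spin(7)$-chart $\chi$ using $\mu>1$, and then interpolate the two on an overlap. Your remarks on the role of the $\mathbb{R}_+$-action in obtaining uniform constants for the inverse function theorem usefully fill in details that the paper leaves implicit.

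One step to sharpen: ``orthogonal projection identifies normal vectors to $\phi((0,\epsilon_2)\times L)$ with nearby normal vectors to $C$; composing this identification with $\Xi_C$ and then $\chi$'' does not, as literally stated, produce a tubular neighbourhood of $\hat{Y}_{\mathrm{sing}}$: orthogonally projecting the zero vector at $\phi(r,l)$ onto the normal space of $C$ at $\iota(r,l)$ gives the zero vector, and $\chi\circ\Xi_C(0) = \Psi|_{\iota}(r,l)$ lands on the image of the cone, not on $\hat{Y}_{\mathrm{sing}}$. One must first identify $\phi(C_\epsilon)$ with a normal section $v_\phi$ of $C$ (possible since $|\phi-\iota| = O(r^\mu)$ keeps $\phi(C_\epsilon)$ inside $T_C$) and shift: the paper sets $\Xi_\phi(v) := \Xi_C(v+v_\phi)$, which sends $v=0$ to $\phi(r,l)$ as required, and then transports $\Xi_\phi$ to a map on $\nu_X(\hat{U})$ using the natural bundle isomorphism induced by $\Psi$ and $\iota$. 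With that correction your argument matches the paper's proof; the uniform-growth statement then follows, as you say, from dilation equivariance and the linear width of $V_C$.
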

\begin{proof}
Notice that $K:=Y\backslash U$ is a compact submanifold of $X$. So by the compact tubular neighbourhood theorem we can find open sets $\hat{V}_1\subseteq \nu_X(K)$ containing the zero section and $\hat{T}_1\subseteq X$ containing $K$ and a diffeomorphism
\[
 \hat{\Xi}_1:\hat{V}_1\to\hat{T}_1.
\]
We will construct a tubular neighbourhood for $\hat{Y}$ near $\hat{x}$. Denote $C_\epsilon:=C\cap B_\epsilon(0)$. Use the notation of Definition \ref{defn:cs}. Choose $\phi:C_\epsilon\to \mathbb{R}^n$ uniquely by asking that 
\[
 \phi(r,l)-\iota(r,l)\in (T_{rl}\iota(C))^\perp.
\]
Then since
\[
 |\phi-\iota|=O(r^\mu),
\]
for $1<\mu<2$ as $r\to 0$, making $\epsilon$ smaller if necessarily, we can guarantee that $\phi(r,l)$ lies in the tubular neighbourhood of $C$ given by Proposition \ref{prop:tubnbhdcone}. We can therefore identify $\phi(C_\epsilon)$ with a normal vector field $v_\phi$ on $C$.

Applying Proposition \ref{prop:tubnbhdcone} gives us $V_C\subseteq \nu_{\mathbb{R}^n}(C)$, $T_C\subseteq \mathbb{R}^n$ and a diffeomorphism
\[
 \Xi_C:V_C\to T_C.
\]
Denote by $V_{C_\epsilon}$ the restriction of $V_C$ to $C_\epsilon$, and define
\[
 V_\phi:=\{v\in \nu_{B_\epsilon(0)}(C_\epsilon) \,|\, v+v_\phi\in V_{C_\epsilon}\},
\]
with
\[
 \Xi_\phi(v):=\Xi_C(v+v_\phi),
\]
for $v\in V_\phi$ and
\[
 T_\phi:=\Xi_C(V_\phi).
\]
Then $\Xi_C:V_\phi\to T_\phi$ is a diffeomorphism by construction.

Write $\hat{U}:=U\backslash \{\hat{x}\}$. Define $\hat{V}_2:=F(V_\phi)\subseteq \nu_X(\hat{U})$, where $F$ is the isomorphism $\nu_{B_\epsilon(0)}(C_\epsilon)\to \nu_X(\hat{U})$ induced from $\Psi$ and $\iota$ and $\hat{T}_2:=\chi(T_\phi)$. By definition, these sets grow with order $\rho$ as $\rho\to 0$. Then
\[
\chi\circ \Xi_\phi\circ F^{-1}: \hat{V}_2\to \hat{T}_2,
\]
is a diffeomorphism taking the zero section of $\nu_X(\hat{U})$ to $\hat{U}$. Define $\hat{V}$, $\hat{T}$ and $\hat{\Xi}$ by interpolating smoothly between $\hat{V}_1$ and $\hat{V}_2$, $\hat{T}_1$ and $\hat{T}_2$ and $\hat{\Xi}_1$ and $\hat{\Xi}_2$.
\end{proof}

\subsection{Deformation problem}
The moduli space that we will consider will be defined in Definition \ref{defn:cscaymodsp} below, and this moduli space will be identified with the kernel of a nonlinear partial differential operator in Proposition \ref{prop:cscayop}. First we will define a weighted norm on spaces of differentiable sections of a vector bundle.

\subsubsection{Weighted norms on spaces of differentiable sections}

Let $X$ will be an $n$-dimensional CS manifold with a radius function $\rho$, $E$ a vector bundle over $\hat{X}$ (the nonsingular part of $X$) with a metric and connection.
\begin{defn}\label{defn:weightck}
 Let $\lambda\in\mathbb{R}$ and $k\in\mathbb{N}$. Define the space $C^k_\lambda(E)$ to be the space of sections $\sigma \in C^k_\text{loc}(E)$ satisfying
\[
 \|\sigma\|_{C^k_\lambda}:=\sum_{j=0}^k\sup_{\hat X}|\rho^{j-\lambda}\nabla^j \sigma|<\infty.
\]
We say that $\sigma\in C^\infty_\lambda(E)$ if $\sigma\in C^k_\lambda(E)$ for all $k\in\mathbb{N}$.
\end{defn}
The space $C^k_\lambda(E)$ is a Banach space, but $C^\infty_\lambda(E)$ is not in general.

\subsubsection{Moduli space}

We will now formally define the moduli space of conically singular Cayley deformations of a Cayley submanifold that we will be studying in this article.
\begin{defn}\label{defn:cscaymodsp}
Let $Y$ be a conically singular Cayley submanifold at $\hat{x}$ with cone $C$ and rate $\mu$ of a $Spin(7)$-manifold $(X,g,\Phi)$ with respect to some $Spin(7)$-coordinate system $\chi$, and denote the tangent cone of $Y$ at $\hat{x}$ by $\hat{C}$. Write $\hat{Y}:=Y\backslash \{\hat{x}\}$. Define the \emph{moduli space of conically singular (CS) Cayley deformations} of $Y$ in $X$, $\hat{\mathcal{M}}_{\mu}(Y)$, to be the set of CS Cayley submanifolds $Y'$ at $\hat{x}$ with cone $C$, rate $\mu$ and tangent cone $\hat{C}$ of $X$ so that there exists a continuous family of topological embeddings $\iota_t:Y\to X$ with $\iota_0(Y)=Y$ and $\iota_1(Y)=Y'$, so that $\iota_t(\hat{x})=\hat{x}$ for all $t\in [0,1]$ and so that $\hat{\iota}_t:=\iota_t|_{\hat{Y}}$ is a smooth family of embeddings $\hat{Y}\to X$ with $\hat{\iota}_0(\hat{Y})=\hat{Y}$ and $\hat{\iota}_1({\hat{Y}})=\hat{Y}':=Y'\backslash\{\hat{x}\}$.
\end{defn}
We will now end this section by identifying the moduli space of Cayley CS deformations of a CS Cayley submanifold of a $Spin(7)$-manifold with the kernel of a nonlinear partial differential operator.
\begin{prop}\label{prop:cscayop}
Let $Y$ be a CS Cayley submanifold at $\hat{x}$ with cone $C$ and rate $\mu\in(1,2)$ of a $Spin(7)$-manifold $(X,g,\Phi)$. Let $\tau$ be the $\Lambda^2_7$-valued four-form defined in Proposition \ref{prop:tau}, $\pi:\Lambda^2_7\to E$ be the projection map for the splitting given in \eqref{eqn:l27decomp} and $\hat{V}\subseteq \nu_X(\hat{Y})$, $\hat{T}\subseteq{M}$ and $\hat{\Xi}$ be the open sets and diffeomorphism from the CS tubular neighbourhood theorem \ref{prop:cstubnbhd}. For $v\in C^\infty(\nu_X(\hat{Y}))$ taking values in $\hat{V}$ write $\hat{\Xi}_v$ for the diffeomorphism $\hat{\Xi}\circ v: \hat{Y}\to \hat{Y}_v:=\hat{\Xi}_v(\hat{Y})$.

Then we can identify the moduli space of CS Cayley deformations of $Y$ in $X$ near $Y$ with the kernel of the following differential operator
\begin{align}\nonumber
\hat{F}:C^\infty_\mu(\hat{V}) &\to C^\infty_\textnormal{loc}(E), \\ \label{eqn:cscaypde}
v&\mapsto \pi( *_{\hat{Y}}\,\hat{\Xi}_v^*(\tau|_{\hat{Y}_v})).
\end{align}
\end{prop}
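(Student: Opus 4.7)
The plan is to adapt McLean's argument from the compact case (Proposition \ref{prop:caylin}) by replacing the exponential map with the CS tubular neighbourhood diffeomorphism $\hat{\Xi}$ of Proposition \ref{prop:cstubnbhd}, and checking that the weighted space $C^\infty_\mu(\hat{V})$ is precisely the space of normal vector fields whose associated deformations preserve the cone, rate, and tangent cone of $Y$.

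First I would verify that for every $v \in C^\infty_\mu(\hat{V})$, the set $Y_v := \hat{\Xi}_v(\hat{Y})\cup\{\hat{x}\}$ is a CS submanifold of $X$ at $\hat{x}$ with cone $C$, rate $\mu$, and tangent cone $\hat{C}$. Inside the cone neighbourhood $U$ one pulls $\hat{Y}_v$ back to $\mathbb{R}^8$ via the $Spin(7)$-coordinate system $\chi$, producing a map $\phi_v:(0,\epsilon)\times L\to \mathbb{R}^8$ obtained from the defining map $\phi$ of $Y$ by composing with the cone tubular neighbourhood $\Xi_C$ of Proposition \ref{prop:tubnbhdcone} applied to $v$. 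Because $|\nabla^j(\phi-\iota)|=O(r^{\mu-j})$ and $|\nabla^j v|=O(\rho^{\mu-j})$, the dilation equivariance of $\Xi_C$ gives
\[
|\nabla^j(\phi_v-\iota)|=O(r^{\mu-j}),
\]
so $Y_v$ is CS with rate $\mu$ and cone $C$. Since $\mu>1$, the tangent cone at $\hat{x}$ is unchanged, and since $\mu<2$ the rate is independent of the equivalent $Spin(7)$-coordinate system by Lemma \ref{lem:indepcoor}. The family $t\mapsto Y_{tv}$ for $t\in[0,1]$ provides the continuous family of topological embeddings required by Definition \ref{defn:cscaymodsp}, so $Y_v\in\hat{\mathcal{M}}_\mu(Y)$ whenever $\hat{F}(v)=0$.

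Next I would show that $\hat{F}(v)=0$ is equivalent to $\hat{Y}_v$ being Cayley. By construction $\hat{F}(v)$ is the $E$-component of $*_{\hat{Y}}\,\hat{\Xi}_v^*(\tau|_{\hat{Y}_v})$, a section of $\Lambda^2_7|_{\hat{Y}}$. As observed in the compact argument recapitulated in Proposition \ref{prop:caylin}, when one evaluates $\tau$ on an ordered basis of tangent vectors to a four-dimensional submanifold, the resulting element of $\Lambda^2_7$ automatically lies in the subbundle $E$ of the decomposition \eqref{eqn:l27decomp}, so $\pi$ is injective on the range of the map $v\mapsto *_{\hat{Y}}\hat{\Xi}_v^*\tau|_{\hat{Y}_v}$. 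Hence $\hat{F}(v)=0$ iff $\tau|_{\hat{Y}_v}\equiv 0$, which by Proposition \ref{prop:tau} is iff $\hat{Y}_v$ is Cayley.

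For the converse, I would take an arbitrary $Y'\in\hat{\mathcal{M}}_\mu(Y)$ sufficiently $C^1$-close to $Y$ on the compact piece $Y\setminus U$ so that $\hat{Y}'\setminus U$ lies in the tubular neighbourhood $\hat{T}$, and close enough to $Y$ on the cone neighbourhood (in the sense of Definition \ref{defn:cs}) that the two CS defining maps $\phi,\phi'$ both factor through $V_C$. Then $\hat{Y}'$ is the graph of a unique normal vector field $v$, and the fact that both $Y$ and $Y'$ are CS with the same cone $C$, rate $\mu$, and tangent cone $\hat{C}$, combined with the dilation equivariance of $\Xi_C$, forces $|\nabla^j v|=O(\rho^{\mu-j})$, i.e.\ $v\in C^\infty_\mu(\hat{V})$. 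This produces the bijection between the CS deformations of $Y$ near $Y$ and $\ker\hat{F}$.

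The main obstacle I expect is the precise matching between the rate $\mu$ in Definition \ref{defn:cs} and the weight $\mu$ in Definition \ref{defn:weightck}: given two CS submanifolds with the same cone, rate, and tangent cone, one must extract a normal vector field whose covariant derivatives (computed with respect to the cone metric via the radius function $\rho$) decay at the expected order. This requires unwinding the construction of $\hat{\Xi}$ from Proposition \ref{prop:cstubnbhd}, which interpolates between the compact tubular neighbourhood on $Y\setminus U$ and the dilation-equivariant cone tubular neighbourhood $\Xi_C$, and then differentiating the identity $\hat{Y}'=\hat{\Xi}(\mathrm{graph}(v))$ enough times to translate the asymptotic condition on $\phi'-\phi$ into the weighted bound on $v$.
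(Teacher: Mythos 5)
Your proposal is correct and follows essentially the same route as the paper: identify the key step as matching the rate $\mu$ in Definition \ref{defn:cs} with the weight $\mu$ in Definition \ref{defn:weightck}, pull the perturbed submanifold back to the cone via the $Spin(7)$-coordinate system, and use the dilation equivariance of the cone tubular neighbourhood $\Xi_C$ to show that $\phi_v-\phi$ is the graph of $v_C$, giving $|\nabla^j v|=O(\rho^{\mu-j})$ precisely when $|\nabla^j(\phi_v-\iota)|=O(r^{\mu-j})$. You additionally spell out why $\pi$ loses no information (the value of $\tau$ restricted to a tangent four-plane already lies in $E$, as in McLean's compact analysis), why the tangent cone condition is automatic from keeping the same coordinate system and cone, and that $t\mapsto Y_{tv}$ provides the required continuous family of embeddings — points the paper treats implicitly — but none of this changes the substance of the argument.
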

\begin{proof}
The deformation $\hat{Y}_v$ is Cayley if, and only if, $\tau|_{\hat{Y}_v}\equiv 0$, which since $\hat{\Xi}_v$ is a diffeomorphism is equivalent to $v\in \text{Ker }\hat{F}$, and since $v,\tau, \hat{\Xi}_v$ are all smooth, we see that $\hat{F}$ takes values in $C^\infty_\text{loc}(E)$ at claimed.

It remains to show that $Y_v:=\hat{Y}_v\cup \{\hat{x}\}$ is a CS submanifold of $X$ at $\hat{x}$ with cone $C$ and rate $\mu$ (with respect to the same $Spin(7)$-coordinate system as $Y$) if, and only if, $v\in C^\infty_\mu(\hat{V})$.

Let $v$ be a smooth normal vector field on $\hat{Y}$, and let $\hat{Y}_v:=\hat{\Xi}_v(\hat{Y})$. Use the notation of Definition \ref{defn:cs}. Choose $\phi:(0,\epsilon)\times L\to B_\epsilon(0)$ uniquely by requiring that
\[
\phi(r,l)-\iota(r,l)\in (T_{rl}\iota(C))^\perp.
\]
Now we can use $\Psi$ and $\iota$ to identify $\nu_{X}(\hat{U})$ with $\nu_{B_\epsilon(0)}(\iota(C_\epsilon))$, where $\hat{U}:=U\backslash\{\hat{x}\}$ and $C_\epsilon:=(0,\epsilon)\times L$. Write $v_C$ for the section of $\nu_{B_\epsilon(0)}(\iota(C_\epsilon))$ corresponding to $v$ under this identification.

Making $\epsilon$ and $U$ smaller if necessary, by the definition of the tubular neighbourhood map in Proposition \ref{prop:cstubnbhd}, we can define a map $\phi_v:C_\epsilon\to B_\epsilon(0)$ by
\[
\phi_v(r,l)=\Xi_\phi(v_C(r,l)),
\]
where $\Xi_\phi$ was defined in the proof of Proposition \ref{prop:cstubnbhd}, so that $\chi\circ\phi_v:C_\epsilon\to \Xi_v(\hat{U})\subseteq \hat{Y}_v$ is a diffeomorphism. So we see that for $Y_v$ to be a CS submanifold of $X$ with rate $\mu$ and cone $C$ we must have that
\begin{equation}\label{eqn:defcs}
|\nabla^j(\phi_v(r,l)-\iota(r,l))|=O(r^{\mu-j}),
\end{equation}
for all $j\in \mathbb{N}$ as $r\to 0$. Now we can write
\[
|\nabla^j(\phi_v-\iota)|\le |\nabla^j(\phi_v-\phi)|+|\nabla^j(\phi-\iota)|,
\]
and so \eqref{eqn:defcs} holds if, and only if,
\[
 |\nabla^j(\phi_v-\phi)|=O(r^{\mu-j}),
\]
for $j\in \mathbb{N}$ as $r\to 0$. But examining the definition of $\phi_v$, we see that we can identify $\phi_v-\phi$ with the graph of $v_C$, and so \eqref{eqn:defcs}  holds if, and only if,
\[
|\nabla^j v_C|=O(r^{\mu-j}),
\]
for $j\in \mathbb{N}$ as $r\to 0$. But then by definition of $v_C$ this is equivalent to 
\[
|\nabla^j v|=O(\rho^{\mu-j}),
\]
for $j \in \mathbb{N}$ as $\rho\to 0$, that is, $v\in C^j_\mu(\hat{V})$ for all $j\in \mathbb{N}$. So we see that the moduli space of CS Cayley deformations of $Y$ in $X$ can be identified with the kernel of \eqref{eqn:cscaypde}.
\end{proof}
\subsection{Cayley deformations of a CS Cayley submanifold}\label{sec:cscay}

In this section we prove Theorem \ref{thm:cscaydefs} on the expected dimension of the moduli space of CS Cayley deformations of a conically singular Cayley submanifold $Y$ in a $Spin(7)$-manifold $X$.

The following lemma is similar to \cite[Thm 5.1]{MR2053761} and \cite[Prop 6.9]{MR2403190}.

\begin{lem}\label{lem:cscayweight}
Let $Y$ be a conically singular Cayley submanifold of a $Spin(7)$-manifold $X$. Let $\hat{F}$ be the operator defined in Proposition \ref{prop:cscayop}. Then we can write
\begin{equation}\label{eqn:qdefn}
 \hat{F}(v)(x)=Dv(x)+\hat{Q}(x,v(x),\nabla v(x)),
\end{equation}
for $x\in\hat{Y}$, where
\[
 \hat{Q}:\{(x,y,z) \,|\, (x,y)\in \hat{V}, z\in \nu_x(\hat{Y})\otimes T^*_x\hat{Y}\}\to E,
\]
is smooth, $D$ was defined in Proposition \ref{prop:caylin} and 
\[
\hat{Q}(v)(x):=\hat{Q}(x,v(x),\nabla v(x)),
\]
is a section of $E$. Let $\mu>1$. Then for each $k\in\mathbb{N}$, for $v\in C^{k+1}_\mu(\hat{V})$ with $\|v\|_{C^1_1}$ sufficiently small, there exist constants $C_k>0$ so that
\begin{equation}\label{eqn:qest}
 \|\hat{Q}(v)\|_{C^{k}_{2\mu-2}}\le C_k\|v\|_{C^{k+1}_\mu}^2,
\end{equation}
and if $v\in L^p_{k+1,\mu}(\hat{V})$ with $\|v\|_{C^1_1}$ sufficiently small, with $k>1+4/p$, there exist constants $D_k>0$ such that
\begin{equation}\label{eqn:qestpk}
\|\hat{Q}(v)\|_{p,k,2\mu-2}\le D_k\|v\|_{p,k+1,\mu}^2.
\end{equation}
Moreover, we may deduce that
\begin{equation}\label{eqn:cscaysob}
 \hat{F}:L^p_{k+1,\mu}(\hat{V})\to L^p_{k,\mu-1}(E),
\end{equation}
is a smooth map of Banach spaces for any $1<p<\infty$ and $k\in \mathbb{N}$ with $k>1+4/p$.
\end{lem}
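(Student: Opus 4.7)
The decomposition \eqref{eqn:qdefn} is immediate from Taylor's theorem: $\hat F(v)(x)$ depends smoothly on the 1-jet $(x, v(x), \nabla v(x))$ since $\hat\Xi_v$ is a smooth embedding whose $1$-jet at $x$ depends smoothly on the $1$-jet of $v$ at $x$, and $\tau$, $*_{\hat Y}$, $\pi$ are all smooth. Because $\hat F(0) = 0$ (as $Y$ is Cayley) and the linearisation at $0$ is the operator $D$ from Proposition \ref{prop:caylin} (whose derivation is pointwise and carries over verbatim to $\hat Y$), writing $\hat F$ as its integral Taylor remainder at $v = 0$ produces a smooth $\hat Q(x, y, z)$ vanishing to second order at $(y, z) = (0, 0)$, so that $|\hat Q(x, y, z)| \le C(x)(|y|^2 + |z|^2)$ for $(y, z)$ small.

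The heart of the lemma is controlling $C(x)$, and the corresponding higher-derivative bounds, as $x \to \hat x$. By Proposition \ref{prop:cstubnbhd}, $\hat\Xi$ is built near $\hat x$ from the dilation-equivariant tubular neighbourhood $\Xi_C$ of Proposition \ref{prop:tubnbhdcone}, and the conical rate bound $|\nabla^j(\phi - \iota)| = O(r^{\mu - j})$ with $\mu > 1$ shows that $\hat F$, after pulling back by the rescaling $t\,\cdot$ from $L \times [1, 2]$ onto $L \times [t, 2t]$, becomes a smooth family of operators on the fixed compact annulus $L \times [1,2]$ whose coefficients and all their derivatives are bounded uniformly in $t \in (0, 1]$. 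Taylor expanding on this fixed annulus then yields the pointwise estimate
\[
|\nabla^j \hat Q(x, y, z)| \le C_j \, \rho(x)^{2\mu - 2 - j}
\]
whenever $|y| \le \rho(x)^\mu$ and $|z| \le \rho(x)^{\mu - 1}$. Substituting $(y, z) = (v(x), \nabla v(x))$ for $v \in C^{k+1}_\mu(\hat V)$ with $\|v\|_{C^1_1}$ small, and applying the product rule, gives \eqref{eqn:qest}; the worst contribution comes from terms quadratic in $\nabla v$ and produces exactly the weight $2\mu - 2$.

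For the Sobolev estimate \eqref{eqn:qestpk}, the condition $k > 1 + 4/p$ ensures the weighted embedding $L^p_{k+1, \mu}(\hat V) \hookrightarrow C^1_1(\hat V)$ (proved by the same rescaling onto $L \times [1, 2]$ followed by the standard Sobolev embedding), so that $\|v\|_{C^1_1}$ is small whenever $\|v\|_{p, k+1, \mu}$ is small and $(v(x), \nabla v(x))$ lies in the smooth domain of $\hat Q$; weighted Sobolev multiplication, again reduced to the fixed annulus, then yields the quadratic bound. Smoothness of $\hat F : L^p_{k+1, \mu}(\hat V) \to L^p_{k, \mu - 1}(E)$ as a Banach space map follows by repeating the Taylor expansion at an arbitrary $v_0$ with $\|v_0\|_{C^1_1}$ small: the Fr\'echet derivatives of $\hat F$ at $v_0$ of every order are bounded multilinear maps between the relevant weighted Sobolev spaces, which is the standard Nemytskii criterion. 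The main obstacle throughout is precisely this uniform control under rescaling of the nonlinear part of $\hat F$ near $\hat x$, and it is here that the dilation equivariance of $\Xi_C$ together with the rate condition $\mu > 1$ are essential.
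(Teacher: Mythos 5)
Your proposal is correct and follows essentially the same strategy as the paper's proof: split $\hat Y$ into a compact piece (handled as in the compact case) and a conical piece near $\hat x$; on the cone exploit the dilation equivariance of the tubular neighbourhood map $\Xi_C$ of Proposition \ref{prop:tubnbhdcone} to reduce the Taylor remainder estimates to a fixed compact reference region, where coefficients and their derivatives are bounded; then substitute $(y,z)=(v(x),\nabla v(x))$ and unwind the weights. The paper does this by fixing a reference radius $r_0$ and invoking the explicit scaling identities
$|\hat F_C(v(r,l))|_r=|\hat F_C(\tfrac{r_0}{r}\cdot v(r,l))|_{r_0}$ and
$|\nabla^k\tfrac{r_0}{r}\cdot v(r,l)|_{r_0}=(\tfrac{r}{r_0})^{k-1}|\nabla^k v(r,l)|_r$,
whereas you pull back by dilation from $L\times[t,2t]$ to $L\times[1,2]$; these are the same manoeuvre. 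The deduction of the $L^p$ estimate and Banach-space smoothness from the weighted embedding $L^p_{k+1,\mu}\hookrightarrow C^1_1$ (for $k>1+4/p$) and the polynomial structure of $\hat Q$ also matches.

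Two points worth tightening. First, your intermediate bound ``$|\nabla^j\hat Q(x,y,z)|\le C_j\rho(x)^{2\mu-2-j}$ whenever $|y|\le\rho^\mu$, $|z|\le\rho^{\mu-1}$'' does not, as stated, record the quadratic dependence on $(y,z)$ that you need to produce the factor $\|v\|_{C^{k+1}_\mu}^2$ in \eqref{eqn:qest}; what the proof actually requires (and what the paper proves via the Taylor expansion \eqref{eqn:hatqexp}) is the scale-adapted quadratic bound $|\hat Q_C(\cdot,v+v_\phi,\nabla(v+v_\phi))-\hat Q_C(\cdot,v_\phi,\nabla v_\phi)|_r\le C_0(r^{-1}|v|_r+|\nabla v|_r)^2$ for $\|v\|_{C^1_1}$ small, together with its differentiated versions. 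You do recover the quadratic factor later via ``the product rule,'' so this is a wording issue, not an error, but the quadratic structure of the pointwise estimate should be made explicit. Second, the paper makes explicit that near $\hat x$ the operator is Taylor-expanded about the base point $v_\phi$ (the normal graph of $\phi$), not about $0$, since $\hat Y$ is a graph over the cone and not the cone itself; the rate condition $|\nabla^j(\phi-\iota)|=O(r^{\mu-j})$ is precisely what keeps $(tv+v_\phi,\nabla(tv+v_\phi))$ in a compact subset of the rescaled tubular neighbourhood for $\|v\|_{C^1_1}$ small. You use the rate bound but fold this shift silently into ``coefficients bounded uniformly in $t$''; making the $v_\phi$-shift explicit, as in \eqref{eqn:hatqc}, is cleaner and makes clear why the constants in \eqref{eqn:qest} depend on $Y$ and not just on $C$.
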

\begin{proof}
 By definition of conically singular, we can split $Y$ into a compact piece $K$, where we can argue as in the proof of an analogous result for compact Cayley submanifolds (see, for example, \cite[Lem 3.4]{compactcay}) that the estimate \eqref{eqn:qest} holds, and a piece diffeomorphic to a cone, which is where we must check how $\hat{F}$ behaves as $\rho\to 0$, where $\rho$ is a radius function for $\hat{Y}$. Making the compact piece slightly larger, using the definition of $\hat{F}$, we may estimate $\hat{F}$ by estimating $\hat{F}_C$, the operator on the cone defined by
\[
 \hat{F}_C(v+v_\phi)(r,l)=\hat{F}(v)(\Psi(r,l)),
\]
where $v_\phi$ is the normal vector field on $C$ that describes $\phi(C)$ as described in the proof of Proposition \ref{prop:cstubnbhd}, where we are using the notation of Definition \ref{defn:cs}. 
Define $\hat{Q}_C$ analogously by
\[
 \hat{F}_C(v+v_\phi)(r,l)=D(v+v_\phi)(r,l)+\hat{Q}_C(r,l,(v+v_\phi)(r,l),\nabla(v+v_\phi)(r,l)).
\]
By definition of $\hat{Q}$ and $\hat{Q}_C$, we see that
\begin{equation}\label{eqn:hatqc}
 \hat{Q}(r,l,v,\nabla v)=\hat{Q}_C(r,l,v+v_\phi,\nabla(v+v_\phi))-\hat{Q}_C(r,l,v_\phi,\nabla v_\phi),
\end{equation}
and so to estimate $\hat{Q}$ and its derivatives, it suffices to estimate the right hand side of Equation \eqref{eqn:hatqc}. Notice that since for each $(r,l)\in C$ we can think of $\hat{Q}_C$ as a map 
\[
 \nu_{r,l}(C)\times \nu_{r,l}(C)\otimes T^*_{r,l}C \to E_{r,l},
\]
and so we can make sense of a Taylor expansion of $\hat{Q}$ around points of the form $(r,l,y,z)$, for $y\in \nu_{r,l}(C)$ and $z\in \nu_{r,l}(C)\otimes T^*_{r,l}C$. Abusing notation slightly, write
\[
 \frac{\partial \hat{Q}_C}{\partial y}(r,l,y,z),
\]
for derivative of $\hat{Q}_C$ in the $y$ direction at $(r,l,y,z)$, and adopt similar notation for the derivative in the $z$ direction and the higher derivatives. Then we have that
\begin{align} \nonumber
\hat{Q}_C(r,l,y+y_0,z+z_0)&=\hat{Q}_C(r,l,y_0,z_0)+\frac{\partial \hat{Q}_C}{\partial y}(r,l,y_0,z_0)y \\ \nonumber +\frac{\partial \hat{Q}_C}{\partial z}(r,l,y_0,z_0)z
&+\frac{1}{2}\frac{\partial^2 \hat{Q}_C}{\partial y^2}(r,l,ty+y_0,tz+z_0)(y,y) \\ \nonumber
&+\frac{\partial^2 \hat{Q}_C}{\partial y\partial z}(r,l,ty+y_0,tz+z_0)(y,z) \\ \label{eqn:hatqexp}
&+\frac{1}{2}\frac{\partial^2 \hat{Q}_C}{\partial z^2}(r,l,ty+y_0,tz+z_0)(z,z),
\end{align}
for some $t\in [0,1]$. We would like to estimate the derivatives of $\hat{Q}_C$. Since $\hat{Q}$ is smooth in all of its variables, this is possible as long as we restrict the domain of $\hat{Q}_C$ to a compact set. However, we are working on a cone (with its singular point removed) so this isn't possible. We may, however, fix $r=r_0$, for some $r_0\in (0,\epsilon)$, perform our estimates, and use the definition of $\hat{F}_C$ and $\hat{Q}_C$ to study the behaviour of the estimates we find as we let $r$ vary. Recall the action of $\mathbb{R}_+$ on $\nu_{\mathbb{R}^8}(C)$ that was defined in the proof of Proposition \ref{prop:tubnbhdcone}, and indeed the tubular neighbourhood map that we constructed in this proof, which forms part of the operator $\hat{F}_C$ that we are currently studying. By construction, we can see that
\[
 |\hat{F}_C(v(r,l))|_r=\left|\hat{F}_C\left(\frac{r_0}{r}\cdot v(r,l)\right)\right|_{r_0},
\]
where $|\cdot|_r$ means that we are taking the norm at the point $r$. We may deduce that
\begin{align*}
& |\nabla^k\hat{Q}_C(r,lv+v_\phi,\nabla(v+v_\phi))|_r \\
 =r^{-k}&\left|\hat{Q}_C\left(r_0,l,\frac{r_0}{r}\cdot(v+v_\phi),\nabla\frac{r_0}{r}\cdot(v+v_\phi)\right)\right|_{r_0}.
\end{align*}
We also have that by construction
\[
 \left|\nabla^k\frac{r_0}{r}\cdot v(r,l)\right|_{r_0}=\left(\frac{r}{r_0}\right)^{k-1}|\nabla^kv(r,l)|_{r}.
\]
Since $\hat{Q}_C$ has no linear parts, we have that by equation \eqref{eqn:hatqexp},
\begin{align}
\nonumber
&\hat{Q}_C(r,l,v+v_\phi,\nabla(v+v_\phi))-\hat{Q}_C(r,l,v_\phi,\nabla v_\phi) \\ \nonumber&\le
\frac{1}{2}\frac{\partial^2 \hat{Q}_C}{\partial y^2}(r,l,tv+v_\phi,\nabla(tv+v_\phi))(v,v) \\ \nonumber
&+\frac{\partial^2 \hat{Q}_C}{\partial y\partial z}(r,l,tv+v_\phi,\nabla(tv+v_\phi))(v,\nabla v) \\ \label{eqn:qexpv}
&+\frac{1}{2}\frac{\partial^2 \hat{Q}_C}{\partial z^2}(r,l,tv+v_\phi,\nabla(tv+v_\phi))(\nabla v,\nabla v),
\end{align}
for some $t\in[0,1]$.

Consider
\[
 \frac{\partial^2 \hat{Q}_C}{\partial y^2}(r_0,l,(tv+v_\phi)(r_0,l),\nabla(tv+v_\phi)(r_0,l)).
\]
Then by taking the supremum over the closed sets $l\in L$ and $v$ with $|v(r_0,l)|_{r_0,l}+|\nabla v(r_0,l)|_{r_0,l}\le \delta$, which is possible as long as we take $\delta$ sufficiently small, we may bound this expression, as well as the other coefficients of Equation \eqref{eqn:qexpv}. Using the scale equivariance properties of $\hat{Q}_C$ described above, we deduce that as long as $\|v\|_{C^1_1}$ is small, we have that
\[
 |\hat{Q}_C(r,l,v+v_\phi,\nabla(v+v_\phi))-\hat{Q}_C(r,l,v_\phi,\nabla v_\phi)|_r\le C_0(r^{-1}|v|_r+|\nabla v|_r)^2.
\]
Therefore
\begin{align}\nonumber
 r^{2-2\mu}&|\hat{Q}_C(r,l,v+v_\phi,\nabla(v+v_\phi))-\hat{Q}_C(r,l,v_\phi,\nabla v_\phi)|_r \\ \nonumber
 &\le C_0r^{2-2\mu}(r^{-1}|v|_r+|\nabla v|_r)^2 \\ \label{eqn:0est}
&= C_0\|v\|_{C^1_\mu}^2.
\end{align}
Finally, we can take $k$ derivatives of Equation \eqref{eqn:qexpv}, which will give us a polynomial quadratic in $v$ and its derivatives, whose coefficients depend on between two and $k+2$ derivatives of $\hat{Q}_C$, the $C^1_1$-norm of $v_\phi$ and $\delta$ as above. We can estimate these coefficients as we did above. We will find that
\begin{align*}
 &|\nabla^k(\hat{Q}_C(r,l,v+v_\phi,\nabla(v+v_\phi))-\hat{Q}_C(r,l,v_\phi,\nabla v_\phi))| \\
 &\le C\left(\sum_{j_1,j_2}\frac{r^{j_1-1}r^{j_2-1}}{r^k}|\nabla^{j_1}v||\nabla^{j_2}v|\right),
\end{align*}
and since
\[
 r^{k-(2\mu-2)}r^{j_1-1}r^{j_2-1}r^{-k}=r^{j_1-\mu}r^{j_2-\mu},
\]
we may deduce that
\[
 r^{k-(2\mu-2)}|\nabla^k(\hat{Q}_C(r,l,v+v_\phi,\nabla(v+v_\phi))-\hat{Q}_C(r,l,v_\phi,\nabla v_\phi))|\le C_k \|v\|_{C^{k+1}_\mu}^2,
\]
and so we see that the estimate \eqref{eqn:qest} holds.

Finally, as long as $\mu>1$, we have that $C^k_{2\mu-2}(E)\subseteq C^k_{\mu-1}(E)$. We can apply Minkowski's inequality and the smallness of $\|v\|_{C^1_1}$ to \eqref{eqn:qest} to deduce \eqref{eqn:qestpk} and that \eqref{eqn:cscaysob} is a smooth map of Banach spaces, as $\hat{Q}$ is smooth.
\end{proof}

Now that we have described the behaviour of the operator $\hat{F}$ close to the singular point of the conically singular manifold $\hat{Y}$, we will prove a weighted elliptic regularity result for normal vector fields in the kernel of $\hat{F}$.

\begin{prop}\label{prop:weightelliptreg}
 Let $Y$ be a conically singular Cayley submanifold of a $Spin(7)$-manifold $X$. Let $\hat{F}$ be the map defined in Proposition \ref{prop:cscayop}. Then 
\[
 \{v\in C^\infty_\mu(\hat{V}) \, |\, \hat{F}(v)=0\} \cong \{v\in L^p_{k+1,\mu}(\hat{V})\, | \, \hat{F}(v)=0\},
\]
for any $\mu\in (1,2)\backslash\mathcal{D}$, $1<p<\infty$ and $k\in\mathbb{N}$ satisfying $k>1+4/p$. Here $\mathcal{D}$ is the set of exceptional weights given by applying Proposition \ref{prop:lmco} to the linear part of $\hat{F}$.
\end{prop}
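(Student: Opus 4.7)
This is a standard weighted elliptic regularity result for the nonlinear elliptic operator $\hat{F}$, and I would establish the two set inclusions separately, with the main content being the direction showing that $L^p_{k+1,\mu}$ solutions are automatically smooth with $C^\infty_\mu$ decay.

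For this main direction, take $v\in L^p_{k+1,\mu}(\hat{V})$ with $\hat{F}(v)=0$, and write $Dv=-\hat{Q}(v)$ as in Lemma \ref{lem:cscayweight}. Standard interior elliptic regularity on $\hat{Y}$, using ellipticity of $D$, smoothness of $\hat{Q}$ in its arguments, and the local Sobolev embedding guaranteed by the hypothesis $k>1+4/p$, immediately yields $v\in C^\infty_{\textnormal{loc}}(\hat{V})$, so only the asymptotic decay at $\hat{x}$ requires work. I would then bootstrap in the weighted spaces: the estimate \eqref{eqn:qestpk} gives $\hat{Q}(v)\in L^p_{k,2\mu-2}$, and since $\mu>1$ this is contained in $L^p_{k,\mu-1}$. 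Passing to the asymptotically cylindrical picture via Lemma \ref{lem:weightisom}, the operator $D$ becomes asymptotically translation invariant and elliptic, and the Fredholm framework of Proposition \ref{prop:lmco} applies at the non-exceptional weight $\mu\notin\mathcal{D}$. The resulting weighted elliptic regularity then promotes $v$ to $L^p_{k+2,\mu}$, which in turn lifts $\hat{Q}(v)$ to $L^p_{k+1,2\mu-2}$ by a second application of \eqref{eqn:qestpk}. Iterating, $v\in L^p_{l,\mu}(\hat{V})$ for every $l\in\mathbb{N}$, and the weighted Sobolev embedding $L^p_{l,\mu}(E)\hookrightarrow C^j_\mu(E)$ for $l-j$ sufficiently large compared to $4/p$ finally gives $v\in C^\infty_\mu(\hat{V})$.

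The opposite inclusion is more routine: for $v\in C^\infty_\mu(\hat{V})$ with $\hat{F}(v)=0$, the pointwise decay estimates $|\nabla^j v|=O(\rho^{\mu-j})$ combined with the improved decay of $\hat{Q}(v)$ provided by \eqref{eqn:qest} (so that $Dv=-\hat{Q}(v)$ decays strictly faster than $\rho^{\mu-1}$) allow one to verify the weighted $L^p$ integrability conditions defining $L^p_{k+1,\mu}$ directly from the definition, after possibly trimming any borderline behaviour against the linear part using the PDE.

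The main obstacle is ensuring the bootstrap genuinely gains a derivative at each step while preserving the weight $\mu$. This depends crucially on the hypothesis $\mu\notin\mathcal{D}$, so that Proposition \ref{prop:lmco} delivers the needed Fredholm property for $D$ at this weight, and on the quadratic improvement $2\mu-2>\mu-1$ coming from $\mu>1$, which makes the nonlinear tail $\hat{Q}(v)$ subcritical relative to the linear part $Dv$. The smallness of $\|v\|_{C^1_1}$ required for the estimate \eqref{eqn:qestpk} is automatic for deformations sufficiently close to $Y$ in the moduli space, which is the setting of Proposition \ref{prop:cscayop}, so one only needs the statement locally near the origin in $C^\infty_\mu(\hat{V})$ and $L^p_{k+1,\mu}(\hat{V})$.
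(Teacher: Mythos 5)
Both directions of your argument contain genuine gaps, each of which the paper has to address with a specific trick you have not supplied.

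For the direction $L^p_{k+1,\mu}\Rightarrow C^\infty_\mu$, your bootstrap as written does not gain a derivative. You argue that $\hat{Q}(v)\in L^p_{k,2\mu-2}\subseteq L^p_{k,\mu-1}$, hence $Dv\in L^p_{k,\mu-1}$, and then that ``weighted elliptic regularity promotes $v$ to $L^p_{k+2,\mu}$.'' But $D$ is a \emph{first}-order operator, so the Lockhart--McOwen estimate reads $\|v\|_{p,k+1,\mu}\le C(\|Dv\|_{p,k,\mu-1}+\|v\|_{p,0,\mu})$: from $Dv\in L^p_{k,\mu-1}$ you recover $v\in L^p_{k+1,\mu}$, which is exactly where you started. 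To get $v\in L^p_{k+2,\mu}$ you would need $Dv\in L^p_{k+1,\mu-1}$, i.e.\ $\hat{Q}(v)\in L^p_{k+1,2\mu-2}$, which by \eqref{eqn:qestpk} requires $v\in L^p_{k+2,\mu}$ --- circular. The paper breaks this circle with the device, borrowed from \cite[Prop 4.6]{MR2140999}, of differentiating the nonlinear equation: writing $\nabla\hat{F}(v)=L(x,v,\nabla v)\nabla^2 v+E(x,v,\nabla v)$, one obtains a \emph{second}-order quasilinear elliptic system $L_v v=-E(x,v,\nabla v)$. After first establishing $v\in C^\infty_{\text{loc}}$ (so that the coefficients of $L_v$ are smooth and in particular asymptotically translation invariant), the second-order weighted estimate $\|v\|_{p,k+2,\mu}\le C(\|L_v v\|_{p,k,\mu-2}+\|v\|_{p,0,\mu})$ genuinely gains one derivative per step, since $E$ involves only $v$ and $\nabla v$ and hence lies in $L^p_{k,\mu-1}\subseteq L^p_{k,\mu-2}$. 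This is the step your proposal is missing.

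For the direction $C^\infty_\mu\Rightarrow L^p_{k+1,\mu}$, which you dismiss as ``routine,'' the paper explicitly flags it as the tricky one. Pointwise decay $|\nabla^j v|=O(\rho^{\mu-j})$ only gives $v\in L^p_{k+1,\mu-\epsilon}$ for every $\epsilon>0$, \emph{not} $v\in L^p_{k+1,\mu}$: the weighted $L^p$ integral at exactly the weight $\mu$ picks up a factor $\rho^{-1}$ and is logarithmically divergent. ``Verifying the integrability conditions directly from the definition'' therefore fails; and your suggestion of ``trimming the borderline behaviour against the linear part'' gestures at what is needed but does not supply the mechanism. The actual argument is Fredholm-theoretic: pick $\epsilon>0$ small enough that $[\mu-\epsilon,\mu]\cap\mathcal{D}=\emptyset$. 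Then $v\in L^p_{k+1,\mu-\epsilon}$, and $Dv=-\hat{Q}(v)\in L^p_{k,2\mu-2-2\epsilon}\subseteq L^p_{k,\mu-1}$; since $Dv$ is already in the image of $D$ at weight $\mu-\epsilon$ it is orthogonal to $\text{Coker}_{\mu-\epsilon}D$, which by \cite[Lem 7.1]{MR837256} equals $\text{Coker}_\mu D$. Hence there is $\bar v\in L^p_{k+1,\mu}$ with $D\bar v=Dv$; then $v-\bar v\in\text{Ker}_{\mu-\epsilon}D=\text{Ker}_\mu D\subseteq L^p_{k+1,\mu}$, and so $v\in L^p_{k+1,\mu}$. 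The constancy of the kernel and cokernel across an interval free of exceptional weights is the key fact your outline does not invoke.
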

\begin{proof}
 We will first show that if $v\in C^\infty_\mu(\hat{V})$ satisfying $\hat{F}(v)=0$, then $v\in L^p_{k+1,\mu}$. This is a little trickier than it seems, since we have that for any $\epsilon>0$, $C^\infty_\mu(\hat{V})\subseteq L^p_{k,\mu-\epsilon}(\hat{V})$, which is weaker than what we require. We will show that if $v\in L^p_{k+1,\mu-\epsilon}(\hat{V})$, for $\epsilon>0$ sufficiently small, satisfies $\hat{F}(v)=0$, then we may deduce that $v\in L^p_{k+1,\mu}(\hat{V})$. Recall that in Lemma \ref{lem:cscayweight}, we saw that we could write
\[
 \hat{F}(v)=Dv+\hat{Q}(v),
\]
where $D$ was defined in Proposition \ref{prop:caylin}, and $\hat{Q}$ is nonlinear. By Proposition \ref{prop:lmco} there exists a discrete set $\mathcal{D}$ so that 
\begin{equation}\label{eqn:ellipregd}
 D:L^p_{k+1,\lambda}(\nu_X(\hat{Y}))\to L^p_{k,\lambda-1}(E),
\end{equation}
is Fredholm as long as $\lambda\notin \mathcal{D}$. Take $0<\epsilon<(\mu-1)/2$ small enough so that $[\mu-\epsilon,\mu]\cap\mathcal{D}=\emptyset$. Let $v\in L^p_{k+1,\mu-\epsilon}(\hat{V})$ and suppose that $\hat{F}(v)=0$. Since \eqref{eqn:ellipregd} is Fredholm when $\lambda=\mu-\epsilon$, we can write
\[
 L^p_{k,\mu-\epsilon-1}(E)=D(L^p_{k+1,\mu-\epsilon}(\nu_X(\hat{Y})))\oplus \hat{\mathcal{O}}_{\mu-\epsilon},
\]
where $\hat{\mathcal{O}}_{\mu-\epsilon}$ is finite-dimensional and
\[
\hat{\mathcal{O}}_{\mu-\epsilon}\cong \text{Coker}_{\mu-\epsilon}D,
\]
where $\text{Coker}_{\lambda}D$ denotes the cokernel of \eqref{eqn:ellipregd}.
Since $[\mu-\epsilon,\mu]\cap \mathcal{D}=\emptyset$, we know that (see \cite[Lem 7.1]{MR837256})
\begin{equation}\label{eqn:cokereq}
 \text{Coker}_{\mu-\epsilon}D=\text{Coker}_\mu D.
\end{equation}
Now since $\hat{F}(v)=0$, we have that $Dv=-\hat{Q}(v)$, and so $\hat{Q}(v)$ is orthogonal to $\text{Coker}_{\mu-\epsilon} D$. Also $\hat{Q}(v)\in L^p_{k,2\mu-2-2\epsilon}(E)\subseteq L^p_{k,\mu-1}(E)$ by Lemma \ref{lem:cscayweight} since $v\in L^p_{k+1,\mu-\epsilon}(\hat{V})$ and by our choice of $\epsilon$. Therefore we have that $Dv=\hat{Q}(v)\in L^p_{k,\mu-1}(\hat{V})$, and it is orthogonal to $\text{Coker}_{\mu}D$ by \eqref{eqn:cokereq}. Therefore there exists $\bar{v}\in L^p_{k+1,\mu}(\hat{V})$ with $Dv=D\bar{v}$. But then we must have that $v-\bar{v}\in \text{Ker}_{\mu-\epsilon}D=\text{Ker}_\mu D$, since $[\mu-\epsilon,\mu]\cap \mathcal{D}=\emptyset$, and so $v\in L^p_{k+1,\mu}(\hat{V})$, as required.

Conversely, let $v\in L^p_{k+1,\mu}(\hat{V})$ satisfy $\hat{F}(v)=0$. Here we perform a trick similar to that in \cite[Prop 4.6]{MR2140999}. Taylor expanding $\hat{F}(v)$ around zero we can write $\hat{F}(v)$ as a polynomial in $v$ and $\nabla v$. Differentiating and gathering terms we can write
 \[
  \nabla\hat{F}(v)=L(x,v(x),\nabla v(x))\nabla^2v+E(x,v(x),\nabla v(x)).
 \]
Consider the second order elliptic linear operator 
\begin{align*}
 L_v: \nu_X(\hat{Y})&\to E, \\
 w&\mapsto L(x,v(x),\nabla v(x))\nabla^2w.
\end{align*}
By Sobolev embedding, we know that $v\in C^l_\mu(\hat{V})$, for $l\ge 2$ by choice of $p$ and $k$, and therefore the coefficients of the linear operator $L_v$ lie in $C^{l-1}_\text{loc}(\hat{V})$. Local regularity for linear elliptic operators with coefficients in H\"{o}lder spaces (a nice statement is given in \cite[Thm 1.4.2]{MR2292510}, taken from \cite[Thm 6.2.5]{MR0202511}) tells us that $v\in C^{l+1}_\text{loc}(\hat{V})$ which is an improvement on the regularity of $v$, and so bootstrapping we may deduce that $v\in C^\infty_\text{loc}(\hat{V})$. (This is why we must differentiate $\hat{F}(v)$, to ensure that the coefficients of the linear operator have enough regularity to improve the regularity of $v$.) Therefore the coefficients of the operator $L_v$ are smooth and so we may apply an estimate of Lockhart and McOwen \cite[Eq. 2.4]{MR837256} in combination with a change of coordinates which tells us that
\begin{equation}\label{eqn:linellipweight}
 \|v\|_{p,k+2,\mu}\le C(\|L_v v\|_{p,k,\mu-2}+\|v\|_{p,0,\mu}).
\end{equation}
Since $\hat{F}(v)=0=\nabla\hat{F}(v)$, we have that 
\[
L_vv=-E(x,v(x),\nabla v(x)).
\]
Since $E(x,v(x),\nabla v(x))$ is a polynomial in $v$ and $\nabla v$ with coefficients that depend on the $C^1_1$-norm of $v$, and $v\in C^1_\mu(\hat{V})$ and $L^p_{k+1,\mu}(\hat{V})$, we have that $E(x,v(x),\nabla v(x))\in L^p_{k,\mu-1}(E)\subseteq L^p_{k,\mu-2}(E)$. Therefore Equation \eqref{eqn:linellipweight} tells us that $v\in L^p_{k+2,\mu}(\hat{V})$, from which we may deduce that $v$ is in fact in $C^\infty_\mu(\hat{V})$.
\end{proof}

We may finally deduce the main theorem of this section, on the expected dimension of the moduli space of Cayley CS deformations of a CS Cayley submanifold of a $Spin(7)$-manifold $X$.
\begin{thm}\label{thm:cscaydefs}
 Let $Y$ be a CS Cayley submanifold at $\hat{x}$ with cone $C$ and rate $\mu\in(1,2)\backslash\mathcal{D}$ of a $Spin(7)$-manifold $X$. Let $D$ denote the first order elliptic differential operator defined in \eqref{eqn:caylin}. Then there exist a smooth manifold $\hat{K}_0$, which is an open neighbourhood of $0$ in the kernel of \eqref{eqn:dpkmu}, and a smooth map $\hat{g}_2$ from $\hat{K}_0$ into the cokernel of \eqref{eqn:dpkmu} with $\hat{g}_2(0)=0$ so that an open neighbourhood of $Y$ in the moduli space of CS Cayley deformations of $Y$ in $X$, $\hat{\mathcal{M}}_\mu(Y)$ from Definition \ref{defn:cscaymodsp}, is homeomorphic to an open neighbourhood of $0$ in $\textnormal{Ker }\hat{g}_2$. 

Moreover, the expected dimension of $\hat{\mathcal{M}}_\mu(Y)$ is given by the index of the linear elliptic operator
\begin{equation}\label{eqn:dpkmu}
 D:L^p_{k+1,\mu}(\nu_X(\hat{Y}))\to L^p_{k,\mu-1}(E).
\end{equation}
If the cokernel of \eqref{eqn:dpkmu} is $\{0\}$ then $\hat{\mathcal{M}}_\mu(Y)$ is a smooth manifold near $Y$ of the same dimension as the kernel of \eqref{eqn:dpkmu}.
 Here $\mathcal{D}$ is the set of weights $\mu\in\mathbb{R}$ for which \eqref{eqn:dpkmu} is not Fredholm from Proposition \ref{prop:lmco}.
\end{thm}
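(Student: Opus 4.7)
The plan is to set up a Lyapunov--Schmidt reduction for the map $\hat F$ of Proposition~\ref{prop:cscayop}, working in the weighted Sobolev category where the linearisation becomes Fredholm, and then appeal to the regularity result of Proposition~\ref{prop:weightelliptreg} to return to the smooth category.

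First I would fix $1<p<\infty$ and $k\in\mathbb N$ with $k>1+4/p$, and recall that by Proposition~\ref{prop:cscayop} combined with Proposition~\ref{prop:weightelliptreg}, a neighbourhood of $Y$ in $\hat{\mathcal M}_\mu(Y)$ corresponds bijectively to a neighbourhood of $0$ in
\[
\{v\in L^p_{k+1,\mu}(\hat V)\,:\,\hat F(v)=0\}.
\]
By Lemma~\ref{lem:cscayweight}, $\hat F\colon L^p_{k+1,\mu}(\hat V)\to L^p_{k,\mu-1}(E)$ is a smooth map of Banach spaces whose derivative at $0$ is the operator $D$ of \eqref{eqn:dpkmu}. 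Because $\mu\in(1,2)\setminus\mathcal D$, Proposition~\ref{prop:lmco} applied to $D$ (after verifying that $D$ is of the asymptotically translation invariant form required there, using the cone structure of $\hat Y$ near $\hat x$) guarantees that \eqref{eqn:dpkmu} is a Fredholm operator.

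Next I would run the standard Lyapunov--Schmidt reduction. Since $D$ is Fredholm we may write
\[
L^p_{k+1,\mu}(\nu_X(\hat Y))=\ker D\oplus Z,\qquad L^p_{k,\mu-1}(E)=\operatorname{im} D\oplus \hat{\mathcal O}_\mu,
\]
where $Z$ is a closed complement of $\ker D$ and $\hat{\mathcal O}_\mu$ is a finite-dimensional complement isomorphic to $\operatorname{coker} D$. Let $\pi_1$ and $\pi_2=\operatorname{id}-\pi_1$ denote the projections onto $\operatorname{im}D$ and $\hat{\mathcal O}_\mu$ respectively, and decompose $v=v_1+v_2$ with $v_1\in\ker D$, $v_2\in Z$. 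The map
\[
(v_1,v_2)\mapsto \pi_1\hat F(v_1+v_2)
\]
is smooth between Banach spaces, vanishes at $(0,0)$, and its derivative in the $v_2$ variable at the origin is $D|_Z\colon Z\to\operatorname{im}D$, an isomorphism. By the Banach space implicit function theorem there exist an open neighbourhood $\hat K_0$ of $0$ in $\ker D$ and a unique smooth map $\hat g_1\colon\hat K_0\to Z$ with $\hat g_1(0)=0$ such that $\pi_1\hat F(v_1+\hat g_1(v_1))=0$ for all $v_1\in\hat K_0$. Setting
\[
\hat g_2\colon\hat K_0\to\hat{\mathcal O}_\mu,\qquad \hat g_2(v_1):=\pi_2\hat F(v_1+\hat g_1(v_1)),
\]
we obtain a smooth obstruction map with $\hat g_2(0)=0$, and an open neighbourhood of $0$ in the zero set of $\hat F$ is in bijection with an open neighbourhood of $0$ in $\ker\hat g_2$, establishing the first claim.

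Finally I would extract the dimension statement. The bijection above, together with Proposition~\ref{prop:weightelliptreg}, shows that $\hat{\mathcal M}_\mu(Y)$ near $Y$ is homeomorphic to $\ker\hat g_2\subseteq\hat K_0$; since $\hat K_0$ is a manifold of dimension $\dim\ker D$ and $\hat g_2$ takes values in a space of dimension $\dim\operatorname{coker}D$, the expected dimension is
\[
\dim\ker D-\dim\operatorname{coker}D=\operatorname{ind} D,
\]
the index of \eqref{eqn:dpkmu}. When $\operatorname{coker}D=\{0\}$ the target of $\hat g_2$ is trivial, so $\ker\hat g_2=\hat K_0$ is a smooth manifold of dimension $\dim\ker D$. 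The main obstacle I anticipate is bookkeeping rather than conceptual: one must check that the smooth structure on $\ker D$ and the smoothness of $\hat F$ are compatible with the regularity upgrade of Proposition~\ref{prop:weightelliptreg}, so that the finite-dimensional model $\ker\hat g_2$ genuinely parametrises smooth CS Cayley deformations rather than merely $L^p_{k+1,\mu}$ ones; this is precisely what Proposition~\ref{prop:weightelliptreg} and the independence of the model from $(p,k)$ deliver.
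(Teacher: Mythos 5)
Your proposal is correct and follows essentially the same route as the paper's: reduce to the zero set of $\hat F$ on weighted Sobolev spaces via Propositions~\ref{prop:cscayop} and~\ref{prop:weightelliptreg}, use that $D$ is Fredholm for $\mu\notin\mathcal D$, and carry out a Lyapunov--Schmidt reduction to a finite-dimensional obstruction map $\hat g_2\colon\hat K_0\to\hat{\mathcal O}_\mu$. The paper packages the reduction by introducing the auxiliary map $\hat{\mathcal F}(v,w)=\hat F(v)+w$ with surjective linearisation rather than first projecting onto $\operatorname{im}D$, but this is a cosmetic variant of the same argument.
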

\begin{proof}
 By Propositions \ref{prop:cscayop} and \ref{prop:weightelliptreg}, we can identify $\hat{\mathcal{M}}_\mu(Y)$ near $Y$ with the kernel of the operator
\[
 \hat{F}:L^p_{k+1,\mu}(\hat{V})\to L^p_{k,\mu-1}(E).
\]
The linearisation of $\hat{F}$ at zero is the operator
\begin{equation}\label{eqn:dmuthm}
 D:L^p_{k+1,\mu}(\nu_X(\hat{Y}))\to L^p_{k,\mu-1}(E),
\end{equation}
which is elliptic. Since $\mu\notin \mathcal{D}$, \eqref{eqn:dmuthm} is Fredholm. Therefore we may decompose
\[
L^p_{k+1,\mu}(\nu_X(\hat{Y}))=\hat{K}'\oplus \hat{X}',
\]
where $\hat{K}'$ is the kernel of \eqref{eqn:dmuthm} and $\hat{X}'$ is closed, and
\[
L^p_{k,\mu-1}(E)=D(L^p_{k+1,\mu}(\nu_X(\hat{Y})))\oplus \hat{\mathcal{O}}_\mu,
\]
where $\hat{\mathcal{O}}_\mu$ is the finite-dimensional obstruction space and
\[
 \hat{\mathcal{O}}_\mu\cong L^p_{k,\mu-1}(E)/D(L^p_{k+1,\mu}(\nu_X(\hat{Y})))=:\text{Coker}_\mu D.
\]
Then the map
\begin{align*}
 \hat{\mathcal{F}}:L^p_{k+1,\mu}(\hat{V})\times \hat{\mathcal{O}}_\mu&\to L^p_{k,\mu-1}(E), \\
(v,w)&\mapsto \hat{F}(v)+w,
\end{align*}
has
\begin{equation}\label{eqn:obsmaphat}
 d\hat{\mathcal{F}}|_{(0,0)}(v,w)=Dv+w,
\end{equation}
which is surjective. Write $\hat{K}=\hat{K}\times \{0\}$ for the kernel of \eqref{eqn:obsmaphat}. We then have that
\[
 L^p_{k+1,\mu}(\nu_X(\hat{Y}))\times \hat{\mathcal{O}}_\mu=\hat{K}\oplus (\hat{X}'\times \hat{\mathcal{O}}_\mu).
\]
Now we may apply the Banach space implicit function theorem to find $\hat{K}_0\subseteq \hat{K}$ containing zero, $\hat{X}'_0\subseteq \hat{X}'$, $\hat{\mathcal{O}}_0\subseteq \hat{\mathcal{O}}_\mu$ and a smooth map $\hat{g}=(\hat{g}_1,\hat{g_2}):\hat{K}_0\to \hat{X}'_0\times \hat{\mathcal{O}}_0$ so that
\[
 \hat{\mathcal{F}}^{-1}(0)\cap (\hat{K}_0\times \hat{X}'_0\times \hat{\mathcal{O}}_0)=\{(x,\hat{g}_1(x),\hat{g}_2(x)) \, | \, x\in \hat{K}_0\}.
\]
So we may identify the kernel of $\hat{F}$, and therefore $\hat{M}_\mu(Y)$ with the kernel of $\hat{g}_2:\hat{K}_0\to \hat{\mathcal{O}}_0$, a smooth map between finite-dimensional spaces (since \eqref{eqn:dmuthm} is Fredholm). Sard's theorem tells us that the expected dimension of the kernel of $\hat{g}_2$ is given by the index of the operator \eqref{eqn:dmuthm}. 
\end{proof}

\subsection{Cayley deformations of a CS complex surface}\label{ss:cscompcay}

In this section we prove Theorem \ref{thm:cscxcaydefs} which gives the expected dimension of the moduli space of CS Cayley deformations of a two-dimensional conically singular complex submanifold $N$ of a Calabi--Yau four-fold $M$ in terms of the index of the operator $\infop$ acting on weighted sections of a vector bundle over $\hat{N}$ (the nonsingular part of $N$). 

\subsubsection{Deformation problem}\label{ss:cscxdefprob}

We would like to study the moduli space given in Definition \ref{defn:cscaymodsp} for the CS Cayley submanifold $N$ that is a complex submanifold of a Calabi--Yau four-fold $M$. We will now identify this moduli space with the kernel of a nonlinear partial differential operator.
\begin{prop}\label{prop:cscaylin2}
 Let $N$ be a CS complex surface at $\hat{x}$ with cone $C$ and rate $\mu\in(1,2)$ inside a Calabi-Yau four-fold $M$. Write $\hat{N}:=N\backslash \{\hat{x}\}$. Then the moduli space of CS Cayley deformations of $N$ in $M$, $\hat{\mathcal{M}}_\mu(N)$, can be identified with the kernel of the operator
\begin{equation*}
\hat{F}^\textnormal{cx}:C^\infty_\mu(\hat{U})\to C^\infty_\textnormal{loc}(\Lambda^{0,1}\hat{N}\otimes \nu^{1,0}_M(\hat{N})),
\end{equation*}
where $\hat{U}\subseteq \nu^{1,0}_M(\hat{N})\oplus \Lambda^{0,2}\hat{N}\otimes \nu^{1,0}_M(\hat{N})$ is the image of $\hat{V}\otimes \mathbb{C}$ from the tubular neighbourhood theorem under the isomorphism given in Proposition \ref{prop:bundleisom}, and $\hat{F}^\textnormal{cx}$ is defined so that the following diagram commutes
\[
 \begin{tikzcd}
 C^\infty(U) \arrow{r}{F^\textnormal{cx}}\arrow[swap]{d}{} & C^\infty(\Lambda^{0,1}N\otimes \nu^{1,0}_M(N)) \arrow{d}{}\\
C^\infty(V\otimes \mathbb{C}) \arrow{r}{F}  & C^\infty(E\otimes \mathbb{C})
\end{tikzcd}
\]
where $\hat{F}$ is the operator defined in Proposition \ref{prop:cscayop} and we use the isomorphisms given in Proposition \ref{prop:bundleisom}.

Moreover, the linearisation of $\hat{F}^\textnormal{cx}$ at zero is the operator
\begin{equation}\label{eqn:lincscxcay}
 \bar\partial+\bar\partial^*:C^\infty_\mu(\nu_M^{1,0}(\hat{N})\oplus \Lambda^{0,2}\hat{N}\otimes \nu^{1,0}_M(\hat{N}))\to C^\infty_\textnormal{loc}(\Lambda^{0,1}\hat{N}\otimes \nu^{1,0}_M(\hat{N})).
\end{equation}
\end{prop}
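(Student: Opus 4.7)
The plan is to obtain this result as a direct translation of Proposition \ref{prop:cscayop} via the complex bundle isomorphisms of Proposition \ref{prop:bundleisom}, followed by reading off the linearisation from Proposition \ref{prop:caylin2}. In other words, both claims of this proposition amount to checking that the compact-case arguments go through pointwise on the smooth part $\hat{N}$, and that passing to the complex picture respects the weighted spaces.

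First I would observe that the maps
\[
\nu_M(\hat{N})\otimes\mathbb{C}\xrightarrow{\sim}\nu^{1,0}_M(\hat{N})\oplus \Lambda^{0,2}\hat{N}\otimes \nu^{1,0}_M(\hat{N}),\qquad E\otimes\mathbb{C}\xrightarrow{\sim}\Lambda^{0,1}\hat{N}\otimes \nu^{1,0}_M(\hat{N})
\]
of Proposition \ref{prop:bundleisom} are defined algebraically from $\Omega$ and the Hermitian metric, and so make equally good sense on the noncompact smooth part $\hat{N}$. Because the K\"ahler data on $M$ is smooth and bounded on a fixed neighbourhood of $\hat{x}$, these isomorphisms are uniformly bounded with uniformly bounded inverse with respect to the Hermitian fibre metrics; hence they induce topological isomorphisms between the weighted spaces $C^\infty_\mu$ (and likewise $C^\infty_{\mathrm{loc}}$) on the two sides. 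Complexifying $\hat{F}$ and composing with these isomorphisms therefore defines $\hat{F}^{\mathrm{cx}}$ as a smooth map $C^\infty_\mu(\hat U)\to C^\infty_{\mathrm{loc}}(\Lambda^{0,1}\hat N\otimes\nu^{1,0}_M(\hat N))$ making the diagram commute, and its kernel is in bijection with the kernel of $\hat{F}$. Together with Proposition \ref{prop:cscayop}, this identifies $\ker\hat{F}^{\mathrm{cx}}$ with an open neighbourhood of $N$ in $\hat{\mathcal{M}}_\mu(N)$.

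For the linearisation, I would use that $d\hat{F}|_0$ is a first-order linear differential operator, computed pointwise from the $\mathrm{Spin}(7)$-data and the second fundamental form of $\hat{N}$ in $M$; the calculation carried out in Proposition \ref{prop:caylin} depends only on these pointwise quantities and on $\nabla\Phi=0$, all of which hold on the smooth part $\hat{N}$ exactly as on a compact complex surface. Therefore $d\hat{F}|_0$ on $C^\infty_\mu$ is literally the operator $D$ of \eqref{eqn:caylin} restricted to sections on $\hat{N}$ of the appropriate weight. Applying the pointwise identification of Proposition \ref{prop:caylin2} on $\hat{N}$, this operator is conjugated by the bundle isomorphisms above to $\bar\partial+\bar\partial^*$, yielding \eqref{eqn:lincscxcay}.

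The only genuine point to verify, and the one I expect to require a little care, is that the differential $d\hat F^{\mathrm{cx}}|_0$ computed via the commutative diagram really equals the conjugate of $D$ by the (non-constant-coefficient) bundle isomorphisms, rather than picking up extra zeroth-order terms. Since the isomorphisms are $C^\infty$ bundle maps and $\hat{F}^{\mathrm{cx}}(0)=0$, the chain rule at $0$ gives precisely conjugation by the linearisations of the bundle isomorphisms at the zero section, which are the isomorphisms themselves; no extra terms appear, and the linearisation is as claimed. The weighted mapping statement then follows because both $D$ and the algebraic bundle isomorphisms drop the weight by exactly one and zero respectively, so that $d\hat F^{\mathrm{cx}}|_0$ sends $C^\infty_\mu$ to $C^\infty_{\mathrm{loc}}$ as stated.
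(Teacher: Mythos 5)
Your proof is correct and follows essentially the same route as the paper's: identify $\ker\hat{F}^{\textnormal{cx}}$ with $\ker\hat{F}$ via the isomorphisms of Proposition \ref{prop:bundleisom} and invoke Proposition \ref{prop:cscayop}, then observe that the linearisation computation of Propositions \ref{prop:caylin} and \ref{prop:caylin2} is purely local and so carries over to $\hat{N}$. You fill in more explicitly than the paper does the (correct) points about boundedness of the bundle isomorphisms with respect to the fibre metrics and the chain-rule verification that no extra zeroth-order terms arise, but the argument is the same.
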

\begin{proof}
By Proposition \ref{prop:cscayop} we can identify the moduli space of CS Cayley deformations of $N$ in $M$ with the kernel of $\hat{F}$, which is the same as the kernel of $\hat{F}^\text{cx}$. 

Since the linearisation of the operator of $\hat{F}$ is given by the operator $D$ defined in Proposition \ref{prop:caylin}, the local argument of Proposition \ref{prop:caylin2} still holds, and so we see that the linearisation of $\hat{F}^\text{cx}$ at zero is given by the operator \eqref{eqn:lincscxcay} as claimed.
\end{proof}

\subsubsection{Cayley deformations of a CS complex surface}\label{ss:caydefscx}

In this section, we will give analogies of the results of Section \ref{sec:cscay}, which were on analytic properties of the operator $\hat{F}$ defined in Proposition \ref{prop:cscayop}, for the operator $\hat{F}^\text{cx}$ defined in Proposition \ref{prop:cscaylin2}. Due to the relation between the operators $\hat{F}$ and $\hat{F}^\text{cx}$ noted in the proof of Proposition \ref{prop:cscaylin2}, these results follow immediately from their counterparts.

\begin{lem}\label{lem:cscxcayweight}
Let $N$ be a conically singular complex surface inside a Calabi--Yau four-fold $M$. Let $\hat{F}^\textnormal{cx}$ be the operator defined in Proposition \ref{prop:cscaylin2}. Then we can write
\begin{equation}\label{eqn:cxqdefn}
 \hat{F}^\textnormal{cx}(w)(x)=(\infop)w(x)+\hat{Q}^\textnormal{cx}(x,w(x),\nabla w(x)),
\end{equation}
for $x\in\hat{N}$, where
\begin{align*}
 \hat{Q}^\textnormal{cx}&:\{(x,y,z) \,|\, (x,y)\in \hat{U}, z\in \left[\nu_x^{1,0}(\hat{N})\oplus \Lambda^{0,2}_x\hat{N}\otimes \nu^{1,0}_x(\hat{N})\right]\otimes T^*_x\hat{N}\} \\
&\to \Lambda^{0,1}\hat{N}\otimes \nu^{1,0}_M(\hat{N}),
\end{align*}
is smooth and $\hat{Q}^\textnormal{cx}(w)(x):=\hat{Q}^\textnormal{cx}(x,w(x),\nabla w(x))$ is a section of $\Lambda^{0,1}\hat{N}\otimes \nu_M^{1,0}(\hat{N})$. Let $\mu>1$. Then for each $k\in\mathbb{N}$, for $w\in C^{k+1}_\mu(\hat{U})$ with $\|w\|_{C^1_1}$ sufficiently small, there exist constants $C_k>0$ so that
\begin{equation}\label{eqn:cxqest}
 \|\hat{Q}^\textnormal{cx}(w)\|_{C^{k}_{2\mu-2}}\le C_k\|w\|_{C^{k+1}_\mu}^2,
\end{equation}
and if $w\in L^p_{k+1,\mu}(\hat{U})$ with $\|w\|_{C^1_1}$ sufficiently small, there exist constants $D_k>0$ such that
\begin{equation} \label{eqn:cxqpk}
\|\hat{Q}^\textnormal{cx}(w)\|_{p,k,2\mu-2}\le D_k\|w\|_{p,k+1,\mu}^2.
\end{equation}
Moreover, we may deduce that
\begin{equation}\label{eqn:cscxcaysob}
 \hat{F}^\textnormal{cx}:L^p_{k+1,\mu}(\hat{U})\to L^p_{k,\mu-1}(E),
\end{equation}
is a smooth map of Banach spaces for any $1<p<\infty$ and $k\in \mathbb{N}$ with $k>1+4/p$.
\end{lem}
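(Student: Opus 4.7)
The plan is to reduce everything to Lemma \ref{lem:cscayweight} by transporting estimates across the bundle isomorphisms of Proposition \ref{prop:bundleisom}. Recall that $\hat{F}^{\text{cx}}$ was defined precisely so that the square in Proposition \ref{prop:cscaylin2} commutes: if $\Theta_1:\nu_M(\hat N)\otimes\mathbb{C}\to \nu^{1,0}_M(\hat N)\oplus \Lambda^{0,2}\hat N\otimes\nu^{1,0}_M(\hat N)$ and $\Theta_2:E\otimes\mathbb{C}\to \Lambda^{0,1}\hat N\otimes\nu^{1,0}_M(\hat N)$ are the smooth bundle isomorphisms given by Proposition \ref{prop:bundleisom}, then
\[
\hat{F}^{\text{cx}}(w)=\Theta_2\circ \hat{F}(\Theta_1^{-1}w).
\]
Since $\Theta_1$ and $\Theta_2$ are smooth bundle isomorphisms with all derivatives bounded on $\hat N$ (they are constructed algebraically from the parallel tensors $\omega$ and $\Omega$), each of the weighted norms $C^k_\lambda$ and $L^p_{k,\lambda}$ of a section is equivalent to the corresponding weighted norm of its image under $\Theta_1$ or $\Theta_2$, with constants depending only on $k$.

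First I would use Proposition \ref{prop:caylin2} and Proposition \ref{prop:cscaylin2} to identify the linearisation of $\hat{F}^{\text{cx}}$ at zero with $\bar\partial+\bar\partial^*$, and then define
\[
\hat{Q}^{\text{cx}}(w):=\hat{F}^{\text{cx}}(w)-(\bar\partial+\bar\partial^*)w.
\]
Smoothness of $\hat{Q}^{\text{cx}}$ as a map of the stated bundles is immediate from the smoothness of $\hat F$, $\Theta_1$, $\Theta_2$ together with the fact that $\hat{Q}^{\text{cx}}$ vanishes to second order at $w=0$ (its zeroth and first order parts in $w$ are both zero by construction). Writing $v=\Theta_1^{-1}w$, one has
\[
\hat{Q}^{\text{cx}}(w)=\Theta_2\bigl(\hat F(v)-Dv\bigr)=\Theta_2\hat Q(v),
\]
so the pointwise identity $\hat Q^{\text{cx}}(w)=\Theta_2\hat Q(\Theta_1^{-1}w)$ transfers estimates for $\hat Q$ to estimates for $\hat Q^{\text{cx}}$.

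Next I would derive the estimates. Since $\|\Theta_1^{-1}w\|_{C^{k+1}_\mu}\le C\|w\|_{C^{k+1}_\mu}$ and $\|\Theta_2\eta\|_{C^k_{2\mu-2}}\le C\|\eta\|_{C^k_{2\mu-2}}$ (and analogously for the $L^p_{k,\lambda}$ norms), the quadratic estimate \eqref{eqn:qest} of Lemma \ref{lem:cscayweight} gives, whenever $\|w\|_{C^1_1}$ is small enough (which via $\Theta_1^{-1}$ corresponds to $\|\Theta_1^{-1}w\|_{C^1_1}$ being small),
\[
\|\hat Q^{\text{cx}}(w)\|_{C^k_{2\mu-2}}\le C\|\hat Q(\Theta_1^{-1}w)\|_{C^k_{2\mu-2}}\le C_k\|w\|_{C^{k+1}_\mu}^2,
\]
which is \eqref{eqn:cxqest}, and similarly \eqref{eqn:qestpk} yields \eqref{eqn:cxqpk}. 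Finally, \eqref{eqn:cscaysob} from Lemma \ref{lem:cscayweight} combined with boundedness of $\Theta_1^{-1}$ and $\Theta_2$ between the relevant weighted Sobolev spaces shows that \eqref{eqn:cscxcaysob} is a smooth map of Banach spaces.

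There is essentially no hard step: the work was done in Lemma \ref{lem:cscayweight}, and the remaining content is bookkeeping of norm equivalences under smooth parallel bundle isomorphisms. The only small point to check is that the isomorphisms $\Theta_1,\Theta_2$ and their inverses do not disturb the weights, which is clear because they are constructed from parallel tensors and hence act fibrewise in an isometric-up-to-constants fashion, with derivatives contributing only lower order terms that are controlled in the same weighted spaces.
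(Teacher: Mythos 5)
Your proposal is correct and takes essentially the same route as the paper: both reduce to Lemma \ref{lem:cscayweight} by observing that $\hat{F}^{\text{cx}}$ is $\hat{F}$ conjugated by the bundle isomorphisms of Proposition \ref{prop:bundleisom}, which preserve the weighted norms (the paper records that they are in fact isometries, so your ``isometric-up-to-constants'' bookkeeping is a slight understatement, though equally sufficient).
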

\begin{proof}
 Since $\hat{F}^\text{cx}$ is defined by composing the operator $\hat{F}$ defined in Proposition \ref{prop:cscayop} with isomorphisms of vector bundles, the estimates \eqref{eqn:cxqest} and \eqref{eqn:cxqpk} follow from the estimates \eqref{eqn:qest} and \eqref{eqn:qestpk} respectively since the isomorphisms defined in Proposition \ref{prop:bundleisom} are isometries.
 
Moreover, since these isomorphisms are smooth, the claim that \eqref{eqn:cscxcaysob} is a smooth map of Banach spaces follows from the corresponding fact for $\hat{F}$ from Lemma \ref{lem:cscayweight}.
\end{proof}

We may now give a weighted elliptic regularity result for $\hat{F}^\textnormal{cx}$.
\begin{prop}\label{prop:cxweightelliptreg}
 Let $N$ be a conically singular complex surface inside a Calabi--Yau four-fold $M$. Let $\hat{F}^\textnormal{cx}$ be the map defined in Proposition \ref{prop:cscaylin2}. Then 
\[
 \{w\in C^\infty_\mu(\hat{U}) \, |\, \hat{F}^\textnormal{cx}(w)=0\} \cong \{w\in L^p_{k+1,\mu}(\hat{U})\, | \, \hat{F}^\textnormal{cx}(w)=0\},
\]
for any $\mu\in (1,2)\backslash\mathcal{D}$, $1<p<\infty$ and $k\in\mathbb{N}$. Here $\mathcal{D}$ is the set of exceptional weights given by applying Proposition \ref{prop:lmco} to the linear part of $\hat{F}^\textnormal{cx}$.
\end{prop}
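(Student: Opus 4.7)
The plan is to replay the two-step argument of Proposition \ref{prop:weightelliptreg} with $\infop$ playing the role of $D$ and $\hat{Q}^{\mathrm{cx}}$ playing the role of $\hat{Q}$, leveraging the analytic estimates already established in Lemma \ref{lem:cscxcayweight} and the Fredholm/cokernel machinery of Proposition \ref{prop:lmco} applied to the linearisation $\infop$ appearing in \eqref{eqn:lincscxcay}. Since the only structural inputs used in the earlier proof were (i) the Fredholmness of the linear part off the exceptional set, (ii) the quadratic weighted estimates on the nonlinear remainder, and (iii) local elliptic regularity of the linearisation, each of which is available here verbatim, the present statement should follow by the same two implications.

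For the implication $C^\infty_\mu \Rightarrow L^p_{k+1,\mu}$, I would proceed as follows. Given $w\in C^\infty_\mu(\hat U)$ with $\hat F^{\mathrm{cx}}(w)=0$, I pick $\epsilon\in(0,(\mu-1)/2)$ small enough that $[\mu-\epsilon,\mu]\cap\mathcal{D}=\emptyset$; this is possible because $\mathcal{D}$ is discrete. Because $C^\infty_\mu\subseteq L^p_{k+1,\mu-\epsilon}$, the quadratic bound \eqref{eqn:cxqpk} gives
\[
(\infop)w=-\hat Q^{\mathrm{cx}}(w)\in L^p_{k,2\mu-2-2\epsilon}\subseteq L^p_{k,\mu-1},
\]
where the inclusion uses $\epsilon<(\mu-1)/2$. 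By \cite[Lem 7.1]{MR837256}, the cokernels of $\infop$ at the weights $\mu-\epsilon$ and $\mu$ agree, since no exceptional weight lies between them. Hence $(\infop)w$, being orthogonal to the cokernel at weight $\mu-\epsilon$, is also orthogonal to the cokernel at weight $\mu$, so there exists $\bar w\in L^p_{k+1,\mu}(\hat U)$ with $(\infop)\bar w=(\infop)w$. Then $w-\bar w$ lies in $\ker_{\mu-\epsilon}(\infop)=\ker_\mu(\infop)$, yielding $w\in L^p_{k+1,\mu}(\hat U)$ as required.

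For the reverse implication $L^p_{k+1,\mu}\Rightarrow C^\infty_\mu$, I mimic the trick of \cite[Prop 4.6]{MR2140999} as done in Proposition \ref{prop:weightelliptreg}. Taylor-expanding $\hat F^{\mathrm{cx}}$ around zero and differentiating once produces an equation
\[
\nabla\hat F^{\mathrm{cx}}(w)=L(x,w,\nabla w)\nabla^2 w+E(x,w,\nabla w)=0
\]
for some linear second-order elliptic operator $L_w$ whose coefficients depend smoothly on $(w,\nabla w)$. Sobolev embedding promotes $w$ to $C^l_\mu$ for some $l\ge 2$, so the coefficients of $L_w$ lie in $C^{l-1}_{\mathrm{loc}}$, and interior Schauder-type regularity (e.g.\ \cite[Thm 1.4.2]{MR2292510}) bootstraps $w$ to $C^\infty_{\mathrm{loc}}$. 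I then apply the Lockhart--McOwen weighted estimate
\[
\|w\|_{p,k+2,\mu}\le C\bigl(\|L_w w\|_{p,k,\mu-2}+\|w\|_{p,0,\mu}\bigr),
\]
combined with $L_w w=-E(x,w,\nabla w)\in L^p_{k,\mu-1}\subseteq L^p_{k,\mu-2}$ (again from the quadratic estimate \eqref{eqn:cxqpk}), to promote $w$ to $L^p_{k+2,\mu}(\hat U)$, and iterate to conclude $w\in C^\infty_\mu(\hat U)$.

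The main technical point, common to both implications, is the weight bookkeeping around the quadratic remainder $\hat Q^{\mathrm{cx}}(w)$: the estimate from Lemma \ref{lem:cscxcayweight} delivers the doubled decay rate $2\mu-2$, and the key inequality $2\mu-2-2\epsilon\ge \mu-1$ (equivalent to $\epsilon\le(\mu-1)/2$) is precisely what lets the cokernel-comparison argument close. Everything else is a direct transcription of the $\hat F$ case, made legitimate by the observation in Lemma \ref{lem:cscxcayweight} that $\hat F^{\mathrm{cx}}$ is obtained from $\hat F$ by composing with isometric bundle isomorphisms (Proposition \ref{prop:bundleisom}), which preserve the weighted Sobolev and $C^k$ norms.
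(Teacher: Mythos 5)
Your proof is correct, but it takes a longer route than the paper. The paper's own proof of this proposition is a two-line reduction: since $\hat{F}^{\text{cx}}$ is obtained from $\hat{F}$ by pre- and post-composing with the isometric bundle isomorphisms of Proposition \ref{prop:bundleisom}, and these isometries preserve both the $C^\infty_\mu$ and $L^p_{k+1,\mu}$ norms, the kernels of $\hat{F}$ and $\hat{F}^{\text{cx}}$ in the respective function spaces correspond bijectively, so the statement follows immediately from Proposition \ref{prop:weightelliptreg}. You instead replay the full two-step argument of that proposition with $\infop$ and $\hat{Q}^{\text{cx}}$ substituted for $D$ and $\hat{Q}$, invoking the quadratic estimates of Lemma \ref{lem:cscxcayweight} and the cokernel comparison from Lockhart--McOwen for the first implication, and the differentiated-bootstrap trick for the second. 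Your version is more explicit and self-contained; the paper's is more economical. You do gesture at the shortcut in your closing paragraph, so you clearly recognise both routes are available; but note that once the isometry observation is in hand, none of the weight bookkeeping needs to be repeated — the reduction is purely formal.
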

\begin{proof}
 This follows from Proposition \ref{prop:weightelliptreg} in combination with the fact that the kernels of $\hat{F}$, defined in Proposition \ref{prop:cscayop}, and $\hat{F}^\text{cx}$ are isomorphic by definition, and the isomorphism given in Proposition \ref{prop:bundleisom} is an isometry.
\end{proof}

We deduce the following theorem on the moduli space of CS Cayley deformations of a CS complex surface inside a Calabi--Yau four-fold. This theorem can be proved by an identical argument to the proof of Theorem \ref{thm:cscaydefs}, but we will deduce it as a corollary of Theorem \ref{thm:cscaydefs}.
\begin{thm}\label{thm:cscxcaydefs}
 Let $N$ be a CS complex surface at $\hat{x}$ with cone $C$ and rate $\mu\in(1,2)\backslash\mathcal{D}$ of a Calabi--Yau four-fold $M$. Then the expected dimension of $\hat{\mathcal{M}}_\mu(N)$ is given by the index of the linear elliptic operator
\begin{equation}\label{eqn:dpkmucx}
 \infop:L^p_{k+1,\mu}(\nu_M^{1,0}(\hat{N})\oplus \Lambda^{0,2}\hat{N}\otimes \nu_M^{1,0}(\hat{N}))\to L^p_{k,\mu-1}(\Lambda^{0,1}\hat{N}\otimes \nu^{1,0}_M(\hat{N})).
\end{equation}
Moreover if the cokernel of \eqref{eqn:dpkmucx} is $\{0\}$ then $\hat{\mathcal{M}}_\mu(N)$ is a smooth manifold near $N$ of the same dimension as the (complex) dimension of the kernel of \eqref{eqn:dpkmucx}.
Here $\mathcal{D}$ is the set of weights $\mu\in\mathbb{R}$ for which \eqref{eqn:dpkmu} is not Fredholm from Proposition \ref{prop:lmco}.
\end{thm}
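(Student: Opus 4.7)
The proof proceeds by essentially repeating the argument of Theorem \ref{thm:cscaydefs} for the operator $\hat{F}^{\textnormal{cx}}$, and all the preparatory work has already been carried out. Proposition \ref{prop:cscaylin2} identifies $\hat{\mathcal{M}}_\mu(N)$ near $N$ with $\textnormal{Ker}\,\hat{F}^{\textnormal{cx}}$ and computes its linearisation at zero as $\infop$; Lemma \ref{lem:cscxcayweight} provides the quadratic estimate for the nonlinear part $\hat{Q}^{\textnormal{cx}}$ and shows that \eqref{eqn:cscxcaysob} is a smooth map of Banach spaces; and Proposition \ref{prop:cxweightelliptreg} gives weighted elliptic regularity for solutions. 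Hence every ingredient used in the Banach-space implicit function theorem argument of Theorem \ref{thm:cscaydefs} is available verbatim in the complex setting.

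The efficient route, as flagged in the statement, is to transfer Theorem \ref{thm:cscaydefs} directly across the isometric bundle isomorphisms of Proposition \ref{prop:bundleisom}. Under those isomorphisms the real operator $D$ of Proposition \ref{prop:caylin} corresponds to $\infop$ of Proposition \ref{prop:caylin2}, so the Fredholm kernels and cokernels are identified, and the smooth obstruction map $\hat{g}_2$ produced by Theorem \ref{thm:cscaydefs} transports to a smooth map $\hat{g}_2^{\textnormal{cx}}$ between $\textnormal{Ker}\,\infop$ and $\textnormal{Coker}\,\infop$ whose zero set is locally homeomorphic to $\hat{\mathcal{M}}_\mu(N)$. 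Sard's theorem then yields the expected dimension as the Fredholm index of \eqref{eqn:dpkmucx}, and when the cokernel vanishes $\hat{g}_2^{\textnormal{cx}} \equiv 0$, so the moduli space is smoothly modelled on $\textnormal{Ker}\,\infop$.

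The only point genuinely requiring care is the reconciliation of real and complex dimensions. Because $\infop$ is $\mathbb{C}$-linear on complex sections, its kernel carries a natural complex structure; since the isomorphism of Proposition \ref{prop:bundleisom} is the complexification of the real normal bundle, the real dimension of $\textnormal{Ker}\,D$ equals the complex dimension of $\textnormal{Ker}\,\infop$ (and similarly for cokernels). This gives the stated count that the real dimension of $\hat{\mathcal{M}}_\mu(N)$ equals $\dim_\mathbb{C}\textnormal{Ker}\,\infop$, matching the expression in the theorem. The main obstacle is essentially this bookkeeping across real and complex linear structures; no new analytic input beyond the lemmas already proved is needed.
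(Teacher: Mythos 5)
Your proposal is correct and takes essentially the same route as the paper: the paper also deduces the result as a corollary of Theorem \ref{thm:cscaydefs} by observing that \eqref{eqn:dpkmucx} is the composition of \eqref{eqn:dpkmu} with the isometric bundle isomorphisms of Proposition \ref{prop:bundleisom}, hence the indices coincide (and the paper even notes in passing that one could alternatively rerun the implicit-function-theorem argument directly for $\hat{F}^{\textnormal{cx}}$, as your first paragraph sketches). Your final paragraph, spelling out that $\ker D\otimes\mathbb{C}\cong\ker(\infop)$ so $\dim_{\mathbb{R}}\ker D=\dim_{\mathbb{C}}\ker(\infop)$, makes explicit a real-versus-complex bookkeeping point that the paper's one-line proof invokes in the theorem statement but leaves implicit.
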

\begin{proof}
 By Theorem \ref{thm:cscaydefs}, the expected dimension of $\hat{\mathcal{M}}_\mu(N)$ is given by the index of the operator \eqref{eqn:dpkmu}. Since, by Proposition \ref{prop:caylin2} we can consider the operator \eqref{eqn:dpkmucx} as the composition of the operator \eqref{eqn:dpkmu} with the isomorphisms from Proposition \ref{prop:bundleisom}, which are isometries, we may deduce that the index of \eqref{eqn:dpkmu} and \eqref{eqn:dpkmucx} are equal.
\end{proof}

\subsection{Complex deformations of a CS complex surface}\label{sec:cscomp}
In this section, we will compare the CS complex and Cayley deformations of a CS complex surface inside a four-dimensional Calabi--Yau manifold.

\begin{defn}\label{defn:cxcsmodsp}
 Let $N$ be a CS complex surface at $\hat{x}$ with rate $\mu$ and cone $C$ inside a Calabi--Yau manifold $M$ with respect to some $Spin(7)$-coordinate system $\chi$, and denote by $\hat{C}$ the tangent cone of $N$. Write $\hat{N}:=N\backslash \{\hat{x}\}$. Define the \emph{moduli space of conically singular (CS) complex deformations of $N$ in $M$}, $\hat{\mathcal{M}}^\text{cx}_\mu(N)$, to be the set of CS complex surfaces $N'$ at $\hat{x}$ with cone $C$, rate $\mu$ and tangent cone $\hat{C}$ of $M$ so that there exists a continuous family of topological embeddings $\iota_t:N\to M$ with $\iota_0(N)=N$ and $\iota_1(N)=N'$, so that $\iota_t(\hat{x})=\hat{x}$ for all $t\in [0,1]$ and so that $\hat{\iota}_t:=\iota_t|_{\hat{N}}$ is a smooth family of embeddings $\hat{N}\to X$ with $\hat{\iota}_0(\hat{N})=\hat{N}$ and $\hat{\iota}_1({\hat{N}})=\hat{N}':=N'\backslash\{\hat{x}\}$.
\end{defn}
We will now identify the moduli space of CS complex deformations of a CS complex surface in a Calabi--Yau manifold $M$ with the kernel of a nonlinear partial differential operator.
\begin{prop}\label{prop:cscxop}
 Let $N$ be a conically singular complex surface at $\hat{x}$ with rate $\mu$ and cone $C$ inside a Calabi--Yau four-fold $M$. Write $\hat{N}:=N\backslash\{\hat{x}\}$. Let $\hat{V}\subseteq \nu_M(\hat{N})\otimes\mathbb{C}$ be the open set and $\hat{\Xi}:\hat{V}\to \hat{T}$ the diffeomorphism defined in the tubular neighbourhood theorem \ref{prop:cstubnbhd}. For $v\in C^\infty_\textnormal{loc}(\hat{V})$ write  $\Xi_v:=\Xi\circ v$, and define $\hat{N}_v:=\Xi_v(\hat{N})$. Then the moduli space of CS complex deformations of $N$ in $M$, $\hat{\mathcal{M}}_\mu^\textnormal{cx}(N)$, is isomorphic near $N$ to the kernel of
\begin{align}\nonumber
 \hat{G}:C^\infty_\mu(\hat{V}\otimes\mathbb{C})&\to C^\infty_{\textnormal{loc}}(\Lambda^1\hat{N}\otimes T^*M|_{\hat{N}}\otimes \mathbb{C}), \\
v&\mapsto *_{\hat{N}}\,\Xi_v^* (\sigma|_{\hat{N}_v}),
\end{align}
where $\sigma$ was defined in Proposition \ref{prop:compdefs}. Moreover, the kernel of $\hat{G}$ is isomorphic to the kernel of its linear part given by the map 
\begin{align}\nonumber
C^\infty_\mu(\nu_M(\hat{N})\otimes \mathbb{C})&\to C^\infty_\textnormal{loc}(\Lambda^{1,0}\hat{N}\otimes \nu^{*1,0}_M(\hat{N})\oplus \Lambda^{0,1}\hat{N}\otimes \nu^{*0,1}_M(\hat{N})), \\ \label{eqn:csglin}
v&\mapsto -\partial^*(v\hook\Omega)-\bar\partial^*(v\hook \overline{\Omega}).
\end{align}
The kernel of \eqref{eqn:csglin} is isomorphic to
\begin{equation}\label{eqn:kergisom}
 \{v\in C^\infty_\mu(\nu^{1,0}_M(\hat{N})\oplus \Lambda^{0,2}\hat{N}\otimes \nu^{1,0}_M(\hat{N})) \, | \, \bar\partial v=0=\bar\partial^* w\}.
\end{equation}
\end{prop}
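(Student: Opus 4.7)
The proposition asserts three things: (a) the moduli space $\hat{\mathcal{M}}^{\text{cx}}_\mu(N)$ identifies with $\ker\hat G$; (b) $\ker\hat G$ is isomorphic to the kernel of its linearisation \eqref{eqn:csglin}; and (c) this linear kernel is the set \eqref{eqn:kergisom}. I would handle (a) and (c) by importing earlier results and reserve the bulk of the effort for (b), which is the main obstacle.

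Part (a) follows the template of Proposition \ref{prop:cscayop}. By Proposition \ref{prop:compdefs}, $\hat N_v$ is a complex submanifold iff $\sigma|_{\hat N_v}\equiv 0$, equivalently $\hat G(v)=0$, since $\Xi_v$ is a diffeomorphism. The decay analysis identifying $v\in C^\infty_\mu(\hat V\otimes\mathbb C)$ with $N_v$ being CS with the same cone $C$, rate $\mu$, and tangent cone $\hat C$ is formally identical to the corresponding passage of Proposition \ref{prop:cscayop}, with Cayley replaced by complex. For the linearisation at zero, the calculation $d\hat G|_0(v) = *_{\hat N}\mathcal L_v\sigma|_{\hat N}$ is pointwise and uses only that $\Omega$ is parallel, so the proof of Proposition \ref{prop:glin} transplants verbatim and yields \eqref{eqn:csglin}. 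For part (c), I would decompose $v=v_1+v_2$ with $v_1\in\nu^{1,0}_M(\hat N)$, $v_2\in\nu^{0,1}_M(\hat N)$, apply the second assertion of Proposition \ref{prop:glin} ($\partial^*(v_1\hook\Omega)=0\iff\bar\partial v_1=0$), and use the bundle isomorphism $v_2\mapsto w:=\tfrac14(v_2\hook\overline\Omega)^\sharp$ of Proposition \ref{prop:bundleisom} to convert $\bar\partial^*(v_2\hook\overline\Omega)=0$ into $\bar\partial^* w=0$.

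The hard part is (b): the kernel of the nonlinear $\hat G$ is isomorphic to the kernel of $L=\infop$, i.e.\ every infinitesimal CS complex deformation integrates. This is the CS analogue of Theorem \ref{thm:maincomp} and expresses the unobstructedness of complex deformations in Calabi--Yau ambients. My plan is to first establish CS analogues of Lemma \ref{lem:cscayweight} and Proposition \ref{prop:weightelliptreg} for $\hat G$, so that $\hat G:L^p_{k+1,\mu}(\hat V\otimes\mathbb C)\to L^p_{k,\mu-1}$ is a smooth Banach-space map whose smooth solutions automatically lie in $C^\infty_\mu$; then apply the Kuranishi/implicit-function argument of Theorem \ref{thm:cscaydefs} to $\hat G$, producing a smooth obstruction map $\hat g_2^{\text{cx}}:\hat K_0\to\hat{\mathcal O}_0$ defined on a neighbourhood of zero in $\ker L$. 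The task then reduces to showing $\hat g_2^{\text{cx}}\equiv 0$, which I would do by constructing, from each $(v_1,w)\in\ker L$, a genuine CS complex deformation: given $v_1\in\ker\bar\partial$, the graph of $v_1$ in a holomorphic tubular neighbourhood of $\hat N$ is a complex submanifold of $M$, and the $w$-component is handled via the isomorphism of Proposition \ref{prop:bundleisom}. The technical obstacle is verifying that this integration preserves the weighted regularity $C^\infty_\mu$ and the prescribed tangent cone near $\hat x$; here I would use the $Spin(7)$-coordinate system of Definition \ref{defn:spin7coord} together with Lemma \ref{lem:indepom} to work with the flat model near the singularity, where the holomorphic tubular neighbourhood is explicit and dilation-equivariant as in Proposition \ref{prop:tubnbhdcone}.
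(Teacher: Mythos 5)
Your treatment of parts (a), (c), and the linearisation matches the paper: all of these are indeed reductions to local arguments that transplant from the compact case (Propositions \ref{prop:cscayop}, \ref{prop:glin}, \ref{prop:bundleisom}). For part (b), however, you take a genuinely different and substantially heavier route than the paper. The paper disposes of the claim that $\ker\hat G$ coincides with the kernel of its linear part by citing \cite[Lem 4.7]{compactcay}, which it describes as a \emph{local} argument: it shows pointwise that a normal vector field $v$ has $\hat G(v)=0$ if and only if the linearised equation holds, without any appeal to Fredholm theory, weighted function spaces, or the implicit function theorem. Because the argument is local, it carries over to the conically singular setting immediately, so for this proposition the paper needs no obstruction theory at all. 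Your plan instead reconstructs the global Kuranishi machinery for $\hat G$ on weighted Sobolev spaces (analogues of Lemma \ref{lem:cscayweight} and Proposition \ref{prop:weightelliptreg}) and then tries to kill the obstruction map $\hat g_2^{\mathrm{cx}}$ by integrating elements of $\ker(\infop)$ to genuine CS complex deformations via a holomorphic tubular neighbourhood.

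There are real gaps in that last step. First, a holomorphic tubular neighbourhood of a complex submanifold need not exist in general, and even if one does, you would need a dilation-equivariant one near the conical singularity to verify that the graph of a holomorphic section is again CS with the same rate, cone, and tangent cone --- you acknowledge this but do not resolve it. Second, the $w$-component of a kernel element $(v_1,w)$ satisfies $\bar\partial^*w=0$, not $\bar\partial w=0$, and corresponds under the isomorphism of Proposition \ref{prop:bundleisom} to an element of $\nu^{0,1}_M(\hat N)$; it is not explained how to exhibit the resulting normal vector field as the derivative of a path of CS complex submanifolds. Third, and structurally, your route would establish a diffeomorphism between a neighbourhood of $0$ in $\ker(\infop)$ and the moduli space, which suffices for Theorem \ref{thm:cxcsdefs} but is weaker than what the paper actually proves here, namely that $\ker\hat G$ and $\ker d\hat G|_0$ are \emph{equal} as subsets of $C^\infty_\mu$. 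In short: the conclusion you reach is plausible, but the mechanism you propose for (b) is considerably more involved than necessary, is not fully carried out, and misses that a clean local equivalence already exists and transfers for free.
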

\begin{proof}
 By definition of $\sigma$ we see that normal vector fields in the kernel of $\hat{G}$ correspond to complex deformations of $\hat{N}$, and a similar argument to Proposition \ref{prop:cscayop} shows that weighted smooth sections of $\nu_M(\hat{N})\otimes \mathbb{C}$ give conically singular deformations of $\hat{N}$ as required. The linear part of $\hat{G}$ follows from Proposition \ref{prop:glin}, which was a local argument, and similarly that the kernel of $\hat{G}$ is equal to the kernel of its linear part follows from the local argument \cite[Lem 4.7]{compactcay}. Finally, that the kernel of \eqref{eqn:csglin} is equal to \eqref{eqn:kergisom} follows from Proposition \ref{prop:glin}, where we proved that
\[
 \partial^*(v\hook \Omega)=0 \iff \bar\partial (\pi_{1,0}(v))=0,
\]
where $\pi_{1,0}:\nu_M(\hat{N})\otimes \mathbb{C}\to \nu^{1,0}_M(\hat{N})$ and the isomorphism of Proposition \ref{prop:bundleisom}.
\[
 \nu^{0,1}_M(\hat{N})\cong \Lambda^{0,2}\hat{N}\otimes \nu^{1,0}_M(\hat{N}).
\]
\end{proof}
This proposition allows us to prove that the CS complex deformations of a conically singular complex surface are unobstructed. This theorem is a generalisation of Theorem \ref{thm:maincomp} to conically singular submanifolds.
\begin{thm}\label{thm:cxcsdefs}
 Let $N$ be a conically singular complex surface at $\hat{x}$ with rate $\mu\in(1,2)$ and cone $C$ inside a Calabi--Yau four-fold $M$. The moduli space of CS complex deformations of $N$ in $M$, $\hat{\mathcal{M}}_\mu^\textnormal{cx}(N)$ given in Definition \ref{defn:cxcsmodsp}, is a smooth manifold of dimension
\begin{equation}\label{eqn:dimmodsp}
 \textnormal{dim}_\mathbb{C}\textnormal{Ker }\bar\partial+\textnormal{dim}_{\mathbb{C}}\textnormal{Ker }\bar\partial^*=2\textnormal{dim}_{\mathbb{C}}\textnormal{Ker }\bar\partial,
\end{equation}
where
\begin{align}\label{eqn:bp}
 \bar\partial:C^\infty_\mu(\nu^{1,0}_M(\hat{N}))&\to C^\infty_\textnormal{loc}(\Lambda^{0,1}\hat{N}\otimes \nu^{1,0}_M(\hat{N})), \\ \label{eqn:bps}
\bar\partial^*:C^\infty_\mu(\Lambda^{0,2}\hat{N}\otimes\nu^{1,0}_M(\hat{N}))&\to C^\infty_\textnormal{loc}(\Lambda^{0,1}\hat{N}\otimes \nu^{1,0}_M(\hat{N})).
\end{align}
\end{thm}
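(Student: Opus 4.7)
The plan is to leverage Proposition \ref{prop:cscxop}, which does essentially all of the analytic work. That proposition identifies $\hat{\mathcal{M}}_\mu^\textnormal{cx}(N)$ near $N$ with the kernel of the nonlinear operator $\hat{G}$, and crucially asserts that this kernel coincides with the kernel of its linearisation. This already yields smoothness of the moduli space for free: we have identified it locally with a vector space, so there are no quadratic obstructions to integrate, and no implicit function theorem argument analogous to the one in Theorem \ref{thm:cscaydefs} is needed here.

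Next, the same proposition identifies that kernel with the set \eqref{eqn:kergisom}. Since the conditions $\bar\partial v=0$ and $\bar\partial^* w=0$ on the two summands are independent, the kernel splits as the direct sum $\textnormal{Ker}\,\bar\partial \oplus \textnormal{Ker}\,\bar\partial^*$ of the kernels of \eqref{eqn:bp} and \eqref{eqn:bps}. Counting complex dimensions yields the first equality in \eqref{eqn:dimmodsp} immediately.

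To deduce the second equality $\dim_\mathbb{C} \textnormal{Ker}\,\bar\partial = \dim_\mathbb{C} \textnormal{Ker}\,\bar\partial^*$, I would run a Serre duality argument adapted to the weighted setting. In the compact case (Theorem \ref{thm:maincomp}) this equality comes from Serre duality $H^0(N, E)^* \cong H^2(N, K_N \otimes E^*)$ applied to the rank-two bundle $E = \nu^{1,0}_M(N)$, combined with the Calabi--Yau adjunction $K_N \cong \det E$ and the rank-two identity $E \cong E^* \otimes \det E$; together these give $H^2(N, E) \cong H^0(N, E)^*$, and Hodge theory then identifies $H^2$ with $\textnormal{Ker}\,\bar\partial^*$. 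For the CS setting, the plan is to replace standard Hodge theory with the Lockhart--McOwen duality of Proposition \ref{prop:lmco}, pairing the weight $\mu\in(1,2)\setminus\mathcal{D}$ on the $\bar\partial$ side with a complementary dual weight on the $\bar\partial^*$ side. The main obstacle will be pinning down this weight correspondence and verifying that no exceptional weights in $\mathcal{D}$ lie between $\mu$ and its dual, so that kernel dimensions remain stable across the intervening range. Once this weight bookkeeping is settled, the fibrewise rank-two adjunction isomorphism is purely algebraic and transfers verbatim from the compact case, completing the proof.
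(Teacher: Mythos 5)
Your first half reproduces the paper's argument: Proposition \ref{prop:cscxop} identifies $\hat{\mathcal{M}}_\mu^{\textnormal{cx}}(N)$ near $N$ with the vector space $\textnormal{Ker}\,\bar\partial \oplus \textnormal{Ker}\,\bar\partial^*$, which gives smoothness and the first equality in \eqref{eqn:dimmodsp} at once.

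For the equality $\dim_{\mathbb{C}}\textnormal{Ker}\,\bar\partial = \dim_{\mathbb{C}}\textnormal{Ker}\,\bar\partial^*$ your route diverges from the paper's, and the step you flag as ``the main obstacle'' is in fact insurmountable as posed. The paper simply cites \cite[Cor 4.6]{compactcay}, which constructs an explicit isomorphism between the two kernels; what makes this usable here --- and what your proposal does not exploit --- is that the isomorphism is a \emph{pointwise} bundle map, essentially the conjugate-linear Serre--Hodge star combined with the Calabi--Yau adjunction $K_N \cong \det\nu^{1,0}_M(N)$ and the rank-two identity $E^*\otimes\det E\cong E$. Being a fibrewise isometry it carries $C^\infty_\mu$ to $C^\infty_\mu$ for every $\mu$, so the compact-case identification restricts to the conically singular setting with no weighted duality theory whatsoever. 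Your plan instead appeals to Lockhart--McOwen duality, which inevitably pairs weight $\mu$ with a complementary weight $\mu'$; from Lemma \ref{lem:adj} and the $L^2$ computation carried out in Proposition \ref{prop:apscs}, for a first-order operator on a four-dimensional conically singular manifold one gets $\mu' = \lambda - m - \mu = -3 - \mu \in (-5,-4)$. The interval $(\mu',\mu)$ then has width more than five and unavoidably meets $\mathcal{D}$ --- for the cones treated in Section \ref{sec:calc} it contains several exceptional integers --- so kernel dimensions are \emph{not} stable across that range and the bookkeeping you are hoping for fails. There is a secondary mismatch as well: the Lockhart--McOwen dual of \eqref{eqn:bp} is $\bar\partial^*:\Lambda^{0,1}\hat{N}\otimes\nu^{1,0}_M(\hat{N})\to\nu^{1,0}_M(\hat{N})$, not the operator \eqref{eqn:bps} on $(0,2)$-forms that the theorem asks about, so even a successful duality argument would leave you one further Hodge-theoretic identification (with its own weight issues) short of the claim. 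Importing the pointwise isomorphism from \cite[Cor 4.6]{compactcay} dissolves all of these difficulties at once.
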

\begin{proof}
 By Proposition \ref{prop:cscxop} the moduli space of CS complex deformations of $N$ in $M$ can be identified with the kernels of the operators \eqref{eqn:bp} and \eqref{eqn:bps}. Equation \eqref{eqn:dimmodsp} follows since the kernels of the operators \eqref{eqn:bp} and \eqref{eqn:bps} are isomorphic \cite[Cor 4.6]{compactcay}.
\end{proof}

To compare CS complex and Cayley deformations of a CS complex surface, we require the following result.
\begin{prop}\label{prop:nocaydefs}
Let $N$ be a CS complex surface at $\hat{x}$ with cone C and rate $\mu\in(1,2)$ in a Calabi--Yau four-fold $M$. Write $\hat{N}:=N\backslash \{\hat{x}\}$. Then $w\in L^2_{k+1,\mu}(\nu^{1,0}_M(\hat{N})\oplus \Lambda^{0,2}\hat{N}\otimes \nu^{1,0}_M(\hat{N}))$ is an infinitesimal CS Cayley deformation of $\hat{N}$ if, and only if, it is an infinitesimal complex deformation of $\hat{N}$. That is, $(\infop)w=0$ if, and only if, $\bar\partial w=0=\bar\partial^*w$.
\end{prop}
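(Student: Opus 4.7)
One direction of the claim is immediate from linearity: if $\bar\partial w = 0$ and $\bar\partial^* w = 0$, then $(\bar\partial+\bar\partial^*)w = 0$. For the nontrivial direction, decompose $w = v \oplus u$ into its two components and observe that $(\bar\partial+\bar\partial^*)w = 0$ becomes $\bar\partial v = -\bar\partial^* u$, an identity in $L^2_{k,\mu-1}(\Lambda^{0,1}\hat{N}\otimes \nu^{1,0}_M(\hat{N}))$. My plan is to pair both sides with $\bar\partial v$ in the unweighted $L^2$-inner product to obtain
\[
\|\bar\partial v\|_{L^2(\hat{N})}^2 = -\langle \bar\partial v, \bar\partial^* u\rangle_{L^2(\hat{N})},
\]
and then to prove that the right-hand side vanishes by the formal identity $\langle \bar\partial v, \bar\partial^* u\rangle = \langle \bar\partial^2 v, u\rangle = 0$. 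This will force $\bar\partial v = 0$, and hence $\bar\partial^* u = -\bar\partial v = 0$. That both sides of the inner product above are finite follows from Definition \ref{defn:weightcs} together with $\mu>1$, which makes $v, u, \bar\partial v, \bar\partial^* u$ all lie in ordinary $L^2(\hat{N})$.

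The main obstacle is justifying the integration by parts $\langle \bar\partial v, \bar\partial^* u\rangle = \langle \bar\partial^2 v, u\rangle$ on the noncompact $\hat{N}$, where $v$ and $u$ are only in weighted spaces. I would introduce smooth cutoff functions $\chi_\epsilon : \hat{N}\to [0,1]$ with $\chi_\epsilon\equiv 0$ on $\{\rho\le \epsilon\}$, $\chi_\epsilon \equiv 1$ on $\{\rho\ge 2\epsilon\}$, and $|d\chi_\epsilon|_g \lesssim \epsilon^{-1}$. Because $\chi_\epsilon v$ and $\chi_\epsilon u$ are compactly supported in the smooth manifold $\hat{N}$, the standard integration-by-parts formula applies to them; combining it with the Leibniz rules for $\bar\partial$ and $\bar\partial^*$ and exploiting $\bar\partial^2 = 0$ yields
\[
\langle \chi_\epsilon^2\, \bar\partial v, \bar\partial^* u\rangle_{L^2(\hat{N})} = R_\epsilon,
\]
where $R_\epsilon$ is a finite sum of error terms each containing at least one factor of $d\chi_\epsilon$, and hence supported on the annulus $A_\epsilon := \{\epsilon\le \rho \le 2\epsilon\}$.

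It remains to show $R_\epsilon \to 0$ as $\epsilon\to 0$. The conical model near $\hat{x}$ gives $\mathrm{vol}_g(A_\epsilon) \lesssim \epsilon^4$, and from Definition \ref{defn:weightcs} one extracts the elementary bound
\[
\int_{A_\epsilon}|\sigma|^2\,\mathrm{vol}_g \le \sup_{A_\epsilon}\rho^{2\lambda+4}\cdot \|\sigma\|_{L^2_{0,\lambda}(A_\epsilon)}^2 \lesssim \epsilon^{2\lambda+4}\|\sigma\|_{L^2_{0,\lambda}}^2 .
\]
Applied with $\lambda = \mu$ to $v,u$ and with $\lambda = \mu-1$ to $\bar\partial v,\bar\partial^* u$, this yields $\|v\|_{L^2(A_\epsilon)} + \|u\|_{L^2(A_\epsilon)} \lesssim \epsilon^{\mu+2}$ and $\|\bar\partial v\|_{L^2(A_\epsilon)} + \|\bar\partial^* u\|_{L^2(A_\epsilon)} \lesssim \epsilon^{\mu+1}$. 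By Cauchy--Schwarz, each linear error term is bounded by $\epsilon^{-1}\cdot \epsilon^{\mu+2}\cdot \epsilon^{\mu+1} = \epsilon^{2\mu+2}$, and each quadratic error term involving $|d\chi_\epsilon|^2$ by $\epsilon^{-2}\cdot \epsilon^{2\mu+4} = \epsilon^{2\mu+2}$. Since $\mu > 1$, these vanish as $\epsilon\to 0$. On the left-hand side, $\chi_\epsilon^2 \langle \bar\partial v,\bar\partial^* u\rangle \to \langle \bar\partial v,\bar\partial^* u\rangle$ pointwise a.e., dominated by the $L^1$ function $|\langle \bar\partial v,\bar\partial^* u\rangle|$, so the dominated convergence theorem gives $\langle\bar\partial v,\bar\partial^* u\rangle_{L^2} = 0$ in the limit, closing the argument.
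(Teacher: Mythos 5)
Your argument is correct and follows essentially the same strategy as the paper: reduce to showing that the integration by parts $\langle\bar\partial v,\bar\partial^* u\rangle_{L^2}=\langle\bar\partial^2 v,u\rangle_{L^2}=0$ is valid on the conically singular $\hat N$, which then forces $\|\bar\partial v\|_{L^2}^2=0$ and hence $\bar\partial v=0=\bar\partial^* u$. The only difference is in how the integration by parts is justified — you use explicit cutoff functions and estimate the $d\chi_\epsilon$-error terms on the annuli $A_\epsilon$, while the paper verifies that both sides of the adjoint identity converge absolutely by weight arithmetic (implicitly a density argument); both routes are standard and close the gap equally well.
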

\begin{proof}
Suppose that $w\in L^2_{k+1,\mu}(\nu^{1,0}_M(\hat{N})\oplus \Lambda^{0,2}\hat{N}\otimes \nu^{1,0}_M(\hat{N}))$ satisfies $\bar\partial w=-\bar\partial^* w$ for $\mu\in (1,2)$. Then $\bar\partial^*\bar\partial w=0$. We will check whether
\[
 \int_{\hat{N}}\langle \bar\partial u,v \rangle \text{ vol}_{\hat{N}}=\int_{\hat{N}}\langle u, \bar\partial^* v\rangle \text{ vol}_{\hat{N}},
\]
holds for $u\in L^2_{1,\mu}(\nu^{1,0}_M(\hat{N})\oplus \Lambda^{0,2}\hat{N}\otimes \nu^{1,0}_M(\hat{N}))$ and $v\in L^2_{1,\mu-1}(\nu^{1,0}_M(\hat{N})\oplus \Lambda^{0,2}\hat{N}\otimes \nu^{1,0}_M(\hat{N}))$, that is, whether the integrals on both sides converge. Let $\rho$ be a radius function for $N$. We have that
\[
  \int_{\hat{N}}\langle \bar\partial u,v \rangle \text{ vol}_{\hat{N}}=\int_{\hat{N}} \langle \rho^{1-\mu-2}\bar\partial u, \rho^{\mu+3-2}v\rangle\text{ vol}_{\hat{N}}\le \|\bar\partial u\|_{2,\mu-1}\|v\|_{2,-\mu-3},
\]
by H\"older's inequality. This is finite since 
\[
 |\rho^{\mu+3}v|\le|\rho^{1-\mu}v|,
\]
since $\mu\in (1,2)$. Similarly,
\[
  \int_{\hat{N}}\langle u,\bar\partial^*v \rangle \text{ vol}_{\hat{N}}=\int_{\hat{N}} \langle \rho^{-\mu-2} u, \rho^{\mu+4-2}\bar\partial^* v\rangle\text{ vol}_{\hat{N}}\le \|u\|_{2,\mu}\|\bar\partial^*v\|_{2,-\mu-4},
\]
which again is finite since
\[
 |\rho^{\mu+4}\bar\partial^*v |\le |\rho^{2-\mu}\bar\partial^* v|,
\]
for $\mu\in (1,2)$. Therefore
\[
 \|\bar\partial w\|^2_{L^2}=\int_{\hat{N}} \langle \bar\partial w, \bar\partial w \rangle \text{ vol}_{\hat{N}}=\int_{\hat{N}} \langle w, \bar\partial^*\bar\partial w\rangle \text{ vol}_{\hat{N}}=0,
\]
and so $\bar\partial w=0$.
\end{proof}
This allows us to find that CS complex and Cayley deformations of a CS complex surface in a Calabi--Yau four-fold are the same.
\begin{cor}\label{cor:cscxcaysame}
 Let $N$ be a CS complex surface inside a Calabi--Yau four-fold $M$. Then the moduli space of CS Cayley deformations of $N$ in $M$ is isomorphic to the moduli space of CS complex deformations of $N$ in $M$. 
\end{cor}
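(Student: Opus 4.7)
The plan is to combine the infinitesimal equality from Proposition \ref{prop:nocaydefs} with the smoothness of the complex moduli space from Theorem \ref{thm:cxcsdefs} to force the Kuranishi obstruction in Theorem \ref{thm:cscaydefs} (equivalently Theorem \ref{thm:cscxcaydefs}) to vanish. One inclusion is essentially tautological: since the Cayley form on a Calabi--Yau four-fold is $\Phi=\tfrac{1}{2}\omega\wedge\omega+\textnormal{Re }\Omega$, any complex surface is automatically Cayley; combined with the fact that CS complex deformations preserve the cone, rate and singular point by Definition \ref{defn:cxcsmodsp}, this gives a natural injection $\hat{\mathcal{M}}^{\textnormal{cx}}_\mu(N)\hookrightarrow \hat{\mathcal{M}}_\mu(N)$ near $N$. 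The work is to prove this injection is surjective locally.

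First I would invoke the Kuranishi description of $\hat{\mathcal{M}}_\mu(N)$ provided by Theorem \ref{thm:cscxcaydefs}: a neighbourhood of $N$ in $\hat{\mathcal{M}}_\mu(N)$ is homeomorphic to $\textnormal{Ker }\hat{g}_2$, where $\hat{g}_2:\hat{K}_0\to\hat{\mathcal{O}}_0$ is a smooth map between finite-dimensional spaces with $\hat{g}_2(0)=0$ and $d\hat{g}_2|_0=0$, and $\hat{K}_0$ is an open neighbourhood of zero in the kernel of the Fredholm operator $\infop$ acting on $L^p_{k+1,\mu}(\nu^{1,0}_M(\hat{N})\oplus \Lambda^{0,2}\hat{N}\otimes\nu^{1,0}_M(\hat{N}))$. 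By weighted elliptic regularity (Proposition \ref{prop:cxweightelliptreg}), every element of this kernel is smooth and decays at rate $\mu$, and in particular lies in $L^2_{1,\mu}$; so Proposition \ref{prop:nocaydefs} applies, and I obtain the identification
\[
\hat{K}_0 \;=\; \textnormal{Ker}\,\bar\partial \,\oplus\, \textnormal{Ker}\,\bar\partial^*,
\]
with the summands as in Theorem \ref{thm:cxcsdefs}.

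Next I would compare dimensions and differentials. By Theorem \ref{thm:cxcsdefs}, $\hat{\mathcal{M}}^{\textnormal{cx}}_\mu(N)$ is a smooth manifold of real dimension $2(\textnormal{dim}_\mathbb{R}\,\textnormal{Ker}\,\bar\partial+\textnormal{dim}_\mathbb{R}\,\textnormal{Ker}\,\bar\partial^*)$ (accounting for the complex dimension formula), which under the identification of the previous step equals $\textnormal{dim}_\mathbb{R}\hat{K}_0$. The composition
\[
\hat{\mathcal{M}}^{\textnormal{cx}}_\mu(N)\;\hookrightarrow\;\hat{\mathcal{M}}_\mu(N)\;\cong\;\textnormal{Ker }\hat{g}_2\;\subseteq\;\hat{K}_0
\]
is a smooth map from a manifold of dimension $\textnormal{dim}\,\hat{K}_0$ to $\hat{K}_0$ whose differential at $N$ is the identity on $\hat{K}_0$ (since infinitesimal complex deformations span $\textnormal{Ker}(\infop)$ by Proposition \ref{prop:nocaydefs}, and the tangent space to $\hat{\mathcal{M}}^{\textnormal{cx}}_\mu(N)$ at $N$ is precisely $\textnormal{Ker}\,\bar\partial\oplus\textnormal{Ker}\,\bar\partial^*$). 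By the inverse function theorem, this composition is a local diffeomorphism onto an open neighbourhood of $0$ in $\hat{K}_0$. Since the image lies in $\textnormal{Ker}\,\hat{g}_2$, the map $\hat{g}_2$ vanishes on an open neighbourhood of $0$ in $\hat{K}_0$, hence identically near $0$. Consequently $\textnormal{Ker}\,\hat{g}_2$ coincides with $\hat{K}_0$ near zero, and both moduli spaces agree with this smooth manifold near $N$.

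The main obstacle I expect is bookkeeping between the function-space conventions: Proposition \ref{prop:nocaydefs} is stated in $L^2$ whereas the Kuranishi model in Theorem \ref{thm:cscxcaydefs} is built in $L^p_{k+1,\mu}$. This is handled by invoking Proposition \ref{prop:cxweightelliptreg}, which shows that the $L^p_{k+1,\mu}$-kernel coincides with the smooth $C^\infty_\mu$-kernel, and smooth sections with decay rate $\mu\in(1,2)$ sit inside the $L^2$-spaces needed for the integration-by-parts step. A secondary bookkeeping point is to verify that the topological-embedding / rate / tangent-cone conditions in Definitions \ref{defn:cscaymodsp} and \ref{defn:cxcsmodsp} are preserved under the identification, but this is immediate from the tubular neighbourhood construction in Proposition \ref{prop:cstubnbhd} and the fact that complex surfaces in Calabi--Yau four-folds are Cayley.
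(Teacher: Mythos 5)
Your argument is correct and follows essentially the same approach as the paper: both rest on the infinitesimal coincidence of Proposition \ref{prop:nocaydefs}, the smoothness from Theorem \ref{thm:cxcsdefs}, and a dimension count inside the Kuranishi model of Theorem \ref{thm:cscaydefs}; your version simply spells out the inverse-function-theorem step that the paper's terse proof leaves implicit. One minor slip: the real dimension of $\hat{\mathcal{M}}^{\textnormal{cx}}_\mu(N)$ is $\textnormal{dim}_\mathbb{R}\,\textnormal{Ker}\,\bar\partial+\textnormal{dim}_\mathbb{R}\,\textnormal{Ker}\,\bar\partial^*$, not twice that (the factor of two already converts the complex dimensions of Theorem \ref{thm:cxcsdefs} to real), but since this still equals $\textnormal{dim}_\mathbb{R}\hat{K}_0$ the conclusion is unaffected.
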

\begin{proof}
 There are no infinitesimal CS Cayley deformations of $N$ by Proposition \ref{prop:nocaydefs}, i.e., no $w\in C^\infty_\mu(\nu^{1,0}_M(\hat{N})\oplus \Lambda^{0,2}\hat{N}\otimes \nu^{1,0}_M(\hat{N}))$ satisfying
\[
 \bar\partial w=-\bar\partial^*w,
\]
where $\bar\partial w \ne 0$. Comparing the expected dimension of the moduli space of CS Cayley deformations of $N$ in $M$, computed in Theorem \ref{thm:cscxcaydefs}, to the dimension of the moduli space of CS complex deformations of $N$ in $M$, computed in Theorem \ref{thm:cxcsdefs} we see that these spaces must be the same, since any CS complex deformation of $N$ is a Cayley deformation of $N$.
\end{proof}

\section[Index theory]{Index theory}\label{sec:index}

Let $Y$ be a CS Cayley submanifold of a $Spin(7)$-manifold $X$ with nonsingular part $\hat{Y}$ and let $N$ be a CS complex surface inside a four-dimensional Calabi--Yau manifold $M$ with nonsingular part $\hat{N}$. In this section, we will be interested in the index of the operators
\begin{equation}\label{eqn:cayopind}
 D:L^p_{k+1,\mu}(\nu_X(\hat{Y}))\to L^p_{k,\mu-1}(E),
\end{equation}
from Proposition \ref{prop:caylin} on sections with compact support and extended by density to the above spaces, and
\begin{equation}\label{eqn:cxopind}
 \infop:L^p_{k+1,\mu'}(\nu_M^{1,0}(\hat{N})\oplus \Lambda^{0,2}\hat{N}\otimes \nu_M^{1,0}(\hat{N}))\to L^p_{k,\mu'-1}(\Lambda^{0,1}\hat{N}\otimes \nu^{1,0}_M(\hat{N})).
\end{equation}

We will first characterise the set of exceptional weights $\mathcal{D}$ for which \eqref{eqn:cayopind} and \eqref{eqn:cxopind} are not Fredholm. We will then explain how we can apply the Atiyah--Patodi--Singer index theorem to operators on conically singular manifolds, before applying this result to the operator \eqref{eqn:cxopind}.

\subsection[Finding the exceptional weights]{Finding the exceptional weights for the operators \texorpdfstring{$D$}{D} and \texorpdfstring{$\infop$}{infop}}\label{sec:badweights}

In this section we will find the set $\mathcal{D}$ of exceptional weights for which the linear elliptic operators \eqref{eqn:cayopind} and \eqref{eqn:cxopind} that appeared in Section \ref{sec:csdefs} are not Fredholm. To do this we will study these operators acting on Cayley and complex cones in $\mathbb{R}^8$. We will see that the exceptional weights are actually eigenvalues for differential operators on the links of these cones. 

\subsubsection{Nearly parallel \texorpdfstring{$G_2$}{g2} structure on \texorpdfstring{$S^7$}{s7}}
We can consider $\mathbb{R}^8$ as a cone with link $S^7$. Let $(\Phi_0,g_0)$ be the Euclidean $Spin(7)$-structure (as given in Definition \ref{defn:spin7}). Define a three-form $\varphi$ on $S^7$ by the following relation:
\begin{equation}\label{eqn:nearpar}
 \Phi_0|_{(r,p)}=r^3dr\wedge \varphi|_p +r^4 *\varphi|_p.
\end{equation}
Then $(\varphi,g)$ is a $G_2$-structure on $S^7$ (here $g$ is the standard round metric on $S^7$). Notice that this $G_2$-structure is not torsion-free, however, since $\Phi_0$ is closed we have that
\begin{equation}\label{eqn:s7g2}
 d\varphi=4*\varphi.
\end{equation}
$G_2$-structures $(\varphi,g)$ satisfying \eqref{eqn:s7g2} are called \emph{nearly parallel}.

\subsubsection{Exceptional weights for the operator \texorpdfstring{$D$}{D}}\label{ss:badweightd}
Let $Y$ be a CS Cayley submanifold at $\hat{x}$ with rate $\mu$ and cone $C$ of a $Spin(7)$-manifold $X$ and write $\hat{Y}:=Y\backslash\{\hat{x}\}$. Consider the linear elliptic operator on $\hat{Y}$ given by
\begin{align}\nonumber
D:C^\infty_0(\nu_X(\hat{Y}))&\to C^\infty_0(E), \\    \label{eqn:dbadweight}
v&\mapsto \sum_{i=1}^4\pi_7(e^i\wedge (\nabla_{e_i}^\perp v)^\flat),
\end{align}
where $\{e_1,e_2,e_3, e_4\}$ is an orthonormal frame for $T\hat{Y}$ with dual coframe $\{e^1,e^2,e^3, e^4\}$, $\Lambda^2_7$ is the  seven dimensional irreducible representation of $Spin(7)$ on two-forms with $\pi_7:\Lambda^2X\to \Lambda^2_7$ and $\Lambda^2_7|_{\hat{Y}}=\Lambda^2_+\hat{Y}\oplus E$.

We will now describe the set of exceptional weights for $D$ in terms of an eigenvalue problem on the link of $C$.
\begin{prop}\label{prop:evalprobcay}
 Let $Y$ be a CS Cayley submanifold at $\hat{x}$ with cone $C$ and rate $\mu$ of a $Spin(7)$-manifold $X$. Write $\hat{Y}:=Y\backslash \{\hat{x}\}$. Let $\mathcal{D}_D$ denote the set of $\lambda\in \mathbb{R}$ for which
\[
 D:L^p_{k+1,\lambda}(\nu_X(\hat{Y}))\to L^p_{k,\lambda-1}(E),
\]
defined in \eqref{eqn:dbadweight} is not Fredholm.

Let $L:=C\cap S^7$ be the link of the cone $C$, a submanifold of $S^7$. Then $\lambda\in \mathcal{D}_D$ if, and only if, there exists $v\in C^\infty(\nu_{S^7}(L))$ so that
\begin{equation}\label{eqn:eprobcay}
 D_L v =-\lambda v,
\end{equation}
where for $\{e_1,e_2,e_3\}$ an orthonormal frame for $TL$ and $\nabla^\perp$ the connection on the normal bundle of $L$ in $S^7$ induced by the Levi-Civita connection of the round metric on $S^7$,
\begin{align}\nonumber
D_L:C^\infty(\nu_{S^7}(L))&\to C^\infty(\nu_{S^7}(L)), \\ \label{eqn:asslin}
v&\mapsto \sum_{i=1}^3 e_i\times \nabla_{e_i}^\perp v,
\end{align}
where $\times$ is the cross product on $S^7$ induced from the nearly parallel $G_2$-structure $(\varphi,g)$ defined by
\[
 g(u\times v,w)=\varphi(u,v,w),
\]
for any vector fields $u,v,w$ on $S^7$.
\end{prop}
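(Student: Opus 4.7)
The strategy is the standard one used in the coassociative and special Lagrangian analogues of Lotay and Joyce: reduce the question via Lockhart--McOwen to an indicial equation on the cone, then separate variables to express the indicial operator in terms of a self-adjoint operator on the link.

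\textbf{Step 1 (Reduction to the model cone).} By Proposition \ref{prop:lmco}, the exceptional set $\mathcal{D}_D$ is determined by the asymptotic structure of $D$ near the singular point. Since $Y$ is CS at $\hat x$ with rate $\mu>1$, the coefficients of $D$ on $\hat Y$ are asymptotic to those of the analogously defined operator on the model cone $C\subseteq \mathbb{R}^8$ (with its Euclidean $Spin(7)$-structure). Hence $\mathcal{D}_D$ coincides with the set of indicial roots of the dilation-equivariant operator $D$ on $C$, which I will compute by separation of variables on $C=(0,\infty)\times L$.

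\textbf{Step 2 (Bundle identifications).} Let $\pi:C\to L$ be the radial projection. The canonical identification $\nu_{\mathbb{R}^8}(C)_{(r,l)} = \nu_{S^7}(L)_l$ holds because the radial direction lies in $TC$, so that normals to $C$ at $rl$ are precisely normals to $L$ at $l$ inside $T_l S^7$. Next, I would identify $E|_C\cong \pi^*\nu_{S^7}(L)$ as follows. The relation \eqref{eqn:nearpar}, $\Phi_0=r^3 dr\wedge \varphi + r^4*\varphi$, implies that the link $L$ of the Cayley cone $C$ is an associative submanifold of $S^7$ with respect to the nearly parallel $G_2$-structure $(\varphi,g)$; the decomposition $\Lambda^2_7|_C = \Lambda^2_+C \oplus E|_C$ then restricts, using the radial coordinate, to the associative splitting of two-forms on $L$, whereby the rank-four subbundle $E|_C$ corresponds to $\pi^*\nu_{S^7}(L)$ via the $G_2$ cross product $v\mapsto l\times v$ on normal vectors.

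\textbf{Step 3 (Separation of variables).} Let $\{\tilde e_1,\tilde e_2,\tilde e_3\}$ be an orthonormal frame for $TL$, giving the orthonormal frame $\{\partial_r, r^{-1}\tilde e_1, r^{-1}\tilde e_2, r^{-1}\tilde e_3\}$ for $TC$ with dual coframe $\{dr, r\tilde e^1, r\tilde e^2, r\tilde e^3\}$. For a section $v=r^{\alpha}w(l)$ with $w\in C^\infty(\nu_{S^7}(L))$ pulled back from the link, radial triviality of the normal bundle gives $\nabla^\perp_{\partial_r}v=0$, so only the tangential terms contribute to $Dv$. A direct computation using \eqref{eqn:nearpar} shows that under the identification of Step 2 the operation $\pi_7(\tilde e^i\wedge(\nabla^\perp_{\tilde e_i}v)^\flat)$ corresponds to the $G_2$ cross product $\tilde e_i\times \nabla^\perp_{\tilde e_i}v$ on $S^7$, yielding
\[
Dv \;=\; r^{\alpha-1}\bigl(\alpha\, w + D_L w\bigr).
\]
Hence $v$ lies in the kernel of $D$ on the cone if and only if $D_L w = -\alpha w$. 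By the Lockhart--McOwen indicial root characterisation, $\lambda\in \mathcal{D}_D$ iff there exists a nontrivial $w\in C^\infty(\nu_{S^7}(L))$ with $D_L w = -\lambda w$, as claimed.

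\textbf{Main obstacle.} The technical heart of the argument is the identification in Step 2, and the resulting correspondence $\pi_7(\tilde e^i\wedge(\nabla^\perp_{\tilde e_i}v)^\flat)\leftrightarrow \tilde e_i\times\nabla^\perp_{\tilde e_i}v$ used in Step 3. This is the infinitesimal version of the cone correspondence between Cayley four-folds in $\mathbb{R}^8$ and associative three-folds in $S^7$; carrying it out rigorously requires careful tracking of the radial factors appearing in $\Phi_0 = r^3 dr\wedge\varphi + r^4 *\varphi$ when decomposing two-forms on $C$ into $Spin(7)$-irreducibles and comparing with the $G_2$-irreducible decomposition on the link.
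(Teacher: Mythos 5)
Your high-level strategy (Lockhart--McOwen reduction to the model cone, then separation of variables and reinterpretation of the indicial operator on the link via the nearly parallel $G_2$-structure) is exactly the one used in the paper, and the bundle correspondence $E|_C\cong \pi^*\nu_{S^7}(L)$ is indeed the key structural input. However, Step 3 contains a genuine error that propagates to the main formula.

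You claim that for $v=r^\alpha w$ with $w$ pulled back from $L$, ``radial triviality of the normal bundle gives $\nabla^\perp_{\partial_r}v=0$, so only the tangential terms contribute to $Dv$.'' This is false, and the failure is exactly where the eigenvalue is born. Even with the most naive identification of $\nu_{\mathbb{R}^8,(r,l)}(C)$ with $\nu_{S^7,l}(L)$ you would have $\nabla^\perp_{\partial_r}(r^\alpha w)=\alpha r^{\alpha-1}w$, not zero; and in the normalisation that the Lockhart--McOwen tensor framework forces (identifying the two normal bundles so that the Euclidean metric becomes $r^2 h$, with $h$ the round-metric normal metric on $L$) one has in addition $\nabla^\perp_{\partial_r}v=r^{-1}v$ for $r$-independent $v$, so $\nabla^\perp_{\partial_r}(r^{\lambda-1}v)=\lambda r^{\lambda-2}v$. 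In either convention the radial term is nonzero, and it is precisely this term that produces the $\lambda v$ (your $\alpha w$) summand in the eventual equation $D_L v=-\lambda v$. The tangential sum $\sum_i \pi_7(\tilde e^i\wedge(\nabla^\perp_{\tilde e_i}v)^\flat)$ produces only $D_L v$ (after contraction with $\partial_r$ and $\sharp_L$); it cannot produce the multiple of $v$. So your stated final formula $Dv=r^{\alpha-1}(\alpha w+D_L w)$ is the right target, but it contradicts your own preceding assertion, and if you really dropped the radial term you would get the wrong eigenvalue.

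The paper's proof does the computation you merely gesture at: it expands $D_0(r^{\lambda-1}v)$ term by term against $\Phi_0=r^3 dr\wedge\varphi+r^4*\varphi$, carefully converts $\flat$ (in the $r^2h$ cone metric) to $\flat_L$ (in $h$), and then uses the isomorphism $E\cong\nu_{S^7}(L)$ given by $\alpha\mapsto(\partial_r\hook\alpha)^{\sharp_L}$ to collect the mixed $r$-powers into the single homogeneous expression $r^{\lambda-2}(\lambda v + D_L v)$. You already flagged the radial book-keeping as the ``main obstacle'', and indeed it is: the part of your argument that you compress into ``a direct computation shows'' is exactly where the proof actually lives, and the specific claim you make there ($\nabla^\perp_{\partial_r}v=0$) would derail it.
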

\begin{rem}
 The operator $D_L$ can be defined on any associative submanifold of a $G_2$-manifold, that is, a manifold with torsion-free $G_2$-structure. Normal vector fields in its kernel correspond to infinitesimal associative deformations of the associative submanifold. This can be deduced from the work of McLean \cite[Thm 5-2]{MR1664890}, however the operator first appears in this form in \cite[Eqn 14]{MR2388088}. Infinitesimal associative deformations of an associative submanifold of $S^7$ with its nearly parallel $G_2$-structure, however, satisfy \eqref{eqn:eprobcay} with $\lambda=1$ as shown by Kawai \cite[Lem 3.5]{MR3672213}. Proposition \ref{prop:evalprobcay} can be considered as a different proof of this fact.
\end{rem}
\begin{proof}
 We can apply Proposition \ref{prop:lmco} to the operator $D$. Suppose that $\rho$ is a radius function for $Y$. Then since the given $Spin(7)$-structure on $X$ approaches the Euclidean $Spin(7)$-structure as we move close to the singular point of $Y$,
\[
 \rho^{-1}D\rho^{-1}
\]
 is asymptotic to the translation invariant differential operator
\[
 r^{-1}D_0 r^{-1},
\]
where $D_0$ is defined similarly to $D$ but using the Euclidean $Spin(7)$-structure pulled back to $X$ by a $Spin(7)$-coordinate system $\chi$ for $X$ around $\hat{x}$ (see Definition \ref{defn:spin7coord}).

By Proposition \ref{prop:lmco} in combination with the discussion in \cite[pg 416]{MR837256}, we see that $\lambda\in \mathcal{D}_D$ if, and only if, there exists a normal vector field $v\in C^\infty(\nu_L(S^7))$ satisfying
\[
 r^{-1}D_0(r^{\lambda-1} v)=0,
\]
where since $\nu_{rl,\mathbb{R}^8}(C)\cong \nu_{l,S^7}(L)$ for all $r>0$ we can consider $(r,l)\mapsto (r,r^{\lambda-1} v(l))$ as a normal vector field on the cone. Note also that the induced Euclidean metric on the normal bundle of $C$ in $\mathbb{R}^8$ takes the form $r^2h$, where $h$ is the metric on the normal bundle of $L$ in $S^7$ induced from the round metric on $S^7$.

Let $\{e_1,e_2,e_3\}$ denote an orthonormal frame for $TL$ with dual coframe $\{e^1,e^2,e^3\}$, and denote by $\Phi_0$ the Euclidean Cayley form on $\mathbb{R}^8$ and $\varphi$ the nearly parallel $G_2$-structure on $S^7$ defined in \eqref{eqn:nearpar}. We compute that
\begin{align*}
 D_0 (r^{\lambda-1}v)&=\pi_7\left(dr\wedge \left(\nabla^\perp_{\frac{\partial}{\partial r}}r^{\lambda-1}v\right)^\flat\right)+\sum_{i=1}^3\pi_7\left(re^i\wedge (\nabla_{\frac{e_i}{r}}^\perp r^{\lambda-1}v)^\flat\right) \\
&=\lambda r^{\lambda-2} dr\wedge v^\flat +\lambda r^{\lambda-2}\Phi_0\left(\frac{\partial}{\partial r},v,\,\cdot\,,\,\cdot\,\right) \\
&+\sum_{i=1}^3\left(r^{\lambda-1}e^i\wedge \left(\nabla^\perp_{e_i}v\right)^\flat+r^{\lambda-3}\Phi_0(e_i,\nabla^\perp_{e_i}v,\,\cdot\,,\,\cdot\,)\right),
\end{align*}
since $\nabla_{\frac{\partial}{\partial r}}^\perp v=r^{-1}v$ as the metric on the normal bundle is of the form $r^2h$. Using the definition of $\varphi$ in \eqref{eqn:nearpar}, we find that
\begin{align*}
 D_0 (r^{\lambda-1}v)&= \lambda r^{\lambda-2}dr \wedge v^\flat +\lambda r^{\lambda+1}\varphi(v,\,\cdot\,,\,\cdot\,) \\
&+\sum_{i=1}^3\left(r^{\lambda-1}e^i\wedge(\nabla^\perp_{e_i}v)^\flat +r^{\lambda}dr\wedge \varphi(e_i,\nabla^\perp_{e_i}v,\,\cdot\,) \right.\\
&\left.+r^{\lambda+1}*\varphi(e_i,\nabla^\perp_{e_i}v,\,\cdot\,,\,\cdot\,)\right).
\end{align*}
Now we wish to replace the musical isomorphism $\flat:\nu_{\mathbb{R}^8}(C)\to \nu^*_{\mathbb{R}^8}(C)$ with the musical isomorphism $\flat_L:\nu_{S^7}(L)\to \nu^*_{S^7}(L)$. Since the metric on $\nu_{\mathbb{R}^8}(C)$ is of the form $r^2h$, where $h$ is a metric on $\nu_{S^7}(L)$, we find that
\begin{align*}
  D_0 (r^{\lambda-1}v)&= \lambda r^{\lambda}dr \wedge v^{\flat_L} +\lambda r^{\lambda+1}\varphi(v,\,\cdot\,,\,\cdot\,) \\
&+\sum_{i=1}^3\left(r^{\lambda+1}e^i\wedge(\nabla^\perp_{e_i}v)^{\flat_L}+r^{\lambda}dr\wedge \varphi(e_i,\nabla^\perp_{e_i}v,\,\cdot\,)\right. \\
&\left. +r^{\lambda+1}*\varphi(e_i,\nabla^\perp_{e_i}v,\,\cdot\,,\,\cdot\,)\right).
\end{align*}
Notice that $E\cong \nu_{S^7}(L)$ via the map
\[
 \alpha \mapsto \left(\frac{\partial}{\partial r}\hook\alpha\right)^{\sharp_L},
\]
where $\sharp_L:\nu^*_{S^7}(L)\to \nu_{S^7}(L)$ is the musical isomorphism, with inverse map
\[
 v \mapsto \pi_7(dr\wedge v^{\flat_L}).
\]
Therefore we see that
\[
 r^{-1}D_0(r^{\lambda-1}v)=0 \iff \left(\frac{\partial}{\partial r}\hook r^{-1}D_0(r^{\lambda-1}v)\right)^{\sharp_L}=0.
\]
We find that
\[
 \left(\frac{\partial}{\partial r} \hook r^{-1} D_0(r^{\lambda-1}v)\right)^{\sharp_L}=r^{\lambda-2}\left(\lambda v+\varphi(e_i,\nabla^\perp_{e_i}v,\,\cdot\,)^\sharp_L\right).
\]
Since by definition,
\[
 D_Lv=e_i\times \nabla_{e_i}^\perp v=\varphi(e_i,\nabla^\perp_{e_i}v,\,\cdot\,)^{\sharp_L},
\]
we see that $\lambda\in \mathcal{D}_D$ if, and only if, there exists $v\in C^\infty(\nu_{S^7}(L))$ such that
\[
 D_Lv=-\lambda v.
\]
\end{proof}

\subsubsection{Exceptional weights for the operator \texorpdfstring{$\infop$}{infop}}

Let $N$ be a CS complex surface with rate $\mu$ and cone $C$ inside a Calabi--Yau four-fold $M$, and write $\hat{N}$ for its nonsingular part.
In order to prove an analogous result to Proposition \ref{prop:evalprobcay} for the operator
\begin{equation}\label{eqn:cxopbad}
 \infop:C^\infty_0(\nu^{1,0}_M(\hat{N})\oplus \Lambda^{0,2}\hat{N}\otimes \nu^{1,0}_M(\hat{N}))\to C^\infty_0(\Lambda^{0,1}\hat{N}\otimes \nu^{1,0}_M(\hat{N})),
\end{equation}
we will need some preliminary facts about complex cones. 
\begin{defn}\label{defn:complexlink}
 Let $C$ be a complex cone in $\mathbb{C}^{n+1}$, with real link $L:=C\cap S^{2n+1}$. Consider the Hopf projection $p:S^{2n+1}\to \mathbb{C}P^n$. Define the \emph{complex link} $\Sigma$ of $C$ to be the image of $L$ under the Hopf projection, i.e., $\Sigma:=p(L)\subseteq \mathbb{C}P^n$.
\end{defn}

The real link of a complex cone $C$ is a circle bundle over the complex link of $C$. Thinking of $L$ as $S^1\times \Sigma$, we can find a globally defined vector field on $L$ that we can think of as being tangent to $S^1$ in this product.
\begin{defn}\label{defn:reeb}
Let $C$ be a complex cone in $\mathbb{C}^{n+1}$, and denote by $J$ the complex structure on $\mathbb{C}^n$. The \emph{Reeb} vector field is defined to be 
\[
 \xi:=J\left(r\frac{\partial}{\partial r}\right).
\]
Notice that $|\xi|_L=1$.
\end{defn}

If $p|_L:L\to \Sigma$ is the restriction of the Hopf projection to $L$, then at each $l\in L$, $\xi_l$ spans the kernel of $d\pi|_l:T_lL\to T_{p(l)}\Sigma$.

\begin{defn}\label{defn:hor}
Let $C$ be a complex cone in $\mathbb{C}^{n+1}$ with real link $L$. Let $\alpha$ be a $p$-form on $L$. We say that $\alpha$ is \emph{horizontal} if $\xi\hook \alpha=0$, where $\xi$ is the Reeb vector field. Denote by $\Lambda^p_hL$ the vector bundle of horizontal $p$-forms on $L$. Denote by $d_h$ the projection of the exterior derivative onto horizontal forms. 

By definition of the Reeb vector field, we see if $J$ is the complex structure on $\mathbb{C}^{n+1}$ then $J(\Lambda^1_hL)\subseteq \Lambda^1_hL$. So we have a well-defined splitting $\Lambda^1_hL=\Lambda^{1,0}_hL\oplus \Lambda^{0,1}_hL$ of one-forms into the $\pm i$ eigenspaces of $J$. Define the operator $\bar\partial_h$ on functions to be the projection of $d_h$ onto horizontal $(0,1)$-forms.
\end{defn}

With these definitions, we may characterise the set of exceptional weights for the operator \eqref{eqn:cxopbad} in terms of an eigenproblem on the link of a cone.
\begin{prop}\label{prop:evalprobcx}
 Let $N$ be a CS complex surface at $\hat{x}$ with rate $\mu$ and cone $C$ inside a Calabi--Yau four-fold $M$. Write $\hat{N}:=N\backslash\{\hat{x}\}$. Let $\mathcal{D}$ denote the set of $\lambda\in\mathbb{R}$ for which 
\begin{equation}\label{eqn:cxopbad2}
 \infop:L^p_{k+1,\lambda}(\nu^{1,0}_M(\hat{N})\oplus \Lambda^{0,2}\hat{N}\otimes \nu^{1,0}_M(\hat{N}))\to L^p_{k,\lambda-1}(\Lambda^{0,1}\hat{N}\otimes \nu^{1,0}_M(\hat{N})),
\end{equation}
is not Fredholm.
Let $L$ denote the real link of $C$. Then $\lambda\in \mathcal{D}$ if, and only if, there exist $v\in C^\infty(\nu^{1,0}_{S^7}(L))$ and $w\in C^\infty(\Lambda^{0,1}_hL\otimes \nu^{1,0}_{S^7}(L))$ so that
\begin{align}\label{eqn:cxev1}
 \bar\partial_h v&=(\lambda+2-i\nabla_\xi)w, \\ \label{eqn:cxev2}
\bar\partial^*_h w&=\frac{1}{2}(\lambda+i\nabla_{\xi})v,
\end{align}
where $\xi$ is the Reeb vector field on $L$. Here $\nabla$ acts on $\Lambda^{0,1}_hL$ as the Levi-Civita connection of the metric on $L$ and on $\nu^{1,0}_{S^7}(L)$ as the normal part of the Levi-Civita connection on $S^7$.
\end{prop}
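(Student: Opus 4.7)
The plan is to follow the template of the proof of Proposition \ref{prop:evalprobcay}: apply Proposition \ref{prop:lmco} together with the spectral description of exceptional weights in \cite[pg 416]{MR837256} to reduce the Fredholmness question to an eigenvalue problem on the link $L$. Choosing a $Spin(7)$-coordinate system $\chi$ around $\hat x$ and pulling back the Euclidean Calabi--Yau structure, the operator $\infop$ on $\hat N$ is asymptotic to its translation-invariant cone model, and $\lambda \in \mathcal{D}$ exactly when this cone model admits a nonzero kernel element of the form $r^{\lambda - c}(\tilde v, \tilde w)$, where $(\tilde v, \tilde w)$ is a pair of sections on $L$ and $c$ is a weight shift determined by the tensor type of the bundles in Proposition \ref{prop:lmco}.

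Next I would set up the splitting of the cone geometry along its real link. Since $C$ is a complex cone, $\xi = J(r\partial_r)$ spans the fibre direction of the Hopf bundle $p|_L : L \to \Sigma$, and $r\partial_r$ together with $\xi$ spans a $J$-invariant two-plane in $TC|_L$ whose $J$-orthogonal complement is the horizontal distribution pulled back from $T\Sigma$. Dualising gives
\begin{equation*}
T^{*0,1}C|_L \;\cong\; \mathbb{C}\langle \bar\omega_c\rangle \;\oplus\; \Lambda^{0,1}_h L,
\end{equation*}
where $\bar\omega_c$ is the normalised $(0,1)$-form dual to $r\partial_r - i\xi$; consequently $\Lambda^{0,2}C|_L \cong \bar\omega_c \wedge \Lambda^{0,1}_h L \cong \Lambda^{0,1}_h L$, while $\nu^{1,0}_C|_L$ is canonically identified with $\nu^{1,0}_{S^7}(L)$. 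Thus the data on the link are exactly a pair $\tilde v \in C^\infty(\nu^{1,0}_{S^7}(L))$ and $\tilde w \in C^\infty(\Lambda^{0,1}_h L \otimes \nu^{1,0}_{S^7}(L))$, matching the statement.

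The core computation is to apply $\bar\partial + \bar\partial^*$ to $r^{\lambda - c}(\tilde v,\, \bar\omega_c \wedge \tilde w)$ and decompose the result along the splitting $\Lambda^{0,1}C|_L \otimes \nu^{1,0}_C \cong \mathbb{C}\bar\omega_c \otimes \nu^{1,0}_{S^7}(L) \oplus \Lambda^{0,1}_h L \otimes \nu^{1,0}_{S^7}(L)$. Differentiation in the cone $(1,0)$-direction $r\partial_r - i\xi$ acts on $r^\alpha \tilde \sigma$ as $r^{\alpha-1}$ times a scalar of the form $(\alpha + k - i\nabla_\xi)\tilde\sigma$, where $k$ absorbs the form weight and the radial scaling of the Euclidean metric on the normal bundle, exactly as in the computation $\nabla^\perp_{\partial_r}(r^{\lambda-1}v) = \lambda r^{\lambda-2}v$ in the proof of Proposition \ref{prop:evalprobcay}. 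The horizontal part of $\bar\partial$ on $\tilde v$ contributes $\bar\partial_h \tilde v$, and the horizontal contraction of $\bar\partial^*$ on $\bar\omega_c \wedge \tilde w$ contributes $\bar\partial^*_h \tilde w$. Equating the horizontal and cone-direction components of $(\bar\partial + \bar\partial^*)(r^{\lambda-c}(\tilde v, \bar\omega_c \wedge \tilde w))$ to zero separately yields equations \eqref{eqn:cxev1} and \eqref{eqn:cxev2}.

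The main obstacle will be the bookkeeping in this last step. The Euclidean metric on $\Lambda^{0,2}C|_L$ carries a nontrivial radial weight coming from the cone-direction factor $\bar\omega_c$, which shifts the effective weight at which $\bar\partial^*$ acts relative to $\bar\partial$; combined with the duality between $T^{1,0}$ and $T^{*0,1}$ in the cone direction and the normal-bundle connection term noted above, this produces the shift by $+2$ in \eqref{eqn:cxev1} and the prefactor $\tfrac{1}{2}$ in \eqref{eqn:cxev2}. Once these normalisations are pinned down, matching coefficients summand by summand recovers the stated eigenvalue equations and completes the characterisation of $\mathcal{D}$.
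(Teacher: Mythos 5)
Your proposal follows essentially the same route as the paper: reduce to the translation-invariant cone model via Proposition \ref{prop:lmco} and the spectral description of exceptional weights, split cone $(0,q)$-forms into a radial factor $dr/r-i\theta$ and horizontal pieces, identify the bundle data on $L$ with $\nu^{1,0}_{S^7}(L)$ and $\Lambda^{0,1}_hL\otimes\nu^{1,0}_{S^7}(L)$, and compute $\bar\partial_C+\bar\partial^*_C$ on scale-homogeneous sections to extract the eigenvalue equations from the radial and horizontal components. The only thing left implicit in your sketch is the explicit computation pinning down the $+2$ shift and the factor $\tfrac{1}{2}$ (the paper carries this out by writing the $(0,2)$-part as $r^{\lambda+2}(dr/r-i\theta)\wedge w$, using $\nabla_\xi\,dr=r\theta$, and separating the $\frac{\partial}{\partial r}$- and $\xi$-contractions in $\bar\partial^*_C$), but you correctly identify both where these constants originate and how they would be determined.
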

\begin{proof}
 Similarly to the proof of Proposition \ref{prop:evalprobcay}, if $\rho$ is a radius function for $N$ then we can see that 
\[
 \bar\partial+\bar\partial^*\rho^2,
\]
on $\hat{N}$ is asymptotically translation invariant to
\[
 \bar\partial_C+\bar\partial^*_Cr^2,
\]
on the cone $C$ where this time we take a metric on $\nu_{\mathbb{C}^4}(C)$ that is independent of $r$. If $v\in C^\infty(\nu_{S^7}(L)\otimes \mathbb{C})$ we can think of $r^\mu v$ as a complexified normal vector field on $C$, and moreover the complex structure $J$ on $\mathbb{C}^4$ induces a splitting
\[
 \nu_{S^7}(L) \otimes \mathbb{C}=\nu^{1,0}_{S^7}(L)\oplus \nu^{0,1}_{S^7}(L),
\]
of the complexified normal bundle of $L$ in $S^7$ into holomorphic and antiholomorphic parts (the $i$ and $-i$ eigenspaces of $J$ respectively). Also, by definition of the Reeb vector field, if we take $\theta\in C^\infty(\Lambda^1L)$ to be the dual one-form to $\xi$ we have that $dr-ir\theta$ is a $(0,1)$-form on $C$. Since $\Lambda^2C\cong \Lambda^2L\oplus dr\wedge \Lambda^1L$, we can see that a $(0,2)$-form on $C$ must be of the form
\[
r^\mu(dr-ir\theta)\wedge w,
\]
where $w\in C^\infty(\Lambda^{0,1}_hL)$. By Proposition \ref{prop:lmco} in combination with the discussion in \cite[pg 416]{MR837256}, we deduce that $\lambda\in \mathcal{D}$ if, and only if, there exists $v\in C^\infty(\nu_{S^7}(L))$ and $w\in C^\infty(\Lambda^{0,1}_hL\otimes \nu^{1,0}_{S^7}(L))$ so that
\[
 \bar\partial_C(r^\lambda v)+\bar\partial^*_C\left(r^{\lambda+2}\left(\frac{dr}{r}-i\theta\right)\wedge w\right)=0,
\]
where $\theta$ is dual to the Reeb vector field $\xi$.
We can calculate that
\[
 d_C(r^\lambda v)=\lambda r^{\lambda-1}dr\otimes v +r^\lambda \theta\otimes\nabla_{\xi}v+r^\lambda d_hv,
\]
and therefore
\begin{equation}\label{eqn:bpc}
 \bar\partial_C(r^\lambda v)= r^\lambda \frac{1}{2}\left(\frac{dr}{r}-i\theta\right)\otimes(\lambda+i\nabla_\xi)v+r^\lambda\bar\partial_h v.
\end{equation}
We also have that
\begin{align*}
 \bar\partial^*_C\left(\left(\frac{dr}{r}-i\theta\right)\wedge r^{\lambda+2}w\right)&=-\frac{\partial}{\partial r}\hook \nabla_{\frac{\partial}{\partial r}}\left(\left(\frac{dr}{r}-i\theta\right)\wedge r^{\lambda+2}w\right) \\
-\frac{1}{r^2} \xi\hook \nabla_\xi\left(\left(\frac{dr}{r}-i\theta\right)\wedge r^{\lambda+2}w\right)
&-r^\lambda\left(\frac{dr}{r}-i\theta\right)\bar\partial^*_hw,
\end{align*}
where since $w$ is a horizontal $(0,1)$-form we see that any term gained from applying $\bar\partial^*_h$ to $r^{-1}dr-i\theta$ must be a multiple of $w$ at each point and therefore will vanish under exterior product with $w$.
We have that
\[
 -\frac{\partial }{\partial r} \hook \nabla_{\frac{\partial}{\partial r}}\left(\left(\frac{dr}{r}-i\theta\right)\wedge r^{\lambda+2}w\right)=-(\lambda+1)r^\lambda w,
\]
and
\[
 -\xi\hook \nabla_\xi\left(\left(\frac{dr}{r}-i\theta\right)\wedge r^{\lambda+2}w\right)=-r^{\lambda+2} w+ir^{\mu+2} \nabla_\xi w,
\]
since $\nabla_{\xi}dr=r\theta$ where $\nabla$ is the Levi-Civita connection of the cone metric. We deduce that
\begin{equation}\label{eqn:bpsc}
 \bar\partial^*_C\left(\left(\frac{dr}{r}-i\theta\right)\wedge r^{\lambda+2}w\right)=-r^\lambda(\lambda+2-i\nabla_\xi)w-r^\lambda\left(\frac{dr}{r}-i\theta\right)\bar\partial^*_h w.
\end{equation}
Equating \eqref{eqn:bpc} and minus \eqref{eqn:bpsc}, we find that $\lambda\in \mathcal{D}$ if, and only if, there exist $v\in C^\infty(\nu_{S^7}^{1,0}(L))$ and $w\in C^\infty(\Lambda^{0,1}_hL\otimes \nu^{1,0}_{S^7}(L))$ satisfying 
\begin{align*}
 \bar\partial_h v &=(\lambda+2-i\nabla_\xi)w, \\
\bar\partial^*_h w&=\frac{1}{2}(\lambda+i\nabla_\xi)v,
\end{align*}
as claimed.
\end{proof}

\subsubsection{An eigenproblem on the complex link}\label{ss:trick1}

In Proposition \ref{prop:evalprobcx} we characterised the set of exceptional weights $\mathcal{D}$ for which the operator \eqref{eqn:cxopbad2} is not Fredholm in terms of an eigenproblem on the real link of a complex cone $C$. In this section we will introduce a trick used by Lotay \cite[\S 6]{MR2981841} to study an eigenvalue problem on the link of a coassociative cone which is a circle bundle over a complex curve in $\mathbb{C}P^2$. This will allow us to give an equivalent eigenvalue problem to \eqref{eqn:cxev1}-\eqref{eqn:cxev2} on the real link of $C$ completely in terms of operators and vector bundles on the complex link of $C$.

Let $C$ be a complex cone in $\mathbb{C}^4$ with real link $L\subseteq S^7$ and complex link $\Sigma\subseteq \mathbb{C}P^3$. Suppose we have a problem of the following form: Find all of the functions $f$ on $L$ that satisfy
\begin{equation}\label{eqn:prob}
 \mathcal{L}_{\xi} f=imf, \quad \bar\partial_h f=0,
\end{equation}
for some $m\in \mathbb{Z}$, where $\xi$ is the Reeb vector field on $C$.

We would like to understand the relationship between the operator $\bar\partial_h$ on the real link of $C$ and $\bar\partial_\Sigma$ on the complex link $C$.

\begin{defn}
 Call a function, horizontal vector field or horizontal differential form $f$ on $L$ \emph{basic} if 
\[
 \mathcal{L}_{\xi}f=0.
\]
\end{defn}
Basic functions, forms and vector fields are special because they are in one-one correspondence with functions, forms and vector fields on $\Sigma$. It follows from \cite[Lem 1]{MR0200865} that $\bar\partial_h$ acting on basic functions, forms or vector fields on L is equivalent to $\bar\partial_\Sigma$ acting on functions, forms or vector fields on $\Sigma$. In Problem \eqref{eqn:prob}, when $m\ne 0$, $f$ is not basic. However, a simple trick allows us to pretend that $f$ is basic. 

By the definition of the complex link, we may identify the cone $C$ with the vector bundle $\mathcal{O}_{\mathbb{C}P^3}(-1)|_\Sigma$, that is, the tautological line bundle over $\mathbb{C}P^3$ restricted to $\Sigma$. This is then a trivial (real) line bundle over $L$ and therefore has a global section given by the map $x\mapsto s(x)=x$ for $x\in L$. It is easy to see that $\mathcal{L}_{\xi}s=is$, and therefore 
\[
 f\otimes s^{-m},
\]
is a section of the vector bundle $\mathcal{O}_{\mathbb{C}P^3}(m)|_\Sigma$ satisfying 
\[
 \mathcal{L}_{\xi}(f\otimes s^{-m})=0,
\]
and therefore pushes down to a well-defined section of the vector bundle $\mathcal{O}_{\mathbb{C}P^3}(m)|_\Sigma$. Since $\mathcal{O}_{\mathbb{C}P^3}(m)|_{\Sigma}$ is a trivial line bundle over $L$, we can still consider $f\otimes s^{-m}$ as a function on $L$. Therefore we can rephrase Problem \eqref{eqn:prob} as: Find all basic sections $\tilde{f}$ of $\mathcal{O}_{\mathbb{C}P^3}(m)|_\Sigma\to L$ satisfying
\[
 \bar\partial_h\tilde{f}=0.
\]
This is now equivalent to finding the sections $\tilde{f}$ of $\mathcal{O}_{\mathbb{C}P^3}(m)|_\Sigma\to \Sigma$ that satisfy
\[
 \bar\partial_\Sigma\tilde{f}=0.
\]
Therefore we have reduced Problem \eqref{eqn:prob} to asking: How many holomorphic sections of the line bundle $\mathcal{O}_{\mathbb{C}P^3}(m)|_\Sigma$ are there?

This problem is easily solved using the Hirzebruch--Riemann--Roch Theorem \cite[Thm 5.1.1]{MR2093043}.
\begin{thm}[Hirzebruch--Riemann--Roch]\label{thm:rr}
 Let $\Sigma$ be a Riemann surface and let $E$ be a vector bundle over $\Sigma$. Denote by $h^0(\Sigma,E)$ the dimension of the space of holomorphic sections of $E$. Let $K_\Sigma$ denote the canonical bundle of $\Sigma$. Then
\[
 h^0(\Sigma,E)=h^0(\Sigma,E^*\otimes K_\Sigma)+\textnormal{deg}(E)+\textnormal{rk}(E)(1-g),
\]
where $\textnormal{deg}(E)$ is the degree of the vector bundle $E$, $\textnormal{rk}(E)$ is the rank of the vector bundle and $g$ is the genus of $\Sigma$.
\end{thm}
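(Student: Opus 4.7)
The plan is to deduce the formula from two ingredients: the additivity of the holomorphic Euler characteristic on short exact sequences together with the base case on the trivial line bundle, and Serre duality to relate $h^1$ to $h^0$ of the twist by the canonical bundle. Set $\chi(\Sigma,E):=h^0(\Sigma,E)-h^1(\Sigma,E)$. Since $\Sigma$ is a compact complex manifold of dimension one, Hodge theory for the Dolbeault operator on $E$ gives $H^1(\Sigma,E)\cong H^0(\Sigma,E^*\otimes K_\Sigma)^*$, so the claimed identity is equivalent to
\[
\chi(\Sigma,E)=\deg(E)+\mathrm{rk}(E)(1-g).
\]
I would prove this equality rather than work with $h^0$ and $h^1$ separately, since $\chi$ has much better functorial behaviour.

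The first step is the base case $E=\mathcal{O}_\Sigma$: here $h^0=1$ because $\Sigma$ is compact and connected, and $h^1=g$ by the definition of the genus via Hodge theory on Riemann surfaces, so $\chi(\mathcal{O}_\Sigma)=1-g$, which matches $\deg(\mathcal{O}_\Sigma)+1\cdot(1-g)=1-g$. The second step is to upgrade to arbitrary line bundles. For any point $p\in\Sigma$ and any line bundle $L$, the skyscraper sequence
\[
0\to L(-p)\to L\to L|_p\to 0
\]
gives $\chi(L)=\chi(L(-p))+1$, while $\deg L=\deg L(-p)+1$, so $\chi(L)-\deg L$ is invariant under twisting by divisors of points. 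Since every line bundle on $\Sigma$ is of the form $\mathcal{O}_\Sigma(D)$ for some divisor $D$ (on a compact Riemann surface every holomorphic line bundle has a non-zero meromorphic section), the base case propagates to all line bundles.

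The third step is to pass from line bundles to vector bundles of arbitrary rank $r$ by induction on $r$. The key input is that every holomorphic vector bundle on a compact Riemann surface admits a filtration
\[
0=E_0\subset E_1\subset\dots\subset E_r=E
\]
by holomorphic subbundles whose successive quotients $E_i/E_{i-1}$ are holomorphic line bundles. Both $\chi$ and $\deg$ are additive on short exact sequences, and $\mathrm{rk}$ is additive too, so the formula for each quotient line bundle propagates to $E$ by a telescoping sum.

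The main obstacle is producing the line-bundle filtration above without circularity, since the usual proof of its existence uses Riemann--Roch for line bundles to construct a holomorphic section of $E\otimes L$ for $L$ sufficiently positive, whose zero locus then yields a line subbundle. This is not actually circular here because Riemann--Roch for line bundles was established in the second step independently of higher rank results; alternatively, one can bypass the issue entirely by invoking Grothendieck's splitting theorem when $\Sigma\cong\mathbb{CP}^1$ and, in the general case, by working in the Grothendieck group $K(\Sigma)$, which is generated by classes of line bundles together with $[\mathcal{O}_\Sigma]$, so that additivity of both sides of the formula as homomorphisms $K(\Sigma)\to\mathbb{Z}$ forces the equality from the line-bundle case.
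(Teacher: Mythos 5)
The paper does not prove this statement; it is quoted as a known result from Huybrechts \cite[Thm~5.1.1]{MR2093043}, so there is no in-paper argument to compare against. Your proposed proof is nevertheless correct and is the standard textbook route: convert the identity into $\chi(E)=\deg(E)+\mathrm{rk}(E)(1-g)$ via Serre duality, establish the base case $E=\mathcal{O}_\Sigma$, propagate to all line bundles using the skyscraper short exact sequence $0\to L(-p)\to L\to L|_p\to 0$ together with the fact that every holomorphic line bundle on a compact Riemann surface comes from a divisor, and then pass to arbitrary rank via a filtration with line-bundle graded pieces (or, as you note, via additivity on $K(\Sigma)$). You correctly flag and resolve the potential circularity in producing the filtration, since your line-bundle case is established independently of the higher-rank claim. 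One small point worth making explicit: the theorem requires $\Sigma$ to be compact and connected (you use this for $h^0(\mathcal{O}_\Sigma)=1$, for $h^1(\mathcal{O}_\Sigma)=g$, and for Serre duality); this is implicit in the paper's usage, where the complex links $\Sigma$ are compact curves in $\mathbb{C}P^3$, but the formula as literally stated fails for noncompact $\Sigma$.
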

We will now apply the trick that we described above to rephrase the eigenvalue problem \eqref{eqn:cxev1}-\eqref{eqn:cxev2} on the real link of a cone as an eigenvalue problem on the complex link on a cone.
\begin{prop}\label{prop:evcxlink}
 Let $C$ be a complex cone in $\mathbb{C}^4$ with real link $L$ and complex link $\Sigma$. Then given $\lambda\in \mathbb{R}$ and $m\in \mathbb{Z}$, pairs $v\in C^\infty(\nu_{\mathbb{C}P^3}^{1,0}(\Sigma)\otimes \mathcal{O}_{\mathbb{C}P^3}(m)|_\Sigma)$ and $w\in C^\infty(\Lambda^{0,1}\Sigma\otimes \nu_{\mathbb{C}P^3}^{1,0}(\Sigma)\otimes \mathcal{O}_{\mathbb{C}P^3}(m)|_{\Sigma})$ so that
\begin{align}\label{eqn:evprobcxlink1}
\bar\partial_{\Sigma}v &=(\lambda+3+m)w, \\ \label{eqn:evprobcxlink2}
 \bar\partial^*_\Sigma w&=\frac{1}{2}(\lambda-1-m)v,
\end{align}
are in a one-one correspondence with pairs $\tilde{v}\in C^\infty(\nu^{1,0}_{S^7}(L))$ and $\tilde{w}\in C^\infty(\Lambda^{0,1}_hL\otimes \nu^{1,0}_{S^7}(L))$ satisfying
\[
 \mathcal{L}_\xi \tilde{v}=im\tilde v, \quad \mathcal{L}_\xi\tilde{w}=im\tilde{w},
\]
where $\xi$ is the Reeb vector field, and the eigenvalue problem \eqref{eqn:cxev1}-\eqref{eqn:cxev2}.
\end{prop}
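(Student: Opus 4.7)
The strategy is to adapt the trick of Section 4.1.3 to the bundle-valued setting. Write $s$ for the tautological section of $\mathcal{O}_{\mathbb{C}P^3}(-1)|_\Sigma$ along $L$, so that $\mathcal{L}_\xi s = is$. The proposed correspondence is
\begin{equation*}
\tilde v \longleftrightarrow v := \tilde v \otimes s^{-m}, \qquad \tilde w \longleftrightarrow w := \tilde w \otimes s^{-m},
\end{equation*}
with inverse obtained by pulling back along $p|_L : L \to \Sigma$ and multiplying by $s^m$. This is well-defined because $s$ is nowhere vanishing on $L$, so its powers trivialise $p^*\mathcal{O}_{\mathbb{C}P^3}(k)|_\Sigma$ over $L$ for every $k \in \mathbb{Z}$.

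First I would establish the bundle identifications needed to interpret $v$ and $w$ as sections over $\Sigma$. The Hopf projection realises $L$ as a principal $U(1)$-bundle whose infinitesimal generator is $\xi$; its horizontal distribution $\xi^\perp \subset TL$ is the horizontal lift of $T\Sigma$ and respects the complex structure, giving $\Lambda^{0,1}_h L \cong p^*\Lambda^{0,1}\Sigma$. Viewing $C$ as the total space of $\mathcal{O}_{\mathbb{C}P^3}(-1)|_\Sigma$ and decomposing $T^{1,0}\mathbb{C}^4|_L$ along the fibres of the Hopf action yields $\nu^{1,0}_{S^7}(L) \cong p^*\nu^{1,0}_{\mathbb{C}P^3}(\Sigma)$. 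Since $\mathcal{L}_\xi s^{-m} = -im\,s^{-m}$, the eigencondition $\mathcal{L}_\xi \tilde v = im \tilde v$ renders $\tilde v \otimes s^{-m}$ basic, and it descends to a section of $\nu^{1,0}_{\mathbb{C}P^3}(\Sigma) \otimes \mathcal{O}_{\mathbb{C}P^3}(m)|_\Sigma$ over $\Sigma$; the same holds for $\tilde w \otimes s^{-m}$. Applying \cite[Lem 1]{MR0200865} in its bundle-valued form then identifies $\bar\partial_h$ on basic sections with $\bar\partial_\Sigma$ on the push-downs, and a standard Riemannian submersion argument yields the same identification for the formal adjoints.

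What remains is to rewrite the coefficients $\lambda + 2 - i\nabla_\xi$ and $\tfrac{1}{2}(\lambda + i\nabla_\xi)$ appearing in \eqref{eqn:cxev1}--\eqref{eqn:cxev2}. Since $\xi$ is a Killing vector field on $S^7$ and the connections on $\nu^{1,0}_{S^7}(L)$ and $\Lambda^{0,1}_h L$ are $U(1)$-invariant, $\nabla_\xi - \mathcal{L}_\xi$ acts as a constant endomorphism of each, determined by how the Hopf $U(1)$-action lifts. A direct computation, using the weight-one action on $T^{1,0}\mathbb{C}^4$, shows that on $\mathcal{L}_\xi$-eigensections of weight $im$ one has $i\nabla_\xi = -m - 1$, so that $\lambda + 2 - i\nabla_\xi = \lambda + 3 + m$ and $\tfrac{1}{2}(\lambda + i\nabla_\xi) = \tfrac{1}{2}(\lambda - 1 - m)$, converting \eqref{eqn:cxev1}--\eqref{eqn:cxev2} into \eqref{eqn:evprobcxlink1}--\eqref{eqn:evprobcxlink2}. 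The main obstacle I anticipate is this weight-shift calculation: one must carefully track how the $U(1)$-weight on the ambient $T^{1,0}\mathbb{C}^4$ distributes between $\nu^{1,0}_{S^7}(L)$, the horizontal $(0,1)$-forms, and the tautological line bundle, so that the integer shifts conspire to produce the exact coefficients on the right-hand sides of \eqref{eqn:evprobcxlink1}--\eqref{eqn:evprobcxlink2}.
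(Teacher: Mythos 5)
Your proposal follows essentially the same route as the paper's proof: you pull $v$ and $w$ back to basic sections via multiplication by powers of the tautological section $s$, identify $\bar\partial_h$ on basic sections with $\bar\partial_\Sigma$, and convert $\nabla_\xi$ to $\mathcal{L}_\xi$ to produce the integer shifts. The paper's proof carries out exactly these steps, invoking the Riemannian-submersion identity $\text{hor}(\nabla_X\xi)=JX$ from O'Neill to deduce $\nabla_\xi = \mathcal{L}_\xi + i$ on $(1,0)$-normal fields, then applying the same shift to $\tilde w$; your ``weight-one action on $T^{1,0}\mathbb{C}^4$'' is a rephrasing of the same computation. One remark worth flagging: you correctly identify the weight-shift calculation as the delicate step, and indeed the paper's proof applies the $+i$ shift uniformly to $\tilde v \in C^\infty(\nu^{1,0}_{S^7}(L))$ and $\tilde w \in C^\infty(\Lambda^{0,1}_h L \otimes \nu^{1,0}_{S^7}(L))$ while only deriving it explicitly for $(1,0)$-vectors; if you want to write this out fully, you should verify that the contribution from the $\Lambda^{0,1}_h L$ factor in the tensor product does not produce an additional shift, which ultimately traces back to how the connection $\nabla_\xi$ in \eqref{eqn:cxev1}--\eqref{eqn:cxev2} is inherited from the Levi-Civita connection of the cone metric in the derivation of Proposition~\ref{prop:evalprobcx}. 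Apart from that point, which is common to both arguments, the two proofs are the same.
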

\begin{proof}
 We can pull back $v$ and $w$ to basic sections of $\nu^{1,0}_{S^7}(L)\otimes \mathcal{O}_{\mathbb{C}P^3}(m)|_\Sigma$ and $\Lambda^{0,1}_hL\otimes \nu^{1,0}_{S^7}(L)\otimes \mathcal{O}_{\mathbb{C}P^3}(m)|_\Sigma$ over $L$ respectively. As mentioned above, these sections are in one-one correspondence with sections $\tilde v$ and $\tilde w$ of $\nu^{1,0}_{S^7}(L)$ and $\Lambda^{0,1}_hL\otimes \nu^{1,0}_{S^7}(L)$ respectively satisfying
\begin{equation}\label{eqn:liederiv}
 \mathcal{L}_\xi \tilde v=im\tilde v, \quad \mathcal{L}_\xi \tilde w=im \tilde w.
\end{equation}
So we see that $v$ and $w$ are in one-one correspondence with $\tilde v$ and $\tilde w$ satisfying \eqref{eqn:liederiv}, and $\tilde v$ and $\tilde w$ satisfy
\begin{align*}
 \bar\partial_h \tilde v&=(\lambda+3+m)\tilde w, \\
\bar\partial^*_h \tilde w&=\frac{1}{2}(\lambda-1-m)\tilde v.
\end{align*}
Finally, by \cite[Lemma 3, \S 5]{MR0200865}, we see that any horizontal vector field $X$ on $S^7$ viewed as a circle bundle over $\mathbb{C}P^3$ satisfies
\[
 \text{horizontal part}(\nabla_X \xi)=JX.
\]
and so for any vector field of type $(1,0)$, we have that
\[
 \mathcal{L}_\xi v=\nabla_\xi v-\nabla_v \xi=\nabla_\xi v -iv.
\]
Therefore \eqref{eqn:liederiv} implies that
\[
 \nabla_\xi \tilde v= i(m+1)\tilde v, \quad \nabla_\xi \tilde w=i(m+1)\tilde w,
\]
and therefore
\begin{align*}
 \bar\partial_h \tilde v&= (\lambda+2 -i\nabla_\xi)\tilde w, \\
\bar\partial^*_h \tilde w&= \frac{1}{2}(\lambda+i\nabla_\xi)\tilde v,
\end{align*}
as required.
\end{proof}
\subsection{Dimension of the moduli space of complex deformations of a CS complex surface}\label{sec:indcomp}

In this section, we will deduce a version of the Atiyah--Patodi--Singer index theorem for operators on conically singular manifolds. We will then apply this result to prove Theorem \ref{thm:indcomp}, an index formula for the operator \eqref{eqn:cxopind}, which allows us to compare the dimension of the moduli space of CS complex deformations of a conically singular complex surface to what we will think of as the dimension of the moduli space of all complex deformations of a CS complex surface in a Calabi--Yau four-fold based on Kodaira's theorem \cite[Thm 1]{MR0133841} on deformations of complex submanifolds of complex varieties.

\subsubsection{The Atiyah--Patodi--Singer index theorem for conically singular manifolds}
The Atiyah--Patodi--Singer index theorem is predominantly for a certain type of elliptic operator on a manifold with boundary. However, as a corollary to the main theorem, they also prove an index theorem for translation invariant operators on a manifold with a cylindrical end, which we quote here.
\begin{thm}[{\cite[Thm 3.10 \& Cor 3.14]{MR0397797}}]\label{thm:apsind}
 Let 
 \[
 A:C^\infty(E)\to C^\infty(F)
 \]
 be a linear elliptic first order translation invariant differential operator on a manifold $\hat{X}$ with a cylindrical end $Y\times (0,\infty)$ that takes the special form
 \[
 A=\sigma\left(\frac{\partial}{\partial u}+B\right),
 \]
 on $Y\times(0,\infty)$, where $u$ is the inward normal coordinate, $\sigma:E|_Y\to F|_Y$ is a bundle isomorphism and $B$ is a self adjoint elliptic operator on $Y$. Then 
\begin{equation}\label{eqn:indcylend}
 \textnormal{ind}_{L^2} A=\int_{\hat{X}} \alpha_0(x) \, dx-\frac{h+\eta(0)}{2}+h_\infty(F),
\end{equation}
where $h,\eta, \alpha_0$ and $h_\infty(F)$ are defined as follows:
\begin{enumerate}[(i)]
 \item $\alpha_0(x)$ is the constant term in the asymptotic expansion (as $t\to 0$) of
\[
 \sum e^{-t\mu'}|\phi'_\mu(x)|^2-\sum e^{-t\mu''}|\phi''_\mu(x)|^2,
\]
where $\mu',\phi'_\mu$ denote the eigenvalues and eigenfunctions of $A^*A$ on the double of $X$, and $\mu'',\phi''_\mu$ are the corresponding objects for $AA^*$.
\item $h=\textnormal{dim } \textnormal{Ker } B=$ multiplicity of the $0$-eigenvalue of $B$.
\item $\eta(s)=\sum_{\lambda\ne 0}(\textnormal{sign }\lambda) |\lambda|^{-s}$, where $\lambda$ runs over the eigenvalues of $B$.
\item $h_\infty(F)$ is the dimension of the subspace of $\textnormal{Ker } B$ consisting of limiting values of extended $L^2$ sections $f$ of $F$ satisfying $A^*f=0$.
\end{enumerate}
Here we call $f$ an \emph{extended $L^2$-section} of $E$ if $f\in L^2_\text{loc}(E)$ and on the cylindrical end of $\hat{X}$, for large $t$, $f$ takes the form
\[
 f(y,t)=g(y,t)+f_\infty(y),
\]
for $g\in L^2(E)$ and $f_\infty\in \textnormal{Ker }B$.
\end{thm}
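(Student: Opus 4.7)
The strategy I would follow is to reduce the cylindrical-end problem to a boundary-value problem on a compact manifold with boundary, where the standard Atiyah--Patodi--Singer index theorem applies, and then pass to the limit as the truncation length tends to infinity. The product structure $A = \sigma(\partial_u + B)$ on $Y \times (0,\infty)$ is precisely the model for which the APS boundary condition was designed, so this reduction is natural.

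First I would analyse $\ker A$ and $\ker A^*$ on the cylindrical end by spectrally decomposing along $Y$ in the eigenbasis of the self-adjoint elliptic operator $B$. A solution of $A\psi = 0$ on $Y \times (0,\infty)$ takes the form
\[
\psi(y,u) = \sum_\lambda e^{-\lambda u}\psi_\lambda(y), \qquad B\psi_\lambda = \lambda\psi_\lambda,
\]
so the $L^2$ condition selects $\lambda > 0$ while extended $L^2$ solutions additionally pick up the $\lambda = 0$ modes in $\ker B$. A symmetric analysis for $A^*$, where the signs of admissible eigenvalues are reversed, is essential in order to identify the correction $h_\infty(F)$ coming from the cokernel.

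Next I would truncate the end at $u = T$ to obtain a compact manifold with boundary $X_T$ and impose the APS boundary condition that the restriction to $Y \times \{T\}$ lies in the span of eigenfunctions of $B$ with negative eigenvalue. The classical APS theorem on manifolds with boundary then applies to give
\[
\textnormal{ind}(A_T) = \int_{X_T} \alpha_0(x)\,dx - \frac{h + \eta(0)}{2}.
\]
Because $\alpha_0$ is a local heat-invariant which, on the translation-invariant cylinder, integrates to zero in the $u$ direction, the integral $\int_{X_T} \alpha_0(x)\,dx$ stabilises as $T \to \infty$ to $\int_{\hat X} \alpha_0(x)\,dx$.

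The remaining step is to compare $\ker A_T$ (and its APS cokernel) with the $L^2$ kernel and cokernel of $A$ on $\hat X$. Solutions of the APS problem on $X_T$ that do not arise from $L^2$ solutions on $\hat X$ are exactly those whose asymptotic boundary value lies in $\ker B$, that is, extended $L^2$ solutions; the same holds for $A^*$. Tracking these gives the correction $h_\infty(F)$, and the fact that zero modes of $B$ are split between kernel and cokernel via the APS projection produces the term $-h/2$. The main obstacle is exactly this careful bookkeeping of zero-modes of $B$ against extended $L^2$ sections of $E$ and $F$: getting $h_\infty(F)$ (rather than some mixture with $h_\infty(E)$) with the correct sign is where the subtlety lies, and it is only reached by properly analysing how the APS boundary projection interacts with the asymptotic values of both $A$-harmonic and $A^*$-harmonic extended sections.
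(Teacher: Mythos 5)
The paper does not prove this result; it is quoted verbatim from Atiyah--Patodi--Singer (\cite[Thm 3.10 \& Cor 3.14]{MR0397797}), so there is no ``paper's own proof'' to compare against. Your sketch faithfully reproduces the outline of the original APS argument --- spectral decomposition along $B$ on the cylinder, truncation to a compact manifold with APS boundary condition, locality of $\alpha_0$ killing the cylinder contribution, and the bookkeeping of zero modes versus extended $L^2$ sections that produces the $-h/2$ and $h_\infty(F)$ terms --- and, while it is a sketch rather than a full argument, it correctly identifies the substantive step as the comparison of $\ker A_T$ with $\ker_{L^2} A$ and $\ker_{L^2} A^*$ via their asymptotic boundary values.
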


We will now explain how we can apply the Atiyah--Patodi--Singer index theorem \ref{thm:apsind} to elliptic operators on conically singular manifolds.

We first give a technical result that relates the adjoint of a differential operator on a conically singular manifold to the adjoint of the related asymptotically translation invariant operator acting on the conformally equivalent manifold with cylindrical end.
\begin{lem}\label{lem:adj}
 Let $M$ be an $m$-dimensional conically singular manifold at $\hat{x}$ and let $\rho$ be a radius function for $M$. Write $\hat{M}:=M\backslash\{\hat{x}\}$, and $g$ for the metric on $\hat{M}$. Let 
\[
 A:C^\infty_0(T^q_s\hat{M})\to C^\infty_0(T^{q'}_{s'}\hat{M}),
\]
be a linear first order differential operator on $\hat{M}$ and suppose there exists $\lambda\in \mathbb{R}$ so that
\[
 \tilde{A}:=\rho^{\lambda+s'-q'}A\rho^{q-s},
\]
is an asymptotically translation invariant operator. Then the formal adjoint of the operator $\tilde{A}$ (with respect to the metric $\rho^{-2}g$)
\[
 \tilde{A}^*:C^\infty_0(T^{q'}_{s'}\hat{M})\to C^\infty_0(T^q_s\hat{M}),
\]
is of the form
\[
 \tilde{A}^*=\rho^{s-q+m}A^*\rho^{\lambda-s'+q'-m},
\]
where 
\[
 A^*:C^\infty_0(T^{q'}_{s'}\hat{M})\to C^\infty_0(T^q_s\hat{M}),
\]
is the formal adjoint of $A$ with respect to $g$.

Moreover, using the notation of Definitions \ref{defn:weightac} and \ref{defn:weightcs}, the kernel of
\begin{equation}\label{eqn:adjcyl}
 \tilde{A}^*:W^p_{k+1,\mu}(T^{q'}_{s'}\hat{M})\to W^p_{k,\mu}(T^q_s\hat{M}),
\end{equation}
is isomorphic to the kernel of
\begin{equation}\label{eqn:adjcs}
 A^*:L^p_{k+1,\mu+\lambda-m}(T^{q'}_{s'}\hat{M})\to L^p_{k,\mu-m}(T^q_s\hat{M}),
\end{equation}
for any $\mu\in \mathbb{R}$, $k\in \mathbb{N}$ and $1<p<\infty$.
\end{lem}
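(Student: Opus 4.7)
The plan is to derive the stated formula for $\tilde{A}^*$ by carefully tracking the effect of the conformal rescaling $\tilde{g}=\rho^{-2}g$ on metrics and volume forms in the defining integral identity for the formal adjoint, and then to deduce the kernel identification from the factored form of $\tilde{A}^*$ combined with the explicit multiplication isomorphism of Lemma \ref{lem:weightisom}.

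First I would record the two basic scaling identities needed throughout. Passing from a $g$-orthonormal local frame $\{e_i,e^j\}$ to the $\tilde{g}$-orthonormal frame $\{\rho e_i,\rho^{-1}e^j\}$ shows that $|T|_{\tilde{g}}=\rho^{q-s}|T|_g$ for any tensor $T$ of type $(s,q)$, while a direct computation gives $\textnormal{vol}_{\tilde{g}}=\rho^{-m}\textnormal{vol}_g$. Substituting these into the identity
\[
\int_{\hat{M}}\langle \tilde{A}u,v\rangle_{\tilde{g}}\,\textnormal{vol}_{\tilde{g}}=\int_{\hat{M}}\langle u,\tilde{A}^*v\rangle_{\tilde{g}}\,\textnormal{vol}_{\tilde{g}}
\]
and using the hypothesis $\tilde{A}=\rho^{\lambda+s'-q'}A\rho^{q-s}$, I would pull the scalar powers of $\rho$ out of the pointwise pairings, rewrite the left hand side as a $g$-integral of the form $\int_{\hat{M}}\langle A(\rho^{q-s}u),\rho^{q'-s'+\lambda-m}v\rangle_g\,\textnormal{vol}_g$, apply the $g$-adjoint of $A$, and reverse the same bookkeeping on the right hand side. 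Non-degeneracy of the pointwise pairing then yields $\tilde{A}^*v=\rho^{s-q+m}A^*(\rho^{\lambda-s'+q'-m}v)$, which is the claimed formula.

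The second half of the lemma follows formally once the first is in place. Since $\rho^{s-q+m}$ is a nowhere vanishing smooth function, $\tilde{A}^*v=0$ is equivalent to $A^*u=0$ where $u:=\rho^{\lambda-s'+q'-m}v$. Matching the exponent $\lambda-s'+q'-m$ against the exponent $\delta-\mu+s'-q'$ prescribed in Lemma \ref{lem:weightisom} for tensor type $(s',q')$ with cylindrical weight $\delta=\mu$ forces the conical weight of $u$ to be $\mu+\lambda-m$, so that $v\mapsto u$ is a topological isomorphism $W^p_{k+1,\mu}(T^{q'}_{s'}\hat{M})\to L^p_{k+1,\mu+\lambda-m}(T^{q'}_{s'}\hat{M})$. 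Under this isomorphism the kernel of \eqref{eqn:adjcyl} is identified with the kernel of \eqref{eqn:adjcs}, and a second application of Lemma \ref{lem:weightisom} on the target bundle $T^q_s\hat{M}$ identifies the codomain weight as $\mu-m$, consistent with the fact that $\tilde{A}^*$ preserves the cylindrical weight.

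The one place that needs real care is the exponent arithmetic in the first step: the powers of $\rho$ coming from converting $\tilde{g}$-pairings to $g$-pairings on tensors of the two different types $(s,q)$ and $(s',q')$, the extra $\rho^{-m}$ from the conformal volume form, and the exponents $\lambda+s'-q'$ and $q-s$ from the definition of $\tilde{A}$ must all balance in a single integration by parts. One must also verify that $\rho^{q-s}u$ and $\rho^{q'-s'+\lambda-m}v$ remain of type $(s,q)$ and $(s',q')$ respectively, so that the standard $g$-adjoint of $A$ applies without further corrections. Once these powers are reconciled, both conclusions of the lemma reduce to straightforward algebra.
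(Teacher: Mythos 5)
Your proof is correct and follows the same route as the paper: track the conformal scaling of the pointwise pairing ($\langle\cdot,\cdot\rangle_{\rho^{-2}g}=\rho^{2(q-s)}\langle\cdot,\cdot\rangle_g$ on $(s,q)$-tensors) and the volume form ($\textnormal{vol}_{\rho^{-2}g}=\rho^{-m}\textnormal{vol}_g$), integrate by parts through the $g$-adjoint of $A$, and read off $\tilde{A}^*=\rho^{s-q+m}A^*\rho^{\lambda-s'+q'-m}$, then identify the kernels via the multiplication isomorphism of Lemma~\ref{lem:weightisom}. The exponent bookkeeping matches the paper's computation exactly.
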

\begin{proof}
 Let $v\in C^\infty_0(T^{q}_s\hat{M})$ and $w\in C^\infty_0(T^{q'}_{s'}\hat{M})$. Then
\begin{align*}
 \int_{\hat{M}}\langle \tilde{A}v,w\rangle_{\rho^{-2}g} \text{ vol}_{\rho^{-2}g}&= \int_{\hat{M}}\langle \rho^{\lambda+s'-q'}A\rho^{q-s}v,w\rangle_{\rho^{-2}g}\text{ vol}_{\rho^{-2}g} \\
&= \int_{\hat{M}}\rho^{2q'-2s'}\langle \rho^{\lambda+s'-q'}A\rho^{q-s}v,w\rangle_{g}\;\rho^{-m}\text{vol}_{g} \\
&=\int_{\hat{M}}\langle \rho^{\lambda-s'+q'}A\rho^{q-s}v,w\rangle_g \;\rho^{-m}\text{vol}_g \\
&=\int_{\hat{M}}\langle A \rho^{q-s}v, \rho^{\lambda-m-s'+q'}w\rangle_g \text{ vol}_g \\
&=\int_{\hat{M}} \langle v, \rho^{q-s}A^*\rho^{\lambda-m-s'+q'}w\rangle_g\text{ vol}_g \\
&= \int_{\hat{M}}\rho^{2q-2s}\langle v,\rho^{s-q+m}A^*\rho^{\lambda-m-s'+q'}w\rangle_{g}\;\rho^{-m}\text{vol}_{g} \\
&=\int_{\hat{M}}\langle v , \rho^{s-q+m}A^*\rho^{\lambda-m-s'+q'}w\rangle_{\rho^{-2}g}\text{ vol}_{\rho^{-2}g},
\end{align*}
where we have used that $A^*$ is the formal adjoint of $A$ with respect to the metric $g$, which shows that
\[
 \tilde{A}^*:=\rho^{s-q+m}A^*\rho^{\lambda-m-s'+q'},
\]
is the formal adjoint of $\tilde{A}$ with respect to the metric $\rho^{-2}g$. By Lemma \ref{lem:weightisom}
\[
 \rho^{\lambda-m-s'+q'}:W^p_{k+1,\mu}(T^{q'}_{s'}\hat{M})\to L^p_{k+1,\mu+\lambda-m}(T^{q'}_{s'}\hat{M}),
\]
is an isomorphism and so by definition of $\tilde{A}^*$ and $A^*$ the kernels of \eqref{eqn:adjcyl} and \eqref{eqn:adjcs} are isomorphic.
\end{proof}

We may now deduce the following proposition from Theorem \ref{thm:apsind} and Lemma \ref{lem:adj} to give an index theorem for operators on conically singular submanifolds.
\begin{prop}\label{prop:apscs}
 Let $M$ be an $m$-dimensional conically singular manifold at $\hat{x}$ with radius function $\rho$. Let $T^q_s\hat{M}$ be the vector bundle of $(s,q)$-tensors on $\hat{M}:=M\backslash \{\hat{x}\}$. Let 
\[
 A:C^\infty_0(T^q_s\hat{M})\to C^\infty_0(T^{q'}_{s'}\hat{M}),
\]
be a first order linear elliptic differential operator so that
\[
 \tilde{A}:=\rho^{\lambda+s'-q'} A\rho^{q-s},
\]
is asymptotically translation invariant to $\tilde{A}_\infty$ for some $\lambda\in \mathbb{R}$. Then for $\mu\in \mathbb{R}\backslash \mathcal{D}$, given in Proposition \ref{prop:lmco}, the index of 
\begin{equation}\label{eqn:l2inda}
 A:L^2_{k+1,\mu}(T^q_s\hat{M})\to L^2_{k,\mu-\lambda}(T^{q'}_{s'}\hat{M}),
\end{equation}
differs by a constant from the index $\textnormal{ind}_\mu A_\infty$ of 
\begin{equation}\label{eqn:l2indatrans}
 A_\infty:=r^{q'-s'-\lambda}\tilde{A}_\infty r^{s-q}:L^2_{k+1,\mu}(T^q_s\hat{M})\to L^2_{k,\mu-\lambda}(T^{q'}_{s'}\hat{M}),
\end{equation}
which satisfies
\begin{equation}\label{eqn:indformulacs}
 \textnormal{ind}_\epsilon A_\infty=\int_{\hat{M}} \alpha_0(x) dx -\frac{h+\eta(0)}{2},
\end{equation}
for $\epsilon>0$ chosen so that $(0,\epsilon]\cap \mathcal{D}=\emptyset$ and we use the notation of Theorem \ref{thm:apsind} for the terms on the right hand side of \eqref{eqn:indformulacs} (and these terms are defined for the translation invariant operator $\tilde{A}_\infty$).
\end{prop}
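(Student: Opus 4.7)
The idea is to pass to the conformally equivalent asymptotically cylindrical manifold $(\hat M,\rho^{-2}g)$, discard the decaying lower-order terms separating $\tilde A$ from $\tilde A_\infty$ via a compact perturbation argument, and then read off the index of the translation invariant model $\tilde A_\infty$ from the cylindrical-end Atiyah--Patodi--Singer theorem (Theorem \ref{thm:apsind}).

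First I would invoke Lemma \ref{lem:weightisom} with $\delta=\mu$: multiplication by $\rho^{s-q}$ identifies $L^2_{k+1,\mu}(T^q_s\hat M)$ with $W^2_{k+1,\mu}(T^q_s\hat M)$, and multiplication by $\rho^{s'-q'+\lambda}$ identifies $L^2_{k,\mu-\lambda}(T^{q'}_{s'}\hat M)$ with $W^2_{k,\mu}(T^{q'}_{s'}\hat M)$. Unpacking the definitions $\tilde A = \rho^{\lambda+s'-q'}A\rho^{q-s}$ and $A_\infty = \rho^{q'-s'-\lambda}\tilde A_\infty\rho^{s-q}$ shows that under these isomorphisms \eqref{eqn:l2inda} is intertwined with $\tilde A\colon W^2_{k+1,\mu}\to W^2_{k,\mu}$, and \eqref{eqn:l2indatrans} is intertwined with $\tilde A_\infty$ between the same spaces. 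Since $\tilde A-\tilde A_\infty$ is first order with coefficients decaying like $O(e^{-\delta t})$ on the cylindrical end, a truncation plus Rellich--Kondrachov argument shows that $\tilde A-\tilde A_\infty$ is compact between these $W^2$-spaces. As $\mu\notin\mathcal D$ makes both operators Fredholm, stability of the Fredholm index under compact perturbation yields $\mathrm{ind}(\tilde A;\mu)=\mathrm{ind}(\tilde A_\infty;\mu)$, which is the first assertion with the ``constant'' equal to zero.

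Next, to prove \eqref{eqn:indformulacs}, I would apply Theorem \ref{thm:apsind} to the translation invariant model $\tilde A_\infty$: its principal symbol and indicial operator along the cross section supply the factorisation $\sigma(\partial_u+B)$ required by APS, and the $L^2$-version of the theorem gives
\[
\mathrm{ind}_{L^2}\tilde A_\infty=\int_{\hat M}\alpha_0(x)\,dx-\tfrac{1}{2}(h+\eta(0))+h_\infty(F).
\]
To replace $\mathrm{ind}_{L^2}$ by $\mathrm{ind}_\epsilon\tilde A_\infty$, I would appeal to the Lockhart--McOwen weight-jumping formula: shifting the weight from $0^{+}$ to $\epsilon$ crosses no exceptional weight since $(0,\epsilon]\cap\mathcal D=\emptyset$, but the critical value $\mu=0$ itself contributes. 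Precisely, the extended $L^2$ solutions of $\tilde A_\infty^* f=0$ with limiting value in $\mathrm{Ker}\,B$ (whose total dimension is $h_\infty(F)$) cease to represent cokernel elements once one requires $e^{-\epsilon t}$-decay on the weighted space, because their constant-in-$t$ component no longer lies in the corresponding dual space. Hence $\mathrm{ind}_\epsilon\tilde A_\infty=\mathrm{ind}_{L^2}\tilde A_\infty-h_\infty(F)$, which is exactly \eqref{eqn:indformulacs}.

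\textbf{Main obstacle.} The non-routine point is the last step: identifying the cokernel shift across the critical weight $\mu=0$ with the APS boundary quantity $h_\infty(F)$. This requires Lemma \ref{lem:adj} to convert the adjoint computation for $\tilde A_\infty^*$ in cylindrical-weight norms into a statement about $A_\infty^*$ in the CS weighted norms, together with careful bookkeeping of the dual pairing and the precise $W^2$-duality between the weighted Sobolev spaces. Once this identification is in place, \eqref{eqn:indformulacs} follows immediately from \eqref{eqn:indcylend}.
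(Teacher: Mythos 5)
Your overall route — conjugate $A$ to a translation-invariant model, apply Theorem \ref{thm:apsind}, and then convert $\mathrm{ind}_{L^2}$ to $\mathrm{ind}_\epsilon$ — is the same as the paper's, but there are two problems with your execution, one minor and one that is a genuine gap.

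\textbf{The weight jump is argued backwards.} You write that the extended $L^2$ solutions of $\tilde A_\infty^* f = 0$ ``cease to represent cokernel elements'' at weight $\epsilon$ because their constant-in-$t$ component does not lie in the dual space. This is the wrong direction, and it contradicts the conclusion you draw. An extended $L^2$ section $f = g + f_\infty$ with $g\in L^2$ and $f_\infty\in\mathrm{Ker}\,B$ constant \emph{does} lie in $W^2_{-\epsilon}$ for every $\epsilon>0$, since $\int e^{-2\epsilon t}|f_\infty|^2\,dt<\infty$; and $W^2_{-\epsilon}$ (not $W^2_{\epsilon}$) is the dual of the domain space $W^2_{k,\epsilon}$ under the cylindrical $L^2$ pairing. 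So increasing the weight from $0$ to $\epsilon$ drops the dual weight from $0$ to $-\epsilon$ and \emph{enlarges} the cokernel by exactly $h_\infty(F)$, while the kernel is unchanged. It is this growth of the cokernel — $\mathrm{Ker}_\epsilon A_\infty=\mathrm{Ker}_0 A_\infty$ but $\dim\mathrm{Coker}_\epsilon A_\infty=\dim\mathrm{Ker}_{L^2}\tilde A_\infty^*+h_\infty(F)$ — that yields $\mathrm{ind}_\epsilon = \mathrm{ind}_{L^2}-h_\infty(F)$. As written, your mechanism (cokernel shrinking) would give $\mathrm{ind}_\epsilon = \mathrm{ind}_{L^2}+h_\infty(F)$, the wrong sign. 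You also need to actually prove $\mathrm{Ker}_\epsilon A_\infty=\mathrm{Ker}_0 A_\infty$; the paper does this by an elliptic regularity + decay argument combined with the Lockhart--McOwen result that the kernel is constant on intervals avoiding $\mathcal D$. Your sketch takes this for granted.

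\textbf{The compactness claim does not hold as stated.} For the first assertion you argue that $\tilde A - \tilde A_\infty$ is compact between $W^2_{k+1,\mu}$ and $W^2_{k,\mu}$ by truncation and Rellich. That fails for the top-order term: $(\tilde A-\tilde A_\infty)u$ contains $a_1\cdot\nabla u$ with $|a_1|=O(e^{-\delta t})$, and $\nabla u$ lives in $W^2_{k,\mu}$; multiplying by a decaying function gains weight but no derivatives, and the inclusion $W^2_{k,\mu+\delta}\hookrightarrow W^2_{k,\mu}$ is \emph{not} compact (on a compact truncation it is the identity of a fixed Sobolev space). The correct argument is a homotopy: $(1-s)\tilde A_\infty+s\tilde A$ has the same indicial roots for all $s\in[0,1]$, so for $\mu\notin\mathcal D$ the whole path is Fredholm and the index is constant. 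That said, the proposition only asserts that the indices differ by a constant, which is all the paper uses; it is cited to Lockhart--McOwen and not proved, so your stronger claim is not needed.

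Finally, note that the paper is careful to use Lemma \ref{lem:adj} to translate the APS $L^2$-index, which is naturally computed for $\tilde A_\infty$ and its formal adjoint with respect to the cylindrical metric, into the statement $\dim\mathrm{Ker}_0\,A_\infty - \dim\mathrm{Ker}_{\lambda-m}\,A_\infty^*$ in the conically singular weights. You flag this as the ``main obstacle'' but do not carry it out; it is essential to the bookkeeping that makes the weight jump land exactly on $h_\infty(F)$.
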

\begin{proof}
By Proposition \ref{prop:lmco}, we know that $A$ and $\tilde{A}$ have the same kernel and cokernel when acting on weighted Sobolev spaces, and moreover, the index of these operators differ from the index of $\tilde{A}_\infty$ by a constant independent of the weight.

Since $\tilde{A}_\infty$ is translation invariant, we can apply Theorem \ref{thm:apsind} to $\tilde{A}_\infty$. Let $\text{Ker}_\mu\, \tilde{A}_\infty$ and $\text{Ker}_\mu\, \tilde{A}_\infty^*$ denote the kernels of 
\begin{align*}
 \tilde{A}_\infty:W^2_{k+1,\mu}(T^{q}_{s}\hat{M})&\to W^2_{k,\mu}(T^{q'}_{s'}\hat{M}), \\
\tilde{A}_\infty^*:W^2_{k+1,\mu}(T^{q'}_{s'}\hat{M})&\to W^2_{k,\mu}(T^{q}_{s}\hat{M}),
\end{align*}
respectively, where $\tilde{A}^*_\infty$ is the formal adjoint of $\tilde{A}_\infty$ with respect to the metric $\rho^{-2}g$, where $g$ is the metric on $\hat{M}$.
Then Theorem \ref{thm:apsind} yields that
\begin{equation}\label{eqn:csind1}
 \text{dim Ker}_0 \, \tilde{A}_\infty -\text{dim Ker}_0\, \tilde{A}_\infty^*=\int_{\hat{M}} \alpha_0(x) dx -\frac{h+\eta(0)}{2}+h_\infty(T^{q'}_{s'}\hat{M}).
\end{equation}
By definition of $\tilde{A}_\infty$, $\text{Ker}_0\, \tilde{A}_\infty\cong \text{Ker}_0\, A_\infty$, where $\text{Ker}_\mu\, A_\infty$ denotes the kernel of \eqref{eqn:l2indatrans}, and by Lemma \ref{lem:adj}, $\text{Ker}_0 \, \tilde{A}_\infty^*\cong \text{Ker}_{\lambda-m}\, A_\infty^*$, where $A^*_\infty$ is the formal adjoint of $A_\infty$ with respect to the metric $g$ and $\text{Ker}_\mu \, A_\infty^*$ denotes the kernel of
\[
 A_\infty^*:L^2_{k+1,\mu}(T^{q'}_{s'}\hat{M})\to L^2_{k,\mu-\lambda}(T^q_s\hat{M}).
\]
So we see that
\begin{equation}\label{eqn:csind2}
 \text{dim Ker}_0 \, A_\infty -\text{dim Ker}_{\lambda-m}\, A_\infty^*=\int_{\hat{M}} \alpha_0(x) dx -\frac{h+\eta(0)}{2}+h_\infty(T^{q'}_{s'}\hat{M}).
\end{equation}
Denote by $\mathcal{D}$ the subset of $\mathbb{R}$ for which $\mu\in \mathcal{D}$ if, and only if, \eqref{eqn:l2indatrans} is not Fredholm. Then we might have a problem equating
\[
  \text{dim Ker}_0 \, A_\infty -\text{dim Ker}_{\lambda-m}\, A_\infty^*=\text{ind}_0 \,A_\infty,
\]
since if $0\in \mathcal{D}$ then $\text{ind}_0 \,A_\infty$ may not be defined. Take $\epsilon>0$ so that
\[
 (0,\epsilon]\cap \mathcal{D}=\emptyset.
\]
Then $\text{ind}_\epsilon\, A_\infty$ is well-defined. Since $\epsilon>0$, we have that
\[
 \text{Ker}_\epsilon \,A_\infty\subseteq \text{Ker}_0 \,A_\infty,
\]
where $\text{Ker}_\mu\, A_\infty$ denotes the kernel of \eqref{eqn:l2indatrans}. It is claimed that
\[
 \text{Ker}_\epsilon \,A_\infty=\text{Ker}_0 \,A_\infty.
\]
To see this, suppose that $\alpha \in \text{Ker}_0 A_\infty$. Then by elliptic regularity, $\alpha$ is smooth, and by definition of weighted norm on $L^2_{k+1,0}(T^q_s\hat{M})$ $\alpha$ must decay to zero as $r\to 0$ and so we must have that $\alpha=\mathcal{O}(r^{\epsilon'})$ for some $\epsilon'>0$. Taking $\epsilon'$ smaller if necessary we can guarantee that $\mathcal{D}\cap (0,\epsilon']=\emptyset$. The rate of decay of $\alpha$ allows us to deduce that $\alpha\in L^2_{k+1,\epsilon''}(T^q_s\hat{M})$ where $0<\epsilon''<\epsilon'$. But then we are done, since there is no exceptional weight between $\epsilon$ and $\epsilon''$, and so \cite[Lem 7.1]{MR837256} says that $\text{Ker}_\epsilon A_\infty=\text{Ker}_{\epsilon'}A_\infty$. Notice that this tells us that the function $\mu\mapsto \text{dim Ker}_\mu A_\infty$ is upper semi-continuous at zero.

Since $\epsilon>0$
\[
 \text{Ker}_{\lambda-m}\, A^*_\infty\subseteq \text{Ker}_{-\epsilon+\lambda-m}\, A^*_\infty.
\]
The above argument also shows that the function $\mu \mapsto \text{dim Ker}_\mu A^*_\infty$ is upper semi-continuous (in particular at $\mu=\lambda-m$) and so the set
\[
 \text{Ker}_{-\epsilon+\lambda-m} \, A^*_\infty\backslash \text{Ker}_{\lambda-m}\, A^*_\infty,
\]
is nonempty, but its elements are exactly the limiting sections of the extended $L^2$-sections of $T^{q'}_{s'}\hat{M}$.
Therefore
\[
 \text{dim Ker}_{-\epsilon+\lambda-m}\, A^*_\infty \backslash \text{Ker}_{\lambda-m}\, A^*_\infty =h_\infty(T^{q'}_{s'}\hat{M}),
\]
i.e., exactly the dimension of the space of limiting sections of extended $L^2$-sections of $T^{q'}_{s'}\hat{M}$. This allows us to deduce that
\[
 \text{Ker}_0 \, A_\infty -\text{Ker}_{\lambda-m}\, A_\infty-h_\infty(T^{q'}_{s'}(\hat{M})) =\text{ind}_\epsilon \, A_\infty.
\]
Applying this to \eqref{eqn:csind2} we find that
\begin{equation}\label{eqn:csind3}
 \text{ind}_{\epsilon}\, A_\infty=\int_{\hat{M}} \alpha_0(x) dx -\frac{h+\eta(0)}{2},
\end{equation}
as claimed.
\end{proof}

\subsubsection{An application of the APS index theorem}

Having discussed in the previous section the set of exceptional weights $\mathcal{D}$ for the operator \eqref{eqn:cxopbad} in more detail, we will apply the Atiyah--Patodi--Singer index theorem to the operator $\infop$ to compare the dimension of the space of CS complex deformations of a CS complex surface in a Calabi--Yau four-fold to what we might expect to be the dimension of the space of all complex deformations of the complex surface from Kodaira's theorem \cite[Theorem 1]{MR0133841}.
\begin{thm}\label{thm:indcomp}
 Let $N$ be a CS complex surface at $\hat{x}$ with cone $C$ and rate $\mu\in (1,2)\backslash\mathcal{D}$, where $\mathcal{D}$ is the set of exceptional weights defined in Proposition \ref{prop:lmco}, inside a Calabi-Yau four-fold $M$. Write $\hat{N}:=N\backslash\{\hat{x}\}$. Let, for $k>4/p+1$,
\begin{equation}\label{eqn:infopaps}
 \infop:L^p_{k+1,\mu}(\nu^{1,0}_{M}(\hat{N})\oplus \Lambda^{0,2}\hat{N}\otimes \nu^{1,0}_M(\hat{N}))\to L^p_{k,\mu-1}(\Lambda^{0,1}\hat{N}\otimes \nu^{1,0}_M(\hat{N})),
\end{equation}
and denote the index of this operator by 
\[
 \textnormal{ind}_\mu(\infop).
\]
Then
\begin{equation}\label{eqn:indcomp}
 \chi(N,\nu^{1,0}_M(N))=\textnormal{ind}_{\mu}(\infop)+\sum_{\lambda\in(0,\mu)\cap\mathcal{D}}d(\lambda)+\frac{d(0)+\eta(0)}{2},
\end{equation}
where $\chi(N,\nu^{1,0}_M(N))$ is the holomorphic Euler characteristic of $\nu^{1,0}_M(N)$, $\mathcal{D}$ is the set of $\lambda\in \mathbb{R}$ for which \eqref{eqn:evprobcxlink1}-\eqref{eqn:evprobcxlink2} has a nontrivial solution and then $d(\lambda)$ is the dimension of the solution space, $\eta$ is the $\eta$-invariant which we can now define to be
\begin{equation}\label{eqn:neweta}
 \eta(s):=\sum_{0\ne\lambda\in  \mathcal{D}} d(\lambda)\frac{\textnormal{sign}(\lambda)}{|\lambda|^{s}}.
\end{equation}
\end{thm}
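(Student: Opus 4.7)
The plan is to apply the Atiyah--Patodi--Singer index theorem for conically singular manifolds (Proposition \ref{prop:apscs}) directly to the operator \eqref{eqn:infopaps} and then translate each of the three resulting terms into the quantities appearing in \eqref{eqn:indcomp}. Since $\infop$ is a first order elliptic operator, after conjugation by the appropriate power of $\rho$ (taking $\lambda=1$ in the notation of Proposition \ref{prop:apscs}) it is asymptotically translation invariant, and on the cylindrical end the resulting operator $\tilde A_\infty$ takes the standard Dirac-type form $\sigma(\partial_u+B)$ for a self-adjoint elliptic operator $B$ on the real link $L$ of the cone $C$.

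First I would identify the contributions from the link. By Propositions \ref{prop:evalprobcx} and \ref{prop:evcxlink}, the spectrum of $B$ is exactly the set of exceptional weights $\mathcal{D}$, with the $\lambda$-eigenspace of $B$ having dimension $d(\lambda)$. In particular the quantity $h=\dim\textnormal{Ker}\,B$ appearing in Proposition \ref{prop:apscs} is $d(0)$, and the APS $\eta$-invariant of $B$ coincides with $\eta(0)$ as redefined in \eqref{eqn:neweta}. Thus Proposition \ref{prop:apscs} reads
\begin{equation*}
\textnormal{ind}_\epsilon(\infop)=\int_{\hat N}\alpha_0(x)\,dx-\frac{d(0)+\eta(0)}{2}
\end{equation*}
for any $\epsilon>0$ chosen so that $(0,\epsilon]\cap\mathcal{D}=\emptyset$.

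Next I would identify $\int_{\hat N}\alpha_0$ with $\chi(N,\nu^{1,0}_M(N))$. The density $\alpha_0$ is the pointwise Atiyah--Singer density for the twisted Dolbeault operator $\infop$ acting on $(0,\textnormal{even})$-forms with values in the holomorphic bundle $\nu^{1,0}_M(N)$, and in top degree equals the Hirzebruch--Riemann--Roch integrand $\textnormal{ch}(\nu^{1,0}_M(N))\,\textnormal{td}(N)$. Integrating over a smooth compact complex surface this gives the holomorphic Euler characteristic; in our setting, the rate $\mu\in(1,2)$ of the conical singularity forces the pointwise density to scale like $r^0$ in cone coordinates, so it is integrable across $\hat x$ and the integral agrees with the characteristic number of any smooth compact model of $N$.

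Finally I would use the Lockhart--McOwen jump formula: since $\textnormal{ind}_{\mu'}(\infop)$ depends on $\mu'$ only by dropping by $d(\lambda)$ as $\mu'$ increases past each exceptional weight $\lambda\in\mathcal{D}$,
\begin{equation*}
\textnormal{ind}_\epsilon(\infop)-\textnormal{ind}_\mu(\infop)=\sum_{\lambda\in(0,\mu)\cap\mathcal{D}}d(\lambda),
\end{equation*}
and combining with the previous two displays and rearranging yields \eqref{eqn:indcomp}. The main obstacle will be the middle step: verifying that the local Atiyah--Singer density is in fact integrable across the conical singularity and that its integral over $\hat N$ recovers the sheaf-theoretic Euler characteristic. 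This should follow from a careful analysis of the curvature forms of $M$ and of the induced bundles on $N$ in cone coordinates, using that the perturbation from the exact cone metric is $O(r^{\mu-1})$ with $\mu>1$.
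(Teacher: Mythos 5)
Your proposal is correct and follows essentially the same route as the paper: apply Proposition \ref{prop:apscs} (the CS version of APS), identify $h=d(0)$ and the $\eta$-invariant via Propositions \ref{prop:evalprobcx} and \ref{prop:evcxlink}, use the Lockhart--McOwen jump formula to pass from the small weight $\epsilon$ to the actual weight $\mu$, and identify $\int_{\hat N}\alpha_0$ with $\chi(N,\nu^{1,0}_M(N))$. The paper's proof is a one-liner citing Proposition \ref{prop:apscs} together with \cite[Thm 1.6]{MR0290318} for the local index density of the twisted Dolbeault operator; the step you flag as the ``main obstacle'' (integrability and identification of the density) is exactly the one the paper disposes of by this citation, so your caution there is well placed but the outcome is the intended one.
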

\begin{rem}
 We interpret this as follows. The term $\chi(N,\nu^{1,0}_M(N))$ is interpreted as the dimension of the space of all complex deformations of $N$ in $M$, since this is what we can expect if Kodaira's theorem \cite[Theorem 1]{MR0133841} remains valid for complex varieties. Theorem \ref{thm:cscxcaydefs} tells us that $\text{ind}_\mu(\infop)$ is the expected dimension of the space of CS Cayley deformations of $N$ in $M$ (which by Proposition \ref{prop:nocaydefs} we can interpret as the expected dimension of the space of CS complex deformations of $N$ in $M$, although Theorem \ref{thm:cxcsdefs} tells us that in fact this should be equal to just the dimension of the kernel of \eqref{eqn:infopaps}, which is what we expect to happen generically anyway). The term $d(1)$ represents deformations of $N$ that have a different tangent cone to $N$ at $\hat{x}$.
 \end{rem}
\begin{proof}
 This follows from Proposition \ref{prop:apscs}, since in this case
\[
 \int_N \alpha_0(x) \text{ vol}=\chi(N,\nu^{1,0}_M(N)),
\]
from \cite[Thm 1.6]{MR0290318}.
\end{proof}

\section{Calculations}\label{sec:calc}

In this section we will calculate some of the quantities studied in this article for some examples.

In Section \ref{sec:conedefs}, we will consider deformations of two-dimensional complex cones in $\mathbb{C}^4$, both as a Cayley submanifold and a complex submanifold of $\mathbb{C}^4$. In particular, we will consider Cayley deformations of the cone that are themselves cones. The (real) link of such a complex cone is an associative submanifold of $S^7$ with its nearly parallel $G_2$-structure inherited from the Euclidean $Spin(7)$-structure on $\mathbb{C}^4$, and so deforming the cone as a complex or Cayley cone in $\mathbb{C}^4$ is equivalent to deforming the link of the cone as an associative submanifold. Homogeneous associative submanifolds of $S^7$ were classified by Lotay \cite{MR3004102}, using the classification of homogeneous submanifolds of $S^6$ of Mashimo \cite{MR794670}. The deformation theory of these submanifolds was studied by Kawai \cite{MR3672213}, who explicitly calculated the dimension of the space of infinitesimal associative deformations of these explicit examples using techniques from representation theory. Motivated by these calculations, in Section \ref{ss:calc} we will apply the analysis of the earlier sections to compute the dimension of the space of infinitesimal Cayley conical deformations of the complex cones with these links, and check that these calculations match. We will be able to see explicitly which infinitesimal deformations correspond to complex deformations of the cone and which are Cayley but not complex deformations. In particular we will see that complex infinitesimal deformations and Cayley infinitesimal deformations of a two-dimensional complex submanifold of a Calabi--Yau four-fold are not the same in general. Finally, in Section \ref{ss:etacalc} we will compute the $\eta$-invariant for a complex cone in $\mathbb{C}^4$.

\subsection{Cone deformations}\label{sec:conedefs}

Let $C$ be a two-dimensional complex cone in $\mathbb{C}^4$. Let $v$ be a normal vector field on $C$. If $v$ is sufficiently small, we can apply the tubular neighbourhood theorem for cones \ref{prop:tubnbhdcone} to identify $v$ with a deformation of $C$. Write $v=v_1\oplus v_2$, where $v_1\in C^\infty(\nu^{1,0}_{\mathbb{C}^4}(C))$ and $v_2\in C^\infty(\nu^{0,1}_{\mathbb{C}^4}(C))$. We know from Proposition \ref{prop:caylin2} that $v$ is an infinitesimal Cayley deformation of $C$ if, and only if,
\[
 \bar\partial v_1+\frac{1}{4}\bar\partial^*(v_2\hook\overline{\Omega}_0^\sharp)=0,
\]
where $\Omega_0$ is the standard holomorphic volume form on $\mathbb{C}^4$ and $\sharp$ denotes the musical isomorphism $\nu^{*0,1}_{\mathbb{C}^4}(C)\to \nu^{1,0}_{\mathbb{C}^4}(C)$. Moreover by Proposition \ref{prop:compdefs} $v$ is an infinitesimal complex deformation of $C$ if, and only if,
\[
 \bar\partial v_1= 0 = \bar\partial^*(v_2\hook \overline{\Omega}_0^\sharp).
\]
We would like to know what properties $v$ must have in order for the deformation of $C$ corresponding to $v$ to be a cone itself. By Proposition \ref{prop:tubnbhdcone}, in which we constructed the tubular neighbourhood of a cone, we constructed a map
\[
 \Xi_C:V_C\to T_C,
\]
where $V_C\subseteq \nu_{\mathbb{R}^8}(C)$ contains the zero section and $T_C\subseteq \mathbb{C}^4$ contains $C$. We constructed an action of $\mathbb{R}_+$ on $\nu_{\mathbb{C}^4}(C)$ satisfying $|t\cdot v|=t|v|$, and the map $\Xi_C$ satisfies
\[
 \Xi_C(tr,l,tr\cdot v(r,l))=t\Xi_C(r,l,v(r,l)).
\]
Therefore, to guarantee that $\Xi_C\circ v$ is a cone in $\mathbb{C}^4$, we must have that $v(r,l)=r\cdot \hat{v}(l)$, for some $\hat{v}\in C^\infty(\nu_{S^7}(L))$. In this case,
\[
 \Xi_C(r,l,v(r,l))=r\Xi_C(1,l,\hat{v}(l)),
\]
for all $r\in \mathbb{R}_+$. Choosing a metric on $\nu_{\mathbb{C}^4}(C)$ that is independent of $r$, we see that $r\cdot \hat{v}(l)=r\hat{v}(l)$.

Therefore the dimension of the space of infinitesimal conical Cayley deformations of $C$ is equal to the dimensions of the spaces of solutions to the eigenproblems \eqref{eqn:eprobcay} and \eqref{eqn:cxev1}-\eqref{eqn:cxev2} with $\lambda=1$. As remarked after the statement of Proposition \ref{prop:evalprobcay}, this particular eigenspace can be identified with the space of infinitesimal associative deformations of the link of the cone in $S^7$ with its nearly parallel $G_2$-structure. This problem was studied by Kawai \cite{MR3672213}, who computed the dimension of these spaces for a range of examples. In terms of the work done here, this is equivalent to solving the eigenproblem \eqref{eqn:eprobcay} when $\lambda=1$. We will study the eigenproblem \eqref{eqn:cxev1}-\eqref{eqn:cxev2} for the three examples of complex cones that were studied by Kawai in his paper. Our analysis will allow us to see directly the difference between the infinitesimal conical Cayley and complex deformations of a cone, and we hope that the complex geometry will make these calculations simpler.

\subsubsection{Example 1: \texorpdfstring{$L_1=S^3$}{L2inS3}}\label{ss:ex1}

The first example is the simplest, being just a vector subspace (with the zero vector removed). We take 
\[
 C_1:=\mathbb{C}^2\backslash\{0\}, \quad L_1:=S^3, \quad \Sigma_1:=\mathbb{C}P^1,
\]
where $C_1$ is the complex cone, $L_1$ is the real link of $C_1$ and $\Sigma_1$ is the complex link of $C_1$.
\begin{prop}[{\cite[\S6.4.1]{MR3672213}}]\label{prop:ex1}
 The space of infinitesimal associative deformations of $L_1$ in $S^7$ has dimension twelve.  
\end{prop}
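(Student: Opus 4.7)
My strategy is to apply the complex-link reduction of Proposition~\ref{prop:evcxlink} with $\lambda=1$, reducing the problem to counting solutions of an eigenvalue system on $\Sigma_1 = \mathbb{C}P^1$. Since $C_1 = \mathbb{C}^2 \subset \mathbb{C}^4$ sits as a linear subspace, $\Sigma_1 \subset \mathbb{C}P^3$ is a linearly embedded projective line, and the Euler sequence identifies $\nu^{1,0}_{\mathbb{C}P^3}(\Sigma_1) \cong \mathcal{O}(1) \oplus \mathcal{O}(1)$. For each $m \in \mathbb{Z}$ the relevant equations become $\bar\partial_\Sigma v = (m+4)w$ and $\bar\partial^*_\Sigma w = -\tfrac{m}{2}v$ on sections of $\mathcal{O}(m+1)^{\oplus 2}$ and $\Lambda^{0,1}\Sigma_1 \otimes \mathcal{O}(m+1)^{\oplus 2}$ respectively. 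Eliminating $w$ shows $v$ is an eigenvector of the non-negative operator $\bar\partial^*_\Sigma\bar\partial_\Sigma$ with eigenvalue $-\tfrac{m(m+4)}{2}$, which restricts attention to $m \in \{-4,-3,-2,-1,0\}$.

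The two boundary cases are pure cohomology. For $m=0$, the equations force $w=0$ and $v$ to be a holomorphic section of $\mathcal{O}(1)^{\oplus 2}$, contributing $2h^0(\mathbb{C}P^1,\mathcal{O}(1)) = 4$ dimensions. For $m = -4$, $v$ is forced to vanish (as $\mathcal{O}(-3)$ has no global holomorphic sections) and $w$ must lie in the harmonic space $H^{0,1}(\mathbb{C}P^1, \mathcal{O}(-3)^{\oplus 2})$, which by Serre duality contributes $2h^0(\mathbb{C}P^1,\mathcal{O}(1)) = 4$ dimensions. For the three interior values $m \in \{-3,-2,-1\}$ both $v$ and $w$ must be nonzero, with $v$ a genuine eigenvector of $\bar\partial^*\bar\partial$ on $\mathcal{O}(m+1)^{\oplus 2}$ at eigenvalue $-m(m+4)/2$ and $w$ then automatically recovered via $w = \bar\partial v/(m+4)$. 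I would compute each eigenspace by decomposing $L^2(\mathbb{C}P^1,\mathcal{O}(k))$ under the natural $SU(2)$-action through Frobenius reciprocity for $\mathbb{C}P^1 = SU(2)/U(1)$, reading off the Laplacian eigenvalues from the $SU(2)$-Casimir, and retaining precisely the irreducible summands whose Casimir eigenvalue matches $-m(m+4)/2$.

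The hard part is the spectral bookkeeping for the three interior values: these eigenspaces together must contribute exactly $4$ dimensions, which requires fixing the normalisation of $\bar\partial^*\bar\partial$ relative to the Fubini--Study metric consistently with the conventions used in the proof of Proposition~\ref{prop:evcxlink}, and then correctly identifying which $SU(2)$-irreducibles appear in each $\mathcal{O}(m+1)^{\oplus 2}$ at the prescribed Casimir value. The final tally $4+4+4 = 12$ then matches Kawai's direct harmonic-analytic computation on $S^3$ in \cite[\S 6.4.1]{MR3672213}, which I would cite as an independent confirmation of the count.
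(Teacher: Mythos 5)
Your strategy is exactly the one the paper uses to re-derive Kawai's count in Proposition~\ref{prop:ex1soln}: pass to the complex link via Proposition~\ref{prop:evcxlink} with $\lambda=1$, identify $\nu^{1,0}_{\mathbb{C}P^3}(\Sigma_1)\cong\mathcal{O}(1)^{\oplus 2}$, reduce to the eigenvalue problem $\bar\partial^*_\Sigma\bar\partial_\Sigma v = -\tfrac{1}{2}m(m+4)v$, and restrict to $m\in\{-4,\dots,0\}$ by non-negativity. Your bookkeeping for the two boundary cases $m=0$ and $m=-4$ (each giving $4$ via $h^0(\mathcal{O}(1)^{\oplus 2})$ and Serre duality respectively) reproduces the paper's ``$8$ infinitesimal conical complex deformations,'' so that part is sound. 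The organisation by $m$-value rather than by ``complex vs.\ Cayley-but-not-complex'' is a harmless cosmetic variant.

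The gap is in the interior computation. You say the three values $m\in\{-3,-2,-1\}$ ``together must contribute exactly $4$ dimensions'' and that you would verify this by a Casimir decomposition of $L^2(\mathbb{C}P^1,\mathcal{O}(m+1))$, but you never actually carry this out, and the closing appeal to Kawai's answer to confirm the total is circular. The paper pins this down with Theorem~\ref{thm:evtwistdirac} (the L\'opez Almorox--Tejero Prieto spectral formula, which is exactly the $SU(2)$-Casimir computation you gesture at, packaged): one must solve $-m(m+4)=4\bigl[(q+a)^2+(q+a)|m+2|\bigr]$ for $q\in\mathbb{N}\cup\{0\}$, with $a=0$ for $m\ge -1$ and $a=1$ for $m\le -2$. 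Running the three cases shows $m=-1$ and $m=-3$ give no integer solution, while $m=-2$ yields $(q,a)=(0,1)$ with multiplicity $1+|\deg\mathcal{O}(-1)|+2q=2$ per line-bundle factor, hence $4$ in total. So the interior contribution is concentrated entirely at $m=-2$, not spread across the three values. This is the step your proposal leaves undone. (One further technical point you should be aware of when you do fill it in: the paper can apply Theorem~\ref{thm:evtwistdirac} to $\mathcal{O}(m+1)^{\oplus 2}$ only because the induced connection on the normal bundle of $L_1$ in $S^7$ is block-diagonal; this is observed there and is what lets the rank-two bundle be treated as two independent line bundles, and it fails for the third example $C_3$ in Section~\ref{ss:ex3}.) Finally, note that Proposition~\ref{prop:ex1} itself is stated in the paper as a citation of Kawai's calculation \cite[\S6.4.1]{MR3672213}; the independent complex-geometric verification you are reconstructing is Proposition~\ref{prop:ex1soln}.
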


\subsubsection{Example 2: \texorpdfstring{$L_2\cong SU(2)/ \mathbb{Z}_2$}{L2}}\label{ss:ex2}

Our second example is a little less trivial. Take
\[
 C_2:=\{(z_1,z_2,z_3,z_4)\in \mathbb{C}^4\,|\, z_4=0, z_1^2+z_2^2+z_3^2=0\}.
\]
Then it can be shown \cite[Ex 6.6]{MR3672213} that the link of $C_2$, $L_2$, is isomorphic to the quotient group $SU(2)/\mathbb{Z}_2$.

The complex link of $C_2$ is
\[
 \Sigma_2:=\{[z_0:z_1:z_2:z_3]\in\mathbb{C}P^3\,|\, z_0=0, z_1^2+z_2^2+z^2_3=0\}.
\]
\begin{prop}[{\cite[Cor 5.12]{MR2981841}, \cite[Prop 6.26]{MR3672213}}]\label{prop:ex2}
 The space of infinitesimal associative deformations of $L_2$ in $S^7$ has dimension twenty-two.
\end{prop}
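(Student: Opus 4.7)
The plan is to apply the framework of Section~\ref{sec:badweights} to the complex cone $C_2$. Infinitesimal associative deformations of $L_2\subset S^7$ correspond via Proposition~\ref{prop:evalprobcay} to solutions of \eqref{eqn:eprobcay} at $\lambda=1$, which by Proposition~\ref{prop:evalprobcx} translate into solutions of \eqref{eqn:cxev1}--\eqref{eqn:cxev2} at $\lambda=1$; Proposition~\ref{prop:evcxlink} then decomposes these under the Reeb $U(1)$-action into modes indexed by $m\in\mathbb{Z}$, each consisting of a pair $v_m\in C^\infty(\nu^{1,0}_{\mathbb{C}P^3}(\Sigma_2)\otimes\mathcal{O}(m)|_{\Sigma_2})$ and $w_m\in C^\infty(\Lambda^{0,1}\Sigma_2\otimes \nu^{1,0}_{\mathbb{C}P^3}(\Sigma_2)\otimes\mathcal{O}(m)|_{\Sigma_2})$ satisfying
\[
\bar\partial_{\Sigma_2}v_m=(4+m)w_m,\qquad \bar\partial_{\Sigma_2}^*w_m=-\tfrac{m}{2}v_m.
\]

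First I would identify $\Sigma_2$ and its normal bundle. The defining equations cut out a smooth conic inside the hyperplane $\{z_0=0\}\cong \mathbb{C}P^2$, so $\Sigma_2\cong\mathbb{C}P^1$ and $\deg_{\mathbb{C}P^3}\Sigma_2=2$. From the normal bundle sequence
\[
0\to\nu_{\mathbb{C}P^2}(\Sigma_2)\to\nu_{\mathbb{C}P^3}(\Sigma_2)\to\nu_{\mathbb{C}P^3}(\mathbb{C}P^2)|_{\Sigma_2}\to 0,
\]
together with the fact that extensions of line bundles on $\mathbb{C}P^1$ split, I would read off $\nu^{1,0}_{\mathbb{C}P^3}(\Sigma_2)\cong\mathcal{O}(2)\oplus\mathcal{O}(4)$.

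Next I would enumerate the modes. For $m\neq 0$, eliminating $v_m$ via the second equation and substituting into the first yields $\bar\partial_{\Sigma_2}\bar\partial_{\Sigma_2}^*w_m=-\tfrac{m(m+4)}{2}w_m$; since $\bar\partial_{\Sigma_2}\bar\partial_{\Sigma_2}^*$ is non-negative, non-trivial contributions occur only for $m\in\{-4,-3,-2,-1,0\}$. Hodge decomposition on $\mathbb{C}P^1$ combined with the orthogonality of $H^0$ to $\mathrm{Im}\,\bar\partial^*$ shows that the mode $m=0$ forces $w_0=0$ and $v_0\in H^0(\Sigma_2,\nu^{1,0}_{\mathbb{C}P^3}(\Sigma_2))$, contributing $h^0(\mathcal{O}(2))+h^0(\mathcal{O}(4))=3+5=8$; the mode $m=-4$ forces $v_{-4}=0$ and $w_{-4}\in H^1(\Sigma_2,\nu^{1,0}_{\mathbb{C}P^3}(\Sigma_2)\otimes\mathcal{O}(-4))$, contributing $h^1(\mathcal{O}(-2))+h^1(\mathcal{O}(0))=1+0=1$ via Serre duality. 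For $m\in\{-1,-2,-3\}$ the problem reduces to counting eigensections of $\bar\partial_{\Sigma_2}^*\bar\partial_{\Sigma_2}$ with eigenvalue $3/2$, $2$, or $3/2$ respectively on $\Gamma(\mathcal{O}(m+2)\oplus\mathcal{O}(m+4))$, which I would carry out by decomposing sections of each line bundle $\mathcal{O}(d)\to\mathbb{C}P^1$ under Peter--Weyl into $SU(2)$-isotypic components $V_k$ and reading off the appropriate Casimir eigenspaces.

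The main technical obstacle is fixing the correct normalisation of the metric on $\Sigma_2$ inherited from $L_2\subset S^7$ via the Hopf fibration, so that the Casimir action on each $V_k$ matches the prescribed eigenvalue of $\bar\partial^*\bar\partial$ and so yields the remaining $13$ dimensions across the three middle modes; the final count $8+1+13=22$ then agrees with Kawai~\cite[Prop 6.26]{MR3672213} and Lotay~\cite[Cor 5.12]{MR2981841}.
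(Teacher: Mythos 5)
Your framework — reducing associative deformations of $L_2$ to the eigenproblem \eqref{eqn:cxev1}--\eqref{eqn:cxev2} at $\lambda=1$ and then decomposing under the Reeb $U(1)$-action into modes indexed by $m$ — is the right one, and your identification $\nu^{1,0}_{\mathbb{C}P^3}(\Sigma_2)\cong\mathcal{O}(2)\oplus\mathcal{O}(4)$ over $\Sigma_2\cong\mathbb{C}P^1$ is correct (it matches the paper's $\mathcal{O}_{\mathbb{C}P^3}(1)|_\Sigma\oplus\mathcal{O}_{\mathbb{C}P^3}(2)|_\Sigma$ since $\Sigma_2$ has degree $2$). The $m=0$ count of $h^0(\mathcal{O}(2))+h^0(\mathcal{O}(4))=8$ is also correct.

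However, there is a degree error that then propagates. In Proposition~\ref{prop:evcxlink} the twisting bundle is $\mathcal{O}_{\mathbb{C}P^3}(m)|_{\Sigma_2}$, which has degree $2m$ over $\Sigma_2\cong\mathbb{C}P^1$ (not degree $m$). At $m=-4$ the relevant bundle is therefore $\nu^{1,0}_{\mathbb{C}P^3}(\Sigma_2)\otimes\mathcal{O}_{\mathbb{C}P^3}(-4)|_{\Sigma_2}\cong\mathcal{O}(-6)\oplus\mathcal{O}(-4)$, so the contribution is $h^1(\mathcal{O}(-6))+h^1(\mathcal{O}(-4))=5+3=8$, not $1$. (Indeed, by the adjunction formula this mode is Serre/conjugation-dual to the $m=0$ mode, giving the $2\times 8=16$ real dimensions of infinitesimal conical \emph{complex} deformations that the paper records.) Consequently the middle modes need to account for $22-8-8=6$, not $13$. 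If you redo the middle-mode analysis with the correct twists (and the scalar curvature $\kappa=4$ of the metric on $\Sigma_2$ induced from $L_2\subset S^7$, rather than $\kappa=8$), you will find no solutions for $m=-1,-3$ and exactly $3+3=6$ for $m=-2$, coming from the two summands of $\nu^{1,0}_{\mathbb{C}P^3}(\Sigma_2)\otimes\mathcal{O}_{\mathbb{C}P^3}(-2)|_{\Sigma_2}\cong\mathcal{O}(-2)\oplus\mathcal{O}(0)$. The paper obtains this from Theorem~\ref{thm:evtwistdirac} rather than the Peter--Weyl/$SU(2)$-isotypic decomposition you propose; the latter is viable (this is essentially Kawai's method), but you would still need to correct the twists and the metric normalization before the Casimir eigenvalues line up.
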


\subsubsection{Example 3: \texorpdfstring{$L_3\cong SU(2)/\mathbb{Z}_3$}{L3}}\label{ss:ex3}
Our third example is the most complicated to state, but is certainly the most interesting.

Define the cone $C_3$ to be the cone over the submanifold $L_3$ of $S^7$ which is defined as follows: consider the following action of $SU(2)$ on $\mathbb{C}^4$
\[
 \begin{pmatrix}
\,z_1 \, \\
z_2 \\
z_3 \\
z_4
 \end{pmatrix}
\mapsto
\begin{pmatrix}
 a^3z_1+\sqrt{3}a^2b z_2+\sqrt{3}ab^2 z_3+b^3z_4 \\
-\sqrt{3}a^2\bar{b}z_1+a(|a|^2-2|b|^2)z_2+b(2|a|^2-|b|^2)z_3+\sqrt{3}\bar{a}b^2z_4 \\
\sqrt{3}a\bar{b}^2z_1-\bar{b}(2|a|^2-|b|^2)z_2+\bar{a}(|a|^2-2|b|^2)z_3+\sqrt{3}\bar{a}^2bz_4 \\
-\bar{b}^3z_1+\sqrt{3}\bar{a}\bar{b}^2z_2-\sqrt{3}\bar{a}^2\bar{b}z_3+\bar{a}^3z_4
\end{pmatrix},
\]
where $a,b\in\mathbb{C}$ satisfy $|a|^2+|b|^2=1$. We define $L_3$ to be the orbit of the above action around the point $(1,0,0,0)^T$, that is,
\[
L_3:=
 \begin{pmatrix}
  a^3 \\
-\sqrt{3}a^2\bar{b} \\
\sqrt{3} a\bar{b}^2 \\
-\bar{b}^3
 \end{pmatrix},
\]
where $a,b\in\mathbb{C}$ satisfy $|a|^2+|b|^2=1$. We see that for
\[
 \mathbb{Z}_3:=\left\{
\begin{pmatrix}
\,\zeta\, & 0 \\
0 & \,\bar{\zeta}\, 
\end{pmatrix}
\in SU(2) \,|\, \zeta^3=1 \right\},
\]
$L_3$ is invariant under the action of $\mathbb{Z}_3$, therefore $L_3\cong SU(2)/\mathbb{Z}_3$. The complex link of the cone $C_3$ over $L_3$ is
\[
 \Sigma_3:= \{[x^3:\sqrt{3}x^2y:\sqrt{3}xy^2:y^3]\in \mathbb{C}P^3\,|\, [x:y]\in \mathbb{C}P^1\},
\]
which is known as the \emph{twisted cubic} in $\mathbb{C}P^3$.

This is a particularly interesting example for the following reason \cite[Ex 5.8]{MR3004102}. Define $L_3(\theta)$ to be the orbit of the above group action around the point $(\cos\theta,0,0,\sin\theta)^T$. Then $L_3(\theta)$ is associative for $\theta\in [0,\frac{\pi}{4}]$. As noted above, $L_3(0)=L_3$ is the real link of a complex cone, however, $L_3(\frac{\pi}{4})$ is the link of a special Lagrangian cone. Therefore there exists a family of Cayley cones in $\mathbb{C}^4$, including both a complex cone and a special Lagrangian cone, that are related by a group action.

\begin{prop}[{\cite[\S6.3.2]{MR3672213}}]\label{prop:ex3}
 The space of infinitesimal associative deformations of $L_3(\frac{\pi}{4})$ in $S^7$ has dimension thirty.
\end{prop}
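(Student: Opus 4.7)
The plan is to exploit the homogeneity $L_3(\pi/4) \cong SU(2)/\mathbb{Z}_3$ to reduce the computation to a problem in the representation theory of $SU(2)$. By the remark following Proposition \ref{prop:evalprobcay}, the infinitesimal associative deformations of an associative submanifold of $S^7$ (with its nearly parallel $G_2$-structure) are precisely the normal vector fields $v \in C^\infty(\nu_{S^7}(L_3(\pi/4)))$ satisfying $D_L v = -v$, where $D_L$ is the cross-product Dirac-type operator of \eqref{eqn:asslin}. So the task is to compute $\dim\ker(D_L + \mathrm{id})$.

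First I would record that the relevant $SU(2)$-action on $\mathbb{C}^4$ preserves $L_3(\pi/4)$, the round metric on $S^7$, and the nearly parallel $G_2$-structure $(\varphi, g)$ coming from the Euclidean $Spin(7)$-structure. Consequently $D_L$ is $SU(2)$-equivariant. Choosing a basepoint in $L_3(\pi/4)$ with stabiliser $\mathbb{Z}_3 \subseteq SU(2)$, the normal bundle is a homogeneous vector bundle $\nu_{S^7}(L_3(\pi/4)) \cong SU(2) \times_{\mathbb{Z}_3} W$, where $W$ is the $\mathbb{Z}_3$-representation on the normal space at the basepoint; this representation $W$ can be read off by differentiating the orbit map and projecting orthogonally in $T_p S^7$.

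Next I would invoke the Peter--Weyl theorem to decompose
\[
C^\infty(\nu_{S^7}(L_3(\pi/4))) \cong \bigoplus_{n\ge 0} V_n \otimes (V_n^* \otimes W)^{\mathbb{Z}_3},
\]
where $V_n = \mathrm{Sym}^n \mathbb{C}^2$ is the $(n+1)$-dimensional irreducible $SU(2)$-representation. Because $D_L$ is equivariant, on each isotypic summand it restricts to $\mathrm{id}_{V_n} \otimes A_n$ for some endomorphism $A_n$ of the finite-dimensional multiplicity space $(V_n^* \otimes W)^{\mathbb{Z}_3}$. Writing the cross product and connection in terms of the reductive decomposition $\mathfrak{su}(2) = \mathfrak{h} \oplus \mathfrak{m}$ and using the standard basis $\{E, F, H\}$ of $\mathfrak{su}(2)_{\mathbb{C}}$, the operator $A_n$ becomes a concrete matrix whose entries are polynomial in $n$ and in the Clebsch--Gordan coefficients relating $V_n \otimes W$ to its irreducible summands. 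I would then diagonalise $A_n$ and enumerate the values of $n$ for which $-1$ appears as an eigenvalue, weighting each occurrence by $\dim V_n = n+1$.

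The main obstacle will be the representation-theoretic bookkeeping: correctly identifying the $\mathbb{Z}_3$-representation $W$ and checking invariance at every stage, expressing the geometric operator $D_L$ as a linear combination of Lie-algebra derivations in a way that is compatible with the Peter--Weyl decomposition, and carrying out the Clebsch--Gordan computation cleanly enough to isolate the $(-1)$-eigenspaces. Since the spectrum of such equivariant first-order operators is typically concentrated in finitely many low values of $n$, once the algebra is set up one can check by direct inspection that contributions come only from a small number of irreducibles, and summing their dimensions will yield the stated total of thirty.
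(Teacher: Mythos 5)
Your outline is a correct plan, but it is not the route this paper takes: the paper does not reprove Proposition~\ref{prop:ex3} at all, it simply cites Kawai \cite{MR3672213}, and your Peter--Weyl reduction to equivariant matrices $A_n$ on the multiplicity spaces $(V_n^*\otimes W)^{\mathbb{Z}_3}$ is in fact Kawai's own method in that reference. What the paper does instead, in Section~\ref{ss:calc} (Example~3, culminating in Proposition~\ref{prop:ex3soln}), is compute the conical Cayley deformations of the \emph{complex} cone $C_3$, whose link is $L_3 = L_3(0)$, not $L_3(\tfrac{\pi}{4})$. There the problem is translated via Proposition~\ref{prop:evcxlink} into eigenproblems for $\bar\partial_\Sigma$-type operators on line bundles over the complex link $\Sigma_3$ (the twisted cubic in $\mathbb{C}P^3$); because the normal connection on $L_3$ in $S^7$ is not diagonal, the paper first builds an explicit adapted moving frame (Lemma~\ref{lem:frame}) from the $Spin(7)$ structure equations, separates the variables $g_1,g_2,\alpha_1,\alpha_2$, and then applies the constant-curvature spectral theorem (Theorem~\ref{thm:evtwistdirac}) to identify the exceptional eigenvalues. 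The two routes buy different things. Your representation-theoretic approach is uniform over the whole family $L_3(\theta)$ and in particular applies at $\theta=\tfrac{\pi}{4}$, which is the special Lagrangian link — precisely the case the paper cannot reach with its complex-geometric tools, which is why it must cite Kawai here. Conversely, the paper's approach at $\theta=0$ directly splits the thirty infinitesimal conical Cayley deformations into the twenty-four that are genuinely complex (holomorphic sections of $\nu^{1,0}_{\mathbb{C}P^3}(\Sigma_3)$, counted by Riemann--Roch) and the six that are Cayley but not complex (rigid $Spin(7)$ motions), a distinction that your $SU(2)$-decomposition does not see without further work. If you wish to pursue your plan rather than cite Kawai, the concrete items you have deferred — reading off the $\mathbb{Z}_3$-weights on $W$, expressing $D_L = \sum e_i\times\nabla^\perp_{e_i}$ in terms of the reductive splitting $\mathfrak{su}(2)=\mathfrak{h}\oplus\mathfrak{m}$, and the Clebsch--Gordan diagonalisation — are exactly the substantive content and must actually be carried out; the high-level framing alone does not yet establish the count of thirty.
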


\subsection{Calculations}\label{ss:calc}

We will now study the eigenvalue problem \eqref{eqn:cxev1}-\eqref{eqn:cxev2} with $\lambda=1$ for $C_1$, $C_2$ and $C_3$ defined above. Recall that by Proposition \ref{prop:evcxlink} we can study the eigenproblem \eqref{eqn:evprobcxlink1}-\eqref{eqn:evprobcxlink2} with $\lambda=1$ on the complex link instead to make our calculations easier. We first explain how to count infinitesimal conical complex deformations and infinitesimal conical Cayley but non complex deformations of a complex cone.

\begin{prop}\label{prop:caycxdef}
 Let $C$ be a complex cone in $\mathbb{C}^4$ with real link $L$ and complex link $\Sigma$. Infinitesimal complex conical deformations of $C$ in $\mathbb{C}^4$ are given by holomorphic sections of $\nu^{1,0}_{\mathbb{C}P^3}(\Sigma)$. Infinitesimal Cayley conical deformations of $C$ that are not complex are given by $v\in C^\infty(\nu^{1,0}_{\mathbb{C}P^3}(\Sigma)\otimes\mathcal{O}_{\mathbb{C}P^3}(m)|_{\Sigma})$ satisfying
\begin{equation}\label{eqn:caylapev}
 \Delta_{\bar\partial_\Sigma} v=-\frac{1}{2}m(4+m)v,
\end{equation}
where $-4< m<0$.
\end{prop}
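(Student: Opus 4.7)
The plan is to specialise the eigenproblem on the complex link obtained in Proposition \ref{prop:evcxlink} to $\lambda=1$ (which by the preceding discussion is exactly the weight that parametrises conical deformations, since $v(r,l)=r\hat v(l)$ corresponds to taking the scaling-rate-$1$ eigenspace), and then to sort the resulting solutions by whether they arise from genuinely complex data or not. Setting $\lambda=1$ in \eqref{eqn:evprobcxlink1}--\eqref{eqn:evprobcxlink2} gives, for $v\in C^\infty(\nu^{1,0}_{\mathbb{C}P^3}(\Sigma)\otimes\mathcal{O}_{\mathbb{C}P^3}(m)|_\Sigma)$ and $w\in C^\infty(\Lambda^{0,1}\Sigma\otimes\nu^{1,0}_{\mathbb{C}P^3}(\Sigma)\otimes\mathcal{O}_{\mathbb{C}P^3}(m)|_\Sigma)$, the system
\[
\bar\partial_\Sigma v = (4+m)\,w,\qquad \bar\partial^*_\Sigma w = -\tfrac{m}{2}\,v.
\]

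For the first claim, I would use the standard correspondence between complex cones in $\mathbb{C}^4$ (with the origin removed) and their projectivisations: conical complex deformations of $C$ correspond bijectively to complex deformations of $\Sigma$ inside $\mathbb{C}P^3$. Kodaira's theorem for smooth compact complex submanifolds (or equivalently Theorem \ref{thm:maincomp} applied to $\Sigma\subset\mathbb{C}P^3$, together with the isomorphism of Proposition \ref{prop:bundleisom} and the invariance observation in Lemma \ref{lem:indepom}) then identifies infinitesimal complex deformations of $\Sigma$ with $H^0(\Sigma,\nu^{1,0}_{\mathbb{C}P^3}(\Sigma))$. This matches the $m=0$ slice of the system above, since with $m=0$ the second equation forces $\bar\partial^*_\Sigma w=0$, and integrating $\bar\partial_\Sigma v=4w$ against itself with $\bar\partial^*_\Sigma w=0$ forces $w=0$ and $v$ holomorphic.

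For the second claim, the key step is to apply $\bar\partial^*_\Sigma$ to the first equation and substitute the second to eliminate $w$, yielding
\[
\bar\partial^*_\Sigma\bar\partial_\Sigma v = -\tfrac{m(4+m)}{2}\,v.
\]
Because $v$ has no anti-holomorphic form degree, $\bar\partial_\Sigma\bar\partial^*_\Sigma v=0$ trivially and hence $\Delta_{\bar\partial_\Sigma}v=\bar\partial^*_\Sigma\bar\partial_\Sigma v$, giving the stated eigenvalue equation. Non-negativity of $\Delta_{\bar\partial_\Sigma}$ on the compact surface $\Sigma$ immediately forces $m(4+m)\le 0$, i.e.\ $m\in[-4,0]$. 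I then need to isolate the Cayley-but-not-complex solutions: at $m=0$ the analysis above gives $v$ holomorphic and $w=0$, a genuine complex deformation; at $m=-4$ the first equation gives $\bar\partial_\Sigma v=0$ automatically, and then pairing $\bar\partial^*_\Sigma w=2v$ with $v$ and integrating by parts on $\Sigma$ yields $2\|v\|_{L^2}^2=\langle w,\bar\partial_\Sigma v\rangle=0$, hence $v=0$ with $w\in\ker\bar\partial^*_\Sigma$, which is again complex data (the ``other half'' of the doubling in Theorem \ref{thm:maincomp}). This leaves precisely $m\in(-4,0)$ for Cayley-not-complex deformations, in which case $4+m\ne 0$ so $w=(4+m)^{-1}\bar\partial_\Sigma v$ is determined by $v$, justifying parametrisation by $v$ alone.

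The main technical point I expect to need care with is the identification of the $m=-4$ endpoint solutions as complex rather than Cayley-not-complex; everything else is a direct manipulation of the twisted $\bar\partial$-Laplacian. Once that bookkeeping is in place, the strict inequalities $-4<m<0$ in the statement follow from excluding these two endpoint cases, and the Cayley-not-complex infinitesimal deformations are exactly the non-zero eigensections of $\Delta_{\bar\partial_\Sigma}$ with eigenvalue $-\tfrac12 m(4+m)$ on $\nu^{1,0}_{\mathbb{C}P^3}(\Sigma)\otimes\mathcal{O}_{\mathbb{C}P^3}(m)|_\Sigma$.
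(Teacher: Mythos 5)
Your proposal is correct and follows the paper's own route: specialise Proposition \ref{prop:evcxlink} to $\lambda=1$, identify the $m=0$ and $m=-4$ slices as the complex deformations, and eliminate $w$ to obtain the twisted $\bar\partial_\Sigma$-Laplacian eigenproblem for the Cayley-not-complex deformations. You supply two useful details the published proof leaves implicit — the non-negativity of $\Delta_{\bar\partial_\Sigma}$ forcing $m\in[-4,0]$, and the integration-by-parts argument at the two endpoints showing $w=0$ (resp.\ $v=0$) at $m=0$ (resp.\ $m=-4$) — which are worth making explicit. One small caveat: the parenthetical invoking Theorem \ref{thm:maincomp}, Proposition \ref{prop:bundleisom} and Lemma \ref{lem:indepom} for $\Sigma\subset\mathbb{C}P^3$ overreaches, since those statements concern a two-dimensional complex submanifold of a Calabi--Yau four-fold rather than a curve in $\mathbb{C}P^3$; Kodaira's theorem, which you cite first, is the appropriate reference for that step.
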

\begin{proof}
 We know that infinitesimal complex deformations $C$ will lie in the kernel of $\bar\partial_C$ or $\bar\partial^*_C$. Recall that these spaces are isomorphic and so we expect them to have the same dimension. Examining the proof of Proposition \ref{prop:evalprobcx} and comparing to Proposition \ref{prop:evcxlink}, we see that infinitesimal complex deformations of $C$ are given by holomorphic sections of $\nu^{1,0}_{\mathbb{C}P^3}(\Sigma)\otimes \mathcal{O}_{\mathbb{C}P^3}(\lambda-1)|_\Sigma$, and antiholomorphic sections of $\Lambda^{0,1}\Sigma\otimes \nu^{1,0}_{\mathbb{C}P^3}(\Sigma)\otimes \mathcal{O}_{\mathbb{C}P^3}(-3-\lambda)$. Since infinitesimal conical deformations of $C$ will correspond to $\lambda=1$ here, we see that infinitesimal complex conical deformations of $C$ correspond to holomorphic sections of
\[
 \nu^{1,0}_{\mathbb{C}P^3}(\Sigma),
\]
and antiholomorphic sections of
\[
 \Lambda^{0,1}\Sigma\otimes \nu^{1,0}_{\mathbb{C}P^3}(\Sigma)\otimes \mathcal{O}_{\mathbb{C}P^3}(-4)|_{\Sigma}\cong \nu^{*1,0}_{\mathbb{C}P^3}(\Sigma),
\]
by the adjunction formula \cite[Prop 2.2.17]{MR2093043} since $K_{\mathbb{C}P^3}|_{\Sigma}=\mathcal{O}_{\mathbb{C}P^3}(-4)|_{\Sigma}$. So we see that infinitesimal conical complex deformations of $C$ arise from holomorphic sections of the holomorphic normal bundle of the complex link in $\mathbb{C}P^3$. The dimension of the space of infinitesimal conical complex deformations of $C$ is then equal to the real dimension (or twice the complex dimension) of the space of holomorphic sections of the holomorphic normal bundle of the complex link.
 
Finally, we see that any remaining infinitesimal conical Cayley deformations of $C$ must satisfy the eigenproblem \eqref{eqn:evprobcxlink1}-\eqref{eqn:evprobcxlink2} with $\lambda=1$ and $m\ne 0,-4$. Applying $\bar\partial^*_{\Sigma}$ to \eqref{eqn:evprobcxlink1} and using \eqref{eqn:evprobcxlink2}, we see that the remaining infinitesimal conical Cayley deformations of $C$ are given by  $v\in C^\infty(\nu^{1,0}_{\mathbb{C}P^3}(\Sigma)\otimes \mathcal{O}_{\mathbb{C}P^3}(m)|_\Sigma)$ satisfying
\[
 \Delta_{\bar\partial_\Sigma} v =-\frac{1}{2} m(4+m)v.
\]
\end{proof}

While we can apply the Hirzebruch--Riemann--Roch theorem \ref{thm:rr} to count holomorphic sections of holomorphic vector bundles, solving eigenproblems for the Laplacian acting sections of vector bundles such as \eqref{eqn:caylapev} is somewhat more difficult, especially since the degree of the line bundle we consider appears in the eigenvalue itself. Such problems have been studied, however, and we will make use of the following result of L\'opez Almorox and Tejero Prieto on eigenvalues of the $\bar\partial_\Sigma$-Laplacian acting on sections of holomorphic line bundles over $\mathbb{C}P^1$ equipped with a metric of constant scalar curvature.
\begin{thm}[{\cite[Thm 5.1]{MR2241738}}]\label{thm:evtwistdirac}
 Let $K$ be a Hermitian line bundle over $\Sigma$, where $\Sigma$ is $\mathbb{C}P^1$ with metric of constant scalar curvature $\kappa$ equipped with a unitary harmonic connection $\nabla_K$ of curvature $F^{\nabla_K}=-iB\omega_\Sigma$ for some $B\in\mathbb{R}$. Then the spectrum of the operator
\[
 2\bar\partial^*_\Sigma \bar\partial_\Sigma:C^\infty(K)\to C^\infty(K),
\]
 is the set
\[
 \left\{\lambda_q=\frac{\kappa}{2}\left[(q+a)^2+(q+a)|\textnormal{deg }K+1|\right] \,|\, q\in \mathbb{N}\cup \{0\}\right\},
\]
where $a=0$ if $\textnormal{deg }K\ge 0$, $a=1$ otherwise.

The space of eigensections of $2\bar\partial^*_\Sigma\bar\partial_\Sigma$ with eigenvalue $\lambda_q$ is identified with the space of holomorphic sections of
\[
 K^{-q}_\Sigma\otimes K,
\]
when $\textnormal{deg } K\ge 0$, or of holomorphic sections of 
\[
 K^{-q}_\Sigma\otimes K^{-1},
\]
when $\textnormal{deg } K <0$.
Therefore the multiplicity of  $\lambda_q$ is
\[
 m(\lambda_q)=1+|\textnormal{deg } K|+2q.
\]
\end{thm}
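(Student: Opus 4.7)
The plan is to reduce the problem to harmonic analysis on $SU(2)$ via a Bochner-Kodaira identity, exploiting that $\mathbb{C}P^1$ of constant scalar curvature $\kappa$ is, up to homothety, the Fubini-Study $S^2$. First I would apply the standard Bochner-Kodaira-Nakano formula on sections of a Hermitian holomorphic line bundle: with our sign conventions,
\[
2\bar\partial_\Sigma^*\bar\partial_\Sigma \;=\; \nabla_K^*\nabla_K \;-\; i\Lambda_\omega F^{\nabla_K} \;=\; \nabla_K^*\nabla_K \;-\; B,
\]
so that it suffices to diagonalise the rough Laplacian $\nabla_K^*\nabla_K$ on $C^\infty(K)$ and then shift the spectrum by the curvature constant $B$, which is proportional to $\deg K$ via $\deg K = \frac{1}{2\pi}\int_\Sigma F^{\nabla_K}/i$.

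Next, since the unitary harmonic connection on a line bundle over $S^2$ is unique (any two differ by a harmonic one-form, of which there are none), I would realise $K$ as the associated bundle $S^3 \times_{U(1)} \mathbb{C}_{(d)}$ for $d = \deg K$, identifying smooth sections of $K$ with $U(1)$-equivariant smooth functions on $S^3 = SU(2)$ of weight $d$ under the right $U(1)$-action. The Peter-Weyl decomposition
\[
L^2(SU(2)) \;\cong\; \widehat{\bigoplus_{n \ge 0}} V_n \otimes V_n^*,
\]
where $V_n$ is the irreducible $SU(2)$-representation of dimension $n+1$ with weights $\{-n, -n+2, \ldots, n\}$, then restricts under the $U(1)$-weight-$d$ projection to
\[
L^2(K) \;\cong\; \widehat{\bigoplus_{q \ge 0}} V_{|d| + 2q},
\]
since the weight $d$ appears in $V_n$ exactly when $n \ge |d|$ and $n \equiv d \pmod 2$, each time with multiplicity one.

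On each isotypic component $V_{|d|+2q}$ the rough Laplacian acts by a scalar, which I would compute as the difference of the $SU(2)$-Casimir eigenvalue $n(n+2)$ and the $U(1)$-Casimir $d^2$, rescaled by the factor dictated by $\kappa$. Substituting $n = |d| + 2q$ and absorbing the curvature shift $-B$ from the Bochner step gives exactly
\[
\lambda_q \;=\; \frac{\kappa}{2}\bigl[(q+a)^2 + (q+a)\,|\deg K + 1|\bigr],
\]
where the apparent asymmetry $a \in \{0,1\}$ reflects the two cases $\deg K \ge 0$ and $\deg K < 0$: in the latter case one rewrites the spectrum in terms of $K^{-1}$, which has nonnegative degree, and the index of summation shifts by one. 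The multiplicity $1 + |\deg K| + 2q$ is simply $\dim V_{|d|+2q}$. Finally, to identify the $q$-th eigenspace with holomorphic sections of $K_\Sigma^{-q} \otimes K$ (respectively $K_\Sigma^{-q} \otimes K^{-1}$), I would use the $\mathfrak{sl}_2$-lowering operator to map the $q$-th level to the $(q-1)$-th and observe that its kernel at level $q$ is precisely the $\bar\partial$-kernel on the appropriately twisted bundle; Riemann-Roch recovers the dimension.

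The main obstacle is the bookkeeping involved in relating the representation-theoretic eigenspaces to the complex-geometric identification with $H^0(\Sigma, K_\Sigma^{-q}\otimes K^{\pm 1})$: matching the $\mathfrak{sl}_2$ creation/annihilation operators on $V_{|d|+2q}$ with $\bar\partial_\Sigma$ and $\bar\partial_\Sigma^*$ twisted into these higher bundles requires care, and the case $\deg K < 0$ has to be handled via Serre duality rather than by direct identification.
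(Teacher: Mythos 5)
The statement you are asked to prove is not proved in Moore's paper at all: it is quoted verbatim, with attribution, as Theorem 5.1 of L\'opez Almorox and Tejero Prieto \cite{MR2241738}, and the text supplies no argument of its own. So there is no ``paper's proof'' to compare against; the only sensible comparison is with the cited source, and any judgement of your sketch has to be made on its own merits.

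On those merits your plan is essentially sound, and the arithmetic it predicts does match the stated formula. Using Gauss--Bonnet to relate $B$, $\deg K$ and $\kappa$ (one finds $B = \kappa \deg K/4$), identifying $L^2(K)$ with the weight-$d$ equivariant functions on $SU(2)$, decomposing via Peter--Weyl into $\widehat{\bigoplus}_{q\ge 0} V_{|d|+2q}$, and computing the rough Laplacian on $V_{|d|+2q}$ as Casimir minus the $U(1)$-Casimir, one indeed recovers $\lambda_q = \tfrac{\kappa}{2}\bigl[(q+a)^2+(q+a)|\deg K+1|\bigr]$ with the stated multiplicity $\dim V_{|d|+2q} = 1+|d|+2q$, and the isotypic piece has the correct dimension to match $h^0(\mathbb{C}P^1, K_\Sigma^{-q}\otimes K^{\pm 1})$. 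Two places, however, are stated more confidently than they are actually justified in your sketch. First, the Bochner--Kodaira step $2\bar\partial_\Sigma^*\bar\partial_\Sigma = \nabla_K^*\nabla_K - i\Lambda_\omega F^{\nabla_K}$ depends on a choice of sign and normalisation convention that you do not fix; for the final eigenvalue constant to come out as $\kappa/2$ you need to verify that your convention for $\Lambda_\omega$, the Hermitian metric, and the curvature agree with the one that produces $B=\kappa\deg K/4$. Second, and more substantively, you explicitly defer the matching of the $\mathfrak{sl}_2(\mathbb{C})$ raising and lowering operators to $\bar\partial_\Sigma$ and $\bar\partial_\Sigma^*$, but this matching is precisely what identifies the $q$-th eigenspace with holomorphic sections of $K_\Sigma^{-q}\otimes K$ (and, via conjugation or Serre duality, with $K_\Sigma^{-q}\otimes K^{-1}$ in the negative-degree case); as written this is the load-bearing step of the second half of the theorem and it is left as an acknowledged gap rather than being carried out. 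Neither issue looks fatal, but both would need to be done honestly before the sketch becomes a proof.
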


\subsubsection{Example 1: \texorpdfstring{$L_1=S^3$}{l1s3}}

To calculate the dimension of the space of infinitesimal conical Cayley deformations of the cone $C_1=\mathbb{C}^2$, which as real link $L_1=S^3$ and complex link $\Sigma_1=\mathbb{C}P^1$, we will apply Proposition \ref{prop:caycxdef}. We first calculate the dimension of the space of holomorphic sections of
\[
 \nu^{1,0}_{\mathbb{C}P^3}(\Sigma_1)=\mathcal{O}_{\mathbb{C}P^3}(1)|_\Sigma\oplus \mathcal{O}_{\mathbb{C}P^3}(1)|_\Sigma,
\]
which by the Hirzebruch--Riemann--Roch theorem \ref{thm:rr} has dimension four. Therefore, the dimension of the space of infinitesimal conical complex deformations of $C_1$ is eight.

Now we study the eigenproblem
\begin{equation}\label{eqn:lapex1}
 \Delta_{\bar\partial_\Sigma}v =-\frac{1}{2}m(4+m),
\end{equation}
for $v\in C^\infty(\nu^{1,0}_{\mathbb{C}P^3}(\Sigma_1)\otimes \mathcal{O}_{\mathbb{C}P^3}(m)|_\Sigma)=C^\infty(\mathcal{O}_{\mathbb{C}P^3}(m+1)|_\Sigma\oplus\mathcal{O}_{\mathbb{C}P^3}(m+1)|_\Sigma)$ and $-4<m<0$. We can apply Theorem \ref{thm:evtwistdirac} to solve \eqref{eqn:lapex1} as long as the connection on $\mathcal{O}_{\mathbb{C}P^3}(m+1)|_\Sigma\oplus\mathcal{O}_{\mathbb{C}P^3}(m+1)|_\Sigma$ takes the form
\[
 \begin{pmatrix}
  \nabla_1 & 0 \\
0 &\nabla_2
 \end{pmatrix},
\]
where $\nabla_i$ are connections on $\mathcal{O}_{\mathbb{C}P^3}(m+1)|_\Sigma$. This is the case here, as can be seen from the relation between the connection on the normal bundle of $\Sigma_1$ in $\mathbb{C}P^3$ and the connection on the normal bundle of $L_1$ in $S^7$ (see \cite[Lem 1]{MR0200865}) and the fact that the normal bundle of $L_1$ in $S^7$ is trivial.

Therefore, by Theorem \ref{thm:evtwistdirac}, solving \eqref{eqn:lapex1} reduces to solving the algebraic equation
\[
 -m(4+m)=4((q+a)^2+(q+a)|m+2|),
\]
for $m\in \mathbb{Z}$ and $q\in \mathbb{N}\cup \{0\}$ (since the scalar curvature of $\Sigma_1$ is eight) with $a=0$ if $m\ge -1$ and $a=1$ if $m\le -2$. It can be checked that this has solution $(q,a,m)=(0,1,-2)$, and so by Theorem \ref{thm:evtwistdirac} the dimension of eigensections of \eqref{eqn:lapex1} has dimension $2\times 2=4$. So we have a total of twelve infinitesimal conical Cayley deformations of $C$ in $\mathbb{C}^4$.

We sum this up in a proposition.
\begin{prop}\label{prop:ex1soln}
 The real dimension of the space of infinitesimal conical Cayley deformations of $C_1$ in $\mathbb{C}^4$ is twelve. The real dimension of the space of infinitesimal conical complex deformations of $C_1$ in $\mathbb{C}^4$ is eight. 
\end{prop}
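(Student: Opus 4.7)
The plan is to apply Proposition \ref{prop:caycxdef} to reduce the count of infinitesimal conical Cayley deformations of $C_1$ into two contributions, which I will compute separately: complex deformations, in bijection with holomorphic sections of $\nu^{1,0}_{\mathbb{C}P^3}(\Sigma_1)$, and Cayley but non-complex deformations, parametrised by $v \in C^\infty(\nu^{1,0}(\Sigma_1) \otimes \mathcal{O}(m)|_{\Sigma_1})$ satisfying $\Delta_{\bar\partial_{\Sigma_1}} v = -\tfrac{1}{2}m(4+m)v$ for integer $m \in (-4,0)$.

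For the complex part, since $\Sigma_1 = \mathbb{C}P^1$ sits as a linear subspace of $\mathbb{C}P^3$, the holomorphic normal bundle splits as $\nu^{1,0}_{\mathbb{C}P^3}(\Sigma_1) \cong \mathcal{O}_{\mathbb{C}P^1}(1) \oplus \mathcal{O}_{\mathbb{C}P^1}(1)$. Hirzebruch--Riemann--Roch (Theorem \ref{thm:rr}) gives $\dim_{\mathbb{C}} H^0(\mathbb{C}P^1, \mathcal{O}(1)) = 2$, so the total space of holomorphic sections has complex dimension $4$. As in Theorem \ref{thm:maincomp}, this doubles to real dimension $8$ for the complex moduli space, establishing the second claim of the proposition.

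For the non-complex Cayley part, I would first note that $\nu^{1,0}(\Sigma_1) \otimes \mathcal{O}(m)|_{\Sigma_1} \cong \mathcal{O}(m+1) \oplus \mathcal{O}(m+1)$ as Hermitian holomorphic line bundles, with the induced connection being block-diagonal; this reduction uses triviality of the normal bundle of $L_1 = S^3 \subset S^7$ together with \cite[Lem 1]{MR0200865} to pass from the link to $\Sigma_1$. Hence Theorem \ref{thm:evtwistdirac} applies factor by factor, using the scalar curvature $\kappa = 8$ of $\mathbb{C}P^1$ with the Fubini--Study metric inherited from the round $S^3$ via the Hopf projection. The eigenvalue condition becomes the arithmetic equation
\[
-m(4+m) \;=\; 4\bigl[(q+a)^2 + (q+a)|m+2|\bigr],
\]
with $a=0$ if $m \ge -1$ and $a=1$ otherwise, $q \in \mathbb{N}\cup\{0\}$. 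Running through $m \in \{-1,-2,-3\}$: $m=-1$ requires $4q(q+1)=3$, impossible; $m=-3$ requires $4(q+1)(q+2)=3$, impossible; only $m=-2$ works, with $(q,a)=(0,1)$, contributing multiplicity $1+|-1|+0=2$ per line-bundle summand, for a total of complex dimension $4$. Each such eigensection corresponds to one real non-complex Cayley deformation, giving real dimension $4$.

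Adding the two contributions yields the total real dimension $8+4=12$ for the space of infinitesimal conical Cayley deformations, matching Kawai's associative count in Proposition \ref{prop:ex1}. The main technical obstacle is verifying that the twisted connection on $\nu^{1,0}(\Sigma_1)\otimes \mathcal{O}(m)$ really does split as a direct sum of line-bundle connections so Theorem \ref{thm:evtwistdirac} can be invoked summand by summand; once that is in place, the remainder is a case-check on three small integer equations.
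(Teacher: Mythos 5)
Your proposal is correct and follows the same route as the paper: reduce to Proposition \ref{prop:caycxdef}, count holomorphic sections of $\nu^{1,0}_{\mathbb{C}P^3}(\Sigma_1)\cong\mathcal{O}(1)\oplus\mathcal{O}(1)$ via Hirzebruch--Riemann--Roch to get real dimension $8$ of complex conical deformations, and solve the twisted Laplacian eigenproblem via Theorem \ref{thm:evtwistdirac} after verifying the connection splits block-diagonally (using \cite[Lem 1]{MR0200865} and the triviality of $\nu_{S^7}(L_1)$). Your explicit check that only $m=-2$ admits a solution, where the paper merely asserts the solution $(q,a,m)=(0,1,-2)$, is a welcome expository addition; the resulting count $8+4=12$ agrees with the paper and with Kawai's associative count.
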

\begin{rem}
Recall that the stabiliser of a Cayley plane in $\mathbb{R}^8$ is isomorphic to $(SU(2)\times SU(2)\times SU(2))/\mathbb{Z}_2$ and that the dimension of $Spin(7)/((SU(2)\times SU(2)\times SU(2))/\mathbb{Z}_2)$ is twelve. The stabiliser of a two-dimensional complex plane in $\mathbb{C}^4$ is isomorphic to $U(2)\times U(2)$, and the dimension of $U(4)/(U(2)\times U(2))$ is equal to eight.
\end{rem}

\subsubsection{Example 2: \texorpdfstring{$L_2\cong SU(2)/\mathbb{Z}_2$}{l2}}

We now calculate the dimension of the space of infinitesimal conical Cayley deformations of the cone $C_2$ in $\mathbb{C}^4$ with link $L_2\cong SU(2)/\mathbb{Z}_2$ and complex link $\Sigma_2$ as defined in Section \ref{ss:ex2}. Again by Proposition \ref{prop:caycxdef} we compute the dimension of the space of holomorphic sections of 
\[
 \nu^{1,0}_{\mathbb{C}P^3}(\Sigma_2)=\mathcal{O}_{\mathbb{C}P^3}(1)|_{\Sigma}\oplus\mathcal{O}_{\mathbb{C}P^3}(2)|_\Sigma,
\]
which by the Hirzebruch--Riemann--Roch theorem \ref{thm:rr} has dimension eight, and so we deduce that the space of infinitesimal conical complex deformations of $C_2$ has dimension sixteen.

Again since the normal bundle of $L_2$ in $S^7$ is trivial we may apply Theorem \ref{thm:evtwistdirac} to solve the eigenproblem
\begin{equation}\label{eqn:lapex2}
 \Delta_{\bar\partial_\Sigma} v=-\frac{1}{2}m(m+4),
\end{equation}
for $v\in C^\infty(\mathcal{O}_{\mathbb{C}P^3}(m+1)|_\Sigma\oplus \mathcal{O}_{\mathbb{C}P^3}(m+2)|_\Sigma)$ with $-4<m<0$. This reduces again to solving the equations for $m\in \mathbb{Z}$ and $q\in \mathbb{N}\cup \{0\}$
\[
 -m(m+4)=2((q+a)^2+(q+a)|2m+3|),
\]
with $a=0$ for $m\ge -1$ and $a=1$ otherwise, which has solution $(q,a,m)=(0,1,-2)$ and
\[
 -m(m+4)=2((q+a)^2+(q+a)|2m+5|),
\]
with $a=0$ for $m\ge -2$  and $a=1$ otherwise, which has solution $(q,a,m)=(1,0,-2)$. Therefore by Theorem \ref{thm:evtwistdirac} the dimension of the space of solutions to \eqref{eqn:lapex2} has dimension $3+3=6$. Therefore, the dimension of the space of infinitesimal conical Cayley deformations of $C_2$ in $\mathbb{C}^4$ is twenty-two.

\begin{prop}\label{prop:ex2soln}
 The real dimension of the space of infinitesimal conical Cayley deformations of $C_2$ in $\mathbb{C}^4$ is twenty-two. The real dimension of the space of infinitesimal conical complex deformations of $C_2$ in $\mathbb{C}^4$ is sixteen.
\end{prop}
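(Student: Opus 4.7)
The plan is to follow the strategy used in the proof of Proposition \ref{prop:ex1soln}, applying Proposition \ref{prop:caycxdef} to split the infinitesimal conical Cayley deformations of $C_2$ into complex deformations (holomorphic sections of $\nu^{1,0}_{\mathbb{C}P^3}(\Sigma_2)$) and the remaining Cayley-but-not-complex deformations (eigensections of $\Delta_{\bar\partial_\Sigma}$ on certain twists of the holomorphic normal bundle).

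For the complex count I would observe that $\Sigma_2$ is the transverse intersection in $\mathbb{C}P^3$ of the hyperplane $\{z_0 = 0\}$ and the quadric $\{z_1^2 + z_2^2 + z_3^2 = 0\}$, so its holomorphic normal bundle decomposes as $\nu^{1,0}_{\mathbb{C}P^3}(\Sigma_2) \cong \mathcal{O}_{\mathbb{C}P^3}(1)|_{\Sigma_2} \oplus \mathcal{O}_{\mathbb{C}P^3}(2)|_{\Sigma_2}$. Since $\Sigma_2$ is a smooth plane conic, it is isomorphic to $\mathbb{C}P^1$ via a degree two (Veronese-type) parametrisation, and hence $\mathcal{O}(k)|_{\Sigma_2}$ pulls back to a line bundle of degree $2k$ on $\mathbb{C}P^1$. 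Hirzebruch--Riemann--Roch (Theorem \ref{thm:rr}) then yields $h^0 = 3 + 5 = 8$, so the real dimension of the space of infinitesimal conical complex deformations is $2 \cdot 8 = 16$.

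For the Cayley-but-not-complex piece I would solve the eigenvalue problem $\Delta_{\bar\partial_\Sigma} v = -\tfrac{1}{2} m(m+4) v$ on each line bundle summand for $m \in \{-1,-2,-3\}$ by applying Theorem \ref{thm:evtwistdirac} componentwise. The justification that the theorem applies componentwise parallels Example 1: the normal bundle of $L_2 \cong SU(2)/\mathbb{Z}_2$ in $S^7$ is trivial (a stably trivial bundle of rank exceeding the base dimension is trivial over an orientable 3-manifold), and so the induced Fubini--Study connection on $\nu^{1,0}_{\mathbb{C}P^3}(\Sigma_2)$ respects the direct sum decomposition. The one substantive change from Example 1 is that the metric induced on $\Sigma_2$ has scalar curvature $\kappa = 4$ rather than $\kappa = 8$, because the degree two embedding $\mathbb{C}P^1 \to \mathbb{C}P^2$ doubles the pulled-back Fubini--Study form; this accounts for the factor $2$ (in place of $4$) in the resulting algebraic equations $-m(m+4) = 2((q+a)^2 + (q+a)|2m+3|)$ and $-m(m+4) = 2((q+a)^2 + (q+a)|2m+5|)$. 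A finite case check over $m \in \{-1,-2,-3\}$, $q \in \mathbb{N} \cup \{0\}$, and $a \in \{0,1\}$ (determined by the sign of the degree) reveals exactly the solutions $(q,a,m) = (0,1,-2)$ in the first summand and $(q,a,m) = (1,0,-2)$ in the second, each of multiplicity $1 + |\deg K| + 2q = 3$; this contributes $6$ further deformations, and summing gives the claimed total $16 + 6 = 22$.

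The only nontrivial step, rather than the purely algebraic enumeration above, is checking that Theorem \ref{thm:evtwistdirac} may genuinely be applied componentwise to the direct sum $\mathcal{O}(1)|_{\Sigma_2} \oplus \mathcal{O}(2)|_{\Sigma_2}$; this in turn rests on the reducibility of the induced connection, which as in Example 1 follows once one knows the normal bundle of $L_2$ in $S^7$ is trivial.
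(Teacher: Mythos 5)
Your proposal is correct and follows the same route as the paper: compute $h^0(\nu^{1,0}_{\mathbb{C}P^3}(\Sigma_2)) = h^0(\mathcal{O}(1)|_{\Sigma_2}) + h^0(\mathcal{O}(2)|_{\Sigma_2}) = 3+5 = 8$ for the complex count, then apply Theorem \ref{thm:evtwistdirac} componentwise to the eigenproblem for $m\in\{-1,-2,-3\}$, finding the solutions $(q,a,m)=(0,1,-2)$ and $(1,0,-2)$ of multiplicity three each, giving $16+6=22$. You also supply useful justifications that the paper leaves implicit, namely that $\kappa=4$ on the conic $\Sigma_2$ (via the degree-two Veronese parametrisation) and that the normal bundle of $L_2$ in $S^7$ is trivial by the stable-range argument.
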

\begin{rem}
The dimension of $Spin(7)/SU(4)$ is six, which implies that the six Cayley but not complex infinitesimal conical deformations of $C_2$ are just rigid motions induced by the action of $Spin(7)$ on $\mathbb{R}^8$.
\end{rem}
\subsubsection{Example 3: \texorpdfstring{$L_3\cong SU(2)/\mathbb{Z}_3$}{l3}}

Finally, we compute the dimension of the space of infinitesimal conical Cayley deformations of $C_3$ in $\mathbb{C}^4$, which has real link $L_3\cong SU(2)/\mathbb{Z}_3$ and complex link $\Sigma_3$ as defined in Section \ref{ss:ex3}.

The dimension of the space of holomorphic sections of
\[
 \nu^{1,0}_{\mathbb{C}P^3}(\Sigma_3)=\mathcal{O}_{\Sigma_3}(5)\oplus \mathcal{O}_{\Sigma_3}(5),
\]
where $\mathcal{O}_{\Sigma_3}(n)$ denotes the line bundle of degree $n$ over $\Sigma_3$. By Hirzebruch--Riemann--Roch, Theorem \ref{thm:rr}, this space has dimension twelve, and so the dimension of the space of infinitesimal conical complex deformations of $C_3$ in $\mathbb{C}^4$ has dimension twenty-four.

So it remains to find $v\in C^\infty(\mathcal{O}_{\Sigma_3}(3m+5)\oplus \mathcal{O}_{\Sigma_3}(3m+5))$ satisfying
\begin{equation}\label{eqn:lapex3}
 \Delta_{\bar\partial_\Sigma}v=-\frac{1}{2}m(4+m).
\end{equation}
Unfortunately, for this example we cannot directly apply Theorem \ref{thm:evtwistdirac} to this problem, so we must find a different way to solve \eqref{eqn:lapex3}. We will do this by constructing a moving frame for $L_3$.

\begin{prop}[{\cite[\S 6.3.2]{MR3672213}}]\label{prop:frame2}
 There exists an orthonormal frame of $L_3$, denoted $\{e_1,e_2,e_3\}$, where $Je_2=e_3$ and $e_1$ is the Reeb vector field. We have that
\[
 [e_1,e_2]=-\frac{2}{3}e_3, \quad [e_1,e_3]=\frac{2}{3}e_2, \quad [e_2,e_3]=-2e_1.
\]
\end{prop}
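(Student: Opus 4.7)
The plan is to use the orbit description $\phi\colon SU(2)\to L_3$, $\phi(g)=\rho(g)\cdot(1,0,0,0)$, with $\rho$ the unitary representation of $SU(2)$ on $\mathbb{C}^4$ given in Section \ref{ss:ex3}, which identifies $L_3$ with $SU(2)/\mathbb{Z}_3$. Since $\rho$ is unitary the $SU(2)$-action on $L_3$ is by isometries of the induced metric, so $\phi^*g_{L_3}$ is left-invariant on $SU(2)$, and the entire problem reduces to computing $d\phi_I$ on a basis of $\mathfrak{su}(2)$ and invoking the Lie algebra structure there.

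First I would differentiate $\phi$ at $I$ on the standard basis $\epsilon_1=\mathrm{diag}(i,-i)$, $\epsilon_2=\bigl(\begin{smallmatrix}0 & 1\\-1 & 0\end{smallmatrix}\bigr)$, $\epsilon_3=\bigl(\begin{smallmatrix}0 & i\\i & 0\end{smallmatrix}\bigr)$ of $\mathfrak{su}(2)$. Using the explicit orbit formula this yields
\[
d\phi_I(\epsilon_1)=(3i,0,0,0),\quad d\phi_I(\epsilon_2)=(0,\sqrt{3},0,0),\quad d\phi_I(\epsilon_3)=(0,i\sqrt{3},0,0).
\]
Recalling that $\xi_{p_0}=Jp_0=(i,0,0,0)$ is the Reeb vector at $p_0=(1,0,0,0)$, one sees that $d\phi_I(\epsilon_1)=3\xi_{p_0}$ and that $d\phi_I(\epsilon_2)$ and $d\phi_I(\epsilon_3)$ are horizontal $J$-related vectors of norm $\sqrt{3}$. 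Taking $e_1,e_2,e_3$ at $p_0$ to be the unit vectors $\tfrac{1}{3}d\phi_I(\epsilon_1)$, $\tfrac{1}{\sqrt{3}}d\phi_I(\epsilon_2)$, $\tfrac{1}{\sqrt{3}}d\phi_I(\epsilon_3)$ and extending everywhere on $L_3$ by the (isometric) $SU(2)$-action gives an orthonormal frame with $e_1=\xi$ and $Je_2=e_3$ by equivariance.

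The Lie brackets then reduce to the matrix computations $[\epsilon_1,\epsilon_2]=2\epsilon_3$, $[\epsilon_2,\epsilon_3]=2\epsilon_1$, $[\epsilon_3,\epsilon_1]=2\epsilon_2$ together with the scaling factors $(\tfrac{1}{3},\tfrac{1}{\sqrt{3}},\tfrac{1}{\sqrt{3}})$: using the convention $[\tilde\alpha,\tilde\beta]=-\widetilde{[\alpha,\beta]}$ for fundamental vector fields of a left group action, the claimed identities $[e_1,e_2]=-\tfrac{2}{3}e_3$, $[e_1,e_3]=\tfrac{2}{3}e_2$, $[e_2,e_3]=-2e_1$ drop out immediately.

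The main subtlety will be verifying that this construction genuinely produces a well-defined global frame on $L_3$: the isotropy representation of $\mathbb{Z}_3$ at $p_0$ acts trivially on $e_1$ and as rotation by $2\pi/3$ in the $(e_2,e_3)$-plane, so $(e_2,e_3)$ is only determined up to such a rotation. However, direct computation shows that both the relation $Je_2=e_3$ and the stated bracket relations are invariant under rotations in $\mathrm{span}(e_2,e_3)$, so this ambiguity is harmless and the proposition holds globally on $L_3$.
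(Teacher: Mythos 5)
Your orbit-map strategy is the right idea, and you do arrive at the stated brackets, but you get there because two sign errors cancel, and the argument as written is internally inconsistent in a way worth sorting out. First, with $a=\cos t$, $\bar b=\sin t$ one has $\phi(\exp(t\epsilon_2))=(\cos^3 t,-\sqrt{3}\cos^2 t\sin t,\sqrt{3}\cos t\sin^2 t,-\sin^3 t)$, so $d\phi_I(\epsilon_2)=(0,-\sqrt{3},0,0)$, not $(0,\sqrt{3},0,0)$. Second, and more importantly: the frame you describe --- take $e_i(p_0)$ and push forward by $d(L_g)_{p_0}$ --- is exactly $d\phi(\tilde\epsilon_i^L)$, the pushforward of the \emph{left}-invariant vector fields under the (local) diffeomorphism $\phi$, and these satisfy $[\tilde X_L,\tilde Y_L]=+\widetilde{[X,Y]}_L$. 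The $-$ sign you invoke is for fundamental vector fields $X^\flat(p)=\frac{d}{dt}\big|_0\exp(tX)\cdot p$, i.e.\ pushforwards of \emph{right}-invariant vector fields. The two sign mistakes compensate, which is why your final answer is correct. The distinction is not cosmetic: if you use the fundamental vector field frame (so that the $-$ convention really does apply), then $\tfrac13\epsilon_1^\flat\ne\xi$ away from $p_0$, since $\rho_*(\epsilon_1)=\mathrm{diag}(3i,i,-i,-3i)$ is not a multiple of $i\,\mathrm{Id}$ and hence $\epsilon_1^\flat(p)=\rho_*(\epsilon_1)p\ne 3ip=3\xi(p)$ for general $p\in L_3$. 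Only the left-translation extension gives $e_1=\xi$ everywhere (because $\rho(g)$ is $\mathbb{C}$-linear), so you cannot simultaneously keep $e_1=\xi$ "by equivariance" and the $-$ convention.

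On the $\mathbb{Z}_3$ issue: your observation that the bracket relations and $Je_2=e_3$ are invariant under rotations of $\mathrm{span}(e_2,e_3)$ is correct, but it explains only that \emph{any} local choice gives the same structure constants --- it does not produce a frame. Since $\epsilon_2,\epsilon_3$ are not $\mathrm{Ad}(\mathbb{Z}_3)$-invariant, the left-invariant fields $\tilde\epsilon_2^L,\tilde\epsilon_3^L$ genuinely fail to descend to $L_3=SU(2)/\mathbb{Z}_3$. The clean fix is simply to note that $\phi\colon SU(2)\to L_3$ is a $3\colon\!1$ covering, hence a local diffeomorphism, so pushing the (correctly scaled and sign-adjusted) left-invariant frame forward gives a local orthonormal frame of $L_3$ with the stated brackets --- and a local frame is all that the moving-frame structure equations (Propositions \ref{prop:jase1}--\ref{prop:jase3}, Lemma \ref{lem:frame}) require. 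The paper itself simply cites Kawai for this result, so there is no in-text proof to compare against; your route is a reasonable independent derivation once the sign and convention issues are straightened out.
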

We extend this to a frame of $S^7$ as follows.
\begin{lem}\label{lem:frame}
 There exist orthonormal frames $\{e_1,e_2,e_3\}$ of $L_3$ and $\{f_4,f_5,$ $f_6,f_7\}$ of $\nu_{S^7}(L_3)$ such that the structure equations of Proposition \ref{prop:jase1} take the following form:
\begin{align*}
 dx&=e_1\omega_1+e_2\omega_2+e_3\omega_3+f_4\eta_4+f_5\eta_5+f_6\eta_6+f_7\eta_7, \\
de_1&=-\omega_1x-\omega_3e_2+\omega_2e_3-\eta_5 f_4+\eta_4 f_5-\eta_7f_6+\eta_6f_7, \\
de_2&=-\omega_2x+\omega_3e_1+\frac{\omega_1}{3}e_3+\frac{2}{\sqrt{3}}\omega_2f_4+\frac{2}{\sqrt{3}}\omega_3 f_5, \\
de_3&=-\omega_3 x-\omega_2 e_1-\frac{\omega_1}{3}e_2-\frac{2}{\sqrt{3}}\omega_3f_4+\frac{2}{\sqrt{3}}\omega_2f_5, \\
df_4&=-x\eta_4+\eta_5e_1-\frac{2}{\sqrt{3}}\omega_2e_2+\frac{2}{\sqrt{3}}\omega_3e_3-\frac{\omega_1}{3}f_5+\omega_2f_6+\omega_3f_7, \\
df_5&=-x\eta_5-\eta_4e_1-\frac{2}{\sqrt{3}}\omega_3e_2-\frac{2}{\sqrt{3}}\omega_2e_3+\frac{\omega_1}{3}f_4-\omega_3f_6+\omega_2f_7, \\
df_6&=-x\eta_6+\eta_7e_1-\omega_2f_4+\omega_3f_5-\omega_1f_7, \\
df_7&=-x\eta_7-\eta_6e_1-\omega_3f_4-\omega_2f_5+\omega_1f_6,
\end{align*}
where $Je_2=e_3, Jf_4=f_5, Jf_6=f_7$, $\{\omega_1,\omega_2,\omega_3\}$ is an orthonormal coframe of $L_3$ ($\omega_i(e_j)=\delta_{ij}$) and $\{\eta_4,\eta_5,\eta_6,\eta_7\}$ is an orthonormal coframe of the normal bundle of $L_2$ in $S^7$ ($\eta_a(f_b)-\delta_{ab}$). Further, the second structure equations of Proposition \ref{prop:jase2} are also satisfied.
\end{lem}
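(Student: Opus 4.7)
The plan is to construct the normal frame by combining the $SU(2)$-action defining $L_3$ with the $J$-invariance of $\nu_{S^7}(L_3)$, and then to read off the structure equations from the moving frame formalism on a tubular neighbourhood of $L_3$ in $S^7$.

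First I would note that $L_3$ is the $SU(2)$-orbit through $p_0=(1,0,0,0)^T$ under the representation of Section \ref{ss:ex3}, and $SU(2)$ acts by complex linear isometries of $\mathbb{C}^4$, so that $TL_3$ and $\nu_{S^7}(L_3)$ are $SU(2)$-equivariant. Since $C_3$ is a complex cone, at every $x\in L_3$ we have $T_xC_3=\mathbb{R}\langle x\rangle\oplus\mathbb{R}\langle e_1\rangle\oplus\mathrm{span}(e_2,e_3)$, which is $J$-invariant with $e_1=Jx$ and $Je_2=e_3$ (Proposition \ref{prop:frame2}); hence its orthogonal complement $\nu_{S^7}(L_3)_x$ inside $T_xS^7$ is also $J$-invariant. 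I would therefore pick a unit $f_4\in\nu_{S^7}(L_3)_{p_0}$, set $f_5:=Jf_4$, pick a unit $f_6$ orthogonal to $\mathrm{span}(f_4,f_5)$, and set $f_7:=Jf_6$; transporting by $SU(2)$ gives a global orthonormal frame on $L_3$, which I extend to a tubular neighbourhood in $S^7$ by normal exponentiation. The first structure equation $dx=\sum_i e_i\omega_i+\sum_a f_a\eta_a$ is then just the tautological decomposition of the identity on $T\mathbb{R}^8$ in this frame.

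For the equations on $de_i$ and $df_a$, I would decompose each into an $x$-component, a tangential component and a normal component. The $x$-components $-\omega_i x$, $-\eta_a x$ arise by differentiating $\langle e_i,x\rangle=\langle f_a,x\rangle=0$ and using $dx$, and they encode $L_3\subset S^7$. The tangential components of $de_i$ are determined by the Levi-Civita connection of $L_3$: Koszul's formula applied to $[e_1,e_2]=-\tfrac{2}{3}e_3$, $[e_1,e_3]=\tfrac{2}{3}e_2$, $[e_2,e_3]=-2e_1$ from Proposition \ref{prop:frame2} produces exactly the pieces $-\omega_3e_2+\omega_2e_3$, $\omega_3e_1+\tfrac{\omega_1}{3}e_3$ and $-\omega_2e_1-\tfrac{\omega_1}{3}e_2$ claimed in the statement.

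The hardest step is pinning down the residual $SO(2)\times SO(2)$ freedom in the choice of $(f_4,f_6)$ so that the second fundamental form $h$ and normal connection $\nabla^\perp$ of $L_3\subset S^7$ take the stated form. Using the explicit $SU(2)$-parametrisation I would compute $h(e_i,e_j)$ at $p_0$ by projecting the ambient second derivatives to $\nu_{S^7}(L_3)_{p_0}$; aligning $f_4$ with $h(e_2,e_2)$, the $J$-invariance of $\nu_{S^7}(L_3)$ together with $Je_2=e_3$ and symmetry of $h$ force $h(e_2,e_3),h(e_3,e_3)\in\mathrm{span}(f_4,f_5)$, which produces the $\tfrac{2}{\sqrt{3}}$ coefficients and rules out any $f_6,f_7$ contribution to $de_2,de_3$. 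The tangential components of $df_a$ are then forced by orthonormality $\langle df_a,e_i\rangle=-\langle f_a,de_i\rangle$, and the remaining normal pieces $\pm\tfrac{\omega_1}{3}f_5$ and $\pm\omega_1 f_7$ encode $\nabla^\perp f_a$ on the two $J$-invariant normal sub-bundles $\mathrm{span}(f_4,f_5)$ and $\mathrm{span}(f_6,f_7)$; the two distinct weights $\tfrac{1}{3}$ and $1$ arise as the two $U(1)$-characters appearing in the isotropy action on $\nu_{S^7}(L_3)_{p_0}$, which I would read off directly from the explicit representation. Once this frame calculation is complete, the second structure equations of Proposition \ref{prop:jase2} follow mechanically from applying $d^2=0$ to the first structure equations, together with the Gauss, Codazzi and Ricci identities for $L_3\subset S^7\subset\mathbb{R}^8$.
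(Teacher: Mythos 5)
Your proposal takes a genuinely different route from the paper's, and in outline it is a plausible alternative: the paper works entirely inside the $\mathfrak{spin}(7)$ structure-equation bookkeeping (it invokes Proposition \ref{prop:jase3} to set $\alpha_2=\omega_2$, $\alpha_3=\omega_3$, extracts $\alpha_1=-\tfrac{\omega_1}{3}$ from the brackets of Proposition \ref{prop:frame2}, then solves \eqref{eqn:struc23}, \eqref{eqn:struc24} and \eqref{eqn:struc25} in turn for $\beta$ and $\gamma$), whereas you propose a direct differential-geometric computation of $\nabla$, $\mathbf{II}$ and $\nabla^\perp$ from the explicit $SU(2)$-orbit. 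Your Koszul step for the tangential part of $de_i$ is fine and matches the paper in substance.

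However, there are concrete gaps in the normal-bundle part. First, the isotropy group of $SU(2)$ at $p_0=(1,0,0,0)^T$ is $\mathbb{Z}_3$ (as recorded in the paper, $L_3\cong SU(2)/\mathbb{Z}_3$), so it is finite and there are no ``$U(1)$-characters of the isotropy action'' in the sense you describe; the $\mathbb{Z}_3$-characters on $\mathrm{span}(f_4,f_5)$ and $\mathrm{span}(f_6,f_7)$ only pin the rotation rates down modulo $3$, whereas you need the precise values $\tfrac{1}{3}$ and $1$. Pinning these down requires actually computing $\nabla^\perp_{e_1}f_a$ (or, as the paper does, solving \eqref{eqn:struc24} and \eqref{eqn:struc25} for $\gamma_1$), not reading characters of a finite stabiliser. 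Second and more importantly, your plan entirely omits the horizontal part of the normal connection: the structure equations to be proved contain the cross-terms $\omega_2 f_6+\omega_3 f_7$ in $df_4$, $-\omega_3 f_6+\omega_2 f_7$ in $df_5$, $-\omega_2 f_4+\omega_3 f_5$ in $df_6$ and $-\omega_3 f_4-\omega_2 f_5$ in $df_7$, i.e.\ the components $\gamma_2=2\omega_2$, $\gamma_3=2\omega_3$. These couple the two $J$-invariant rank-$2$ subbundles of $\nu_{S^7}(L_3)$, are in no way captured by ``the two $U(1)$-characters on the two $J$-invariant normal sub-bundles'', and are precisely what the paper determines by imposing \eqref{eqn:struc24} (to force $\gamma_2=a\omega_2$, $\gamma_3=a\omega_3$) and then \eqref{eqn:struc25} (to force $a=2$). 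Finally the coefficient $\tfrac{2}{\sqrt{3}}$ in the second fundamental form is asserted but never derived in your plan; the paper gets it cleanly from the Gauss equation \eqref{eqn:struc23}.
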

\begin{proof}
 Let $\nabla$ denote the Levi-Civita connection of $L_3$. Again we take $\alpha_2=\omega_2$ and $\alpha_3=\omega_3$ as we may by Proposition \ref{prop:jase3}. We see that since, using the structure equations given in \ref{prop:jase1},
\[
-\alpha_1(e_1)e_3-e_3 =\nabla_{e_1}e_2-\nabla_{e_2}e_1=[e_1,e_2]=-\frac{2}{3}e_3,
\]
we must have that $\alpha_1=-\frac{\omega_1}{3}$. We check that
\[
 -\frac{1}{3}e_2+e_2 =\nabla_{e_1}e_3-\nabla_{e_3}e_1=[e_1,e_3]=\frac{2}{3}e_2,
\]
and
\[
 -e_1-e_1=\nabla_{e_2}e_3-\nabla_{e_3}e_2=[e_2,e_3]=-2e_1.
\]
Now Equation \eqref{eqn:struc23} tells us that we must have that
\[
 -2(\beta^4_2\wedge \beta^5_2+\beta^6_2\wedge \beta^7_2)=-\frac{8}{3}\omega_2\wedge \omega_3.
\]
So we take $\beta^4_2=\frac{2}{\sqrt{3}}\omega_2$ and $\beta^5_2=\frac{2}{\sqrt{3}}\omega_3$, $\beta^6_2=\beta^7_2=0$ and this is satisfied.
To ensure that Equation \eqref{eqn:struc24} is satisfied, we seek $\gamma$ so that
\begin{align*}
 d\beta^4_2&=\frac{2}{\sqrt{3}}d\omega_2=-\frac{4}{3\sqrt{3}}\omega_1\wedge \omega_3=-\frac{2}{\sqrt{3}}\omega_1\wedge \omega_3+\frac{1}{\sqrt{3}}\gamma_1\wedge \omega_3, \\
d\beta^5_2&=\frac{2}{\sqrt{3}}d\omega_3=\frac{4}{3\sqrt{3}}\omega_1\wedge \omega_2=\frac{2}{\sqrt{3}}\omega_1\wedge \omega_2-\frac{1}{\sqrt{3}}\gamma_1\wedge \omega_2, \\
d\beta^6_2&=0=\frac{1}{\sqrt{3}}\gamma_3\wedge \omega_3-\frac{1}{\sqrt{3}}\gamma_2\wedge \omega_2, \\
d\beta^7_2&=0=-\frac{1}{\sqrt{3}}\gamma_3\wedge\omega_2-\frac{1}{\sqrt{3}}\gamma_2\wedge \omega_2.
\end{align*}
From this we see that we must have that $\gamma_1=\frac{2}{3}\omega_1$, and $\gamma_2=a\omega_2$ and $\gamma_3=a\omega_3$. To determine $a$, we check Equation \eqref{eqn:struc25}, which tells us that we must have
\[
 -\frac{1}{3}d\omega_1=-\frac{2}{3}\omega_2\wedge \omega_3=\frac{a^2}{2}\omega_2\wedge \omega_3-\frac{8}{3}\omega_2\wedge \omega_3,
\]
and therefore we must have $a=2$. It can be checked that the remaining parts of Equation \eqref{eqn:struc25} are satisfied with $\gamma=(\frac{2}{3}\omega_1,2\omega_2,2\omega_3)$. Therefore we choose $\{f_4,f_5,f_6,f_7\}$ so that the above choices of $\gamma,\beta$ and $\alpha$ hold, and so the equations claimed hold.
\end{proof}

We have that $\{f_4-if_5,f_6-if_7\}$ is a frame for the holomorphic tangent bundle of $L_3$ in $S^7$. We have that
\begin{align*}
 \nabla_{e_1}(f_4-if_5)&=-\frac{1}{3}f_5-\frac{i}{3}f_4=-\frac{i}{3}(f_4-if_5), \\
\nabla_{e_1}(f_6-if_7)&=-f_7-if_6=-i(f_6-if_7).
\end{align*}
However, 
\begin{align*}
 (\nabla_{e_2}^\perp+i\nabla_{e_3}^\perp)(f_4-if_5)&=0, \\
(\nabla_{e_2}^\perp+i\nabla_{e_3}^\perp)(f_6-if_7)&=-2(f_4-if_5), \\
(\nabla_{e_2}^\perp-i\nabla_{e_3}^\perp)(f_4-if_5)&=2(f_6-if_7), \\
(\nabla_{e_2}^\perp-i\nabla_{e_3}^\perp)(f_6-if_7)&=0,
\end{align*}
and so we see explicitly that the connection on the normal bundle of $L_3$ in $S^7$ is not in a nice diagonal form as we had before.
Since we have a moving frame of $S^7$, we will return to considering the eigenvalue problem \eqref{eqn:cxev1}-\eqref{eqn:cxev1}. Writing a section of $\nu^{1,0}_{S^7}(L_3)$ as
\[
 g_1(f_4-if_5)+g_2(f_6-if_7),
\]
where $g_1,g_2$ are functions on $L_3$ and sections of $\Lambda^{0,1}_hL\otimes \nu^{1,0}_{S^7}(L_3)$ as
\[
 \alpha_1\otimes (f_4-if_5)+\alpha_2\otimes (f_6-if_7),
\]
where $\alpha_1,\alpha_2$ are sections of $\Lambda^{0,1}_hL$, we seek $g_1,g_2\in C^\infty(L_3)$ and $\alpha_1,\alpha_2\in C^\infty(\Lambda^{0,1}_hL)$ satisfying
\begin{align*}
 \bar\partial_h g_1-g_2(\omega_2-i\omega_3)&=\left(\frac{8}{3}-i\nabla_{e_1}\right)\alpha_1, \\
\bar\partial^*_h\alpha_1&=\frac{1}{2}\left(\frac{4}{3}+i\nabla_{e_1}\right) g_1,
\end{align*}
and
\begin{align*}
 \bar\partial_h g_2&=(2-i\nabla_{e_1})\alpha_2, \\
\bar\partial^*_h\alpha_2+2(e_2\hook \alpha_1)&=\frac{1}{2}(2+i\nabla_{e_1})g_2.
\end{align*}
We must have that 
\[
 g_2(\omega_2-i\omega_3)=a \alpha_1,
\]
for some $a\in \mathbb{C}$ (since if $\alpha_1=0$ then we find infinitesimal conical complex deformations of $C_3$), and so we may instead study the eigenvalue problems
\begin{align}\label{eqn:ev0}
 \bar\partial_h g_1&=\left(\frac{8}{3}-i\nabla_{e_1}+a\right)\alpha_1, \\\label{eqn:ev00}
\bar\partial^*_h\alpha_1&=\frac{1}{2}\left(\frac{4}{3}+i\nabla_{e_1}\right)g_1,
\end{align}
and
\begin{align}\label{eqn:ev1}
 \bar\partial_h g_2&=(2-i\nabla_{e_1})\alpha_2, \\ \label{eqn:ev2}
\bar\partial^*_h \alpha_2&=\frac{1}{2}\left(2+\frac{4}{a}+i\nabla_{e_1}\right)g_2.
\end{align}
Using the structure equations given in Lemma \ref{lem:frame}, we see that the problem \eqref{eqn:ev1}-\eqref{eqn:ev2} is equivalent to the eigenproblem
\begin{align}\label{eqn:ev3}
 \bar\partial_h(g_2(\omega_2-i\omega_3))&=\left(\frac{8}{3}-i\nabla_{e_1}\right)\alpha_2\otimes (\omega_2-i\omega_3), \\ \label{eqn:ev4}
\bar\partial^*_h(\alpha_2\otimes (\omega_2-i\omega_3))&=\frac{1}{2}\left(\frac{4}{3}+\frac{4}{a}+i\nabla_{e_1}\right)g_2(\omega_2-i\omega_3),
\end{align}
where we consider $g_2(\omega_2-i\omega_3)$ as a $\Lambda^{0,1}_hL$-valued function, which becomes
\begin{align}\label{eqn:ev000}
 a\bar\partial_h \alpha_1&=\left(\frac{8}{3}-i\nabla_{e_1}\right)\alpha_2, \\ \label{eqn:ev0000}
\bar\partial^*_h\alpha_2&=\frac{a}{2}\left(\frac{4}{3}+\frac{4}{a}+i\nabla_{e_1}\right)\alpha_1,
\end{align}
where now $\alpha_2$ is a section of $\Lambda^{0,1}_hL\otimes \Lambda^{0,1}_hL$.
Supposing that
\[
 \mathcal{L}_{e_1}g_1=img_1, \;\mathcal{L}_{e_1}\alpha_1=im\alpha_1,
\]
for $3m\in \mathbb{Z}$ we see that in order for the eigenproblem \eqref{eqn:ev000}-\eqref{eqn:ev0000} to make sense we must have 
\[
 \mathcal{L}_{e_1}\alpha_2=im\alpha_2.
\]
Write $\mathcal{O}_{\Sigma_3}(d)$ for the degree $d$ line bundle over $\Sigma_3$. Then as explained in Section \ref{ss:trick1}, we may replace the eigenvalue problems \eqref{eqn:ev0}-\eqref{eqn:ev00}-\eqref{eqn:ev000}-\eqref{eqn:ev0000} with seeking $g_1\in C^\infty(\mathcal{O}_{\Sigma_3}(3m)),$ and $\alpha_1\in C^\infty(\mathcal{O}_{\Sigma_3}(3m+2)),\alpha_2\in C^\infty(\mathcal{O}_{\Sigma}(3m+4))$ satisfying
\begin{align}\label{eqn:fev1}
 \bar\partial_{\Sigma_3}g_1=\left(\frac{8}{3}+a+m\right)\alpha_1, \\ \label{eqn:fev2}
\bar\partial^*_{\Sigma_3}\alpha_1=\frac{1}{2}\left(\frac{4}{3}-m\right)g_1,
\end{align}
and
\begin{align}\label{eqn:fev3}
 a\bar\partial_{\Sigma_3}\alpha_1=\left(\frac{8}{3}+m\right)\alpha_2, \\ \label{eqn:fev4}
\bar\partial^*_{\Sigma_3}\alpha_2=\frac{a}{2}\left(\frac{4}{3}+\frac{4}{a}-m\right)\alpha_1.
\end{align}
We find that $\alpha_1$ must simultaneously satisfy the following two eigenproblems: applying $\bar\partial_{\Sigma_3}$ to \eqref{eqn:fev2} and using \eqref{eqn:fev1} we find that
\begin{equation}\label{eqn:lap1}
 \bar\partial_{\Sigma_3}\bar\partial^*_{\Sigma_3} \alpha_1 =\frac{1}{2}\left(\frac{8}{3}+a+m\right)\left(\frac{4}{3}-m\right)\alpha_1,
\end{equation}
and applying $\bar\partial_{\Sigma_3}^*$ to \eqref{eqn:fev3} and using \eqref{eqn:fev4} we have that
\begin{equation}\label{eqn:lap2}
 \bar\partial^*_{\Sigma_3}\bar\partial_{\Sigma_3}\alpha_1=\frac{1}{2}\left(\frac{8}{3}+m\right)\left(\frac{4}{3}+\frac{4}{a}-m\right)\alpha_1.
\end{equation}
Applying the formula \cite[Lem 2.1, 2.2]{MR2241738}
\[
 \bar\partial_{\Sigma_3}\bar\partial^*_{\Sigma_3}\alpha=\bar\partial^*_{\Sigma_3}\bar\partial_{\Sigma_3}\alpha+\frac{2}{3}(3m+2)\alpha,
\]
where $\alpha$ is a section of $\mathcal{O}_{\Sigma_3}(3m+2)$, we see that
\begin{align*}
 \bar\partial^*_{\Sigma_3}\bar\partial_{\Sigma_3}\alpha_1&=\frac{1}{2}\left(\frac{8}{3}+m\right)\left(\frac{4}{3}+\frac{4}{a}-m\right)\alpha_1, \\
&=\frac{1}{2}\left[\left(\frac{8}{3}+a+m\right)\left(\frac{4}{3}-m\right)+\frac{4}{3}(3m+2)\right]\alpha_1,
\end{align*}
for $\alpha_1\in C^\infty(\mathcal{O}_{\Sigma_3}(3m+2))$. Therefore $a\in \mathbb{C}$ must satisfy
\[
 \left(\frac{8}{3}+m\right)\left(\frac{4}{3}+\frac{4}{a}-m\right)=\left(\frac{8}{3}+a+m\right)\left(\frac{4}{3}-m\right)-\frac{4}{3}(3m+2).
\]
Solving this equation for $a$, we find that for $m\ne 4/3$
\[
 a_\pm=\frac{4m+\frac{8}{3}\pm 8}{2(\frac{4}{3}-m)},
\]
which simplifies to
\[
 a_+=\frac{6m+16}{4-3m}, \;a_-=-2.
\]
First considering $a=a_+$ we apply Theorem \ref{thm:evtwistdirac} to see that
\[
 \frac{1}{2}\left(\frac{8}{3}+m\right)\left(\frac{4}{3}-m+\frac{4(4-3m)}{6m+16}\right),
\]
is an eigenvalue of $\bar\partial^*_{\Sigma_3}\bar\partial_{\Sigma_3}$ acting on sections of $\mathcal{O}_{\Sigma_3}(3m+2)$ if, and only if, $m=-2/3$. In this case there are five $\alpha_1\in C^\infty(\mathcal{O}_{\Sigma_3}(0))$ satisfying
\[
 \Delta_{\bar\partial_{\Sigma_3}}\alpha_1=4\alpha_1.
\]
Taking $g_1=\bar\partial^*_{\Sigma_3}\alpha_1$ and $\alpha_2=\bar\partial_{\Sigma_2}\alpha_1$ completes this solution to the eigenproblem \eqref{eqn:fev1}-\eqref{eqn:fev2}-\eqref{eqn:fev3}-\eqref{eqn:fev4}.

Secondly, when $a=a_-=-2$ Theorem \ref{thm:evtwistdirac} tells us that
\[
 \frac{1}{2}\left(\frac{8}{3}+m\right)\left(-\frac{2}{3}-m\right),
\]
is an eigenvalue of $\bar\partial^*_{\Sigma_3}\bar\partial_{\Sigma_3}$ acting on sections of $\mathcal{O}_{\Sigma_3}(3m+2)$ if, and only if, $m=-2/3$, in which case we seek functions $\alpha_1$ on $\Sigma_3$ satisfying
\[
 \Delta_{\bar\partial_{\Sigma_3}}\alpha_1=0.
\]
Since $\Sigma_3$ is compact, $\alpha_1$ must be holomorphic and further constant. Taking $g_1=\alpha_2=0$ completes our analysis. 

Finally, we check the case that $m=4/3$. In this case, for the eigenvalues
\[
 \frac{1}{2}\left(\frac{8}{3}+\frac{4}{3}\right)\left(\frac{4}{a}\right)=-\frac{4}{6}(4+2),
\]
we must have $a=-2$. However, in this case, the eigenvalue is equal to $-4$, which is negative and therefore not a possible eigenvalue of $\bar\partial^*_{\Sigma_3}\bar\partial_{\Sigma_3}$ on sections of $\mathcal{O}_{\Sigma_3}(6)$.

We have found a total of six infinitesimal conical Cayley deformations of $C_3$ that are not complex.

\begin{prop}\label{prop:ex3soln}
 The real dimension of the space of infinitesimal conical Cayley deformations of $C_3$ in $\mathbb{C}^4$ is thirty. The real dimension of the space of infinitesimal conical complex deformations of $C_3$ in $\mathbb{C}^4$ is twenty-four.
\end{prop}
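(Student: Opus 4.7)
The plan is to apply Proposition \ref{prop:caycxdef} as we did for $C_1$ and $C_2$, but with extra care because the normal bundle of $L_3$ in $S^7$ is no longer trivial, so we cannot invoke Theorem \ref{thm:evtwistdirac} with a diagonal connection.

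First I would compute the complex deformations. The normal bundle splits as
\[
\nu^{1,0}_{\mathbb{C}P^3}(\Sigma_3)\cong \mathcal{O}_{\Sigma_3}(5)\oplus \mathcal{O}_{\Sigma_3}(5),
\]
and applying the Hirzebruch--Riemann--Roch Theorem \ref{thm:rr} (noting $\Sigma_3\cong \mathbb{C}P^1$ has genus zero) gives $h^0(\Sigma_3,\nu^{1,0}_{\mathbb{C}P^3}(\Sigma_3))=12$, so the space of infinitesimal conical complex deformations has real dimension $24$.

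For the remaining Cayley deformations I would use the moving frame $\{e_1,e_2,e_3\}$ for $L_3$ together with an orthonormal normal frame $\{f_4,f_5,f_6,f_7\}$ chosen so that $Jf_4=f_5$ and $Jf_6=f_7$. The key point is to compute $\nabla^\perp$ in this frame and observe that, unlike in the first two examples, the mixed terms $(\nabla_{e_2}^\perp\pm i\nabla_{e_3}^\perp)(f_6-if_7)=-2(f_4-if_5)$ (and the corresponding relation in the other direction) make the connection genuinely off-diagonal on $\nu^{1,0}_{S^7}(L_3)$. Returning to the eigenproblem \eqref{eqn:cxev1}--\eqref{eqn:cxev2} with $\lambda=1$ and expanding a section of $\nu^{1,0}_{S^7}(L_3)$ in this frame as $g_1(f_4-if_5)+g_2(f_6-if_7)$ and the $(0,1)$-part similarly as $\alpha_1\otimes (f_4-if_5)+\alpha_2\otimes (f_6-if_7)$, the system decouples into two subsystems linked by a single complex scale $a$ (representing the proportionality $g_2(\omega_2-i\omega_3)=a\alpha_1$).

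The next step is to apply the eigenvalue trick of Section \ref{ss:trick1} to descend the eigenproblem to line bundles on $\Sigma_3$: imposing $\mathcal{L}_{e_1}$-eigenvalue $im$ (with $3m\in \mathbb{Z}$) produces sections of $\mathcal{O}_{\Sigma_3}(3m)$, $\mathcal{O}_{\Sigma_3}(3m+2)$ and $\mathcal{O}_{\Sigma_3}(3m+4)$ satisfying first-order equations. Eliminating via $\bar\partial_{\Sigma_3}$ and $\bar\partial^*_{\Sigma_3}$ forces $\alpha_1$ to satisfy two different Laplacian eigenvalue equations simultaneously, which (after invoking the Weitzenb\"ock-type identity relating $\bar\partial\bar\partial^*$ and $\bar\partial^*\bar\partial$ on $\mathcal{O}_{\Sigma_3}(3m+2)$) translates into an algebraic constraint on the scale $a$ with two roots $a_+ = (6m+16)/(4-3m)$ and $a_-=-2$.

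The main obstacle, and where I expect the most bookkeeping, is checking compatibility of each root of $a$ against the spectrum of $\bar\partial^*_{\Sigma_3}\bar\partial_{\Sigma_3}$ on $\mathcal{O}_{\Sigma_3}(3m+2)$ provided by Theorem \ref{thm:evtwistdirac}, ruling out the degenerate case $m=4/3$ (where the eigenvalue becomes negative and hence spurious), and being careful not to double-count. The branch $a=a_-$ yields only holomorphic (hence constant) $\alpha_1$ with $g_1=\alpha_2=0$, contributing nothing new beyond the complex deformations; the branch $a=a_+$ forces $m=-2/3$, and Theorem \ref{thm:evtwistdirac} gives exactly a $5$-dimensional eigenspace of $\Delta_{\bar\partial_{\Sigma_3}}$ at eigenvalue $4$ on $\mathcal{O}_{\Sigma_3}(0)$, but one of these (the constants) was already accounted for under $a_-$, leaving a $6$-dimensional real space of genuinely Cayley (non-complex) conical deformations once we complete $g_1=\bar\partial^*_{\Sigma_3}\alpha_1$ and $\alpha_2=\bar\partial_{\Sigma_3}\alpha_1$. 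Adding this to the $24$ complex deformations yields $30$, matching the associative count in Proposition \ref{prop:ex3}.
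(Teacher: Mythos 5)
Your overall strategy is exactly that of the paper: compute the twenty-four complex deformations via Hirzebruch--Riemann--Roch on $\nu^{1,0}_{\mathbb{C}P^3}(\Sigma_3)\cong\mathcal{O}_{\Sigma_3}(5)\oplus\mathcal{O}_{\Sigma_3}(5)$, observe that the non-diagonal normal connection on $L_3$ prevents a direct application of Theorem \ref{thm:evtwistdirac}, build an adapted moving frame, introduce the complex scale $a$ via $g_2(\omega_2-i\omega_3)=a\alpha_1$, reduce to line bundles on $\Sigma_3$, find the two roots $a_\pm$, and rule out $m=4/3$. That part is all in accord with the paper's proof.

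However, the final tally is reasoned incorrectly, and the errors in it happen to cancel. First, the claim that the $a_-$ branch ``contributes nothing new beyond the complex deformations'' is wrong: the $a_-$ solution has $\alpha_1$ a nonzero constant with $g_1=\alpha_2=0$ and $g_2$ determined by the proportionality, and this is a genuine Cayley-but-not-complex conical deformation; the paper counts it as contributing one dimension. Second, the claim that ``one of [the $a_+$ eigensections] (the constants) was already accounted for under $a_-$'' is also wrong: the $a_+$ branch produces the $5$-dimensional eigenspace of $\Delta_{\bar\partial_{\Sigma_3}}$ on $\mathcal{O}_{\Sigma_3}(0)$ with eigenvalue $4$, whereas constants lie in the eigenspace with eigenvalue $0$, so there is no overlap and nothing to remove. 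The correct count is simply $5$ from $a_+$ plus $1$ from $a_-$, giving $6$ Cayley-non-complex conical deformations and hence $24+6=30$ in total; the arithmetic you wrote ($5$ minus a double-counted constant ``leaving $6$'') does not produce that number by the reasoning given, even though you state the right final answer.
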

\begin{rem}
Similarly to Proposition \ref{prop:ex2soln} we have six infinitesimal conical Cayley deformations of $C_3$ which are not complex, which again implies that these deformations are just rigid motions.
\end{rem}

\subsection{Calculating the \texorpdfstring{$\eta$}{eta}-invariant for an example}\label{ss:etacalc}
The final calculation in this article is to compute the $\eta$-invariant of the Atiyah--Patodi--Singer index theorem \ref{thm:apsind} for one of the examples we considered in Section \ref{sec:conedefs}. This will help us to calculate (what we expect to be) the codimension of the space of conically singular complex CS deformations of a CS complex surface $N$ at $C$ with rate $\mu$ in a Calabi--Yau manifold $M$ inside the space of all complex deformations of $N$, for a certain cone $C$ in $\mathbb{C}^4$, using Theorem \ref{thm:indcomp}.

We consider our simplest example of a two-dimensional complex cone in $\mathbb{C}^4$ which is $C_1=\mathbb{C}^2$. Denote by $\Sigma_1$ the complex link of $C_1$, i.e., $\Sigma_1=\mathbb{C}P^1$. Proposition \ref{prop:evcxlink} told us that the exceptional weights $\lambda\in \mathbb{R}$ satisfy an eigenproblem, and to calculate the $\eta$-invariant we must first find the dimension of the space of solutions to \eqref{eqn:evprobcxlink1}-\eqref{eqn:evprobcxlink2} for each $\lambda\in \mathbb{R}$. Setting $w=0$ in \eqref{eqn:evprobcxlink1}-\eqref{eqn:evprobcxlink2}, we seek holomorphic sections of $\nu^{1,0}_{\mathbb{C}P^3}(\Sigma_1)\otimes \mathcal{O}_{\mathbb{C}P^3}(\lambda-1)|_{\Sigma_1}=\mathcal{O}_{\mathbb{C}P^3}(\lambda)|_{\Sigma_1}\oplus\mathcal{O}_{\mathbb{C}P^3}(\lambda)|_{\Sigma_1}$, for $\lambda\in \mathbb{N}\cup \{0\}$, which by the Hirzebruch--Riemann--Roch theorem \ref{thm:rr} has dimension $2(\lambda+1)$. Similarly, setting $v=0$ in \eqref{eqn:evprobcxlink1}-\eqref{eqn:evprobcxlink2}, we seek antiholomorphic sections of $\mathcal{O}_{\mathbb{C}P^3}(-\lambda)|_{\Sigma_1}\oplus\mathcal{O}_{\mathbb{C}P^3}(-\lambda)|_{\Sigma_1}$, which again have dimension $2(\lambda+1)$.

It remains to compute the multiplicity of $\lambda$ as an eigenvalue of 
\begin{equation}\label{eqn:lapevex1}
2\bar\partial_{\Sigma_1}^*\bar\partial_{\Sigma_1} v=(\lambda-1-m)(\lambda+3+m)v,
\end{equation}
where $v$ is a section of $\mathcal{O}_{\mathbb{C}P^3}(m+1)|_{\Sigma_1}\oplus\mathcal{O}_{\mathbb{C}P^3}(m+1)|_{\Sigma_1}$ and $\lambda\ne 1+m$ or $-3-m$. Theorem \ref{thm:evtwistdirac} tells us that this is equivalent to solving the algebraic equation
\[
(\lambda-1-m)(\lambda+3+m)=4[q^2+q|m+2|],
\]
where $q$ is a positive integer.

It can be computed that the multiplicity of integer $\lambda>0$ as an eigenvalue of \eqref{eqn:lapevex1} is $2\lambda(\lambda+1)$ and the multiplicity of integer $\lambda<-2$ as an eigenvalue of \eqref{eqn:lapevex1} is $2(\lambda+2)(\lambda+1)$. So we have that
\begin{align*}
\eta(s)&=4\sum_{\lambda=1}^\infty \frac{\lambda+1}{\lambda^s}+2\sum_{\lambda=1}^\infty\frac{\lambda(\lambda+1)}{\lambda^s}-2\sum_{\lambda=3}^\infty\frac{(-\lambda+2)(-\lambda+1)}{\lambda^{s}}, \\
&=12\sum_{\lambda=1}^\infty \lambda^{1-s},
\end{align*}
and so 
\[
\eta(0)=12\zeta(-1)=-1,
\]
where $\zeta$ is the Riemann zeta function.

We have that the multiplicity of the zero eigenvalue in this case in four.  So we have found that
\[
\frac{\eta(0)+h}{2}=\frac{3}{2}.
\]

\appendix

\section{Structure equations of \texorpdfstring{$Spin(7)$}{Spin(7)}}
We will here give the structure equations of $S^7$ adapted to an associative submanifold of $S^7$. To do this, we will consider the sphere $S^7$ as the group quotient $Spin(7)/G_2$, that is, we can consider $Spin(7)$ as the $G_2$ frame bundle over $S^7$. Bryant \cite[Prop 1.1]{MR664494} first wrote down the structure equations of $Spin(7)$, but we will quote them in the following useful form given by Lotay \cite[\S4]{MR3004102}.
\begin{prop}[{\cite[Prop 4.2]{MR3004102}}]
We may write the Lie algebra $\mathfrak{spin}(7)$ of the Lie group $Spin(7)\subseteq Gl(n,\mathbb{R})$ as
\begin{align*}
 \mathfrak{spin}(7)= \left\{\left.
\begin{pmatrix}
 0 & -\omega^T & -\eta^T \\
\omega & [\alpha] & -\beta^T-\frac{1}{3}\{\eta\}^T \\
\eta & \beta+\frac{1}{3}\{\eta\} & \frac{1}{2}[\alpha-\omega]_+ +\frac{1}{2}[\gamma]_-
\end{pmatrix}
 \right| \right.&\left.
\begin{matrix}
 \omega,\alpha,\gamma\in M_{3\times 1}(\mathbb{R}), \\
\eta\in M_{4\times 1}(\mathbb{R}),\\
\beta\in M_{4\times 3}(\mathbb{R}),
\end{matrix} \right. \\
\left. 
\begin{matrix}
 \beta^4_1+\beta^7_2+\beta^6_3=0,   \\ 
\beta^6_1-\beta^5_2-\beta^4_3=0, 
\end{matrix}
\right.&\left.
\begin{matrix}
 \beta^5_1+\beta^6_2-\beta^7_3=0, \\
\beta^7_1-\beta^4_2+\beta^5_3=0.
\end{matrix}
\right\},
\end{align*}
where
\[
 [(x,y,z)^T]:=
\begin{pmatrix}
 0 & z & -y \\
-z & 0 & x \\
y & -x & 0
\end{pmatrix},
\]
\[
 [(x,y,z)^T]_\pm:=
\begin{pmatrix}
 0 & -x &-y & \pm z \\ 
x & 0 & z & \pm y \\
y& -z & 0 & \mp x \\
\mp z &\mp y & \pm x & 0
\end{pmatrix},
\]
and 
\[
 \{(p,q,r,s)^T\}:=
\begin{pmatrix}
  -q & -r & s \\
p& s& r \\
-s& p& -q \\
r& -q &-p
\end{pmatrix}.
\]
\end{prop}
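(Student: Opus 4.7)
The plan is to verify the description directly from the infinitesimal stabilizer condition. By definition $Spin(7)$ is the stabilizer of the Cayley form $\Phi_0$ in $GL(8,\mathbb{R})$, so
\[
\mathfrak{spin}(7)=\{A\in\mathfrak{gl}(8,\mathbb{R})\mid A\cdot\Phi_0=0\},
\]
where $A\cdot\Phi_0$ denotes the derivation action of $A$ on $\Lambda^4(\mathbb{R}^8)^*$. Since $\Phi_0$ determines the Euclidean metric, $Spin(7)\subseteq SO(8)$ and hence every such $A$ is necessarily skew-symmetric.

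First I would split the coordinates on $\mathbb{R}^8$ as $\{1\}\sqcup\{2,3,4\}\sqcup\{5,6,7,8\}$ to mirror the proposed block decomposition. Skew-symmetry alone forces the first row and column to take the form $(-\omega^T,-\eta^T)$ and $(\omega,\eta)^T$ for some $\omega\in\mathbb{R}^3$ and $\eta\in\mathbb{R}^4$, the central diagonal block to equal $[\alpha]$ for some $\alpha\in\mathbb{R}^3$, and the two off-diagonal blocks to be negative transposes of one another; the bottom-right block is then an arbitrary skew-symmetric $4\times 4$ matrix $X$. This exhibits $\mathfrak{so}(8)$ as a $28$-dimensional space, and the task reduces to cutting this down to the $21$ dimensions of $\mathfrak{spin}(7)$ by imposing $A\cdot\Phi_0=0$.

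The key step is then to compute $A\cdot\Phi_0$ and collect the coefficients of the resulting $5$-form. Observe that the terms of $\Phi_0$ not involving the index $1$ constitute the associative $3$-form $\varphi_0=e_1\hook\Phi_0$ on $\mathbb{R}^7\cong\mathrm{span}\{e_2,\dots,e_8\}$, so the requirement that $A\cdot\Phi_0$ contains no component involving only indices in $\{2,\dots,8\}$ is exactly the condition that the bottom-right $7\times 7$ block of $A$ lies in $\mathfrak{g}_2\subseteq\mathfrak{so}(7)$. Bryant's matrix description of $\mathfrak{g}_2$ (see \cite{MR664494}) then identifies this $14$-dimensional subalgebra with matrices parametrized by $\alpha,\gamma\in\mathbb{R}^3$ and $\beta\in M_{4\times 3}(\mathbb{R})$ subject to the four stated linear relations, with the $4\times 4$ block encoded by the formula involving $[\,\cdot\,]_+$ and $[\,\cdot\,]_-$. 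The remaining monomials of $\Phi_0$ involving the index $1$ couple $(\omega,\eta)$ to the $7\times 7$ block; these couplings produce the specific replacement $\alpha\mapsto\alpha-\omega$ inside $[\,\cdot\,]_+$ and the correction $\tfrac{1}{3}\{\eta\}$ to $\beta$ in the off-diagonal block. A final count $3+3+3+4+(12-4)=21$ confirms completeness.

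The main obstacle is the bookkeeping in this direct calculation: evaluating $A\cdot\Phi_0$ against each of the $\binom{8}{5}=56$ basis $5$-forms produces a large but highly redundant linear system. A conceptually cleaner organization of the argument, implicit in both references, uses the fibration $G_2\hookrightarrow Spin(7)\twoheadrightarrow S^7$, giving a vector space decomposition $\mathfrak{spin}(7)\cong\mathfrak{g}_2\oplus\mathbb{R}^7$ of $G_2$-modules under which $(\omega,\eta)$ parametrize the tangent space to $S^7$ at the basepoint $e_1$ and $(\alpha,\beta,\gamma)$ parametrize $\mathfrak{g}_2$. This reduces the full claim to the standard matrix description of $\mathfrak{g}_2$ as the infinitesimal annihilator of $\varphi_0$ on $\mathbb{R}^7$, which is precisely the statement of \cite[Prop 4.2]{MR3004102} that the proposition quotes.
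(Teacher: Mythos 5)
This proposition is quoted from Lotay \cite[Prop 4.2]{MR3004102}, who in turn follows Bryant \cite{MR664494}; the paper you are reading supplies no proof of its own, so there is nothing here to compare your argument against. Your overall strategy --- realize $\mathfrak{spin}(7)$ as the infinitesimal annihilator of $\Phi_0$ inside $\mathfrak{so}(8)$ and organize the computation using the isotropy decomposition $\mathfrak{spin}(7)\cong\mathfrak{g}_2\oplus\mathbb{R}^7$ coming from $G_2\hookrightarrow Spin(7)\to S^7$ --- is the right one and is essentially what the cited references do. The dimension count $3+3+3+4+(12-4)=21$ is also correct.

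That said, one intermediate claim is false as written: you assert that the vanishing of the component of $A\cdot\Phi_0$ supported in $\{2,\dots,8\}$ is ``exactly the condition that the bottom-right $7\times 7$ block of $A$ lies in $\mathfrak{g}_2$.'' This holds only when $\omega=\eta=0$. For $v=(\omega,\eta)\neq 0$, the off-diagonal part of $A$ already contributes $5$-forms with no $e^1$ (for instance $A\cdot e^1=\sum_{j\geq 2}A^j_1\,e^j$ hits the $e^1\wedge\varphi_0$ summand of $\Phi_0$), so the condition couples the $7\times 7$ block to $v$ and does not reduce to $B\in\mathfrak{g}_2$. The displayed matrix itself makes this visible: its bottom-right $7\times7$ block contains $\tfrac{1}{3}\{\eta\}$ and $[\alpha-\omega]_+$, hence depends on $\omega$ and $\eta$ and is generally \emph{not} an element of $\mathfrak{g}_2$. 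Your last paragraph already contains the correct statement --- $\mathfrak{g}_2$ is the block-diagonal $\omega=\eta=0$ slice and $\mathbb{R}^7$ embeds by a specific non-block-diagonal map --- so the fix is to promote that from an aside to the organizing principle and delete the erroneous sentence. Finally, you do not actually verify the specific formulas (the brackets $[\cdot]$, $[\cdot]_\pm$, $\{\cdot\}$, the $\tfrac13$, and the four linear relations on $\beta$), which are the entire content of the proposition; as a sketch deferring to Bryant that is reasonable, and indeed mirrors what the paper itself does by citation, but it should be stated plainly that the explicit bookkeeping is being taken from the reference rather than rederived.
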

Now that we have the structure equations for $Spin(7)$, we may construct a moving frame for $S^7$ adapted to an associative three-fold. 
If we let $g:Spin(7)\to Gl(8,\mathbb{R})$ be the map taking $Spin(7)$ to the identity component of the Lie subgroup of $Gl(8,\mathbb{R})$ which has Lie algebra $\mathfrak{spin}(7)$, then we can write $g=(x\, e \, f)$, where for $p\in Spin(7)$ we have that $x(p)\in M_{8\times 1}(\mathbb{R}), e(p)=(e_1(p),e_2(p),e_3(p))\in M_{8\times 3}(\mathbb{R})$ and $f(p)=(f_4(p),f_5(p),f_6(p),$ $f_7(p))\in M_{8\times 4}(\mathbb{R})$. We can choose our frame so that $x$ represents a point of our associative three-fold $L$, $e$ is an orthonormal frame for $L$ and $\omega$ is an orthonormal coframe for $L$. Therefore $f$ is an orthonormal frame for the normal bundle of $L$ in $S^7$, $\eta$ an orthonormal coframe. Then since the Maurer-Cartan form $\phi=g^{-1}dg$ takes values in $\mathfrak{spin}(7)$, we can write
\[
 \phi:=
\begin{pmatrix}
 0 & -\omega^T & -\eta^T \\
\omega & [\alpha] & -\beta^T-\frac{1}{3}\{\eta\}^T \\
\eta & \beta+\frac{1}{3}\{\eta\} & \frac{1}{2}[\alpha-\omega]_+ +\frac{1}{2}[\gamma]_-
\end{pmatrix}.
\]
This yields the following results
\begin{prop}[{\cite[Prop 4.3]{MR3004102}}]\label{prop:jase1}
 Use the notation above. On the adapted frame bundle of an associative three-fold $L$ in $S^7$, $x:L\to S^7$ and $\{e_1,e_2,e_3,$ $f_4,f_5,f_6,f_7\}$ is a local oriented orthonormal basis for $TA\oplus NA$, so the first structure equations are
\begin{align*}
 dx&=e \omega; \\
de&=-x\omega^T+e[\alpha]+f\beta; \\
df&=-e\beta^T+\frac{1}{2}f([\alpha-\omega]_++[\gamma]_-).
\end{align*}
\end{prop}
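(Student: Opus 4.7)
The plan is to derive all three structure equations as a direct reading of the Maurer--Cartan equation $dg = g\phi$ on the adapted frame bundle over the associative three-fold $L \subset S^7$, using the explicit block-matrix description of $\mathfrak{spin}(7)$ given just before the statement.

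First, I invoke the preceding lemma: the Maurer--Cartan form $\phi = g^{-1}dg$ on $Spin(7)$ takes values in the block-matrix Lie algebra $\mathfrak{spin}(7)$ with components $\omega, \eta, \alpha, \beta, \gamma$ parametrising it as displayed. Rearranging gives $dg = g\phi$, which I will compute column-block by column-block using the decomposition $g = (x\;\; e\;\; f)$ corresponding to the splitting $\mathbb{R}^8 = \mathbb{R}\oplus \mathbb{R}^3 \oplus \mathbb{R}^4$.

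Second, I pull back via the section of the frame bundle over $L$ chosen so that $x(p) = p \in L$, the columns of $e$ span $T_pL$, and the columns of $f$ span the normal bundle of $L$ in $S^7$. Reading off the first column of $dg = g\phi$ gives
\[
dx = x\cdot 0 + e\omega + f\eta.
\]
Since $x$ parametrises points of $L$, the differential $dx$ must be tangent to $L$, so the $f$-component vanishes. This forces $\eta = 0$ on the adapted frame bundle and immediately yields the first equation $dx = e\omega$; in particular $\{\eta\} = 0$ thereafter.

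Third, reading off the middle three columns of $dg = g\phi$ with $\eta = 0$ gives
\[
de = x(-\omega^T) + e[\alpha] + f\bigl(\beta + \tfrac{1}{3}\{\eta\}\bigr) = -x\omega^T + e[\alpha] + f\beta,
\]
and the final four columns give
\[
df = -x\eta^T + e\bigl(-\beta^T - \tfrac{1}{3}\{\eta\}^T\bigr) + \tfrac{1}{2}f\bigl([\alpha-\omega]_+ + [\gamma]_-\bigr) = -e\beta^T + \tfrac{1}{2}f\bigl([\alpha-\omega]_+ + [\gamma]_-\bigr),
\]
which are precisely the two remaining structure equations. The computation is mechanical once the matrix description of $\mathfrak{spin}(7)$ is granted; the only point requiring care is the pullback step that reduces $\eta$ to zero, since this is where the geometric hypothesis (that $L$ is parametrised by the base point $x$ of the frame) enters. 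Everything else is a bookkeeping exercise in block matrix multiplication and in unpacking the operators $[\cdot]$, $[\cdot]_\pm$, $\{\cdot\}$.
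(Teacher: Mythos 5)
Your derivation is correct: reading the block columns of $dg = g\phi$ against the displayed parametrisation of $\mathfrak{spin}(7)$, and then observing that $\eta$ vanishes on the adapted frame bundle because $dx$ lands in $T_pL=\operatorname{span}\{e_1,e_2,e_3\}$, is exactly the standard moving-frames argument underlying this statement. Note that the paper itself does not prove this proposition but simply quotes it from Lotay's paper (\cite[Prop 4.3]{MR3004102}), so there is no in-text proof to compare against; nevertheless your argument reproduces the intended derivation and is complete.
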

\begin{prop}[{\cite[Prop 4.4]{MR3004102}}]\label{prop:jase2}
 Use the notation above. On the adapted frame bundle of an associative three-fold in $S^7$, there exists a local tensor of functions $h=h^a_{jk}=h^a_{kj}$, for $1\le j,k \le 3$ and $4\le a \le 7$, such that the second structure equations are
\begin{align}\label{eqn:struc21}
 d\omega&=-[\alpha]\wedge \omega; \\ \label{eqn:struc22}
\beta&=h\omega; \\ \label{eqn:struc23}
d[\alpha]&=-[\alpha]\wedge [\alpha]+\omega\wedge \omega^T+\beta^T\wedge \beta; \\ \label{eqn:struc24}
d\beta&=-\beta\wedge [\alpha]-\frac{1}{2}([\alpha-\omega]_++[\gamma]_-)\wedge \beta; \\ \label{eqn:struc25}
\frac{1}{2}d([\alpha-\omega]_++[\gamma]_-)&=-\frac{1}{4}[\alpha-\omega]_+\wedge [\alpha-\omega]_+-\frac{1}{4}[\gamma]_-\wedge [\gamma]_-+\beta\wedge \beta^T.
\end{align}
\end{prop}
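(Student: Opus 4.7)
The plan is to derive these structure equations as a direct consequence of the Maurer--Cartan equation $d\phi + \phi \wedge \phi = 0$ on $Spin(7)$, combined with the associative condition on $L$. Since $\phi = g^{-1}dg$ is the Maurer--Cartan form and takes the explicit matrix form displayed before Proposition \ref{prop:jase1}, differentiating once and comparing entries of the resulting matrix-valued 2-form identity will produce one equation for each block of $\mathfrak{spin}(7)$.

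First I would extract the associative condition. By construction of the adapted frame, $\{f_4, f_5, f_6, f_7\}$ spans the normal bundle of $L$ in $S^7$, so pulling $\eta$ back to the adapted frame bundle over $L$ gives $\eta = 0$. The $(0,3)$-block of the identity $d\phi = -\phi \wedge \phi$ yields an equation of the form
\[
d\eta = -\eta \wedge (\text{something}) - \bigl(\beta + \tfrac{1}{3}\{\eta\}\bigr) \wedge \omega + \tfrac{1}{2}([\alpha-\omega]_++[\gamma]_-) \wedge \eta,
\]
and substituting $\eta = 0 = d\eta$ reduces this to $\beta \wedge \omega = 0$. Cartan's lemma then produces a tensor $h^a_{jk}$ with $\beta^a_j = h^a_{jk} \omega^k$ and $h^a_{jk} = h^a_{kj}$, giving \eqref{eqn:struc22}.

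With $\eta=0$ in hand, the remaining structure equations are simply the block components of $d\phi = -\phi \wedge \phi$ read off from the prescribed form of $\mathfrak{spin}(7)$. Differentiating $dx = e\omega$ and substituting $de = -x\omega^T + e[\alpha] + f\beta$ gives $d\omega = -[\alpha] \wedge \omega$ after the $x$- and $f$-components collapse (the $f$-component uses $\beta \wedge \omega = 0$), producing \eqref{eqn:struc21}. The $(e,e)$-block of the Maurer--Cartan equation gives \eqref{eqn:struc23}, the $(f,e)$-block gives \eqref{eqn:struc24}, and the $(f,f)$-block gives \eqref{eqn:struc25}; in each case the terms involving $\eta$ disappear on $L$.

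The main obstacle will be the bookkeeping: performing the block matrix multiplication $\phi \wedge \phi$ and verifying that the output repackages correctly into the bracket notations $[\cdot]$, $[\cdot]_\pm$ and $\{\cdot\}$ (so that, for instance, the $(f,f)$-block really lands in the image of $\tfrac{1}{2}[\,\cdot\,]_+ + \tfrac{1}{2}[\,\cdot\,]_-$ rather than in a larger piece of $\mathfrak{so}(4)$). This reduces to checking a handful of algebraic identities for these brackets, for example that $[\alpha]\wedge[\alpha]$ outputs a matrix of the form $[\,\cdot\,]$ and that $\beta^T \wedge \beta$ is symmetric and fits into the same $\mathfrak{so}(3)$-component — routine once one writes out the components, but the only genuinely computational step in the proof.
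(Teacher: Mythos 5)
Your plan is correct and is exactly the standard moving-frames argument used in the cited reference (Lotay \cite{MR3004102}, Prop.~4.4, following Bryant \cite{MR664494}); the paper itself gives no proof and simply cites that source. You correctly identify the two key points: pulling back to the adapted frame bundle over $L$ forces $\eta=0$, so the $\eta$-block of the Maurer--Cartan identity $d\phi=-\phi\wedge\phi$ collapses to $\beta\wedge\omega=0$, whence Cartan's lemma produces the symmetric tensor $h$ with $\beta=h\omega$; and the remaining equations \eqref{eqn:struc21}, \eqref{eqn:struc23}--\eqref{eqn:struc25} are the $(e,0)$, $(e,e)$, $(f,e)$ and $(f,f)$ blocks of $d\phi=-\phi\wedge\phi$ with all $\eta$-terms dropped. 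Your final worry about the bookkeeping in \eqref{eqn:struc25} resolves exactly as you suspect: the cross-term $[\alpha-\omega]_+\wedge[\gamma]_-+[\gamma]_-\wedge[\alpha-\omega]_+$ vanishes because the constant matrices in the images of $[\cdot]_+$ and $[\cdot]_-$ commute (they span the two commuting $\mathfrak{su}(2)$ factors of $\mathfrak{so}(4)$), so the wedge of matrix-valued one-forms picks up only the commutator, which is zero.
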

Notice that $[\alpha]$ is the Levi-Civita connection of $L$ and $\frac{1}{2}([\alpha-\omega]_++[\gamma]_-)$ defines the induced connection on the normal bundle of $L$ in $S^7$. We have that $h$ defines the second fundamental form $\mathbf{II}_L\in C^\infty(S^2T^*L;\nu(L))$ of $L$ in $S^7$, writing
\[
 \mathbf{II}_L:=h^a_{jk}f_a\otimes \omega_j\omega_k.
\]
Since the associative submanifolds of $S^7$ that we are considering are $S^1$-bundles over complex curves, we may reduce the structure equations of $L$.
\begin{prop}[{\cite[Ex 4.9]{MR3004102}}]\label{prop:jase3}
 Let $L$ be the link of complex cone $C$ in $\mathbb{C}^4$. Then we can choose a frame of $TS^7|_L$ such that
\[
 \alpha_2=\omega_2, \; \alpha_3=\omega_3 \text{ and } \beta^4_1=\beta^5_3=\beta^6_3=\beta^7_3=0.
\]
This implies that $\beta^4_3=-\beta^5_2,\beta^5_3=\beta^4_2, \beta^6_3=-\beta^7_2$ and $\beta^7_3=\beta^6_2$. Here $e_1$ defines the direction of the circle fibres of $L$ over the complex link $\Sigma$ of $C$.
\end{prop}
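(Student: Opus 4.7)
The plan is to build the frame directly from the two structures that $L$ carries: the Hopf circle bundle structure $L\to\Sigma$ (which makes $L$ a Sasakian submanifold of $S^7$ in the $\xi$-direction) and the induced complex structure $J$ on the horizontal subbundles coming from $\Sigma\hookrightarrow\mathbb{C}P^3$ being Kähler. The $\alpha$-conditions will be forced by the Sasakian identity $\nabla^{S^7}_X\xi=JX$, while the $\beta$-conditions will be achieved by an appropriate unitary rotation of the remaining frame freedom in the horizontal normal bundle.

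First I would set $e_1=\xi=J(r\partial/\partial r)|_L$, the Reeb vector field, which is canonically determined and tangent to the circle fibres. The orthogonal complement of $\xi$ in $T_xL$ is a single complex line for $J$, so I can pick a unit local horizontal vector $e_2$ and set $e_3=Je_2$, which automatically makes $\{e_1,e_2,e_3\}$ an orthonormal frame with $Je_2=e_3$. The horizontal part of $\nu_{S^7}(L)$ is identified with $\nu_{\mathbb{C}P^3}(\Sigma)$ and is a rank two complex bundle; I would split it into two orthogonal complex lines and pick unit sections $f_4,f_6$ with $f_5=Jf_4$, $f_7=Jf_6$. The freedom in choosing the $SU(2)$-rotation of the pair $(f_4,f_6)$ inside this complex rank two bundle is what I will use to arrange the $\beta$-conditions.

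For the $\alpha$-conditions, I would read off the first column of $[\alpha]$ from $de_1=-x\omega_1+e\,[\alpha]_{(1)}+f\,\beta_{(1)}$ in Proposition \ref{prop:jase1}: this says the tangential component of $\nabla^{S^7}_{e_j}e_1$ is $-\alpha_3(e_j)e_2+\alpha_2(e_j)e_3$ and the normal component is $\sum_a\beta^a_1(e_j)f_a$. On Sasakian $S^7$ one has $\nabla^{S^7}_X\xi=JX$ for horizontal $X$ and $\nabla^{S^7}_\xi\xi=0$; since $JX\in TL$ whenever $X\in TL$ (by our frame choice) these covariant derivatives lie entirely in $TL$, forcing simultaneously $\alpha_2=\omega_2$, $\alpha_3=\omega_3$ and $\beta^a_1=0$ for every $a\in\{4,5,6,7\}$. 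The four linear $\mathfrak{spin}(7)$-constraints on $\beta$ listed in Proposition 5.2 then immediately give the four implications $\beta^4_3=-\beta^5_2$, $\beta^5_3=\beta^4_2$, $\beta^6_3=-\beta^7_2$, $\beta^7_3=\beta^6_2$.

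Finally, the remaining conditions $\beta^5_3=\beta^6_3=\beta^7_3=0$ I would achieve by using the Kähler identity $\mathbf{II}_\Sigma(JX,Y)=J\,\mathbf{II}_\Sigma(X,Y)$ for the complex submanifold $\Sigma\subset\mathbb{C}P^3$. Since $h^a_{jk}=\beta^a_j(e_k)$ is the second fundamental form of $L$ in $S^7$, and the Sasakian structure identifies the horizontal part of $\mathbf{II}_L$ with the pullback of $\mathbf{II}_\Sigma$, this $J$-linearity forces $\mathbf{II}_\Sigma(e_2,e_2)$ to carry the entire tensorial information, and the resulting $\mathbf{II}_\Sigma$ lives in a single complex line of $\nu_{\mathbb{C}P^3}(\Sigma)$ pointwise. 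A unitary rotation of the orthogonal decomposition $(f_4,f_5)\oplus(f_6,f_7)$ can be chosen so that this complex line is $\operatorname{span}(f_4,f_5)$, killing the $f_6,f_7$-components of $\mathbf{II}_\Sigma$ and, together with the $J$-identity, the $e_3$-directed $f_5$-component too. The main obstacle is the bookkeeping that relates $\mathbf{II}_L$ to $\mathbf{II}_\Sigma$ via the O'Neill tensors of the Riemannian submersion $L\to\Sigma$, including the cross-terms involving $\xi$; once this dictionary is written down, the $\beta$-vanishing reduces to a pointwise linear-algebra statement on the Kähler second fundamental form.
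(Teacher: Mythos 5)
Your first three paragraphs are correct and amount to the same argument the paper uses: the paper simply prescribes a $J$-compatible frame $Jx=e_1$, $Je_2=e_3$, $Jf_4=f_5$, $Jf_6=f_7$ and leaves the rest implicit, while you unpack it via the Sasakian identity $\nabla^{S^7}_X\xi=JX$. Reading this off the structure equation for $de_1$ gives $\alpha_2=\omega_2$, $\alpha_3=\omega_3$ and the vanishing of the entire first column of $\beta$, namely $\beta^4_1=\beta^5_1=\beta^6_1=\beta^7_1=0$, after which the four listed relations are exactly the four $\mathfrak{spin}(7)$ constraints on $\beta$. This is what the proposition should be asserting: the printed conditions $\beta^4_1=\beta^5_3=\beta^6_3=\beta^7_3=0$ appear to be a typo, since the claimed implications only follow from the $\mathfrak{spin}(7)$ constraints when $\beta^a_1=0$ for every $a$, and the frame constructed for $L_3$ in Lemma \ref{lem:frame} (which explicitly invokes this proposition) has $\beta^5_3=\beta^4_2=\tfrac{2}{\sqrt{3}}\omega_2\neq0$.

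Your final paragraph, which tries to additionally force $\beta^5_3=\beta^6_3=\beta^7_3=0$, is therefore chasing the typo rather than the intended statement, and the argument there does not go through. The $J$-linearity $\mathbf{II}_\Sigma(JX,Y)=J\,\mathbf{II}_\Sigma(X,Y)$ gives $\mathbf{II}(e_3,e_2)=J\,\mathbf{II}(e_2,e_2)$ and $\mathbf{II}(e_3,e_3)=-\mathbf{II}(e_2,e_2)$, so if $\mathbf{II}(e_2,e_2)=hf_4$ after your unitary rotation, then $\beta^5_3=h\,\omega_2$; this is nonzero whenever $\Sigma$ is not totally geodesic, so the $J$-identity does not kill $\beta^5_3$ as you claim. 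Moreover, the rotation placing $\operatorname{Im}\mathbf{II}_\Sigma$ into $\operatorname{span}(f_4,f_5)$ is only a pointwise normalization and breaks down at zeros of $\mathbf{II}_\Sigma$, so it need not extend to a smooth frame choice over $L$. Dropping the last paragraph and keeping paragraphs one through three, but stating the conclusion as $\beta^4_1=\beta^5_1=\beta^6_1=\beta^7_1=0$, gives a clean, correct proof that matches the paper.
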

\begin{proof}
 This follows from supposing that the complex structure of $\mathbb{C}^4$ acts on $C$ as follows:
\[
 Jx=e_1; \quad Je_2=e_3; \quad Jf_4=f_5; \quad Jf_6=f_7.
\]
\end{proof}

\textbf{Acknowledgements.} I would like to thank Jason Lotay for his help, guidance and feedback on this project. I would also like to thank Alexei Kovalev, Yng-Ing Lee and Julius Ross for comments on my PhD thesis, from which this work is taken. This work was supported by the UK Engineering and Physical Sciences Research Council (EPSRC) grant EP/H023348/1 for the University of Cambridge Centre for Doctoral Training, the Cambridge Centre for Analysis.

\bibliographystyle{plain}
\bibliography{csbib}

\end{document}